\newcommand{\N}{\mathbb{N}}                 
\newcommand{\Z}{\mathbb{Z}}                  
\newcommand{\R}{\mathbb{R}}                    
\newcommand{\D}{\mathbb{D}}      
\newcommand{\HH}{\mathbb{H}}
\newcommand{\T}{\mathbb{T}}                     
\newcommand{\finedim}{\hfill $\Box$}            
\newcommand{\area}{\mathrm{area}}    
\newcommand{\supp}{\mathrm{supp\,}}             
\newcommand{\conv}{\mathrm{conv \2}}            
\newcommand{\rank}{\mathrm{rank\,}}             
\newcommand{\RP}{\mathbb{RP}}                   
\newcommand{\pr}{{\mathrm{pr}}}     
\newcommand{\SL}{\mathrm{SL}}
\newcommand{\SO}{\mathrm{SO}}
\newcommand{\id}{\mathrm{id}}         
\newcommand{\can}{\mathrm{can}\,}                
\newcommand{\proofend}{\hspace*{\fill} $\Box$\\}
\newenvironment{brsm}{
  \bigl( \begin{smallmatrix} }{
  \end{smallmatrix} \bigr)}
\def\rev{\operatorname{rev}}
\def\top{\operatorname{top}}
\def\Vol{\operatorname{Vol \1}}
\def\vol{\operatorname{vol}}
\def\sym{\operatorname{sym}}
\def\length{\operatorname{length}\1}
\def\card{\operatorname{card}}
\def\HT{\operatorname{HT}}
\def\BH{\operatorname{BH}}
\def\MT{\operatorname{MT}}
\def\OB{\operatorname{OB}}
\def\SH{\operatorname{SH}}
\def\WH{\operatorname{WH}}
\def\Ad{\operatorname{Ad}}
\def\ga{\alpha}
\def\gve{\varepsilon}
\def\gl{\lambda}
\def\1{\:\!}
\def\2{\;\!}
\def\m2{\!\!\:}
\def\cc{{\mathcal C}}
\def\cf{{\mathcal F}}
\def\cg{{\mathcal G}}
\def\ch{{\mathcal H}}
\def\cl{{\mathcal L}}
\def\cm{{\mathcal M}}
\def\cR{{\mathcal R}}
\def\ct{{\mathcal T}}
\def\ovr{{\bold r}}
\def\ox{{\bold x}}
\def\otheta{\textnormal{\fontfamily{phv}\selectfont\straighttheta}}
\newtheorem{theorem}{Theorem}[section]           
\newtheorem*{theorem*}{Theorem}              
\newtheorem{corollary}[theorem]{Corollary}        
\newtheorem*{corollary*}{Corollary}       
\newtheorem{lem}[theorem]{Lemma}            
\newtheorem{prop}[theorem]{Proposition}     
\newtheorem{defn}[theorem]{Definition}      
\newtheorem{rem}[theorem]{Remark}      
\newtheorem{rems}[theorem]{Remarks}     
\newtheorem{exs}[theorem]{Example}
\newtheorem{question}[theorem]{Question}
\begin{document}

\numberwithin{equation}{section}

\title[]{Entropy collapse versus entropy rigidity for Reeb and Finsler flows}

\author[]{Alberto Abbondandolo}
\address{Alberto Abbondandolo,
  Fakult\"at f\"ur Mathematik, 
  Ruhr-Universit\"at Bochum}
\email{alberto.abbondandolo@rub.de}

\author[]{Marcelo R.R.\ Alves}
\address{Marcelo Alves,
Faculty of Science, University of Antwerp}
\email{marcelorralves@gmail.com}

\author[]{Murat Sa\u glam}
\address{Murat Sa\u glam,
  Mathematisches Institut, 
	Universit\"at zu K\"oln}
\email{msaglam@math.uni-koeln.de}                          
								
\author[]{Felix Schlenk}  
\address{Felix Schlenk,
Institut de Math\'ematiques,
Universit\'e de Neuch\^atel}
\email{schlenk@unine.ch}

\keywords{topological entropy, volume entropy, Reeb flows, Finsler flows}

\date{\today}
\thanks{2010 {\it Mathematics Subject Classification.}
Primary 37B40, Secondary~53D10}

\begin{abstract}
On every closed contact manifold there exist contact forms with volume one whose Reeb flows have arbitrarily small topological entropy.
In contrast, for many closed manifolds there is a uniform positive lower bound for the topological entropy
of (not necessarily reversible) normalized Finsler geodesic flows.
\end{abstract}

\maketitle

\begin{center}
Dedicated to the memory of Will Merry, 1984--2022
\end{center}

\tableofcontents

\section{Introduction}

\subsection{Main results}
The main results of this paper are the following two theorems.

\begin{theorem} \label{t:mainintro}
Let $(M,\xi)$ be a closed co-orientable contact manifold. 
For every $\gve >0$ there exists a contact form $\alpha$ on~$(M,\xi)$ with volume one
such that the topological entropy $h_{\top}(\alpha)$ of its Reeb flow is smaller than~$\gve$.
\end{theorem}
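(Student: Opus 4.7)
The plan is to use the simple rescaling $\alpha\mapsto c\alpha$, which acts by $\Vol(c\alpha)=c^{n+1}\Vol(\alpha)$ and $h_{\top}(c\alpha)=h_{\top}(\alpha)/c$ (the latter since $R_{c\alpha}=R_\alpha/c$ on $M^{2n+1}$), to reduce the theorem to showing that the scale-invariant quantity $h_{\top}(\alpha)\cdot\Vol(\alpha)^{1/(n+1)}$ has infimum zero over contact forms $\alpha$ defining $\xi$.

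By Giroux in dimension three, and by Giroux--Mohsen in higher dimensions, $(M,\xi)$ admits a compatible open book decomposition with binding $(B^{2n-1},\xi_B)$ and pages $P^{2n}$. Fix a compatible contact form $\alpha_0$ in normal form so that, away from $B$, $\alpha_0=H(\theta)\,d\theta+\beta$ where $d\beta$ is symplectic on pages, and in a tubular neighborhood of $B$, $\alpha_0=f_1(r)\alpha_B+f_2(r)\,d\theta$ with $\alpha_B$ the induced contact form on $B$. For $\lambda\ge 1$, define
\[
\alpha_\lambda=\lambda H\,d\theta+\beta\ \ \text{away from $B$},\qquad
\alpha_\lambda=f_1\alpha_B+\lambda f_2\,d\theta\ \ \text{near $B$.}
\]
Since $H$ depends only on $\theta$, one has $d\alpha_\lambda=d\beta$, so the Reeb vector field $R_\lambda$ has the same unparametrized orbits as $R_{\alpha_0}$ but is slowed down, with the return time to a page multiplied by $\lambda$.

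A direct calculation shows $\Vol(\alpha_\lambda)=\lambda V_1+V_2$ for constants $V_1,V_2$ depending only on $\alpha_0$, and by Abramov's formula the topological entropy of the off-binding Reeb flow equals $h_{\top}(\varphi)/(\lambda T_0)$, where $\varphi$ is the page monodromy and $T_0$ the unstretched return time. Since $\alpha_\lambda|_B=\alpha_B$, the binding contribution to the entropy of $R_\lambda$ is the unchanged quantity $h_{\top}(\alpha_B)$. Combining, $h_{\top}(\alpha_\lambda)=\max\bigl(h_{\top}(\alpha_B),\,h_{\top}(\varphi)/(\lambda T_0)\bigr)$, and optimizing in $\lambda$ produces
\[
h_{\top}(\alpha_\lambda)\,\Vol(\alpha_\lambda)^{1/(n+1)}\ \le\ C\,h_{\top}(\alpha_B)^{n/(n+1)}\,h_{\top}(\varphi)^{1/(n+1)}
\]
for a constant $C$ depending on $\alpha_0$.

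The proof is completed by induction on $\dim M$. In dimension three the binding $B$ is a union of circles and $h_{\top}(\alpha_B)=0$, so letting $\lambda\to\infty$ drives the right-hand side to zero. For $\dim M\ge 5$, the inductive hypothesis applied to $(B,\xi_B)$ furnishes $\alpha_B$ with $h_{\top}(\alpha_B)$ as small as desired at unit volume, which via the bound yields the theorem. The main obstacle is the entropy identity $h_{\top}(\alpha_\lambda)=\max\bigl(h_{\top}(\alpha_B),\,h_{\top}(\varphi)/(\lambda T_0)\bigr)$: one must verify that the off-binding Reeb dynamics of $\alpha_\lambda$ is indeed a reparametrized suspension over the monodromy, and that the transition collar between the two local models near $B$ contributes no additional topological entropy. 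This requires precise control of $R_\lambda$ on the tubular neighborhood of $B$ via the Giroux normal form, together with a variational-principle argument that decomposes invariant measures between the binding and the off-binding pieces.
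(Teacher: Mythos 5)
Your approach is, up to a change of parameter, the same as the paper's: a supporting open book decomposition (Giroux), an inductive descent on dimension through the binding, and a one--parameter family of contact forms obtained by rescaling the page direction relative to the $d\theta$ direction. The paper writes the mapping--torus part as $\alpha_s=d\theta+s\bigl(\lambda+\chi(\theta)\lambda_\psi\bigr)$ and lets $s\to0$; your $\alpha_\lambda=\lambda H\,d\theta+\beta$ with $\lambda\to\infty$ is the same family after multiplying through by $s=1/\lambda$ and normalizing the volume. So the route is not a genuinely different one; the question is whether the details work.

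Two of the gaps you flag yourself are real but fixable. The Abramov--type ``entropy identity'' cannot be an identity: the first return time to a page is not constant (the paper controls it only within a factor, $\pi\le T_s\le 4\pi$), so Abramov gives only the two--sided bound $h_{\top}(\text{base})/\max T\le h_{\top}(\text{susp})\le h_{\top}(\text{base})/\min T$; moreover the relevant base map is the Reeb first return map, which is an isotoped version of the open book monodromy, not the monodromy $\varphi$ itself. The paper sidesteps both issues with Ohno's estimate $h_{\top}(\phi_{fX})\le (\max f)\,h_{\top}(\phi_X)$, applied to the fixed vector field $\partial_\theta+Y$; this gives exactly the bound your argument needs without having to prove a clean Abramov formula.

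The serious missing ingredient is in the inductive step. You first fix a Giroux normal form $\alpha_0$, so that $\alpha_B$ is \emph{determined} as the contact form induced by $\alpha_0$ on $B$; you then ``apply the inductive hypothesis to $(B,\xi_B)$ to furnish $\alpha_B$ with small entropy.'' But the inductive hypothesis produces some \emph{other} contact form on $(B,\xi_B)$, and you have not explained how to build a contact form on $M$, adapted to the open book, whose behaviour near the binding is governed by that prescribed form. This is precisely the content of the paper's Lemma~\ref{le:nearK} on ideal Liouville domains: for \emph{any} contact form $\alpha_0$ on the ideal contact boundary $(K,\xi)$ there is an embedding $\imath\colon[0,\infty)\times K\hookrightarrow F$ of a collar of $K$ with $\imath^*\lambda=\tfrac{1}{r}\alpha_0$. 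This lemma is what makes the binding contact form a free parameter in the construction; without it the induction does not close, because the small--entropy binding form you get from the inductive hypothesis cannot be inserted into the normal form for $\alpha_0$. (In dimension three the binding is a disjoint union of circles and the issue is vacuous, which is why the base case is much easier.) A related omission is that the constant $C$ in your optimized bound $h_{\top}\Vol^{1/(n+1)}\le C\,h_{\top}(\alpha_B)^{n/(n+1)}h_{\top}(\varphi)^{1/(n+1)}$ depends on $\alpha_0$, hence on $\alpha_B$; you need to verify that this dependence does not ruin the estimate when $\alpha_B$ is replaced by the contact form coming from the inductive hypothesis. The paper handles this by choosing the binding contact form to have small contact volume (not unit volume) and then controlling the binding and page contributions to the total volume separately.
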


Given a closed manifold $Q$ let $h_{\vol} (Q)$ be the infimum of the volume entropies
of Riemannian metrics on~$Q$ that have volume one.
This number is equal to $2 \sqrt{\pi (k-1)}$ for a closed orientable surface of genus~$k \geq 2$, 
and it is positive for instance if $Q$ admits a Riemannian metric of negative curvature.
Given a Finsler metric~$F$ on~$Q$ we denote by $h_{\top} (F)$ the topological entropy of the time-one map of the
geodesic flow of~$F$.
Define the dimension constants 
$$
c_n :=  \frac{1}{(n!\, \omega_n)^{1/n}} 
$$
where $\omega_n$ is the volume of the Euclidean unit ball in~$\R^n$. 
For instance $c_2 = \frac{1}{\sqrt{2\pi}}$, and asymptotically $c_n \sim \sqrt{\frac{e}{2\pi}} \frac{1}{\sqrt n}$.

\begin{theorem} \label{t:Finslerintro}
Let $Q$ be a closed connected $n$-dimensional manifold. 
Then for every Finsler metric~$F$ on~$Q$ of Holmes--Thompson volume one
it holds that
$$
h_{\top} (F) \,\geq\, c_n \, h_{\vol} (Q) , \;\;
$$
and if $F$ is symmetric that
$$
h_{\top} (F) \,\geq\,  2 \2 c_n \, h_{\vol} (Q) .
$$
\end{theorem}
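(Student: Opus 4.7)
The plan is to prove the general (possibly non-reversible) inequality $h_{\top}(F) \geq c_n h_{\vol}(Q)$ by combining a Manning-type entropy bound for Finsler geodesic flows with a pointwise convex-geometric comparison between $F$ and an auxiliary Riemannian metric; the symmetric case follows the same scheme with a sharper convex-geometric input giving the extra factor of $2$.

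\textbf{Step 1 (Finsler Manning inequality).} First I would prove
\[
h_{\top}(F) \;\geq\; h_{\vol}(F) \;:=\; \limsup_{r\to\infty} \tfrac{1}{r}\log \mu\bigl(\tilde B_F(\tilde x_0, r)\bigr),
\]
where $\mu$ is any smooth volume lifted to the universal cover $\tilde Q$ (the limsup is independent of $\mu$ by compactness of $Q$) and $\tilde B_F$ denotes the forward Finsler ball. This is a direct adaptation of Manning's classical argument for Riemannian geodesic flows: one compares the volume growth of $\tilde B_F(\tilde x_0, r)$ in $\tilde Q$ with the cardinality of $(T,\gve)$-separated orbit segments. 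The only care needed for non-reversible $F$ is to work consistently with forward balls and forward-time orbits.

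\textbf{Step 2 (Riemannian comparison via inscribed ellipsoids).} Next I would associate to $F$ a Riemannian metric $g$ on $Q$ by declaring, at each point $q$, the unit ball of $g_q$ to be a centered ellipsoid $E(q) \subset T_qQ$ inscribed in the Finsler unit ball $B_F(q)$. The inclusion $E(q) \subset B_F(q)$ yields $F(v) \leq g(v,v)^{1/2}$, hence $d_F \leq d_g$ and therefore $h_{\vol}(F) \geq h_{\vol}(g)$. The ellipsoid $E(q)$ is chosen to satisfy the pointwise volume estimate
\[
\omega_n / \lambda(E(q)) \;\leq\; n!\, \lambda(B^*_F(q)) ,
\]
where $\lambda$ is Lebesgue measure in a linear trivialization and $B^*_F(q) \subset T^*_qQ$ is the dual Finsler unit ball. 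The existence of such $E(q)$ reduces to a sharp convex-geometric fact: for every convex body $K \subset \R^n$ with $0 \in \mathrm{int}(K)$ there is a centered ellipsoid $E \subset K$ with $\lambda(E)\lambda(K^*) \geq \omega_n/n!$, strengthened to $2^n \omega_n/n!$ for centrally symmetric $K$ (extremal at simplices, respectively cross-polytopes). Integrating pointwise and using the identities $\sqrt{\det g} = \omega_n/\lambda(E(q))$ and $\Vol_{HT}(F) = \int_Q \lambda(B^*_F(q))/\omega_n \, dq$ gives $\vol_g(Q) \leq n!\omega_n \, \Vol_{HT}(F)$, respectively $\vol_g(Q) \leq (n!\omega_n/2^n) \Vol_{HT}(F)$ in the symmetric case.

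\textbf{Step 3 (Normalization) and main obstacle.} With $\Vol_{HT}(F) = 1$ one has $\vol_g(Q) \leq n!\omega_n = c_n^{-n}$. Rescaling $g' := g / \vol_g(Q)^{2/n}$ to unit Riemannian volume gives
\[
h_{\vol}(g') \,=\, \vol_g(Q)^{1/n}\, h_{\vol}(g) \,\leq\, c_n^{-1}\, h_{\vol}(F) \,\leq\, c_n^{-1}\, h_{\top}(F),
\]
and since $h_{\vol}(Q) \leq h_{\vol}(g')$ by definition, this gives $h_{\top}(F) \geq c_n h_{\vol}(Q)$, with the sharper $2c_n$ in the symmetric case. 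The main obstacle is the sharp convex-geometric inequality in Step 2: it governs the dimensional constant and the factor-of-$2$ improvement for symmetric $F$, and it is strictly sharper than what John's theorem alone would provide (which gives only a bound of order $\sqrt n$ rather than $(n!\omega_n)^{1/n}$). By comparison, the Finsler Manning inequality and the normalization step are standard.
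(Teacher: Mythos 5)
Your argument is correct and is essentially the paper's proof, just packaged slightly differently. The paper proceeds in two explicit steps (Propositions~\ref{bound2} and \ref{bound1}): first pass from $F$ to the reversible symmetrization $S$ via the reflection body $D_q^*(S) = \conv(D_q^*(F)\cup -D_q^*(F))$ and control volumes with the Rogers--Shephard inequality, then pass from $S$ to a Riemannian $g$ via inner Loewner ellipsoids and Ball's volume-ratio theorem for symmetric bodies. Your ``sharp convex-geometric fact'' that every convex body $K\ni 0$ admits a centered inscribed ellipsoid $E$ with $\lambda(E)\lambda(K^*)\geq \omega_n/n!$ (respectively $2^n\omega_n/n!$ in the symmetric case) is \emph{exactly} the composition of these two ingredients: symmetrize $K^*$ by the reflection body, bound its volume by $2^n\lambda(K^*)$ via Rogers--Shephard, take its outer Loewner ellipsoid $\cE^*$ and bound its volume via Ball, then set $E=(\cE^*)^*$. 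So while you state it as a single clean lemma, you have not replaced the hard input -- it is Rogers--Shephard plus Ball under a different name, as in the paper. Two small inaccuracies worth noting: (i) the equality case of Rogers--Shephard for the reflection body requires $0\in\partial K^*$, so for genuine convex bodies with $0\in\Int(K)$ your non-symmetric inequality is always strict and ``extremal at simplices'' is only a degenerate-limit statement; (ii) your phrasing ``inscribed ellipsoid'' should be ``centered inscribed ellipsoid'' throughout (as you do say once) -- the unconstrained John ellipsoid need not be centered, and the proof requires a centered one so that its polar is the outer Loewner ellipsoid of $K^*$. The remaining steps -- the Finsler Manning inequality, the identity $\sqrt{\det g}=\omega_n/\lambda(E(q))$, the pointwise integration, and the normalization -- all match the paper.
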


\medskip
In the rest of this introduction, we recall the notions appearing in these theorems, 
describe in more detail the results proved in this paper, 
put them into context, and formulate a few open problems they give rise to. 
We first tell our story for the special case of unit circle bundles over closed orientable surfaces
of higher genus. 
Most ideas are present already for these simple spaces.
We keep the presentation informal, referring to the subsequent sections for the precise definitions 
and arguments.

\subsection{The case of unit circle bundles over higher genus surfaces}
\label{ss:circle}

Let $Q_k$ be the closed orientable surface of genus $k \geq 2$.
For every Riemannian metric~$g$ on~$Q_k$ we consider the geodesic flow~$\phi_g^t$
on the unit circle bundle  
$$
\left\{ (q,v) \in TQ_k \mid g_q(v,v) = 1 \right\} .
$$
A good numerical measure for the complexity of the flow $\phi_g^t$ is the topological entropy
$h_{\top} (g) := h_{\top} (\phi_g^1)$. 
A definition can be found in Appendix~\ref{a:Manning}.
This is an interesting invariant because it is related to many other complexity measurements 
of~$\phi^t_g$, see~\cite{HK95}.

For which Riemannian metrics~$g$ is $h_{\top} (g)$ minimal?
Such a $g$ could then rightly be considered as a best Riemannian metric from a dynamical point of view. 
Since the topological entropy scales like 
\begin{equation} \label{e:scal}
h_{\top} (c \1 g) \,=\, \frac 1c \, h_{\top} (g) ,
\end{equation}
the problem is meaningful only if one imposes a normalization. 
We normalize by the Riemannian area and consider the scale invariant quantity 
\begin{equation} \label{e:hatarea}
\widehat h_{\top}(g) \,=\, \sqrt{\area_g (Q_k)} \, h_{\top} (g) .
\end{equation}
It is a classical theorem of Dinaburg~\cite{Din71} and Manning~\cite{Man79}
that the geodesic flow of any Riemannian metric on~$Q_k$
has positive topological entropy (cf.\ Appendix~\ref{a:Manning} below).
Their results do not give a uniform positive lower bound on~$\widehat h_{\top}(g)$
nor do they say anything about the minimizers, however.  
This was achieved in the following remarkable result of Katok~\cite{Kat83}.

\begin{theorem} \label{t:katok}
{\rm (Katok 1983)}
For every Riemannian metric $g$ on $Q_k$ it holds that
$$
\widehat h_{\top}(g) \,\geq\, 2 \sqrt{\pi (k-1)} .
$$
Moreover, equality holds if and only if $g$ has constant curvature.
\end{theorem}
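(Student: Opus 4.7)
My plan is to combine three tools: Manning's inequality $h_{\top}\geq h_{\vol}$, the Gauss--Bonnet formula, and an ODE-comparison for the growth of distance circles in the universal cover. Write $A := \area_g(Q_k)$; Gauss--Bonnet gives $\int_{Q_k} K\2 dA = -4\pi(k-1)$, so the spatial mean of the Gaussian curvature is $\bar K = -4\pi(k-1)/A$. Manning's theorem (Appendix~\ref{a:Manning}) reduces the goal to the purely metric-geometric inequality
\[
h_{\vol}(g)^2 \cdot A \;\geq\; 4\pi(k-1),
\]
with equality forcing $g$ to have constant curvature.

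To prove this inequality I would fix a basepoint $\tilde q$ in the Riemannian universal cover $\tilde Q_k$, set $D_R := B_R(\tilde q)$, and let $L(R) := \length(\partial D_R)$. For almost every $R$, first variation of arclength for equidistant curves gives $L'(R) = \int_{\partial D_R}\kappa_g\2 ds$, and Gauss--Bonnet applied to the simply connected disk $D_R$ yields
\[
L'(R) \;=\; 2\pi \,-\, \int_{D_R} \tilde K \2 dA.
\]
For large $R$ the ball $D_R$ is tiled by roughly $\vol(D_R)/A$ copies of a fundamental domain, so $\int_{D_R}\tilde K dA = \bar K\cdot \vol(D_R) + O(L(R))$. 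Setting $\lambda := -\bar K = 4\pi(k-1)/A$ and using $\vol(D_R) = \int_0^R L(r)\2 dr$ converts the Gauss--Bonnet identity into the integro-differential inequality
\[
L'(R) \;\geq\; \lambda \int_0^R L(r)\2 dr \,-\, O(L(R)),
\]
whose Sturm comparison against $y'' = \lambda y$ forces $L(R)\gtrsim e^{\sqrt{\lambda}\2 R}$. Consequently $h_{\vol}(g)\geq \sqrt{\lambda}$, which is the desired $h_{\vol}(g)^2\cdot A \geq 4\pi(k-1)$.

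For the \emph{rigidity} part, equality requires simultaneous equality in Manning (characteristic of non-positively curved geometries in the surface case) and equality in the Gauss--Bonnet/Sturm comparison for every $R$. The latter forces $\tilde K\equiv \bar K$ pointwise, so $g$ has constant curvature $-4\pi(k-1)/A$, and after rescaling to area one this is the hyperbolic metric on $Q_k$.

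The \emph{main obstacle} I expect lies in the equidistribution step $\int_{D_R}\tilde K dA \approx \bar K\cdot \vol(D_R)$: its natural error is $O(L(R))$ while the leading term is of size $\vol(D_R)$, so controlling the ratio presupposes the very exponential volume growth we are trying to deduce. My intended resolution is to reduce first to the case $K\leq 0$, where $L'(R)\geq 2\pi$ holds pointwise and gives $\vol(D_R)\geq \pi R^2$ as a zeroth-order bound from which the Gauss--Bonnet bootstrap can run cleanly; the reduction would be carried out by a controlled smoothing that surgers away positive-curvature regions while keeping track of how $A$ and $h_{\top}$ change. As a fallback, one may bypass $h_{\vol}$ entirely and work directly on $T^1 Q_k$, bounding the Liouville-measure entropy from below through the Pesin--Ruelle formula and a Riccati argument along individual geodesics, thereby avoiding the two-dimensional equidistribution issue altogether.
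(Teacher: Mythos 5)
The paper does not prove this theorem; it is imported from Katok's paper \cite{Kat83}, and the paper explicitly remarks that Katok proved the sharp lower bound for the \emph{volume} entropy $\widehat h_{\vol}$, from which the topological-entropy version follows by Manning. Your first reduction (via Manning's inequality) therefore mirrors exactly how the result is used in the paper and is correct. The problem is in your proof of the volume entropy bound $h_{\vol}(g)^2\cdot A\geq 4\pi(k-1)$.

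The equidistribution step is not a circularity issue; it is an issue of \emph{order of magnitude}. Write $N_R(q)$ for the number of orbit points of $q$ inside $D_R$; then $\int_{D_R}\widetilde K\, dA = \int_F K(q)\,N_R(q)\,dA(q)$ and $\vol(D_R) = \int_F N_R(q)\,dA(q)$, so the error you need to control is $\int_F K(q)\bigl(N_R(q)-\overline{N_R}\bigr)dA(q)$. By the triangle inequality $N_{R-d}(q')\leq N_R(q)\leq N_{R+d}(q')$ for any two points of $F$, where $d=\diam F$. Once volume growth is exponential with rate $h$, the ratio $N_{R+d}/N_{R-d}$ stays bounded but does \emph{not} converge to $1$ (it hovers near $e^{2hd}$), so the fluctuation of $N_R$ across $F$ is a bounded positive fraction of $N_R$ itself. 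Concretely, the only honest inequality the tiling argument yields is of the shape
\[
L'(R)\;\geq\;2\pi+\lambda\,\vol(D_{R-d})-C\bigl(\vol(D_{R+d})-\vol(D_{R-d})\bigr),
\]
and when $\vol(D_R)\asymp e^{hR}$ the subtracted term is of the same order as the leading one. Running Sturm comparison on this yields a constant strictly worse than $\sqrt{\lambda}$ (and, for bad choices of $C$, nothing at all). Your approach can recover the Dinaburg--Manning theorem (positivity of entropy on surfaces of higher genus), because for that you only need a qualitative dichotomy, but it cannot produce the sharp constant. Restricting first to $K\leq 0$ does not help: the error term is still comparable to the main term there.

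Your fallback also does not close the gap. The Pesin--Ruelle route would ask you to lower-bound $\int \lambda^{+}d\mu_{L}$, and the Riccati comparison $\lambda^{+}\geq\sqrt{-K}$ (valid pointwise when $K\leq 0$) feeds into Jensen \emph{the wrong way}: concavity of $\sqrt{\,\cdot\,}$ gives $\int\sqrt{-K}\,d\mu\leq\sqrt{\int(-K)\,d\mu}$, so you cannot conclude $\int\lambda^{+}d\mu_{L}\geq\sqrt{-\bar K}$. Katok's actual argument is of a different nature: one uniformizes, writes $g=\rho^{2}g_{0}$ with $g_{0}$ hyperbolic, normalizes areas, and compares length spectra (or a time-changed flow) using Jensen's inequality against a measure of maximal entropy for $g_{0}$; the convexity that saves the day is in the conformal factor, not in the curvature. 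There is also a small inaccuracy in your rigidity discussion: equality in Manning's inequality is a consequence of non-positive curvature (and more generally of the absence of conjugate points, by Freire--Ma\~n\'e), not a characterization of it. What actually closes the rigidity is Katok's rigidity for $\widehat h_{\vol}$: equality in $\widehat h_{\top}(g)\geq\widehat h_{\vol}(g)\geq 2\sqrt{\pi(k-1)}$ forces equality in the right-hand inequality, which is the one with the equality characterization.
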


Geodesic flows are very special Reeb flows. 
For our unit circle bundle over~$Q_k$, Reeb flows can be described as follows.
We look at the cotangent bundle $T^*Q_k$ instead of the tangent bundle, 
endowed with its usual symplectic form $\omega = d \lambda$, where $\lambda = \sum_{j=1}^2 p_j\2 dq_j$.
Let $H \colon T^*Q_k \to \R$ be a continuous function that is smooth and positive away from the zero-section and fiberwise positively homogenous of degree one: 
$H(q, r p) = r H(q,p)$ for all $r \geq 0$.
Then $S^*(H) := H^{-1} (1)$ is a smooth hypersurface of~$T^*Q_k$
with the property that 
for each point $q \in Q_k$ the intersection $S_q^* (H) := S^*(H) \cap T_q^*Q_k$
with the cotangent plane at~$q$ 
is the smooth boundary of a domain which is starshaped with respect to the origin~$0_q$,
see the left drawing in Figure~\ref{star.fig}.
Denote by $\phi_H^t$ the restriction of the Hamiltonian flow of~$H$ to~$S^*(H)$.
The class of these flows are the Reeb flows on our unit circle bundle.
This flow is a co-geodesic flow exactly if $H$ restricts on each fiber to the square root 
of a positive quadratic form.
Special shapes of the fibers $S_q^* (H)$ in $T_q^*Q_k$ correspond to special Reeb flows:

\smallskip
\begin{itemize}
\item[($\varbigcirc$)] 
$\phi_H^t$ is a Riemannian geodesic flow if and only if each $S_q^* (H)$ is a centrally symmetric ellipse. 

\item[(\1$\square$\1)]
$\phi_H^t$ is a reversible Finsler geodesic flow if and only if each $S_q^* (H)$ 
is a centrally symmetric closed smooth curve with strictly positive curvature.

\medskip 
\item[($\triangle$)]
$\phi_H^t$ is a Finsler geodesic flow if and only if each $S_q^* (H)$ is a closed smooth curve with strictly positive curvature.
\end{itemize}

\smallskip
\noindent
Here we identified co-Finsler geodesic flows with Finsler geodesic flows via the Legendre transform.

\begin{figure}[h]   
 \begin{center}
  \psfrag{a}{(a)} \psfrag{b}{(b)} \psfrag{c}{(c)} \psfrag{d}{(d)}       
  \leavevmode\includegraphics{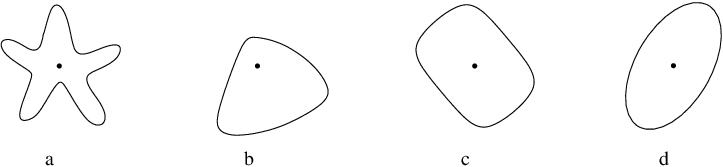}
 \end{center}
 \caption{Spheres $S_q^* (H)$ in $T_q^*Q_k$ defining (a) a Reeb flow, 
(b) a Finsler geodesic flow, (c) a reversible Finsler geodesic flow, 
(d) a Riemannian geodesic flow.}   \label{star.fig}
\end{figure}

Based on \cite{FS} it was shown in~\cite{MacSch11} that the above result of Dinaburg and Manning 
about Riemannian geodesic flows extends to all Reeb flows:

\begin{theorem} \label{t:MacSch}
Every Reeb flow $\phi_H^t$ on $S^*(H)\subset T^* Q_k$, $k\geq 2$, has positive topological entropy. 
\end{theorem}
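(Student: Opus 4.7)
The plan is to derive positive topological entropy from exponential volume growth of a Legendrian submanifold of $S^*(H)$ under the Reeb flow $\phi_H^t$, and to conclude via Yomdin's theorem. The geometric driver is the exponential growth of $\pi_1(Q_k)$ for $k \geq 2$; the symplectic tool is the isotopy invariance of a suitable Lagrangian (wrapped) Floer homology, which transports chord counts from the Riemannian model to a general star-shaped $S^*(H)$.

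Fix two distinct points $q_0, q_1 \in Q_k$ and set $L_i := S^*(H) \cap T^*_{q_i} Q_k$, two Legendrian circles in $S^*(H)$. A Reeb chord of length $t$ from $L_0$ to $L_1$ is by definition an intersection point of $\phi_H^t(L_0)$ with $L_1$, and its projection to $Q_k$ represents a homotopy class in the $\pi_1(Q_k, q_0)$-torsor of path components of $\Omega_{q_0, q_1} Q_k$. The central step is to show that in each such homotopy class $c$ there exists at least one Reeb chord of length at most $\ell(c) + C$, where $\ell(c)$ is the length of the shortest representative of $c$ in an auxiliary Riemannian metric and $C$ depends only on $H$. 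I plan to establish this by computing the Lagrangian Floer homology of $L_0$ and $L_1$ inside the Liouville domain bounded by $S^*(H)$: by invariance along a smooth family of star-shaped hypersurfaces connecting $S^*(H)$ to the round cotangent sphere of the auxiliary metric, the action-filtered Floer complex restricted to the class $c$ is isomorphic to the Morse complex on the corresponding component of $\Omega_{q_0, q_1} Q_k$, whose rank is nonzero.

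Next, since $Q_k$ has negative Euler characteristic, $\pi_1(Q_k)$ is word-hyperbolic and has exponential growth; consequently the number of homotopy classes $c$ with $\ell(c) \leq T$ is at least $a e^{bT}$ for positive constants $a, b$. Combined with the previous step, this yields at least $a e^{bT}$ transverse intersections of $\phi_H^T(L_0)$ with the smooth $2$-torus $\bigcup_{q \text{ near } q_1} L_q$, forcing exponential length growth of the smooth curve $\phi_H^T(L_0)$ at rate at least $b$. Yomdin's theorem, which bounds the exponential volume growth of any smooth submanifold under a $C^\infty$ diffeomorphism from above by its topological entropy, then gives $h_{\top}(\phi_H^1) \geq b > 0$.

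The hardest part will be the chord-existence step: producing, in every homotopy class, a Reeb chord of controlled length when $\phi_H^t$ is not a geodesic flow and no direct variational principle is available. This is exactly where the Floer-theoretic input of \cite{FS} is needed; the argument turns on the action-filtered invariance of Lagrangian Floer homology under Liouville deformations of the contact boundary, which imports the purely topological lower bound from the Riemannian model to the general Reeb setting.
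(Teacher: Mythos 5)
Your proposal follows the same conceptual route as the argument the paper cites from~\cite{MacSch11} (and sketches in the proof of Lemma~\ref{le:topvol}): use Floer theory to transport the topological chord count from the Riemannian model to the given star field, exploit the exponential growth of $\pi_1(Q_k)$ for $k\ge 2$, and conclude via Yomdin's theorem. However, the Legendrian/length-growth framing has a genuine gap in the last step. Your chord-existence argument (correctly) produces, in each homotopy class $c$, a Reeb chord from $L_0$ to $L_{q_1}$ of \emph{some} length $s\le\ell(c)+C$. But a chord of length $s<T$ gives $\phi_H^s(x)\in L_{q_1}$ for some $x\in L_0$; after the further flow $\phi_H^{T-s}$, the point $\phi_H^T(x)$ is generically nowhere near $\bigcup_{q}L_q$. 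So the exponentially many chords of lengths $\le T$ do \emph{not} translate into exponentially many transverse intersections of the fixed-time slice $\phi_H^T(L_0)$ with the chosen $2$-torus, and hence do not force length growth of $\phi_H^T(L_0)$ as needed for Yomdin applied to the circle $L_0$.

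The paper's (i.e., \cite{MacSch11}'s) framing avoids this by working with the Lagrangian disk $D_{q_0}^*(H)\subset T^*Q_k$ and the Hamiltonian flow $\Phi_H$ of $H^2$ rather than with the Legendrian circle $L_0$ and the Reeb flow. Since $H^2$ is fiberwise $2$-homogeneous, the Hamiltonian vector field of $H^2$ on the level set $\{H=c\}$ equals $2c$ times the Reeb vector field, so the time-$n$ map $\Phi_H^n$ applied to the disk, which contains all ``radii'' $c\in[0,1]$, simultaneously sweeps out the Reeb chords of \emph{all} lengths up to $2n$. Thus at a single time $n$ the $2$-dimensional Lagrangian $\Phi_H^n(D_{q_0}^*(H))$ already intersects nearby fibers $D_q^*(H)$ exponentially many times, and a Crofton-type integration over $q$ gives the lower bound on $\mu_{g^*}\bigl(\Phi_H^n(D_{q_0}^*(H))\bigr)$ that feeds into Yomdin. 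To salvage your Legendrian version you would either have to add a precise argument producing chords of length \emph{exactly} $T$ (for all large $T$) to $L_q$ for $q$ in an arc near $q_1$, or pass to the ``time-thickened'' surface $\bigcup_{t\in[0,T]}\phi_H^t(L_0)$, which amounts to reintroducing the Lagrangian disk. The chord-existence and $\pi_1$-growth steps of your proposal are essentially correct; it is this matching of chord length to the iteration time that is missing.
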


Does Katok's theorem also extend to Reeb flows?
To make the question meaningful, we again need to normalize. 
We do this by the symplectic volume of the bounded domain~$D^*(H)$
in~$T^*Q_k$ with boundary~$S^* (H)$, and define the Holmes--Thompson volume of~$Q_k$ 
associated with~$H$ by
\begin{equation} \label{e:hatDH}
\vol_H^{\HT} (Q_k) \,=\, \frac{1}{2\pi}  \int_{D^*(H)} \omega\wedge \omega.
\end{equation}
Then the normalized topological entropy
$$
\widehat h_{\top}^{\HT} (H) \,:=\, \sqrt{\vol_H^{\HT}(Q_k)} \; h_{\top} (\phi_H^1)
$$
is invariant under scalings of~$H$.
In the Riemannian case, this definition agrees with~\eqref{e:hatarea},
since then
$\vol_H^{\HT} (Q_k) = \area_g (Q_k)$.
The following question was asked in~\cite[\S 7.2]{FLS13}.

\begin{question} \label{q:FLS}
Is there a positive constant $c(k)$ such that $\widehat h_{\top}^{\HT} (H) \geq c(k)$
for every Reeb flow on the co-circle bundle over~$Q_k$\2?
\end{question}

Let us first try to answer this question in the affirmative for Finsler geodesic flows.
Given a Finsler metric~$F$ on~$Q_k$,
an obvious idea is to find a lower bound for $\widehat h_{\top}^{\HT}(F)$
by choosing a larger Riemannian metric $\sqrt{g} \geq F$, cf.~\eqref{e:scal}.
In general, the topological entropy of geodesic flows is not monotone with respect 
to the order relation on metrics, however. 
We therefore pass to a more geometric version of entropy, which is indeed monotone: 
The {\it volume entropy}\/ of~$F$ is defined as the exponential growth rate of balls in the universal cover~$\widetilde Q_k$
(which is the plane),
\begin{equation} \label{e:hvoldef}
h_{\vol} (F) \,:=\, \lim_{R \to \infty} \frac 1R \1 \log \Vol (B_{\tilde q}(R))
\end{equation}
where $\tilde q$ is any fixed point in $\widetilde Q_k$, $B_{\tilde q} (R)$ is the ball of radius~$R$
about~$\tilde q$ with respect to the lifted Finsler metric, and $\Vol$ is the volume with respect to 
the lift of any smooth area form on~$Q_k$ (see Appendix~\ref{a:Manning} for details).
It is clear that $F_1 \geq F_2$ implies
\begin{equation}  \label{e:hvolFg}
h_{\vol} (F_2) \, \geq \, h_{\vol} (F_1) .
\end{equation}
In the case of a Riemmannian metric $g$, 
denoting $h_{\vol}(\sqrt{g})$ simply by~$h_{\vol}(g)$, we have that
\begin{equation}  \label{e:Man}
h_{\top} (g) \, \geq \, h_{\vol} (g) 
\end{equation}
%
with equality if $g$ has non-positive curvature, as proven by Manning in~\cite{Man79}.
His proof of~\eqref{e:Man} readily generalizes to all Finsler metrics, 
see Appendix~\ref{a:Manning}:
\begin{equation}  \label{e:ManF}
h_{\top} (F) \, \geq \, h_{\vol} (F) . 
\end{equation}
Let $g$ be a Riemannian metric such that $\sqrt{g}\geq F$. 
Using \eqref{e:ManF} and~\eqref{e:hvolFg} we can now estimate
\begin{eqnarray*}
\widehat h_{\top}^{\HT} (F) &:=& \sqrt{\vol_F^{\HT}(Q_k)} \,h_{\top} (F)  \\
&\geq&
\sqrt{\vol_F^{\HT}(Q_k)} \, h_{\vol} (g)  \\
&=&
\sqrt{\frac{\vol_F^{\HT} (Q_k)}{\vol_g^{\HT} (Q_k)}} \,  \, \widehat{h}_{\vol} (g) .
\end{eqnarray*}
In~\cite{Kat83}, Katok actually proved Theorem~\ref{t:katok} for the normalized volume entropy~$\widehat h_{\vol}$ 
(which by Manning's theorem implies Theorem~\ref{t:katok}).
Hence we obtain
\begin{eqnarray} \label{e:hFg}
\widehat h_{\top}^{\HT} (F) 
\, \geq \, \sqrt{\frac{\vol_F^{\HT} (Q_k)}{\vol_g^{\HT} (Q_k)}} \,  \, 2 \sqrt{\pi (k-1)} . 
\end{eqnarray}
To get a uniform lower bound for $\widehat h_{\top}^{\HT} (F)$
we therefore look for a Riemannian metric~$g$ with $\sqrt{g} \geq F$ 
that is as close as possible to~$F$ in the sense of the Holmes--Thompson volume.
We best do this directly in the cotangent bundle~$T^*Q_k$.  
We thus look at each $q \in Q_k$ for a centrally symmetric ellipse in~$T_q^* Q_k$
such that, denoting by~$E_q$ the region bounded by it, we have
$E_q \supset D_q^* (F)$ and $E_q$ is as close to~$D_q^*(F)$ in volume as possible.

If $D_q^*(F)$ is centrally symmetric, the best choice is Loewner's outer ellipse.
This is the unique centrally symmetric ellipse enclosing $D_q^*(F)$ which minimizes the value of the area of the region bounded by it, which we denote by $E(D_q^*(F))$. Here the area $| \; |$ is taken with respect to any translation invariant measure on the plane~$T_q^*Q_k$.
Loewner's ellipse depends continuously on~$q$, and the largest area ratio
$$
\frac{| E(D_q^*(F)) | } { |D_q^*(F)|} 
$$
is~$\frac \pi 2$, which is attained exactly  when $D_q^*(F)$ is a parallelogram.
If we take the Riemannian metric~$g$ on~$Q_k$ that has the sets~$E(D_q^*(F))$  
as unit co-disks, 
we therefore obtain 
$$
\vol_F^{\HT} (Q_k)   \,\geq\, \frac{2}{\pi} \vol_g^{\HT} (Q_k) .
$$
Together with~\eqref{e:hFg} this yields
$$
\widehat h_{\top}^{\HT} (F) \,\geq\, 
\sqrt{\frac{2}{\pi}} \, 2 \sqrt{\pi (k-1)} \,=\, 2 \sqrt{2 (k-1)}. 
$$

If $D_q^* (F)$ is not centrally symmetric, we observe that the convex hull 
$$
\conv \bigl( D_q^*(F) \cup - D_q^*(F) \bigr)
$$
is centrally symmetric. 
It is not hard to see that for every convex body $K \subset \R^2$ that contains the origin,
$$
\left| \conv (K \cup -K) \right| \,\leq\, 4 |K| 
$$
with equality attained exactly by the triangles with one vertex at the origin.
Therefore, 
$$
\frac{|E (\conv (K \cup -K))|}{|K|} \,\leq\, 4 \cdot \frac{\pi}{2}
\ \,=\, 2\pi .
$$
Note that the constant $2\pi$ is sharp and is attained exactly by the triangles with one vertex at the origin,
see Figure~\ref{E.fig}~(b).
\begin{figure}[h] 
 \begin{center}
  \psfrag{a}{(a)} \psfrag{b}{(b)} \psfrag{c}{(c)}  
 \psfrag{K}{$K$}  \psfrag{-K}{$-K$}   
  \leavevmode\includegraphics{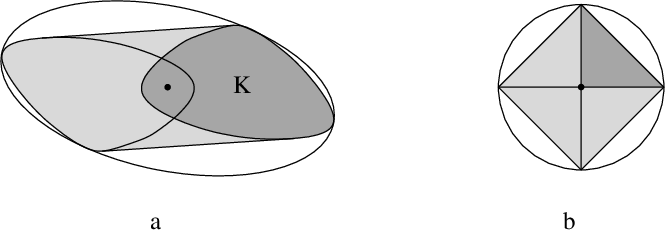}
 \end{center}
 \caption{The symmetrization $\conv (K \cup -K)$}   \label{E.fig}
\end{figure}
Since the two maps 
$$
K \,\mapsto\, \conv (K \cup -K) \,\mapsto\, E(K \cup -K)
$$ 
are continuous, 
we can take as $g$ the Riemannian metric with unit co-disks  
$$
E \bigl( \conv \bigl( D_q^*(F) \cup - D_q^*(F) \bigr)  \bigr)
$$
and obtain
$$
\widehat h_{\top}^{\HT} (F) \,\geq\, 
\frac{1}{\sqrt{2\pi}}\, 2 \sqrt{\pi (k-1)} \,=\, \sqrt{2(k-1)}. 
$$

Summarizing, we obtain Theorem~\ref{t:Finslerintro} for orientable surfaces:
\begin{equation} \label{e:FQk}
\widehat h_{\top}^{\HT} (F) \,\geq\, \frac{1}{\sqrt{2\pi}} \, h_{\vol} (Q_k) , 
\quad \mbox{and} \quad 
\widehat h_{\top}^{\HT} (F) \geq \sqrt{\frac 2 \pi} \, h_{\vol} (Q_k)  \, \mbox{ if $F$ is symmetric.}
\end{equation}

How sharp are these lower bounds?
It is still unknown whether the constants
$\frac{1}{\sqrt{2\pi}}$ and~$\sqrt{\frac 2 \pi}$ 
can be replaced by~$1$.
In other words, it is unknown whether there exist Finsler metrics~$F$ on~$Q_k$
such that $\widehat h_{\vol}^{\HT} (F) < h_{\vol} (Q_k)$.
We shall say more on this ``minimal entropy problem" in \S \ref{ss:entprob}.

Recall that for the closed orientable surfaces $Q_k$ of genus~$k$ one has
$$
h_{\vol} (Q_k) = 2 \sqrt{\pi (k-1)} .
$$
For the non-orientable surface $P_k$ whose orientation cover is $Q_k$ this implies
$$
h_{\vol} (P_k) = \sqrt{2 \pi (k-1)} .
$$
For the other four closed surfaces 
(the sphere, the torus, the real projective plane and the Klein bottle) 
Theorem~\ref{t:Finslerintro} is not useful since $h_{\vol}$ vanishes, 
and in fact there exist geodesic flows on these surfaces with vanishing topological entropy.

\medskip
We now look at general Reeb flows on the co-circle bundle over~$Q_k$.
As said earlier, these flows correspond to Hamiltonian flows on $S^*(H) = H^{-1}(1)$
of Hamiltonian functions $H \colon T^*Q_k \to \R$ that are fiberwise homogeneous of degree~one.
Looking for a lower bound for $\widehat h_{\top}^{\HT} (\phi_H)$,
we proceed as in the case of Finsler geodesic flows,
but knowing already~\eqref{e:FQk} we now compare~$H$ with any Finsler metric.
Choose a Finsler Hamiltonian $F \colon T^*Q_k \to \R$
such that $D^*(H) \subset D^*(F)$, i.e., $F \leq H$.

Definition~\eqref{e:hvoldef} can be extended to Reeb flows:
Fix a point $q \in Q_k$, take a lift $\tilde q \in \widetilde Q_k$ of~$q$
and the lift $\widetilde H \colon T^*\widetilde Q_k \to \R$ of~$H$,
and then define $h_{\vol}(H,q)$ as the exponential growth rate
of the volume of the set $B_{\tilde q}(\widetilde H, T)$ 
of those points $z \in \widetilde Q_k$ for which the fiber $S^*_z(\widetilde H)$ can be reached
in time~$\leq T$ by a flow line of~$\phi_{\widetilde H}^t$ that starts at the fiber~$S^*_{\tilde q}(\widetilde H)$.
As we shall show in Appendix~\ref{a:Manning} one then still has Manning's inequality, 
$$
h_{\top}(H) \,\geq\, h_{\vol}(H,q) .
$$
We now wish to show that there is a constant $c>0$ depending only on $H$ and~$F$
such that $h_{\vol}(H,q) \geq c\, h_{\vol}(F)$.  
The existence of such a constant for non-convex~$H$ does not follow from geometric considerations, 
since it is not true in general that $F \leq H$ implies the inclusion of balls 
$B_{\tilde q} (\widetilde F,T) \subset B_{\tilde q}(\widetilde H, T)$.
However, using Lagrangian Floer homology in $T^*Q_k$ one can avoid passing through~$h_{\vol}(H, q)$
and prove directly that 
\begin{equation} \label{e:hsigmaG}
\widehat{h}_{\top}^{\HT} (H) \,\geq\, \frac{1}{\sigma(H;F)} \, \widehat h_{\vol} (F) 
\end{equation}
where $\sigma (H;F)$ is the smallest real number such that 
$\frac{1}{\sigma (H;F)} D^*(F) \subset D^*(H)$, cf.\ Figure~\ref{fig.GH}.
This is explained in \S \ref{s:Floer}, using the proof of 
the above Theorem~\ref{t:MacSch} from~\cite{MacSch11}.

\begin{figure}[h]   
\begin{center}
 \leavevmode\includegraphics{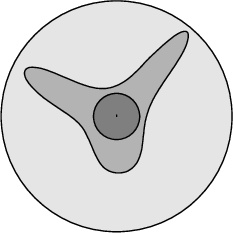}
\end{center}
 \caption{The co-disks $\frac{1}{\sigma(H;F)} D^*_q (F) \subset D^*_q (H) \subset D^*_q (F)$
          in $T_q^*Q_k$}    
\label{fig.GH}
\end{figure}

\noindent
The number $\sigma (H) := \inf \{ \sigma (H;F) \mid F \leq H \}$
is a measure for how far the fibers of $D^*(H)$ are from being convex.
Inequalities~\eqref{e:hsigmaG} and~\eqref{e:FQk} and Katok's inequality 
imply 

\begin{prop} \label{p:Floer2}
For every Reeb flow~$\phi_H$ on the co-circle bundle over $Q_k$,
\begin{equation} \label{e:hHs}
\widehat{h}_{\top}^{\HT} (H) \,\geq\, \frac{1}{\sigma (H)} \, \sqrt{2 (k-1)} .
\end{equation}
\end{prop}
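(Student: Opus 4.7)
The plan is to derive the stated bound directly from the Floer-theoretic inequality~\eqref{e:hsigmaG} by plugging in the Finsler lower bound on volume entropy that is already contained in the derivation of~\eqref{e:FQk}, and then to take the supremum over all Finsler metrics $F \leq H$.

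First, I would fix an arbitrary Finsler metric $F$ on $Q_k$ with $F \leq H$, which by~\eqref{e:hsigmaG} gives
\begin{equation*}
\widehat h_{\top}^{\HT}(H) \,\geq\, \frac{1}{\sigma(H;F)}\, \widehat h_{\vol}(F).
\end{equation*}
It therefore suffices to show that $\widehat h_{\vol}(F) \geq \sqrt{2(k-1)}$ for every Finsler metric $F$ on $Q_k$. This is exactly what the argument leading to~\eqref{e:FQk} establishes en route, before the final application of Manning's inequality~\eqref{e:ManF}: from the Riemannian metric $g$ whose unit co-disks are $E(\conv(D_q^*(F) \cup -D_q^*(F)))$, which satisfies both $\sqrt{g} \geq F$ and $\vol_F^{\HT}(Q_k) \geq \frac{1}{2\pi}\vol_g^{\HT}(Q_k)$, the monotonicity~\eqref{e:hvolFg} together with Katok's sharp bound $\widehat h_{\vol}(g) \geq 2\sqrt{\pi(k-1)}$ yields
\begin{equation*}
\widehat h_{\vol}(F) \,\geq\, \sqrt{\tfrac{\vol_F^{\HT}(Q_k)}{\vol_g^{\HT}(Q_k)}}\; \widehat h_{\vol}(g) \,\geq\, \frac{1}{\sqrt{2\pi}}\cdot 2\sqrt{\pi(k-1)} \,=\, \sqrt{2(k-1)}.
\end{equation*}

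Combining the two estimates, $\widehat h_{\top}^{\HT}(H) \geq \frac{1}{\sigma(H;F)}\sqrt{2(k-1)}$ holds for every Finsler metric $F \leq H$; taking the supremum over such $F$ replaces $\frac{1}{\sigma(H;F)}$ by $\frac{1}{\sigma(H)}$ and yields~\eqref{e:hHs}. The argument is largely an assembly of results already stated; the only point calling for attention is that~\eqref{e:hsigmaG} produces $\widehat h_{\vol}(F)$ rather than $\widehat h_{\top}^{\HT}(F)$, so one must use the \emph{volume}-entropy form of the Finsler bound rather than the (formally weaker) topological-entropy form~\eqref{e:FQk}. Since Katok's theorem and the Loewner/symmetrization comparisons are purely geometric, this stronger form is already available in the proof of~\eqref{e:FQk}, and no new ingredient is needed.
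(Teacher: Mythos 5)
Your proof is correct and follows essentially the same route the paper intends: combine the Floer-theoretic inequality~\eqref{e:hsigmaG} with Katok's bound via the Loewner/symmetrization comparison, then take the infimum over admissible Finsler metrics $F$ to pass from $\sigma(H;F)$ to $\sigma(H)$. You also correctly flag the small imprecision in the paper's one-line derivation, namely that~\eqref{e:hsigmaG} calls for the volume-entropy bound $\widehat h_{\vol}(F)\geq\sqrt{2(k-1)}$ (established en route to~\eqref{e:FQk}) rather than the formally weaker topological-entropy form stated in~\eqref{e:FQk} itself.
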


The following special case of Theorem~\ref{t:mainintro} shows that 
the lower bound in~\eqref{e:hHs} cannot be made uniform, 
that the answer to Question~\ref{q:FLS} is `no', and that
there is no way to extend Katok's rigidity theorem to Reeb flows.

\begin{theorem} \label{t:collapseS}
For every $\gve >0$ there exists a Reeb flow $\phi_H^t$ on~$S^*(H)$ with 
$\widehat h_{\top}^{\HT} (H) \leq \gve$.
\end{theorem}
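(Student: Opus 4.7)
The strategy is to construct a family $\{H_s\}_{s > 0}$ of 1-homogeneous Hamiltonians on $T^*Q_k$ such that $\widehat h_{\top}^{\HT}(H_s) \to 0$ as $s \to 0$. By Proposition~\ref{p:Floer2} any such family must have $\sigma(H_s) \to \infty$, so the unit co-disk bundles $D^*(H_s)$ must be forced to be highly non-convex; equivalently, one must genuinely leave the Finsler realm. By scale invariance of $\widehat h_{\top}^{\HT}$, it is enough to arrange that $\vol_{H_s}^{\HT}(Q_k)$ stays bounded while $h_{\top}(H_s) \to 0$.

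Fix a reference Finsler Hamiltonian $F$ on $Q_k$ (for example coming from a hyperbolic metric), choose a small trivialising disk $U \subset Q_k$, and construct $H_s$ to agree with $F$ outside $T^*Q_k|_U$. Over $U$ I would deform $D^*(F)$ fiberwise by inserting a long thin \emph{slow plug}: a star-shaped protrusion of length of order $1/s$ and width that tends to zero fast enough that the added fiber area is $o(1)$. The plug should be designed so that its geometry forces orbits of the Reeb flow passing through it to spend time of order $1/s$ traversing a region in which the dynamics is essentially one-dimensional, and hence contributes no exponential orbit separation. The local modification is glued smoothly so that $S^*(H_s)$ remains a smooth star-shaped hypersurface.

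Given such a construction, the two required estimates proceed as follows. The \emph{volume estimate} is elementary: since the deformation is local over $U$ and adds only $o(1)$ area per fiber, $\vol_{H_s}^{\HT}(Q_k) = \vol_F^{\HT}(Q_k) + o(1)$. The \emph{entropy estimate} reduces $h_{\top}(H_s)$ to the entropy of a return map transverse to the plug: each return time is of order $1/s$, and by the one-dimensional nature of the plug dynamics this time delay carries no entropy. A time reparametrisation / slow-plug argument then yields $h_{\top}(H_s) = O(s)$. Combining gives $\widehat h_{\top}^{\HT}(H_s) = O(s) \to 0$, and choosing $s$ small enough produces the claimed $H$.

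The main obstacle is the entropy estimate, and more specifically the explicit design of the plug. One needs to choose the geometry of $D^*_q(H_s)$ so that the Reeb flow is simultaneously \emph{slow} on the plug (so that orbits actually dwell there for time of order $1/s$) and \emph{integrable} there (so that no entropy is generated), while keeping the bounding hypersurface $S^*(H_s)$ smooth and star-shaped around the zero section. The 1-homogeneity of $H$ ties together the size of the fiber and the speed of the Reeb flow in a rigid way: the naive thin-finger construction yields fast, not slow, orbits on the walls of the finger, so the plug must be shaped more cleverly, likely with a non-trivial $q$-dependence in the chart so that orbits are genuinely pushed along the plug rather than crossing it transversely. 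Verifying that the resulting flow admits the required semi-conjugacy to a time-reparametrised model flow is the essential technical content, and is precisely where the construction departs from the setting of classical Katok rigidity.
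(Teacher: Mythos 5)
Your proposal takes the \emph{opposite} route from the paper, and it does not close the gap that you yourself identify. The paper's strategy is to keep $h_{\top}$ \emph{bounded} while driving $\vol_{\alpha}^{\HT}$ to zero; yours is to keep the volume bounded while driving $h_{\top}$ to zero. Because $\widehat h_{\top}^{\HT}$ is scale invariant these are in principle interchangeable, but the paper's version is actually easy to implement: on the mapping torus of an open book for $S^*Q_k$, the form $\alpha_s = d\theta + s\,\lambda_\theta$ has $\alpha_s \wedge d\alpha_s = s\, d\theta \wedge \omega + O(s^2)$ (volume $= O(s)$) while its Reeb field equals $\tfrac{1}{1+O(s)}(\partial_\theta + Y)$, a positive time reparametrisation of a \emph{fixed} vector field, whence $h_{\top}$ is uniformly bounded by Ohno's estimate; near the binding the flow is integrable on invariant tori so the entropy vanishes there. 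Crucially, this construction never touches the fibration structure of $S^*Q_k$, and the resulting contact form is pushed to $(S^*Q_k,\xi_{\can})$ by Giroux's uniqueness theorem; the star fields $D_q^*(H)$ one implicitly produces are never written down.

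The gap in your proposal is precisely the one you flag: the explicit design of the plug. The issue is not a matter of cleverness postponed — it is the reason the paper's authors say ``writing down explicitly such star fields on $T^*Q_k$ that lead to small $\widehat h_{\top}^{\HT}$ seems difficult'' and why they route through open books instead. Concretely: for a $1$-homogeneous $H$ the Reeb field on $S^*(H)$ is the Hamiltonian field $X_H = \partial_p H\, \partial_q - \partial_q H\, \partial_p$. Near the tip of a long thin protrusion in a fiber $D_{q_0}^*(H)$, Euler's identity $p\cdot\partial_p H = 1$ makes $\partial_p H$ small (the base-projection slows, as you want), but for the protrusion to be localized over a small disk $U$ it must die out as $q$ leaves $U$, forcing $\partial_q H$ to be \emph{large} somewhere on the protrusion. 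So the $p$-component of the Reeb field is large and the orbit is expelled from the spike rather than dwelling there for time $1/s$. Your proposal's volume claim (``length $1/s$ and area $o(1)$'') and your entropy claim (``return time $1/s$, no entropy in the plug'') therefore both rest on a geometry you have not exhibited and which the paper's own lower bound (Proposition~\ref{p:Floer2}) only says is \emph{necessary}, not that it is realisable by a thin-finger deformation. Moreover, even granting the plug, your entropy estimate implicitly uses that long return times force small flow entropy; for a suspension flow with a roof function $\tau$ that is large only on a small set, the entropy is governed by the Abramov/pressure equation $P(\Psi,-h\tau)=0$ and does not automatically tend to zero, so the reduction ``return time $1/s$ $\Rightarrow$ $h_{\top} = O(s)$'' also needs justification. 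In short: the approach is legitimately different from the paper's, but the central construction is missing, and that construction is not a routine detail.
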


Proposition \ref{p:Floer2} shows that this entropy collapse cannot happen 
unless at least some of the co-disks $D_q^*(H) = D^*(H) \cap T_q^*Q_k$ are 
very far from convex.
Writing down explicitely such star fields on~$T^*Q_k$ that lead to small $\widehat h_{\top}^{\HT}$ 
seems difficult, however. 
In fact, our proof of Theorem~\ref{t:collapseS} does not use the special fibration structure of~$S^*(H)$,
but uses the existence of open book decompositions valid for all closed 3-manifolds,
see the beginning of \S \ref{ss:proof3} for an outline and \S \ref{s:collapse3} for the proof.

\subsection{Entropy rigidity for Finsler geodesic flows} \label{ss:entrig}

Proceeding as in the previous section, one readily arrives at Theorem~\ref{t:Finslerintro}
for Finsler geodesic flows on closed manifolds~$Q$ of arbitrary dimension~$n$,
\begin{equation} \label{e:FQ}
\widehat h_{\top}^{\HT} (F) \,\geq\, c_n \, h_{\vol} (Q) , 
\quad \mbox{and} \quad 
\widehat h_{\top}^{\HT} (F) \geq  2 \2 c_n \, h_{\vol} (Q) \, \mbox{ if $F$ is symmetric.}
\end{equation}
Here the normalization $\widehat h_{\top}^{\HT} (F) = \left(\vol_F^{\HT}(Q)\right)^{1/n} \, h_{\top}(F)$
is done in terms of the Holmes--Thompson volume
\begin{equation} \label{def:HTF}
\vol_F^{\HT} (Q) \,:=\, \frac{1}{n!\, \omega_n} \int_{D^*(F)} \omega^n ,
\end{equation}
which extends definition~\eqref{e:hatDH}. 
The proof of~\eqref{e:FQ} uses Loewner's outer ellipsoids and the Roger--Shephard volume bounds for symmetrized convex bodies. Similar arguments appear in~\cite{apbt16}, 
where they are used to derive systolic inequalities for Finsler metrics from the 
analogous inequalities for Riemannian metrics.

For Finsler metrics there is another natural volume,
the Busemann--Hausdorff volume. 
For reversible Finsler metrics, this volume is at least
the Holmes--Thompson volume, see~\S \ref{s:ent}.
The second inequality in~\eqref{e:FQ} thus also holds true if
we normalize by the Busemann--Hausdorff volume.
For irreversible Finsler metrics, however, we do not know
whether the first inequality in~\eqref{e:FQ} holds true for 
the Busemann--Hausdorff volume.

\smallskip
For manifolds of dimension $n \geq 3$, it is more difficult to understand 
the volume entropy~$h_{\vol}(Q)$ than for surfaces.
The only sharp result is the following extension of Katok's theorem.

\begin{theorem} \label{t:BCG}
{\rm (Besson--Courtois--Gallot\cite{BCG, BCG1})}
If $Q$ is a closed manifold of dimension at least~$3$ 
that admits a locally symmetric Riemannian metric~$g_0$ of negative curvature, 
then
$$
\widehat h_{\vol}(g) \,\geq\, \widehat h_{\vol} (g_0) 
$$
for every Riemannian metric $g$ on $Q$,
and equality holds if and only if $g$ is also locally symmetric.
In particular, $h_{\vol}(Q) = \widehat h_{\vol} (g_0) >0$.
\end{theorem}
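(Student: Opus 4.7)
The plan is to follow the natural-map (barycenter) method of Besson--Courtois--Gallot. The strategy is to construct a canonical smooth map $F\colon (Q,g)\to (Q,g_0)$ of degree one, homotopic to the identity, and to estimate its Jacobian sharply in terms of the ratio $h_{\vol}(g)/h_{\vol}(g_0)$; the Jacobian estimate integrated against the degree formula then gives the desired inequality, while an analysis of the equality case yields the rigidity statement.

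First, I would pass to universal covers. Write $X = \widetilde Q$ equipped with the locally symmetric lift $\widetilde g_0$; since $g_0$ has negative curvature, $(X,\widetilde g_0)$ is a rank-one symmetric space of noncompact type, with a well-defined Gromov boundary $\partial X$ on which $\pi_1(Q)$ acts by isometries, and with strictly convex Busemann functions $B(\cdot,\theta)$, $\theta \in \partial X$. For the lift $\widetilde g$ of $g$ and any $s > h_{\vol}(g)$, form the finite measures $d\mu_{x,s}(y) = e^{-s\, d_{\widetilde g}(x,y)}\, dv_{\widetilde g}(y)$ on $\widetilde Q$, push them to $\partial X$ using the radial retraction of $(X,\widetilde g_0)$ (the identity $\widetilde Q \to X$ is a quasi-isometry, so extends continuously to $\partial X$), normalize to probability measures, and let $s \downarrow h_{\vol}(g)$ to extract a weak-$*$ limit $\{\nu_x\}_{x\in\widetilde Q}$ of $\pi_1(Q)$-equivariant probability measures on $\partial X$. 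Define $\widetilde F(x) \in X$ as the unique minimizer of the strictly convex functional
\[
\mathcal B_x(z) \,=\, \int_{\partial X} B(z,\theta)\, d\nu_x(\theta) .
\]
By equivariance this descends to a smooth map $F\colon Q \to Q$ homotopic to the identity, so $\deg F = 1$.

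Next, I would establish the sharp pointwise Jacobian bound
\[
\bigl| \mathrm{Jac}_{g,g_0} F(x) \bigr| \,\leq\, \left( \frac{h_{\vol}(g)}{h_{\vol}(g_0)} \right)^n \quad \text{for all $x \in Q$.}
\]
Differentiating the critical-point equation $d\mathcal B_x(\widetilde F(x)) = 0$ in $x$ yields a factorization $H(\widetilde F(x))\, d\widetilde F_x = K_x$, where $H(z) = \int_{\partial X} \operatorname{Hess} B(z,\theta)\, d\nu_x(\theta)$ is a positive symmetric endomorphism of $T_{\widetilde F(x)} X$, and Cauchy--Schwarz together with the definition of $h_{\vol}(g)$ gives a bound of the form $\|K_x v\|^2 \leq h_{\vol}(g)^2\, \langle H(\widetilde F(x)) v, v\rangle$ for every $v$. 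The desired pointwise inequality then reduces to a purely algebraic estimate comparing $\det H$ and $\det(K K^\ast)$ for positive symmetric operators subject to this trace constraint, exploiting the rigid spectral structure of $\operatorname{Hess} B(\cdot,\theta)$ on the rank-one symmetric model. Integrating over $Q$ and using $\deg F = 1$ then yields
\[
\vol(Q, g_0) \,\leq\, \left( \frac{h_{\vol}(g)}{h_{\vol}(g_0)} \right)^n \vol(Q, g) ,
\]
which after taking $n$-th roots is exactly $\widehat h_{\vol}(g) \geq \widehat h_{\vol}(g_0)$.

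The hard part is the sharp algebraic Jacobian estimate: the inequality between $\det H$ and $\det(K K^\ast)$ is not elementary and requires either a case-by-case spectral analysis on each of the four families of rank-one symmetric spaces of noncompact type (real, complex, quaternionic hyperbolic spaces, and the Cayley hyperbolic plane), using the fact that $\operatorname{Hess} B(\cdot,\theta)$ has essentially only two distinct eigenvalues on each of them, or a uniform H\"older-type argument. The equality case is equally delicate: tracing the equality conditions back through each step forces $d\widetilde F_x$ to be a scalar multiple of an isometry with a rigidly constrained spectrum of $\operatorname{Hess} B$, and combined with $\pi_1(Q)$-equivariance this forces $g$ to be locally isometric to a constant rescaling of $g_0$, i.e.\ locally symmetric.
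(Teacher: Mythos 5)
The paper does not give a proof of Theorem~\ref{t:BCG}: it is quoted as an external result with references to Besson--Courtois--Gallot \cite{BCG,BCG1}, and nothing in Section~\ref{s:ent} or elsewhere is devoted to re-proving it. Your outline is a correct high-level summary of exactly the barycenter / natural-map argument of those cited papers: Patterson--Sullivan--type measures $\mu_{x,s}$, pushforward to $\partial X$ via the boundary identification given by the equivariant quasi-isometry, the barycenter map $\widetilde F$, the factorization $H\circ d\widetilde F = K$ coming from differentiating the critical-point equation, the pointwise Jacobian bound $|\mathrm{Jac}\,F|\le \bigl(h_{\vol}(g)/h_{\vol}(g_0)\bigr)^n$, and integration against $\deg F=1$. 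You are also right that the genuinely hard step is the spectral inequality giving the sharp Jacobian bound, which in BCG's treatment uses the rigid eigenvalue structure of $\operatorname{Hess} B(\cdot,\theta)$ on the four families of rank-one symmetric spaces, and that the rigidity statement requires tracing equality through every step to conclude $d\widetilde F_x$ is homothetic, a delicate argument occupying a large part of \cite{BCG}. In short: this is the approach in the references the paper points to, and there is no competing proof in the paper itself for you to diverge from.
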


Note that the space of minimizers up to isometry in Katok's theorem is the $6k-6$ dimensional 
Teichm\"uller space, 
while the minimizers in~Theorem~\ref{t:BCG} are all isometric up to scaling, by Mostow's theorem.  

In the context of Theorem~\ref{t:Finslerintro} we wish to know when $h_{\vol} (Q) >0$.
The main tool for proving $h_{\vol} (Q) >0$ is the {\it simplicial volume}~$\|Q\|$. 
If $Q$ is orientable, 
it is defined as $\inf \sum_i |r_i|$ where the infimum is taken over 
those sums $\sum_i r_i \sigma_i$ that represent the fundamental class $[Q] \in H_n (Q;\R)$ with real coefficients.
If $Q$ is not orientable, pass to the orientation double covering $\widehat Q$ and put $\|Q\| = \frac 12 \|\widehat Q\|$.
Gromov proved in~\cite{Gro82} that
$$
h_{\vol} (Q) \,\geq\, C_n^{-1} \, \|Q\|^{1/n}  
$$
for an explicit dimension constant $C_n$.

There are many more manifolds $Q$ with positive simplicial volume~$\|Q\|$ than those in Theorem~\ref{t:BCG}.
Indeed, $\|Q\| >0$ for all manifolds that admit a Riemannian metric of negative curvature, 
and positivity of the simplicial volume is preserved under taking 
the product with any other closed manifold of positive simplicial volume and under
taking the connected sum with any other closed manifold of the same dimension.
We refer to~\cite{Gro82} and~\cite{Loe} for more examples and information on simplicial volume.

\subsection{Entropy collapse for Reeb flows} \label{ss:collapse-intro}

Reeb flows are flows naturally associated to contact manifolds. 
A contact structure~$\xi$ on a ($2n-1$)-dimensional manifold~$M$ is a maximally non-integrable hyperplane field of the tangent bundle~$TM$.
We assume throughout that $\xi$ is co-orientable, i.e., 
$\xi = \ker \alpha$ for a 1-form~$\alpha$ on~$M$.
In terms of such a form~$\alpha$, called a contact form for~$\xi$, 
the maximal non-integrability means that $\alpha \wedge (d\alpha)^{n-1}$ is a volume form on~$M$.
For any non-vanishing function $f$ on~$M$ the $1$-form $f \alpha$ is also a contact form on~$(M,\xi)$.
Each contact form~$\alpha$ gives rise to the Reeb flow~$\phi_\alpha^t$, which is generated by
the Reeb vector field~$R_\alpha$ implicitly defined by the two conditions
$$
d\alpha (R_\alpha, \cdot ) =0, \qquad \alpha (R_\alpha) = 1.
$$ 

For every closed manifold $Q$ the so-called spherization $(S^*Q,\xi_{\can})$
is a contact manifold whose Reeb flows are exactly the flows~$\phi_H^t$ on~$S^*(H)$
described in \S \ref{ss:circle} in the case of closed surfaces~$Q_k$,
see Appendix~\ref{ss:RSG}.
Every closed 3-manifold admits infinitely many non-isotopic contact structures,
and an odd-dimensional closed manifold~$M$ admits a contact structure
if and only if its stabilized tangent bundle $TM \oplus \R$ admits a complex structure~\cite{BEM15}.

Theorem~\ref{t:MacSch} has been extended to many contact manifolds:
First, for many closed manifolds~$Q$ every Reeb flow on~$(S^*Q,\xi_{\can})$
has positive topological entropy, \cite{MacSch11}.
Second, there are many closed 3-dimensional manifolds~$M$ such that for every contact structure~$\xi$ 
on~$M$ every Reeb flow has positive topological entropy, 
\cite{A1,A3,A2,ACH,Meiwes}. 
For a recent result for {\it non-degenerate}\/ Reeb flows see~\cite{CoDeRe20}.

While in these results the underlying manifolds have rich loop space topology, 
there are also examples where the positivity of topological entropy of all Reeb flows
does not come from the topological complexity of the loop space.
For instance, it is shown in~\cite{AM19} that the standard smooth sphere of dimension $2n-1 \geq 5$
admits a contact structure for which every Reeb flow has positive topological entropy. 

Nevertheless, for none of these contact manifolds there can be a uniform bound for the normalized 
topological entropy:
The contact volume of the co-oriented contact manifold~$(M,\alpha)$ of dimension~$2n-1$ is defined as
$$
\vol_{\alpha}(M) \,:=\, \frac{1}{n!\,\omega_n} \2 \int_M \alpha \wedge (d\alpha)^{n-1} .
$$
Now define the normalized topological entropy of the Reeb flow $\phi_\alpha^t$ by
\begin{equation} \label{e:han}
\widehat h_{\top}(\alpha) \,:=\,  (\vol_{\alpha}(M))^{1/n} \: h_{\top} (\phi_{\alpha}^1) .
\end{equation}
This normalization extends the normalizations~\eqref{e:hatDH} and~\eqref{def:HTF}
to all contact manifolds,
see Appendix~\ref{ss:RSG}.
The following result implies Theorem~\ref{t:mainintro}.

\begin{theorem} \label{t:h=c}
Let $(M,\xi)$ be a closed co-orientable contact manifold of dimension at least three.
Then for every real number $c >0$ there exists 
a contact form $\alpha$ for~$\xi$
such that $\widehat h_{\top}(\alpha) = c$.
\end{theorem}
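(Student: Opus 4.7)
The plan is to produce, for each value $c>0$, a contact form $\alpha$ for $\xi$ whose normalized topological entropy equals $c$, by combining Theorem~\ref{t:mainintro} with an explicit high-entropy construction and an intermediate-value argument. A preliminary remark is that $\widehat h_{\top}$ is scale invariant under $\alpha\mapsto\lambda\alpha$ ($\lambda>0$), since $R_{\lambda\alpha}=\lambda^{-1}R_\alpha$ forces $h_{\top}(\phi_{\lambda\alpha}^1)=\lambda^{-1}h_{\top}(\phi_\alpha^1)$, while $(\vol_{\lambda\alpha}(M))^{1/n}=\lambda(\vol_\alpha(M))^{1/n}$; the two factors cancel. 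So I only need to show that the image of $\widehat h_{\top}$ on the convex cone of contact forms for $\xi$ is all of $(0,\infty)$.

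The next step is to bracket the target value $c$. For the lower bracket, Theorem~\ref{t:mainintro} directly yields a contact form $\alpha_-$ for $\xi$ with $\widehat h_{\top}(\alpha_-)<c$. For the upper bracket, I would construct $\alpha_+$ with $\widehat h_{\top}(\alpha_+)>c$ by a local \emph{horseshoe plug}: inside a Darboux chart $U\subset M$, contactomorphic to an open set of the standard contact $\R^{2n-1}$, modify $\alpha_-$ so that the new Reeb flow has, in $U$, a Smale horseshoe whose topological entropy can be made arbitrarily large. Since $U$ can be taken with arbitrarily small contact volume, one may also prescribe $\vol_{\alpha_+}(M)$ not much larger than $\vol_{\alpha_-}(M)$. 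The required local flexibility follows from the fact that, on the standard contact $\R^{2n-1}$, the 1-jet of a contact form may be prescribed freely subject only to $\alpha\wedge(d\alpha)^{n-1}\neq 0$, so models realizing a suspension of an arbitrarily entropic horseshoe can be inserted.

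Having $\alpha_-$ and $\alpha_+$ with $\ker\alpha_-=\ker\alpha_+=\xi$, write $\alpha_+=g\1\alpha_-$ with $g\in C^\infty(M,\R_{>0})$ and consider the family
\begin{equation*}
\alpha_t \,:=\, \bigl((1-t)+t\2 g\bigr)\alpha_-, \qquad t\in[0,1],
\end{equation*}
each of which is a contact form for $\xi$. By construction $\widehat h_{\top}(\alpha_0)<c<\widehat h_{\top}(\alpha_1)$. I would then invoke a continuity property of $t\mapsto\widehat h_{\top}(\alpha_t)$ sufficient to produce some $t_\ast\in(0,1)$ with $\widehat h_{\top}(\alpha_{t_\ast})=c$. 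The volume part of the normalization depends smoothly on $t$, so the issue is the continuity of $h_{\top}(\phi^1_{\alpha_t})$.

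The main obstacle is precisely this last continuity step: topological entropy for $C^\infty$ families of smooth flows is upper semicontinuous by Yomdin's theorem, but lower semicontinuity — needed for the intermediate value theorem — fails in general. To overcome this, I would arrange the horseshoe plug to be \emph{structurally stable}, so that it persists through an open neighbourhood of $\alpha_+$ in the $C^1$-topology, and refine the interpolation so that the family $\alpha_t$ reduces, near each $t$, to a smooth one-parameter perturbation of a Reeb flow with a topologically transitive hyperbolic invariant set whose topological entropy varies continuously in the parameter (as in Katok's theory of basic sets for smooth flows). Combined with upper semicontinuity, this makes $t\mapsto\widehat h_{\top}(\alpha_t)$ continuous and allows the intermediate value theorem to deliver the required $t_\ast$.
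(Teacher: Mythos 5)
Your bracketing idea — produce $\alpha_-$ with $\widehat h_{\top}(\alpha_-)<c$ via Theorem~\ref{t:mainintro} and $\alpha_+$ with $\widehat h_{\top}(\alpha_+)>c$ via a horseshoe plug in a small Darboux chart, then interpolate — is a natural plan, and the high-entropy plug parallels the informal sketch at the start of Section~\ref{s:spectrum}. But the proposal has a genuine gap at the step on which it hinges: the continuity of $t\mapsto\widehat h_{\top}(\alpha_t)$ along the linear interpolation $\alpha_t=((1-t)+t\1g)\alpha_-$. Structural stability of the horseshoe gives only a persistent \emph{lower bound} for the entropy on a $C^1$-neighbourhood of $\alpha_+$; it says nothing about what happens in the bulk of the family. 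Upper semicontinuity (Newhouse/Yomdin) together with a lower bound near one endpoint does not prevent the function from jumping \emph{over} the value $c$ without attaining it: a drop in entropy as $t$ decreases is exactly what upper semicontinuity permits. The sentence proposing to ``refine the interpolation so that $\alpha_t$ reduces, near each $t$, to a perturbation of a flow with a hyperbolic set whose entropy varies continuously'' asserts the needed continuity rather than establishing it — you would have to control the entropy of the whole flow, not just of a marked hyperbolic piece, and for a generic one-parameter family of Reeb flows there is no mechanism to rule out discontinuities outside the plug.

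The paper sidesteps this problem by choosing a very different one-parameter family, on which the entropy is an \emph{explicit} function of the parameter, so no continuity theorem for topological entropy is ever invoked. Concretely, in Proposition~\ref{p:spec} the starting form is $\tau$ from the proof of Theorem~\ref{t:mainn}, and the family is $\alpha_\delta := (F\circ\rho)\1\tau$, where $F$ is a conformal factor that equals $\delta$ outside a distinguished full torus $\ct_r$ and depends only on the radial coordinates inside $\ct_r$. The crucial structural input, absent from a generic interpolation, is that the construction on $\OB(F,\psi)$ makes the Reeb flow on $\ct_r$ integrable — foliated by invariant tori on which the flow is Kronecker (Lemma~\ref{le:F123}) — hence of \emph{exactly zero} topological entropy for \emph{all} admissible conformal factors $F$; meanwhile on $\ct_r^c$ the form is literally $\delta\1\alpha_{s,\gve}$, so the entropy there is literally $\delta^{-1}h_{\top}(\alpha_{s,\gve})$. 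Thus $\widehat h_{\top}(\alpha_\delta)^{n+1}=\bigl(\underline v+\delta^{-(n+1)}\bigr)\,h_{\top}(\alpha_{s,\gve})^{n+1}$ is an explicit continuous function of $\delta$ whose range covers $c$; no intermediate value theorem for an opaque entropy function is required. (There is also a separate lemma ensuring $h_{\top}(\alpha_{s,\gve})>0$, needed because otherwise the formula degenerates.) If you want to rescue your approach, you would need to impose structure on $\alpha_t$ mirroring this — for instance, ensuring that outside the plug the Reeb flow of $\alpha_t$ contributes zero entropy for every $t$ — at which point you would essentially be reproducing the paper's construction rather than running a soft intermediate-value argument.
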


We shall in fact prove the flexibility expressed in Theorem~\ref{t:h=c} for a larger 
growth rate:
Given a $C^1$-diffeomorphism $\phi$ of a compact manifold~$M$,
we define the two real numbers
\begin{eqnarray*}
\Gamma_+(\phi) &:=& \lim_{n \to +\infty} \frac 1n \log \| d \phi^n \|_{\infty} \,, \\
\Gamma(\phi)   &:=& \max \left\{ \Gamma_+(\phi), \Gamma_+(\phi^{-1}) \right\}.
\end{eqnarray*}
Here $\| \cdot \|_{\infty}$ denotes the supremum norm induced by a Riemannian metric 
on~$M$, but the above limit, whose existence follows from the subadditivity of the sequence $\log \|d\phi^n\|_{\infty}$, is clearly independent of the choice of the metric. 

The quantity $\Gamma_+$ was used by Yomdin~\cite{Yom87}
to measure the difference between topological entropy and volume growth, 
and the study of the growth type of the sequence $\| d \phi^n \|_\infty$
for various classes of diffeomorphisms was proposed in~\cite[\S 7.10]{DG90}.
The more symmetric invariant~$\Gamma$ and its polynomial version were investigated, for instance, 
in~\cite{Po02, PoSo04}.
For Hamiltonian flows and Reeb flows, where uniform measurements (like the Hofer metric) 
turned out to capture symplectic rigidity, 
it is particularly natural to look at these two growth rates.

The norm growths $\Gamma_+$ and $\Gamma$ are related to the topological entropy by
\begin{equation} \label{e:hRR}
h_{\top} (\phi) \,\leq\, (\dim M) \: \Gamma_+(\phi) \,\leq\, (\dim M) \: \Gamma (\phi) ,
\end{equation}
see \cite[Corollary 3.2.10]{HK95} for the first inequality.
The numbers $\Gamma_+ (\phi)$ and $\Gamma(\phi)$ are upper bounds for several other invariants of~$\phi$,
and hence the collapsibility of~$\Gamma$ for Reeb flows also implies the collapsibility of these other invariants.
For instance, $\Gamma_+ (\phi)$ is not less than the largest Lyapunov exponent 
$\chi_{\max} (p)$ at every point $p \in M$.
With 
$$
\Sigma (p) = \sum_{\chi_i^+ (p)>0} k_i^+(p) \, \chi_i^+ (p)
$$
the sum of the positive Lyapunov exponents $\chi_i^+(p)$ at~$p$ counted with their multiplicities~$k_i^+(p)$, we then also have 
$\Sigma (p) \leq (\dim M) \, \Gamma_+ (\phi)$.
Together with the Margulis--Ruelle inequality  
(see \cite[Theorem S.2.13]{HK95}) 
we obtain that the metric entropy~$h_{\mu}(\phi)$ 
with respect to any invariant Borel probability measure~$\mu$ has the upper bound
$$
h_\mu (\phi) \,\leq\, \int_M \Sigma (p)\, d\mu (p) \,\leq\, (\dim M) \: \Gamma_+ (\phi).
$$
Applying the variational principle for the topological entropy, we obtain again~\eqref{e:hRR}.

From now on we focus on~$\Gamma$.
For a flow $\phi^t$ we set $\Gamma(\phi) = \Gamma(\phi^1)$, and for a Reeb flow $\phi_\alpha^t$ we set
$\Gamma(\alpha) = \Gamma (\phi_\alpha)$.
For $c>0$ we have $\phi_{c \1 \alpha}^t = \phi_\alpha^{t/c}$ and hence $\Gamma(c \1 \alpha) = \frac 1c \Gamma(\alpha)$.
Like for the topological entropy, the invariant
$$
\widehat \Gamma (\alpha) \,=\, \vol_{\alpha} (M)^{1/n} \: \Gamma(\alpha) ,
$$
where $\dim M = 2n-1$,
is therefore invariant under scaling.
In view of~\eqref{e:hRR}, the following result improves Theorem~\ref{t:mainintro}.

\begin{theorem}\label{t:Gamma=c}
Let $(M,\xi)$ be a closed co-orientable contact manifold of dimension at least three. 
Then for every real number $c>0$ there exists 
a contact form $\alpha$ for~$\xi$ such that $\widehat \Gamma(\alpha) = c$.
\end{theorem}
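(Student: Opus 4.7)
My plan is to realise every $c>0$ as $\widehat\Gamma(\alpha)$ for some contact form $\alpha$ for $\xi$, by combining a $\Gamma$-strengthening of the entropy collapse of Theorem~\ref{t:mainintro} with a tunable insertion of local hyperbolic Reeb dynamics in a Darboux ball.

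\smallskip
\noindent\emph{Step~1 ($\Gamma$-collapse).} I would first upgrade the proof of Theorem~\ref{t:mainintro} from $h_{\top}$ to the stronger invariant $\Gamma$. The construction (via open-book decompositions in dimension three, and an analogue in higher dimensions) produces, for every $\delta>0$, a contact form $\alpha_\delta$ for $\xi$ of unit volume whose Reeb flow is, away from a thin neighbourhood of the binding, a very slow suspension of a near-identity page monodromy, together with a standard linear model near the binding. A quantitative $C^1$ estimate of these two ingredients should yield $\|d\phi^n_{\alpha_\delta}\|_\infty \le C\,e^{n\delta}$ uniformly in~$n$, so that $\Gamma(\alpha_\delta)\le\delta$ and hence $\widehat\Gamma(\alpha_\delta)\le\delta$. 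This is the main technical obstacle: in contrast to $h_{\top}$, the derivative norm picks up tangential stretching that cannot be reabsorbed by soft orbit-level perturbations, so the page monodromy and the binding model must be made quantitatively $C^1$-close to the identity as $\delta\to 0$.

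\smallskip
\noindent\emph{Step~2 (Tunable hyperbolic insertion).} Pick a Darboux ball $B\subset M$ in which $\alpha_\delta$ has standard shape, and an even smaller concentric ball $B_0\subset B$. Let $\mu$ be a contact form supported in $B_0$, compatible with $\xi|_{B_0}$, whose Reeb flow carries a hyperbolic closed orbit $\gamma\subset B_0$ with local Lyapunov exponent $\Lambda>0$ and with $V^{hyp}:=\vol_\mu(B_0)$. For $\lambda>0$, surgically replace $\alpha_\delta|_{B_0}$ by $\lambda\mu$, interpolating smoothly on the collar $B\setminus B_0$. The resulting contact form $\alpha_{\delta,\lambda}$ for $\xi$ equals $\alpha_\delta$ outside $B$ and carries $\gamma$ as a hyperbolic Reeb orbit of Lyapunov exponent $\Lambda/\lambda$, so
$$
\Gamma(\alpha_{\delta,\lambda})\ge \frac{\Lambda}{\lambda} \quad\text{and}\quad \vol_{\alpha_{\delta,\lambda}}(M) \,=\, V^{\mathrm{out}}+\lambda^n V^{hyp},
$$
where $V^{\mathrm{out}}$ is bounded above independently of $\lambda$.

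\smallskip
\noindent\emph{Step~3 (Intermediate value).} For $\lambda$ small enough that $\Lambda/\lambda\ge\delta$, the insertion dominates the growth rate and $\Gamma(\alpha_{\delta,\lambda})=\Lambda/\lambda$, so
$$
g(\lambda) \,:=\, \widehat\Gamma(\alpha_{\delta,\lambda}) \,=\, \bigl(V^{\mathrm{out}}+\lambda^n V^{hyp}\bigr)^{1/n}\,\frac{\Lambda}{\lambda}\, .
$$
This is a smooth function of $\lambda$, tending to $+\infty$ as $\lambda\to 0^+$ and approaching $\widehat\Gamma(\mu)=(V^{hyp})^{1/n}\Lambda$ as $\lambda\to\infty$. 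Given any $c>0$, I apply Step~1 \emph{inside $B_0$} to choose $\mu$ with $\widehat\Gamma(\mu)<c$, slightly perturbed if necessary to produce a hyperbolic orbit (upper semicontinuity of $\widehat\Gamma$, which follows from $\Gamma_+(\phi)=\inf_n \tfrac1n\log\|d\phi^n\|_\infty$ being an infimum of $C^1$-continuous functions of $\phi$, keeps $\widehat\Gamma(\mu)<c$ under a small perturbation). Then $g$ is continuous and takes values in an interval containing $c$, so by the intermediate value theorem there exists $\lambda^*$ with $\widehat\Gamma(\alpha_{\delta,\lambda^*})=c$.
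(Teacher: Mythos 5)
Your overall strategy — realize all values of $\widehat\Gamma$ by combining the $\Gamma$-collapse of Step~1 with a tunable local insertion and the intermediate value theorem — is the same as the paper's in Proposition~\ref{p:spec}. But Step~2 contains a serious gap. The identity $\Gamma(\alpha_{\delta,\lambda})=\Lambda/\lambda$ implicitly uses a decomposition of $\Gamma$ over $B_0$ and its complement, and this decomposition (Proposition~\ref{p:Gammaele}~(3)) requires the pieces to be \emph{flow-invariant}. A Darboux ball $B_0$ is not flow-invariant: the model Reeb field $\partial_z$ leaves any ball in $\R^{2n+1}$, and after your surgery orbits still cross $\partial B_0$. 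So the norm growth of the composite flow is not controlled by the local contributions inside and outside the ball — an orbit crossing between the region where $\|R\|\sim 1$ and the region where $\|R\|\sim 1/\lambda$ can pick up uncontrolled derivative stretching, and in particular $\Gamma(\alpha_{\delta,\lambda})$ can exceed $\max\{\Lambda/\lambda,\delta\}$. The IVT argument in Step~3 then has nothing to run on. The paper sidesteps this by scaling with a function $F$ that is constant outside a thickened transverse knot $\ct_r=B^{2n}(r)\times S^1$ and depends only on the radial variables $\ovr$ on $\ct_r$; this makes $\ct_r$ (and its complement) a genuine flow-invariant compact submanifold on which the Reeb flow is integrable (Kronecker on leaves $\T_{\ovr}$, Lemma~\ref{le:F123}), so that the decomposition lemma applies cleanly and $\Gamma$ is supported on the complement. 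If you replace your ball $B_0$ by such a flow-invariant full torus, the scheme comes into line with the paper's.

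Two further points worth noting. First, the instruction ``apply Step~1 inside $B_0$'' is not well-posed: $B_0$ is an open ball, not a closed contact manifold, so neither the open-book construction nor the normalized invariant $\widehat\Gamma$ is defined there, and the Reeb flow of a contact form on a ball is generally not complete. The source of hyperbolicity in the paper is instead a small perturbation of the ambient contact form: in dimension three, Birkhoff normal form plus KAM near one of the many elliptic orbits present in the collapsing construction produces a transverse homoclinic orbit; in higher dimensions Newhouse's theorem is invoked, and the perturbed form is brought back to the contact structure $\xi$ via Gray stability. Second, your description of Step~1 as a ``slow suspension of a near-identity page monodromy'' misdescribes the mechanism: the paper keeps the monodromy $\psi$ fixed and bounds $\Gamma(\alpha_s)$ uniformly (Lemma~\ref{l:return}), while the \emph{volume} of $\alpha_s$ collapses as $s\to 0$; the smallness of $\widehat\Gamma$ is a consequence of the volume normalization, not of a $C^1$-collapse of the unnormalized flow. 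This is a description issue and does not affect the validity of the conclusion, but it matters if you want to make Step~1 quantitative.
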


We shall prove Theorems~\ref{t:h=c} and \ref{t:Gamma=c}
along the following lines.
The main step is to show that 
for every $\gve >0$ there exists a contact form $\alpha_{\gve}$ for~$\xi$ 
such that $\widehat \Gamma(\alpha_{\gve}) \leq \gve$.
We do this with the help of an open book decomposition of~$M$ and an inductive construction,
in which the induction step $\dim 2n-1 \leadsto \dim 2n+1$ is carried out by 
applying the induction hypothesis to the binding of the open book decomposition of~$M$.
We can start the induction in dimension~$1$ at the circle, 
for which $\widehat \Gamma (d\theta) =0$. 
We nevertheless present the $3$-dimensional case separately in \S \ref{s:collapse3}
because we believe that after understanding the geometric ideas in this particular situation 
it is easier 
to follow the general argument.
The induction step is done in \S \ref{s:collapsegeq3}.
It uses results of Giroux on the correspondence between contact structures
and supporting open books, that we recollect in \S \ref{s:Giroux}.

Given contact forms $\alpha_{\gve}$ as above,
Theorems~\ref{t:h=c} and~\ref{t:Gamma=c}
follow from~\eqref{e:hRR} and from a simple modification of~$\alpha_{\gve}$ 
that increases $\widehat h_{\top}$ and $\widehat \Gamma$, see~\S \ref{s:spectrum}.

\subsection{Collapsing the growth rate of symplectic invariants} \label{ss:collapseSH}
In the works \cite{A2, AM19, Meiwes} it is shown that the exponential growth rate of certain symplectic topological invariants provides a lower bound for the topological entropy of Reeb flows. 
These invariants are linearised Legendrian contact homology~\cite{A2}, 
wrapped Floer homology~\cite{AM19}, and Rabinowitz--Floer homology~\cite{Meiwes}. 
Combining these results with Theorem~\ref{t:mainintro} we obtain that 
the growth rate of these invariants can be made arbitrarily small.
Details are given in \S \ref{s:collapseSH}.

\subsection{Relations to systolic inequalities} \label{ss:systolic}

Consider a closed co-orientable contact manifold $(M,\xi)$ of dimension~$2n-1$.
Given a contact form~$\alpha$ for~$\xi$ that has at least one periodic Reeb orbit, 
take the smallest period $T_{\min}(\alpha)$. The so-called systolic ratio 
$$
\rho_{\mathrm{sys}} (\alpha) \,=\, \vol_{\ga} (M)^{- 1/n} \: T_{\min} (\alpha)
$$ 
is then invariant under scalings of $\alpha$.

While for spherizations $S^*Q$ of many closed manifolds~$Q$
there are famous uniform upper bounds on the systolic ratios of Riemannian Reeb flows, 
in the full class of Reeb flows one has the following flexibility result.

\begin{theorem} \label{t:sys}
For any closed co-orientable contact manifold $(M,\xi)$ and every positive number~$c$ there exists a contact form $\alpha$ such that $\rho_{\mathrm{sys}} (\alpha) > c$.
\end{theorem}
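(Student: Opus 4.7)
The plan is to mirror the open-book construction used for Theorems~\ref{t:h=c} and~\ref{t:Gamma=c}, but tune the geometric parameters so as to push the minimal period up rather than the topological entropy down. Since $\rho_{\mathrm{sys}}(c\1\alpha)=\rho_{\mathrm{sys}}(\alpha)$, the task reduces to producing, for any prescribed large $T$, a contact form $\alpha$ for $\xi$ with $\vol_\alpha(M)$ of order~$1$ and $T_{\min}(\alpha)\geq T$, so that $\rho_{\mathrm{sys}}(\alpha)\geq c T$ for a constant depending only on $(M,\xi)$. By the Giroux correspondence (\S\ref{s:Giroux}) one may work on an open book decomposition supporting $(M,\xi)$, with binding~$B$ and page~$\Sigma$.

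First I would handle dimension three along the lines of \S\ref{s:collapse3}. On the mapping-torus region $\Sigma\times[0,1]/\!\sim$ one prescribes a contact form of the shape $\alpha = f(p)\,dt + \beta$, where $\beta$ is a Liouville primitive on the page and $f$ is a large positive function. The Reeb vector field of such an $\alpha$ is essentially proportional to $\partial_t$, so any closed orbit must traverse the $t$-circle, and doing so takes time comparable to $f$; choosing $f\sim T$ on a region of bounded contact volume forces $T_{\min}\geq T$ while keeping $\vol_\alpha(M)$ bounded. In a tubular neighborhood of the binding one plugs in a standard contact model on $B\times\mathbb{D}^2$ whose Reeb flow has no closed orbit of period smaller than~$T$, for example a rotation with irrationally related frequencies truncated at the prescribed time scale.

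The passage to higher dimension then proceeds inductively, following the very same step $\dim 2n-1\leadsto\dim 2n+1$ as in the proof of Theorem~\ref{t:Gamma=c}. Given by induction a contact form $\beta_T$ on the binding $B$ with $T_{\min}(\beta_T)\geq T$ and $\vol_{\beta_T}(B)$ bounded, I would extend it over a neighborhood $B\times\mathbb{D}^2$ by an ansatz $\alpha = h_1(r)\,\beta_T + h_2(r)\,d\vartheta$ and match it to the mapping-torus model above. Any closed Reeb orbit of~$\alpha$ then decomposes into segments in the binding region and segments in the mapping-torus region; the induction hypothesis bounds the period of the former from below, and the slow-return construction handles the latter, yielding an overall lower bound of order~$T$ on the total period, while the contact volume of $\alpha$ is dominated by the sum of the contributions of the two models and remains bounded.

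The main obstacle I anticipate is the interface between the binding neighborhood and the mapping-torus region: one must prevent closed orbits that spend only short times in each piece from evading both lower bounds. This is the same delicate point as in the entropy-collapse argument, and should be resolved by choosing the interpolating functions $h_1, h_2$ so that in the transition zone the Reeb vector field is either almost tangent to the pages (forcing long travel within~$\Sigma$ before escape) or of small magnitude (directly inflating the time parameter along the orbit), while simultaneously keeping $\vol_\alpha(M)$ uniformly controlled so that $T_{\min}(\alpha)\1\vol_\alpha(M)^{-1/n}\to\infty$ as $T\to\infty$.
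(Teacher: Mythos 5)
The paper does not in fact prove Theorem~\ref{t:sys}: it cites it, attributing the $3$-dimensional case to~\cite{ABHS,ABHS1} (a plug construction inside open books) and the case $\dim \geq 5$ to~\cite{Sag18,Sag20}. More importantly, the paper says explicitly, in~\S\ref{ss:systolic}, that \emph{the open-book construction of Section~\ref{s:collapse3} does not yield a proof of Theorem~\ref{t:sys} in dimension three}. Your proposal mirrors exactly that construction for the base case, so there is a genuine gap at the start of the induction.

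Here is why the dimension-three step fails. In the construction of Section~\ref{s:collapse3} the contact form is $\alpha_s = d\theta + s\2\lambda_\theta$ on the mapping torus and $\sigma_s = g(\ovr)\2 d\otheta + s f(\ovr)\2 d\ox$ on the solid torus, so the contact condition forces the page-direction part to carry a factor~$s$ that cannot be decoupled from the binding period: the binding orbit has period of order~$s$ while the total volume is also of order~$s$, hence $\rho_{\mathrm{sys}} \sim s/\sqrt{s} \to 0$. On the other hand the mapping-torus return time stays in $[\pi,4\pi]$ and, since the monodromy is the identity near the boundary, the return map $\Upsilon$ has a whole open set of fixed points, producing closed Reeb orbits of period bounded \emph{above} by a constant; so one cannot raise $T_{\min}$ by inflating~$\theta$-speed either, because every attempt (e.g.\ $T\2 d\theta + s\2\lambda_\theta$) inflates the volume proportionally faster. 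Your ansatz $\alpha = f(p)\2 dt + \beta$ with $f$ large has exactly the same problem: $\vol_\alpha$ grows like $\int f$, so the systolic ratio stays bounded or decreases. And your remark about plugging a ``rotation with irrationally related frequencies'' near the binding does not meet the difficulty: a contact form adapted to the open book necessarily has the binding as a closed Reeb orbit of period $\int_K\alpha$, and as $r$ varies the invariant tori in the neighborhood sweep through all rotation numbers, so resonances (and hence short closed orbits) are unavoidable. The plug of \cite{ABHS1} is an essentially different device, a flow-box modification of the return map on the page that delays all orbit segments without creating new short ones; it is not simply an irrational rotation truncated in time.

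Once the dimension-three case is granted (from \cite{ABHS1}), your inductive step is essentially the strategy of \cite{Sag20}, and that part of the proposal is sound in spirit: one uses the induction hypothesis to put a form of large systolic ratio on the binding $K$, then the estimates~\ref{f1}--\ref{g2}, \eqref{le:g'h}, and the explicit Reeb flow~\eqref{reebflownearK} propagate the lower bound on periods through the solid-torus region. But the induction cannot be started at dimension~$1$: on $S^1$ every contact form has systolic ratio equal to a fixed constant, and, as above, the jump from dimension~$1$ to dimension~$3$ via Section~\ref{s:collapse3} makes $\rho_{\mathrm{sys}}$ tend to zero, not infinity. So the missing idea is precisely the three-dimensional plug.
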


This result was shown for the tight 3-sphere in~\cite{ABHS}
and for all contact 3-manifolds in~\cite{ABHS1}
by a plug construction in open book decompositions.
The idea in this paper to use open book decompositions for proving entropy collapse
came from these works. 
Theorem~\ref{t:sys} in dimension $\geq 5$ was proved in~\cite{Sag18}.
That proof was later on much simplified~\cite{Sag20} by using our inductive construction 
in~\S \ref{s:collapsegeq3}.
Interestingly, our construction in dimension~3 does not yield a proof of  
Theorem~\ref{t:sys}. 
This suggests that at least in the smallest interesting dimension,
one has more flexibility to collapse
the topological entropy of Reeb flows than to increase their systolic ratio.


\subsection{Minimal entropy problems for Finsler and Reeb flows} 
\label{ss:entprob}

Given a class $\cc$ of maps on a compact manifold~$M$,
it is interesting to understand which maps in class~$\cc$
minimize the (normalized) topological entropy.
Since topological entropy is a measure for the complexity, these maps
can then be considered as the simplest, or the best, maps on~$M$ in class~$\cc$.

For the class of Riemannian geodesic flows on the spherization $SQ$ of 
a compact manifold~$Q$,
the minimal entropy problem consists of three parts.

\smallskip
\begin{itemize}
\item[(P1)] 
Compute the minimal entropy
\begin{equation*} 
h_{\top}(Q, \cg) \,:=\, 
   \inf \left\{ \widehat h_{\top} ( \phi_g) \mid \text{$g$ a Riemannian metric on $Q$} \right\} .
\end{equation*}

\item[(P2)] 
Decide whether the infimum is attained or not.

\smallskip
\item[(P3)] 
If the infimum is attained, describe the minimizers~$g$.
\end{itemize}

\smallskip
For manifolds admitting a locally symmetric Riemannian metric of negative curvature, 
Theorems~\ref{t:katok} and~\ref{t:BCG} completely solve the minimal entropy problem.
Among the many further interesting works on the minimal entropy problem are~\cite{KKW, PP}.

The minimal entropy problem can also be formulated  
for the larger classes of Finsler and Reeb flows.
Define three more numbers 
$$
h_{\top}^{\HT} (Q,\cR) \,\leq\,  h_{\top}^{\HT} (Q,\cf) \,\leq\, h_{\top}^{\HT} (Q, \cf_{\rev}) 
\,\leq\, h_{\top} (Q,\cg) 
$$
by taking the infimum in the definition of $h_{\top} (Q,\cg)$ 
over all contact forms on $(S^*Q,\xi_{\can})$  for $h_{\top}^{\HT} (Q,\cR)$,
over all Finsler metrics for $h_{\top}^{\HT} (Q,\cf)$,
and over all reversible Finsler metrics for $h_{\top}^{\HT} (Q,\cf_{\rev})$, 
respectively,
where as in~\eqref{def:HTF} and ~\eqref{e:han} we normalize by the Holmes--Thompson volume.

Theorem~\ref{t:mainintro} shows that $h_{\top}^{\HT} (Q,\cR) =0$ 
for all compact manifolds~$Q$.
This settles~(P1) for the class~$\cR$.
Furthermore, 
for many manifolds, like those with fundamental group of exponential growth, the answer
to~(P2) is `no' by the general version of Theorem~\ref{t:MacSch} from~\cite{MacSch11}. 

We now turn to the invariants $h_{\top}^{\HT} (Q,\cf)$ and $h_{\top}^{\HT} (Q,\cf_{\rev})$.
By Theorems~\ref{t:Finslerintro} and \ref{t:manning} we have
$$
\begin{array}{rclcl}
  c_n\, h_{\vol} (Q) &\leq& h_{\top}^{\HT} (Q,\cf) &\leq&  h_{\top} (Q, \cg) ,   \\ [0.8em]
2 c_n\, h_{\vol} (Q) &\leq& h_{\top}^{\HT} (Q,\cf_{\rev}) &\leq&  h_{\top} (Q, \cg) .
\end{array}
$$
For manifolds admitting a locally symmetric Riemannian metric of negative curvature
(for which $h_{\top} (Q, \cg) = h_{\vol}(Q)$)
nothing more seems to be known about the values of $h_{\top}^{\HT} (Q,\cf)$ and
$h_{\top}^{\HT} (Q,\cf_{\rev})$,
so already (P1) in the entropy problem is wide open for the classes~$\cf$ and~$\cf_{\rev}$. 

Addressing (P3) we note
that in the Finsler setting one cannot expect metrics of minimal normalized topological entropy to be unique, or even to be characterised in terms of curvature-like invariants. Indeed, any exact symplectomorphism of~$T^*Q$ that is $C^2$-close to the identity maps the unit cotangent sphere bundle~$S^*(F)$ of the Finsler metric~$F$ 
to the unit cotangent sphere bundle~$S^*(F')$ of some Finsler metric~$F'$ whose geodesic flow is conjugated to the one of~$F$ by a smooth time-preserving conjugacy. 
In particular, the new Finsler metric~$F'$ has the same normalized topological entropy 
as~$F$, but need not be isometric to it. 
See Appendix~\ref{app:Finslerisotop} for a discussion of this.

\medskip \noindent
{\bf Higher rank.}
More can be said in higher rank. 
The following result is proved in~\S \ref{ss:ver} using Verovic's work~\cite{Ver99}.

\begin{prop} \label{prop:ver}
Let $(Q,g)$ be a compact locally symmetric space of non-compact type and
of rank~$\geq 2$.
Then there exists a constant $c<1$ such that
\begin{equation} \label{e:Qcf}
h_{\top}^{\HT} (Q,\cf_{\rev}) \,\leq\, c\, \widehat h_{\vol} (g) . 
\end{equation} 
\end{prop}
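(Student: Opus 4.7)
The plan is to exhibit a single reversible Finsler metric~$F$ on~$Q$ satisfying $\widehat h_{\top}^{\HT}(F) < \widehat h_{\vol}(g)$, which immediately yields the conclusion with $c = \widehat h_{\top}^{\HT}(F)/\widehat h_{\vol}(g) < 1$. The metric~$F$ will be the one introduced by Verovic~\cite{Ver99}, whose construction uses the rank~$\geq 2$ hypothesis in an essential way.

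First I would lift to the universal cover $\widetilde Q = G/K$, a globally symmetric space of non-compact type, and fix a Cartan decomposition of~$\mathfrak g$ with a maximal abelian subspace $\mathfrak a \subset \mathfrak p$ of dimension equal to the rank. Following Verovic I would pick a $K$-invariant, centrally symmetric convex body $C \subset \mathfrak p$ that is strictly contained in the Riemannian unit ball and differs from an ellipsoid in a controlled way along the Weyl chambers. The rank~$\geq 2$ assumption enters precisely here: in rank one, every $K$-invariant centrally symmetric convex body in~$\mathfrak p$ is already a round ball, so no such~$C$ exists. Propagating~$C$ by the $G$-action produces a $G$-invariant reversible Finsler norm on~$\widetilde Q$ which descends to a reversible Finsler metric~$F$ on~$Q$, since $g$ is locally symmetric.

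Second I would invoke Verovic's volume--entropy estimate, which in our normalization reads
$$
h_{\vol}(F) \cdot \vol_F^{\HT}(Q)^{1/n} \,<\, h_{\vol}(g) \cdot \vol_g(Q)^{1/n}.
$$
Its mechanism is an infinitesimal comparison on~$\mathfrak a$: contracting along the long root directions decreases the exponential growth rate of balls in~$\widetilde Q$ more strongly than it decreases the Holmes--Thompson volume, which by its definition as the normalized symplectic volume of~$D^*(F)$ reacts only polynomially to such shrinkings. To upgrade this to the topological entropy, I would exploit the homogeneity built into the construction: the $F$-geodesics on~$\widetilde Q$ are, as unparametrized curves, precisely the orbits of one-parameter subgroups in $G$, hence coincide with the $g$-geodesics. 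In particular $F$ has no conjugate points, and the Finsler extension of Manning's equality yields $h_{\top}(F) = h_{\vol}(F)$. Combining the two displays concludes the proof.

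The main obstacle is this last step, the identification $h_{\top}(F) = h_{\vol}(F)$. Manning's equality for non-positively curved \emph{Riemannian} manifolds is classical, but its Finsler analogue is more delicate, requiring either a properly formulated notion of non-positive flag curvature for~$F$, or, alternatively, a direct application of Abramov's time-change formula to the conjugacy relating~$\phi_F^t$ to a reparametrization of~$\phi_g^t$ on the respective unit tangent bundles. Both routes rely crucially on the $G$-equivariance built into Verovic's construction; without this homogeneity, the dynamics of~$\phi_F^t$ could be far wilder than the Riemannian symmetric flow, and the passage from volume entropy to topological entropy would no longer be automatic.
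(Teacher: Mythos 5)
Your proposal follows essentially the same route as the paper: pass to $\widetilde Q = G/K$, build a $G$-invariant reversible Finsler metric from a $W_{\frak a}$-invariant non-ellipsoidal convex body in $\frak a$ (possible precisely because the rank is at least two), invoke Verovic's strict volume-entropy inequality, and then close the gap between volume entropy and topological entropy by the Finsler version of Manning's equality. Your first suggested resolution for the last step — non-positive flag curvature — is exactly what the paper uses, via Deng--Hou's result that $G$-invariant reversible Finsler metrics on symmetric spaces of non-compact type have negative flag curvature, combined with Egloff's extension of Manning's equality to such metrics. Two details you gloss over: Verovic's extremal metric $F_0$ is \emph{not} smooth (the optimal body $C_0$ has corners along the Weyl walls), so one must approximate it by a smooth $W_{\frak a}$-invariant body and propagate the strict inequality by continuity; and Verovic's estimate is naturally stated for the Busemann--Hausdorff volume, so passing to Holmes--Thompson requires the Blaschke--Santal\'o inequality as in~\eqref{reversible}. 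Your second proposed route via Abramov's formula is the one to be wary of: you would first have to establish that $\phi^t_F$ is a time-change of the Anosov flow $\phi^t_g$ under the radial identification of unit sphere bundles, and even then Abramov's theorem only controls metric entropies of corresponding invariant measures, so upgrading to an equality of topological entropies requires a further argument (e.g.\ uniqueness of MMEs). Note also that ``no conjugate points'' alone is not known to suffice for Manning's equality in the Finsler setting — the paper explicitly records the Freire--Ma\~n\'e extension as an open problem — so you do genuinely need the flag curvature hypothesis, not merely absence of conjugate points.
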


The constant $c$ only depends on the globally symmetric space $(\widetilde Q, \widetilde g)$,
and it can be computed from its Weyl data. 
See Proposition~\ref{p:ver} below for a more precise statement.

Let $h_{\vol}^{\sym}(Q)$ be the minimum of $\widehat h_{\vol}(g)$ taken over all 
locally symmetric Riemannian metrics~$g$ on~$Q$.
This number is easy to compute, see \cite[\S 2]{CF03}.
Unfortunately it is still not known whether Theorem~\ref{t:BCG} also holds in higher rank, 
that is, whether $h_{\vol} (Q) = h_{\vol}^{\sym}(Q)$. 
However, this is known if $(Q,g)$
is locally isometric to a product of negatively curved symmetric spaces 
of dimension~$\geq 3$, \cite{CF03},
and for quotients of the $k$-fold product 
$(\HH^2)^k = \HH_2 \times \dots \times \HH_2$
of the real hyperbolic plane, \cite{Mer16}.
For these spaces, \eqref{e:Qcf} can thus be written as
$$
h_{\top}^{\HT} (Q,\cf_{\rev}) \,\leq\, c\, h_{\vol}(Q) . 
$$
We shall compute the constant $c$ for quotients of $(\HH^2)^k$ in~\S \ref{ss:ver}.
For instance, $c (\HH^2 \times \HH^2) = \sqrt[4]{2} \approx 0.841$.
This should be compared with the constant $2 c_4 = \sqrt{\frac{2}{\pi}} \approx 0.61$
for the lower bound in Theorem~\ref{t:mainintro}.

\medskip
The minimal entropy problem can also be studied for the volume entropies $h_{\vol}$
instead of $h_{\top}$, and by normalizing either entropy by the Busemann--Hausdorff volume. 
Much of the above discussion applies also to these minimal entropies.


\subsection{Topological pressure}

In view of Theorem~\ref{t:mainintro}, there is no minimal entropy program for Reeb flows.
Furthermore, the situation cannot be salvaged by looking at subexponential growth rates, 
since replacing $\lim_{n \to \infty} \frac 1n \log \dots$ in the definition of topological entropy
by $\lim_{n \to \infty} \frac{1}{n^c} \log \dots$ for some $c \in (0,1)$ yields 
$+ \infty$ for all Reeb flows on many contact
manifolds by Theorem~\ref{t:MacSch}.

However, increasing topological entropy in terms of topological pressure 
leads to a meaningful problem.
Given a closed contact manifold $(M,\xi)$ associate with 
every continuous function $f \in C^0(M,\R)$ and every contact form $\alpha$ 
for~$\xi$ the topological pressure $P(\alpha, f) = P (\phi_{\alpha},f)$,
see \cite[Chapter~9]{Wal82} for the definition and basic results on topological 
pressure. 
We recall that $P(\alpha,0) = h_{\top}(\alpha)$ and that the variational principle for topological pressure says
\begin{equation} \label{e:pressure}
P(\alpha,f) \,=\, 
\sup_{\mu \in \cm (\alpha)} 
\left\{ h_{\mu}(\phi_{\alpha}) + \int_M f \,d\mu \right\}
\end{equation}
where $\cm (\alpha)$ denotes the set of $\phi_{\alpha}^t$-invariant Borel probability 
measures on~$M$ 
and $h_\mu \geq 0$ is the entropy of the measure~$\mu$.
Define
$$
P (M,f) \,:=\, \inf \left\{ P(\alpha, f) \mid \mbox{$\alpha$ a normalized contact form on $(M,\xi)$} \right\} .
$$
Since $P(\alpha, f+c) = P(\alpha, f) +c$ for all $c \in \R$, we can assume that $\min f = 0$.
Together with Theorem~\ref{t:mainintro} we then obtain
$$
0 \,\leq\, P(M,f) \,\leq\, \max f .
$$
It would be interesting to see if these bounds can be sharpened 
for functions~$f$ that do not identically vanish.
Our proof of Theorem~\ref{t:mainintro} does not help with this problem, 
since the maximal measures in~\eqref{e:pressure} (the so-called equilibrium states)
may not be related in any way to the open book decomposition in our proof.


\bigskip
\noindent
{\bf Acknowledgment.}
We wish to thank Viktor Ginzburg for several useful discussions, 
that in particular led to the results in Section~\ref{s:spectrum}, 
Leonid Polterovich who asked us whether the collapse of
topological entropy can be extended to the collapse of norm growth,
and Patrick Verovic for explaining to us his work~\cite{Ver99}.
We also thank the referee for many useful comments and the careful reading.

A.A., M.A.\ and M.S.\ are partially supported by the SFB/TRR 191 
`Symplectic Structures in Geometry, Algebra and Dynamics', 
funded by the DFG (Projektnummer 281071066 – TRR 191).
M.A. is also supported by the ERC consolidator grant 646649 `SymplecticEinstein' 
and by the Senior Postdoctoral fellowship of the Research Foundation - Flanders (FWO) 
in fundamental research 1286921N.
F.S.\ is partially supported by the SNF grant 200020-144432/1.

\section{Volume entropy for Finsler geodesic flows} \label{s:ent}

\subsection{Finsler metrics and their volumes}  \label{ss:vol}
By a Finsler metric on an $n$-dimensional manifold~$Q$ we mean in this paper a continuous 
function $F \colon TQ \rightarrow [0,+\infty)$ which is fiberwise convex, 
fiberwise positively homogeneous of degree~1, and positive outside of the zero section. 
The Finsler metric~$F$ is said to be reversible if $F(v)=F(-v)$ for all $v \in TQ$. 

For $q \in Q$ the unit disk in~$T_q Q$ determined by the Finsler metric~$F$ is the set
\[
D_q(F) := \left\{ v \in T_qQ \mid F(v) \leq 1 \right\} .
\]
This is a convex compact neighborhood of the origin in $T_q Q$. 
The function $F|_{T_q Q}$ is precisely the Minkowski gauge of~$D_q(F)$. 
The unit co-disk in $T_q^* Q$ is the polar set of~$D_q(F)$:
\[
D_q^*(F) := \left\{ p \in T_q^* Q \mid \langle p,v \rangle \leq 1 \; \forall \2 v \in D_q(F) \right\},
\]
where $\langle\cdot,\cdot \rangle$ denotes the duality pairing between tangent vectors and co-vectors. This is a compact convex neighborhood of the origin in~$T_q^* Q$. 

On compact Finsler manifolds there are two different notions of volume that are used in the literature. From the point of view of this paper, the most natural one is the Holmes--Thompson volume, which can be defined as
\[
\vol_F^{\HT}(Q) := 
\frac{1}{n! \, \omega_n} \, \int_{D^*(F)} \omega^n ,
\]
where 
\[
D^*(F) := \bigcup_{q \in Q} D_q^*(F) \,\subset\, T^* Q
\]
is the unit co-disk bundle of $Q$,
where $\omega^n$ denotes the standard volume form on~$T^*Q$ 
induced by integrating the $n$-fold exterior power of  
the canonical symplectic form~$\omega = \sum_j dq_j \wedge dp_j$, 
and where $\omega_n$ is the volume of the Euclidean unit ball in~$\R^n$, 
$n=\dim Q$. 
The normalization factor $n! \,\omega_n$  makes $\vol_F^{\HT}(Q)$ coincide with 
the Riemannian volume of~$Q$ when $F(v) = \sqrt{g(v,v)}$ is a Riemannian metric on~$Q$. 

Alternatively, the Holmes--Thompson volume can be defined as the integral over~$Q$ of a suitable volume density~$\rho_F^*$. Here by volume density we mean a norm on the line bundle~$\Lambda^n(TQ)$, whose fiber at $q \in Q$ is the  top degree component of the exterior algebra of $T_q Q$, that is, the 1-dimensional space spanned by $v_1\wedge \dots \wedge v_n$, where $v_1,\dots,v_n$ is a basis of $T_q Q$. 
When $Q$ is orientable, a volume density is just the absolute value of a nowhere vanishing differential $n$-form. A volume density can be integrated over any non-empty open subset of $Q$, producing a positive number.
The volume density $\rho_F^*$ is defined as follows: Given any volume density $\rho$ on~$Q$ set
\[
\rho_F^*(q) := \frac{|D_q^*(F)|_{\rho}^*}{\omega_n} \,\rho(q),
\]
where $|\! \cdot \!|^*_{\rho}$ denotes the Lebesgue measure on $T_q^* Q$ 
that is normalized to~1 on the $n$-dimensional parallelogram spanned 
by the covectors that are dual to basis vectors $v_1,\dots,v_n$ in~$T_q Q$ 
such that $\rho(q)[v_1 \wedge \dots\wedge v_n]=1$. 
We then have
\[
\vol_F^{\HT}(Q) = \int_Q \rho_F^*.
\]
Another common choice is to consider the Busemann--Hausdorff volume, which is defined as
\[
\vol_F^{\BH}(Q) := \int_Q \rho_F,
\]
where the volume density $\rho_F$ is given by
\[
\rho_F(q) := \frac{\omega_n}{|D_q(F)|_{\rho}} \, \rho(q).
\]
Here $\rho$ is again an arbitrary volume density on $Q$ and $| \! \cdot \! |_{\rho}$ is 
the Lebesgue measure on~$T_q Q$ normalized to~1 on the parallelogram spanned by vectors 
$v_1,\dots,v_n$ in~$T_q Q$ with $\rho(q)[v_1 \wedge \dots \wedge v_n]=1$. 
When $F$ is reversible, the Busemann--Hausdorff volume of~$Q$ coincides with the $n$-dimensional Hausdorff measure of~$Q$ with respect to the distance induced by~$F$.

Both volumes reduce to the standard Riemannian volume when the Finsler metric $F$ is Riemannian. If $F$ is reversible, then 
\begin{equation}
\label{reversible}
\vol_F^{\HT}(Q) \leq \vol_F^{\BH}(Q), 
\end{equation}
with equality holding if and only if $F$ is Riemannian. 
This follows from the Blaschke--Santal\'o inequality, see e.g.\ \cite{Dur98}. 
In the non-reversible case, the Holmes--Thompson volume can be much larger than the Busemann--Hausdorff volume. 
Note that both the Holmes--Thompson and 
the Busemann--Hausdorff volume depend monotonically on the Finsler metric, meaning that
\begin{equation}
\label{monotonicity}
F_1 \leq F_2 \qquad \Rightarrow \qquad \vol_{F_1}^{\HT}(Q) \leq \vol_{F_2}^{\HT}(Q), \quad  \vol_{F_1}^{\BH}(Q) \leq \vol_{F_2}^{\BH}(Q),
\end{equation}
and rescale as
\begin{equation}
\label{rescaling}
\vol_{cF}^{\HT}(Q) = c^n \vol_F^{\HT}(Q), \qquad \vol_{cF}^{\BH}(Q) = c^n \vol_F^{\BH}(Q),
\end{equation}
when the Finsler metric $F$ is multiplied by a positive constant $c$.

\subsection{Volume entropy} \label{ss:volent}
Let $F$ be a Finsler metric on a compact $n$-dimensional manifold~$Q$. 
This Finsler metric lifts to a Finsler metric on the universal cover~$\widetilde{Q}$ of~$Q$, 
and we denote the lifted metric by the same symbol~$F$.  The $R$-ball centered at $q \in \widetilde{Q}$  
that is induced by~$F$ is the following compact subset of~$\widetilde{Q}$:
\begin{equation} \label{e:Lip}
B_q(F,R) := \bigl\{ \gamma(R) \mid \gamma \colon [0,R] \rightarrow \widetilde{Q} 
        \mbox{ Lipschitz curve, } \gamma(0)=q \mbox{ and } F \circ \dot\gamma \leq 1 \mbox{ a.e.} \bigr\}.
\end{equation}
When $F$ is reversible, $B_q(F,R)$ is the ball of the distance on~$\widetilde{Q}$ 
that is induced by~$F$; in general, it is the forward ball of an asymmetric distance. 

The volume entropy of $F$ is the non-negative number
\begin{equation} \label{def:hvol}
h_{\vol}(F) := \lim_{R\rightarrow \infty} \frac{1}{R} \log \Vol (B_q(F,R)).
\end{equation}
Here $\Vol$ denotes the volume of Borel subsets of~$\widetilde{Q}$ with respect to the lift to~$\widetilde{Q}$ of an arbitrary Riemannian metric on~$Q$. 
A minor modification of Manning's argument from~\cite{Man79} shows that 
the above limit exists and is independent of the choice of the point~$q \in \widetilde{Q}$ and of the Riemannian metric on~$Q$,
see Proposition~\ref{p:manning}. In the case of the Finsler metric $G=\sqrt{g(\cdot,\cdot)}$ that is induced by a Riemannian metric~$g$, we use interchangeably the notations
\[
h_{\vol}(g) = h_{\vol}(G).
\]
The volume entropy is monotonically decreasing in $F$, meaning that
\begin{equation}
\label{monotonicity-h}
F_1 \leq F_2 \qquad \Rightarrow \qquad h_{\vol}(F_1) \geq h_{\vol}(F_2).
\end{equation}
Indeed, if $F_1 \leq F_2$ on $Q$ then the same inequality holds on $\widetilde{Q}$ and hence \eqref{e:Lip} implies
\[
B_q(F_1,R) \supset B_q(F_2,R) \qquad \forall \, q \in \widetilde{Q}, \; \forall \2 R \geq 0,
\]
from which \eqref{monotonicity-h} follows. Let $c$ be a positive number. 
From the identity
\[
B_q(cF,R) = B_q(F,c^{-1} R)
\]
we deduce that the volume entropy rescales as
\begin{equation}
\label{rescaling-h}
h_{\vol}(cF) = c^{-1} \,h_{\vol}(F).
\end{equation}
Together with \eqref{rescaling}, this suggests to consider the normalized volume entropies
\[
\widehat{h}_{\vol}^{\HT}(F) := \vol_F^{\HT}(Q)^{1/n}\, h_{\vol}(F), \qquad \widehat{h}_{\vol}^{\BH}(F) := \vol_F^{\BH}(Q)^{1/n}\, h_{\vol}(F).
\]
These quantities are now invariant under scaling: 
\[
\widehat{h}_{\vol}^{\HT}(cF) = \widehat{h}_{\vol}^{\HT}(F), \qquad 
\widehat{h}_{\vol}^{\BH}(cF) = \widehat{h}_{\vol}^{\BH}(F).
\] 
Since the Holmes--Thompson and the Busemann--Hausdorff volumes coincide when $F=G=\sqrt{g}$ is Riemannian, there is just one normalized volume entropy in the Riemannian case, and we denote it by
\[
\widehat{h}_{\vol}(g) = \widehat{h}_{\vol}(G).
\]
In the next two subsections, we study how the two different normalized volume entropies 
of an arbitrary Finsler metric can be bounded from below and from above 
in terms of the normalized volume entropy of suitable Riemannian metrics. 
Our arguments follow~\cite{apbt16}, 
where similar techniques are used in order to derive bounds for the systolic ratio. 

\subsection{From reversible Finsler to Riemannian} 
Let $F$ be a reversible Finsler metric on the compact $n$-dimensional manifold~$Q$. 
Denote by~$E_q$ the inner Loewner ellipsoid of the symmetric convex body~$D_q(F)$, 
i.e.\ the ellipsoid centered at the origin which is contained in~$D_q(F)$ 
and has maximal volume among all ellipsoids with this property. 
Here by volume we mean any translation invariant measure on~$T_q Q$ 
(which is unique up to multiplication by a positive constant). 
It is well known that the inner Loewner ellipsoid is unique, and John proved that it satisfies
\begin{equation}
\label{john}
E_q \subset D_q(F) \subset \sqrt{n}\, E_q.
\end{equation}
See \cite{Joh48}, or \cite{Bal97} for a modern proof of these results.
Denote by $G \colon TQ \rightarrow [0,+\infty)$ the function which in each tangent space~$T_q Q$ 
is the Minkowski gauge of~$E_q$. The function~$G$ is the square root of a 
Riemannian metric: $G(v) = \sqrt{g(v,v)}$ for some continuous
Riemannian metric~$g$ on~$Q$. Indeed, the continuity of~$G$ easily follows from 
the uniqueness of the inner Loewner ellipsoid. From the inclusions~\eqref{john} we deduce 
the inequalities
\begin{equation}
\label{relmetri}
n^{-1/2}\,  G \leq F \leq G,
\end{equation}
which thanks to \eqref{monotonicity-h} and \eqref{rescaling-h} imply the bounds
\begin{equation}
\label{relhvol}
h_{\vol}(G) \leq h_{\vol}(F) \leq \sqrt{n}\, h_{\vol}(G).
\end{equation}
By \eqref{monotonicity} and the second inequality in \eqref{relmetri} the Busemann--Hausdorff volume of~$(Q,F)$ 
has the upper bound
\begin{equation} \label{UB}
\vol_F^{\BH} (Q) \leq \vol_G (Q).
\end{equation} 
In order to get a lower bound for the Holmes--Thompson volume of $(Q,F)$ we can use \eqref{monotonicity}, \eqref{rescaling} and the first inequality in \eqref{relmetri} and obtain
\begin{equation}
\label{cattivo}
\vol_G (Q) \leq n^{n/2} \vol_F^{\HT} (Q).
\end{equation}
However, we get a better bound by the following argument. 
The polar set $E_q^*=D_q^*(G)$ of $E_q=D_q(G)$ satisfies
\[
D_q^*(F) \subset D_q^*(G)
\]
and is the outer Loewner ellipsoid of $D_q^*(F)$, i.e.\ the centrally symmetric ellipsoid of minimal volume among those containing $D_q^*(F)$. Then we have
\[
|D_q^* (G)|_{\rho}^* \,\leq\, \frac{n! \, \omega_n}{2^n} \, |D_q^*(F)|_{\rho}^* .
\]
This follows from the fact that the ratio between the volume of the outer 
Loewner ellipsoid of a symmetric convex body~$K$ and the volume of~$K$ is maximal 
for the cross-polytope, 
a result that Ball deduced from the inverse Brascamp--Lieb inequality of Barthe in~\cite[Theorem 5]{bal01}. 
Therefore, we obtain
\begin{equation}
\label{buono}
\vol_G(Q) \,\leq\, \frac{n! \,\omega_n}{2^n} \vol_F^{\HT}(Q),
\end{equation}
which is a  better bound than \eqref{cattivo} for every $n \geq 2$, and also asymptotically because
\[
\lim_{n \to \infty} \left( \frac{n!\, \omega_n}{2^n \, n^{n/2}} \right)^{1/n} \,=\, \sqrt{\frac{\pi}{2e}} 
\]
by the Stirling formula. 
By putting together 
\eqref{reversible}, \eqref{relhvol}, \eqref{UB} and~\eqref{buono} 
we obtain the following result.

\begin{prop}
\label{bound1}
Let $F$ be a reversible Finsler metric on the compact $n$-dimensional manifold~$Q$ 
and let $G = \sqrt{g}$ be the Riemannian metric on~$Q$ whose unit disks are 
the inner Loewner ellipsoids of the unit disks of~$F$. Then
$$
\frac{2}{(n!\, \omega_n)^{1/n}} \, \widehat{h}_{\vol}(g) \,\leq\, 
\widehat{h}_{\vol}^{\HT}(F) \,\leq\, \widehat{h}_{\vol}^{\BH}(F) \,\leq\, 
                 \sqrt{n} \; \widehat{h}_{\vol}(g).
$$
\end{prop}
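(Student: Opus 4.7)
My plan is to read Proposition~\ref{bound1} as a packaging of the four inequalities that have been established in the preceding discussion, so the proof will be a formal assembly with no new geometric input. The four ingredients I intend to use are: the sandwich of volume entropies~\eqref{relhvol}, namely $h_{\vol}(G) \leq h_{\vol}(F) \leq \sqrt{n}\, h_{\vol}(G)$, which comes from the pointwise metric bound~\eqref{relmetri} together with the monotonicity~\eqref{monotonicity-h} and rescaling~\eqref{rescaling-h} of volume entropy; the Blaschke--Santal\'o inequality~\eqref{reversible}, $\vol_F^{\HT}(Q) \leq \vol_F^{\BH}(Q)$; the Busemann--Hausdorff comparison~\eqref{UB}, $\vol_F^{\BH}(Q) \leq \vol_G(Q)$, which uses the right inequality of~\eqref{relmetri} and monotonicity~\eqref{monotonicity}; and Ball's sharp volume ratio bound~\eqref{buono}, $\vol_G(Q) \leq \frac{n!\,\omega_n}{2^n} \vol_F^{\HT}(Q)$.

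For the upper bound, I would combine the right inequality in~\eqref{relhvol} with~\eqref{UB} to get
\[
\widehat{h}_{\vol}^{\BH}(F) \,=\, \vol_F^{\BH}(Q)^{1/n}\, h_{\vol}(F) \,\leq\, \vol_G(Q)^{1/n} \cdot \sqrt{n}\, h_{\vol}(G) \,=\, \sqrt{n}\,\widehat{h}_{\vol}(g).
\]
The middle inequality $\widehat{h}_{\vol}^{\HT}(F) \leq \widehat{h}_{\vol}^{\BH}(F)$ is just~\eqref{reversible} multiplied by the common factor $h_{\vol}(F)$. For the lower bound I would combine the left inequality in~\eqref{relhvol} with~\eqref{buono}, obtaining
\[
\widehat{h}_{\vol}(g) \,=\, \vol_G(Q)^{1/n}\, h_{\vol}(G) \,\leq\, \frac{(n!\,\omega_n)^{1/n}}{2}\, \vol_F^{\HT}(Q)^{1/n}\, h_{\vol}(F) \,=\, \frac{(n!\,\omega_n)^{1/n}}{2}\,\widehat{h}_{\vol}^{\HT}(F),
\]
and rearranging yields the claimed factor $2/(n!\,\omega_n)^{1/n}$.

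Since every substantive estimate has already been carried out, there is no real obstacle left at this stage. The only point that required a non-trivial input was the sharp inequality~\eqref{buono}, whose sharpness (with the cross-polytope as extremizer) rests on Ball's deduction from Barthe's inverse Brascamp--Lieb inequality; that is precisely what makes the final constant $2/(n!\,\omega_n)^{1/n}$ better than the naive bound $n^{-1/2}$ coming directly from John's inequality, as already noted in~\eqref{cattivo}. The proof of the proposition itself is then purely bookkeeping.
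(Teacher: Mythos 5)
Your proof is correct and is exactly the assembly the paper intends: the paper introduces Proposition~\ref{bound1} with the phrase ``By putting together \eqref{reversible}, \eqref{relhvol}, \eqref{UB} and~\eqref{buono} we obtain the following result,'' and your bookkeeping spells out precisely how those four inequalities combine. Nothing further to add.
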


\subsection{From irreversible to reversible Finsler} 
Let $F$ be an arbitrary Finsler metric on the compact $n$-dimensional manifold~$Q$. We symmetrize the metric~$F$ 
by the following procedure: We define $S \colon TQ \rightarrow [0,+\infty)$ to be the reversible Finsler metric on~$Q$ whose unit co-disk at each $q \in Q$ is the reflection body of~$D_q^*(F)$, 
i.e.\ the centrally symmetric convex body
\[
D^*_q(S) := \mathrm{conv} \bigl( D_q^*(F) \cup ( - D_q^*(F)) \bigr).
\]
Note that 
\begin{equation} \label{symmetrization}
D_q^*(F) \subset D^*_q(S) \subset \theta\,  D_q^*(F),
\end{equation}
where $\theta$ is the irreversibility ratio of $F$, i.e.\ the number
\begin{equation} \label{e:irr}
\theta := \max_{\substack{v\in TQ \\ F(v)=1}} F(-v),
\end{equation}
which is at least 1, and equal to 1 if and only if $F$ is reversible. 
Indeed, the second inclusion in \eqref{symmetrization} follows from the fact that $\theta$ is an upper bound for the norm of minus the identity on~$T_q Q$ with the asymmetric norm~$F$, and hence also for the norm of minus the identity 
on~$T_q^* Q$ with the asymmetric norm that is dual to~$F$. Moreover, the volume of $D^*_q(S)$ has the upper bound
\begin{equation} \label{Rogers-Shephard}
|D^*_q(S)|_{\rho}^* \,\leq\, 2^n \, |D_q^*(F)|_{\rho}^*,
\end{equation}
as proven by Rogers and Shephard in \cite[Theorem 3]{rs58}. From \eqref{symmetrization} we deduce
\begin{equation}
\label{N3}
\theta^{-1} S \leq F \leq S,
\end{equation}
and hence \eqref{monotonicity-h} and \eqref{rescaling-h} imply
\begin{equation}
\label{N4}
h_{\vol}(S) \leq h_{\vol} (F) \leq \theta\, h_{\vol}(S).
\end{equation}

On the other hand, from \eqref{Rogers-Shephard} and the second inequality in \eqref{N3} we obtain the following inequalities for the Holmes--Thompson volume
\begin{equation}
\label{N5}
2^{-n} \, \vol_S^{\HT} (Q) \leq \vol_F^{\HT} (Q) \leq \vol_S^{\HT} (Q).
\end{equation}
The bounds \eqref{N4} and \eqref{N5} imply the following result.

\begin{prop}
\label{bound2}
Let $F$ be a Finsler metric on the compact $n$-dimensional manifold~$Q$ 
with irreversibility ratio~$\theta$ and let $S$ be the reversible Finsler metric whose dual disks are the reflection bodies of the dual disks of~$F$:
\[
D^*_q(S) = \mathrm{conv} \bigl( D_q^*(F) \cup ( - D_q^*(F)) \bigr) \qquad \forall \, q \in Q.
\]
Then
\[
\frac{1}{2} \, \widehat{h}^{\HT}_{\vol}(S) \,\leq\,  \widehat{h}_{\vol}^{\HT} (F) 
             \,\leq\, \theta \, \widehat{h}_{\vol}^{\HT}(S).
\]
\end{prop}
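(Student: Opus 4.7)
The proof is essentially an assembly of the inequalities \eqref{N4} and \eqref{N5} that have already been established in the preceding discussion, combined with the definition
\[
\widehat{h}_{\vol}^{\HT}(F) \,=\, \vol_F^{\HT}(Q)^{1/n} \, h_{\vol}(F) .
\]
So my plan is not to do new analytic work but rather to carefully multiply the two groups of bounds so that the factors of $\theta$ and $2$ land in the right places. The key observation is that \eqref{N5} already incorporates the Rogers--Shephard inequality, which is the only nontrivial input, and \eqref{N4} packages the Finsler-metric inequality \eqref{N3} through the monotonicity \eqref{monotonicity-h} and rescaling \eqref{rescaling-h} of volume entropy.

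For the upper bound, I will take $n$-th roots in the right half of \eqref{N5} to get $\vol_F^{\HT}(Q)^{1/n} \leq \vol_S^{\HT}(Q)^{1/n}$ and combine this with the right half of \eqref{N4}, namely $h_{\vol}(F) \leq \theta \, h_{\vol}(S)$. Multiplying the two inequalities (both sides are nonnegative) yields
\[
\widehat{h}_{\vol}^{\HT}(F) \,=\, \vol_F^{\HT}(Q)^{1/n} h_{\vol}(F) \,\leq\, \theta \, \vol_S^{\HT}(Q)^{1/n} h_{\vol}(S) \,=\, \theta \, \widehat{h}_{\vol}^{\HT}(S) .
\]
For the lower bound, I will analogously take $n$-th roots in the left half of \eqref{N5} to get $\vol_F^{\HT}(Q)^{1/n} \geq \tfrac{1}{2} \vol_S^{\HT}(Q)^{1/n}$, combine it with the left half of \eqref{N4}, $h_{\vol}(F) \geq h_{\vol}(S)$, and multiply to obtain $\widehat{h}_{\vol}^{\HT}(F) \geq \tfrac{1}{2} \widehat{h}_{\vol}^{\HT}(S)$.

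There is no real obstacle here: once the sandwich $\theta^{-1} S \leq F \leq S$ from~\eqref{N3} and the volume ratio from Rogers--Shephard are in place, the proposition is just bookkeeping. The only thing to watch is the sign/direction of the monotonicity of $h_{\vol}$ (decreasing in $F$, as recorded in \eqref{monotonicity-h}) versus that of $\vol^{\HT}$ (increasing in $F$, as in \eqref{monotonicity}), so that the factor $\theta$ accumulates only on the upper side and the factor $2$ only on the lower side. Because each of the four one-sided inequalities is used exactly once in the indicated direction, the constants $\tfrac{1}{2}$ and $\theta$ are the best one can obtain from this argument.
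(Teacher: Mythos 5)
Your argument is correct and coincides with the paper's own proof, which is precisely the remark that Proposition~\ref{bound2} follows directly from \eqref{N4} and \eqref{N5}. The bookkeeping you carry out — taking $n$-th roots in \eqref{N5} and multiplying the one-sided bounds so that $\theta$ appears only on the upper side and $\tfrac{1}{2}$ only on the lower side — is exactly what the paper leaves implicit.
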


The lower bounds of Propositions \ref{bound1} and~\ref{bound2}, 
together with Stirling's formula, have the following consequence:

\begin{corollary}
\label{minhvol}
Let $Q$ be a compact $n$-dimensional manifold and
denote by~$h_{\vol}(Q)$ the infimum of~$\widehat{h}_{\vol}(g)$ over all Riemannian metrics~$g$ 
on~$Q$. Then the Holmes--Thompson normalized volume entropy of an arbitrary Finsler metric~$F$ on~$Q$ 
has the lower bound
\[
\widehat{h}_{\vol}^{\HT}(F) \,\geq\, c_n \, h_{\vol}(Q) ,
\]
where
\[
c_n :=  \frac{1}{(n!\, \omega_n)^{1/n}} \sim \sqrt{\frac{e}{2\pi}} \frac{1}{\sqrt n}.
\]
Moreover, if the Finsler metric $F$ is reversible, we have
\[
\widehat{h}_{\vol}^{\BH}(F) \geq  \widehat{h}_{\vol}^{\HT}(F) \geq 2 \2 c_n \, h_{\vol}(Q).
\]
\end{corollary}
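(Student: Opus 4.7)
The plan is to combine Propositions \ref{bound1} and \ref{bound2} in a short chain, with Stirling's formula providing the asymptotic behavior of $c_n$. I would handle the reversible case first and then bootstrap to arbitrary Finsler metrics via the symmetrization $S$ introduced in the discussion before Proposition \ref{bound2}.

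For a reversible Finsler metric $F$ on $Q$, let $g$ be the continuous Riemannian metric whose unit disks are the inner Loewner ellipsoids $E_q$ of the $D_q(F)$, as in Proposition \ref{bound1}. That proposition directly yields
$$
\widehat{h}_{\vol}^{\HT}(F) \,\geq\, \frac{2}{(n!\,\omega_n)^{1/n}}\, \widehat{h}_{\vol}(g) \,=\, 2\,c_n\,\widehat{h}_{\vol}(g) \,\geq\, 2\,c_n\,h_{\vol}(Q),
$$
where the last step is simply the definition of $h_{\vol}(Q)$ as the infimum of $\widehat{h}_{\vol}(g)$ over all Riemannian metrics on $Q$. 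The further inequality $\widehat{h}_{\vol}^{\BH}(F) \geq \widehat{h}_{\vol}^{\HT}(F)$ is also contained in Proposition \ref{bound1}, coming ultimately from the Blaschke--Santal\'o inequality \eqref{reversible}. This establishes the second assertion of the corollary.

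For an arbitrary, possibly irreversible Finsler metric $F$, I would pass to the symmetrization $S$ whose dual disks are $\mathrm{conv}\bigl(D_q^*(F)\cup -D_q^*(F)\bigr)$. The lower bound in Proposition \ref{bound2} gives $\widehat{h}_{\vol}^{\HT}(F) \geq \tfrac{1}{2}\,\widehat{h}_{\vol}^{\HT}(S)$, and applying the reversible case already established to the reversible Finsler metric $S$ produces
$$
\widehat{h}_{\vol}^{\HT}(F) \,\geq\, \tfrac{1}{2}\cdot 2\,c_n\,h_{\vol}(Q) \,=\, c_n\,h_{\vol}(Q).
$$
Note that the factor $2$ from Proposition \ref{bound1} precisely absorbs the factor $\tfrac{1}{2}$ coming from the Rogers--Shephard estimate in Proposition \ref{bound2}, so the clean constant $c_n$ rather than $c_n/2$ appears in the general case.

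Finally, the asymptotic $c_n \sim \sqrt{e/(2\pi)}\,/\sqrt{n}$ follows from Stirling's formula applied to both factors in $n!\,\omega_n = n!\,\pi^{n/2}/\Gamma(n/2+1)$. Using $n! \sim \sqrt{2\pi n}(n/e)^n$ and $\Gamma(n/2+1) \sim \sqrt{\pi n}(n/(2e))^{n/2}$ a brief computation gives $n!\,\omega_n \sim \sqrt{2}\,(2\pi n/e)^{n/2}$, so that $(n!\,\omega_n)^{1/n} \sim \sqrt{2\pi n/e}$ and hence $c_n \sim \sqrt{e/(2\pi n)}$. I do not anticipate a genuine obstacle: the result is essentially a bookkeeping consequence of the two symmetrization-type Propositions \ref{bound1} and~\ref{bound2}, and the only small care required is to verify that the numerical constants fit together as advertised.
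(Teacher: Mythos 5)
Your proposal is correct and follows exactly the paper's own route: the paper states that the corollary is obtained by combining the lower bounds of Propositions~\ref{bound1} and~\ref{bound2} with Stirling's formula, which is precisely what you do — Proposition~\ref{bound1} gives the reversible case directly, and the factor~$\tfrac12$ from Proposition~\ref{bound2} is absorbed by the factor~$2$ in Proposition~\ref{bound1} for the general case. Your Stirling computation $(n!\,\omega_n)^{1/n}\sim\sqrt{2\pi n/e}$ is also accurate.
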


\begin{rem}
{\rm 
If we symmetrize $D_q^*(F)$ by considering the difference body $D_q^*(F)-D_q^*(F)$ instead of the reflection body, 
then we get a worse bound, because in this case the factor $2^n$ in~\eqref{Rogers-Shephard} must be replaced by 
the middle binomial coefficient~$\binom{2n}{n}$, in view of the Rogers--Shephard inequality for the volume 
of the difference body, see~\cite{RS57}. 
By using the reflection body instead of the difference body, the systolic upper bounds of 
Theorem~4.13 and Corollary~4.14 in~\cite{apbt16} can be improved by replacing the dimension dependent 
quantity $\sqrt[n]{(2n)!/(n!)^2}$ by the constant factor~2.
}
\end{rem}

If the volume entropy is normalized by the Busemann--Hausdorff volume, we do not get a lower bound that is independent of the irreversibility ratio. 
From~\eqref{monotonicity}, \eqref{rescaling}, \eqref{N3} and~\eqref{N4} we obtain
\[
\frac{1}{\theta}\,  \widehat{h}_{\vol}^{\BH}(S) \,\leq\, \widehat{h}_{\vol}^{\BH}(F)  \,\leq\, \theta\,  \widehat{h}_{\vol}^{\BH}(S).
\]
We do not have a lower bound that is independent of $\theta$ because, unlike the volume ratio 
$|D_q^*(S)|_{\rho}^*/  |D_q^*(F)|_{\rho}^*$, the ratio
$|D_q(F)|_{\rho}/|D_q(S)|_{\rho}$ can be arbitrarily large.

On the other hand, the upper bound can be made independent of the irreversibility ratio~$\theta$ by symmetrizing,
this time, directly in~$TQ$: 
We consider the reversible Finsler metric~$T$ whose unit ball at~$q$ is the set
\[
D_q(T) := \mathrm{conv} \bigl( D_q(F)  \cup (- D_q(F)) \bigr).
\]
For this metric, we have
\[
T \leq F \leq \theta\, T,
\]
from which we obtain
\[
\frac{1}{\theta} \, h_{\vol}(T) \leq h_{\vol}(F)\leq  h_{\vol} (T).
\]
Moreover, the Rogers--Shephard inequality for the reflection body gives
\[
\vol_T^{\BH} (Q) \leq \vol_F^{\BH} (Q) \leq 2^n  \vol_T^{\BH} (Q),
\]
and we deduce the following result.

\begin{prop}
Let $F$ be a Finsler metric on the compact $n$-dimensional manifold~$Q$ 
with irreversibility ratio~$\theta$ and let $T$ be the 
reversible Finsler metric whose unit disk at each~$q$ is the reflection body 
\[
D_q(T) =  \mathrm{conv} \bigl( D_q(F)  \cup (- D_q(F)) \bigr)
\]
of the disk of $F$ at $q$. Then
\[
\frac{1}{\theta} \, \widehat{h}^{\BH}_{\vol}(T) \,\leq\,  \widehat{h}_{\vol}^{\BH} (F) 
             \,\leq\, 2 \, \widehat{h}_{\vol}^{\BH}(T).
\]
\end{prop}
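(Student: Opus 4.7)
The plan is very direct: all three key estimates needed for the statement are already recorded in the paragraph immediately preceding it, and the proof amounts to combining them.

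First I would collect these estimates. The sandwich $T \leq F \leq \theta T$ follows from the chain of inclusions $D_q(F) \subset D_q(T) \subset \theta\, D_q(F)$: the left inclusion is immediate from the definition of $T$ as the convex hull of $D_q(F) \cup (-D_q(F))$, and the right one uses the definition~\eqref{e:irr} of the irreversibility ratio (which gives $-D_q(F) \subset \theta\, D_q(F)$) together with $\theta \geq 1$ and convexity of $\theta\, D_q(F)$. Applying the monotonicity~\eqref{monotonicity-h} and rescaling~\eqref{rescaling-h} of $h_{\vol}$ then yields
\[
\tfrac{1}{\theta}\, h_{\vol}(T) \,\leq\, h_{\vol}(F) \,\leq\, h_{\vol}(T).
\]
For the Busemann--Hausdorff volumes, the inclusion $D_q(F) \subset D_q(T)$ combined with the inverse dependence of the density $\rho_F$ on $|D_q(F)|_\rho$ gives $\vol_T^{\BH}(Q) \leq \vol_F^{\BH}(Q)$, while the Rogers--Shephard inequality for the reflection body (the primal-space analogue of~\eqref{Rogers-Shephard}) gives $|D_q(T)|_\rho \leq 2^n\, |D_q(F)|_\rho$, which inverts to $\vol_F^{\BH}(Q) \leq 2^n\, \vol_T^{\BH}(Q)$.

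With these two pairs of inequalities in hand, the statement follows by taking $n$-th roots of the volume bounds and multiplying with the corresponding entropy bounds in the definition $\widehat{h}_{\vol}^{\BH}(F) = \vol_F^{\BH}(Q)^{1/n}\, h_{\vol}(F)$; the factors $2^n$ and $1/\theta$ become $2$ and $1/\theta$ respectively. There is no genuine obstacle: the only nontrivial analytical input is the Rogers--Shephard inequality for the reflection body, which is already invoked (in the dual setting) in the proof of Proposition~\ref{bound2}, and everything else is a formal application of the monotonicity and scaling rules of $h_{\vol}$ and $\vol^{\BH}$ collected in~\S\ref{ss:vol}--\S\ref{ss:volent}.
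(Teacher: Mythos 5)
Your proof is correct and coincides with the paper's own argument: the paper derives the sandwich $T \leq F \leq \theta\, T$, the entropy bound $\frac{1}{\theta}\, h_{\vol}(T) \leq h_{\vol}(F) \leq h_{\vol}(T)$, and the volume bound $\vol_T^{\BH}(Q) \leq \vol_F^{\BH}(Q) \leq 2^n\, \vol_T^{\BH}(Q)$ via Rogers--Shephard for the reflection body, in the paragraph immediately preceding the statement, and then deduces the proposition by the same multiplication of the $n$-th-root volume bounds with the entropy bounds that you describe. Nothing to change.
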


\subsection{Lower bounds on the normalized topological entropy}  \label{ss:lbne}
We now assume that the (possibly irreversible) Finsler metric~$F$ on~$Q$ has better regularity and convexity properties: 
Outside of the zero section, $F \colon TQ \rightarrow \R$ is of class~$C^{2}$ 
and the fiberwise second differential~of $F^2$ is positive definite. 
We will refer to such an~$F$ as to a regular Finsler metric. Under these assumptions, 
the geodesic flow of~$F$ is well defined. We denote by $h_{\top}(F)$ the topological entropy of this flow, and by
\[
\widehat{h}_{\top}^{\HT}(F) = \vol_F^{\HT}(Q)^{1/n} \, h_{\top}(F), \qquad 
\widehat{h}_{\top}^{\BH}(F) = \vol_F^{\BH}(Q)^{1/n} \, h_{\top}(F)
\]
the Holmes--Thompson and Busemann--Hausdorff normalizations of this entropy. 
Manning's inequality
\[
h_{\top}(F) \geq h_{\vol}(F)
\]
from \cite{Man79} holds also in the Finsler setting, as shown in Theorem~\ref{t:manning}. 
Then Corollary~\ref{minhvol} has the following immediate consequence. 

\begin{corollary}
\label{minhtop}
Let $Q$ be a compact $n$-dimensional manifold and
denote by~$h_{\vol}(Q)$ the infimum of~$\widehat{h}_{\vol}(g)$ over all Riemannian metrics~$g$ on~$Q$. 
Then the Holmes--Thompson normalized topological entropy of any regular Finsler metric~$F$ on~$Q$ has the lower bound
\[
\widehat{h}_{\top}^{\HT}(F) \,\geq\, c_n \, h_{\vol}(Q) ,
\]
where
\[
c_n :=  \frac{1}{(n! \, \omega_n)^{1/n}} \sim \sqrt{\frac{e}{2\pi}} \frac{1}{\sqrt n}.
\]
Moreover, if the Finsler metric $F$ is reversible, we have
\[
\widehat{h}_{\top}^{\BH}(F) \geq  \widehat{h}_{\top}^{\HT}(F) \geq 2 \2 c_n \, h_{\vol}(Q).
\]
\end{corollary}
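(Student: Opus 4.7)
The plan is to deduce this corollary directly by combining Manning's inequality $h_{\top}(F) \geq h_{\vol}(F)$ (stated as Theorem~\ref{t:manning} and referenced in the paragraph preceding the corollary) with the already-established lower bound on the normalized volume entropy from Corollary~\ref{minhvol}. Since everything needed has been assembled, there is essentially nothing to do beyond multiplying Manning's inequality by the appropriate volume factor.

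More concretely, first I would observe that multiplying both sides of $h_{\top}(F) \geq h_{\vol}(F)$ by the positive quantity $\vol_F^{\HT}(Q)^{1/n}$ yields
\[
\widehat{h}_{\top}^{\HT}(F) \;=\; \vol_F^{\HT}(Q)^{1/n}\, h_{\top}(F) \;\geq\; \vol_F^{\HT}(Q)^{1/n}\, h_{\vol}(F) \;=\; \widehat{h}_{\vol}^{\HT}(F).
\]
Applying the first inequality of Corollary~\ref{minhvol} to $\widehat{h}_{\vol}^{\HT}(F)$ then gives $\widehat{h}_{\top}^{\HT}(F) \geq c_n\, h_{\vol}(Q)$, which is the first claim.

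For the reversible case, I would proceed analogously but also invoke inequality~\eqref{reversible}, which for reversible Finsler metrics gives $\vol_F^{\HT}(Q) \leq \vol_F^{\BH}(Q)$. Multiplying by $h_{\top}(F) \geq 0$ shows that $\widehat{h}_{\top}^{\BH}(F) \geq \widehat{h}_{\top}^{\HT}(F)$, which explains the first comparison in the reversible statement. The sharper lower bound $\widehat{h}_{\top}^{\HT}(F) \geq 2\,c_n\,h_{\vol}(Q)$ then follows by applying Manning's inequality as before and using the second (reversible) inequality of Corollary~\ref{minhvol}.

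There is no real obstacle here: the substantive geometric content lives entirely in Corollary~\ref{minhvol} (Loewner ellipsoid inclusions, the Ball/Barthe bound via the inverse Brascamp--Lieb inequality, and the Rogers--Shephard bound on the reflection body) and in Manning's inequality $h_{\top}(F) \geq h_{\vol}(F)$, whose extension from the Riemannian to the Finsler setting is deferred to Theorem~\ref{t:manning} in the appendix. The only point worth being careful about is that Manning's inequality requires the geodesic flow to be well defined, which is why the corollary assumes $F$ to be a \emph{regular} Finsler metric in the sense specified at the start of \S\ref{ss:lbne}; under this regularity the flow exists and $h_{\top}(F)$ is defined as the topological entropy of its time-one map, so the chain of inequalities above makes sense.
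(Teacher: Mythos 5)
Your proposal is correct and follows exactly the route the paper itself takes: the corollary is stated as an "immediate consequence" of Theorem~\ref{t:manning} (Manning's inequality for regular Finsler metrics) applied to Corollary~\ref{minhvol}, which is precisely the chain of multiplications you write out. The only content beyond bookkeeping is your (correct) observation that the comparison $\widehat{h}_{\top}^{\BH}(F) \geq \widehat{h}_{\top}^{\HT}(F)$ for reversible $F$ comes from \eqref{reversible}, and that the regularity hypothesis on $F$ is there so that the geodesic flow and hence $h_{\top}(F)$ are defined.
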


\subsection{Finsler metrics with small topological entropy}  \label{ss:ver}
The following result is more precise than Proposition~\ref{prop:ver}.

\begin{prop} \label{p:ver}
Let $(\widetilde Q, \widetilde g)$ be a Riemannian globally symmetric space of non-compact type
and of rank~$\geq 2$.
Let $G$ be the connected component of the identity of the isometry group of $(\widetilde Q, \widetilde g)$.
Then there exist computable constants $c^{\HT} < c^{\BH} < 1$ that depend only on 
$(\widetilde Q, \widetilde g)$
with the following property:
For every discrete co-compact subgroup~$\Gamma$ of~$G$ that acts without fixed points on~$\widetilde Q$
and for every $\gve >0$ 
there exists a smooth reversible $G$-invariant Finsler metric~$F$ on $Q = \widetilde Q / \Gamma$
such that
\begin{eqnarray} 
\widehat h_{\top}^{\HT}(F) 
  &=& \widehat h_{\vol}^{\HT}(F)  
	\;\leq\; (1+\gve)\; c^{\HT} \; \widehat h_{\vol} (\widetilde g) , \label{e:HT3} \\ 
\widehat h_{\top}^{\BH}(F) 
  &=& \widehat h_{\vol}^{\BH}(F)  
	\;\leq\; (1+\gve)\; c^{\BH} \; \widehat h_{\vol} (\widetilde g).  \label{e:BH3}
\end{eqnarray}
In particular, $\widehat h_{\top}^{\HT}(Q, \cf_{\rev}) \leq c^{\HT} \; \widehat h_{\vol} (\widetilde g)$
and $\widehat h_{\top}^{\BH}(Q, \cf_{\rev}) \leq c^{\BH} \; \widehat h_{\vol} (\widetilde g)$.
\end{prop}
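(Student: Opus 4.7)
The plan is to reduce the construction to a finite-dimensional convex-geometric optimization on a Cartan subspace, following Verovic~\cite{Ver99}. Let $\mathfrak{g}=\mathfrak{k}\oplus\mathfrak{p}$ be the Cartan decomposition of $\mathfrak g$ with $K$ the stabilizer of a basepoint $x_0 \in \widetilde Q$, let $\mathfrak{a}\subset\mathfrak{p}$ be a maximal abelian subspace with Weyl group~$W$, and let $\Phi^+$ denote the positive restricted roots with multiplicities~$m_\ga$. By Kostant's convexity theorem, $\Ad(K)$-invariant centrally symmetric convex bodies in~$\mathfrak{p}$ are in bijective correspondence with $W$-invariant centrally symmetric convex bodies $B \subset \mathfrak{a}$, via restriction. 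Any smooth strictly convex such~$B$ therefore determines a $G$-invariant regular reversible Finsler metric $F_B$ on~$\widetilde Q$, which descends to~$Q=\widetilde Q/\Gamma$.

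For $F_B$, both $h_{\vol}$ and the Finsler volumes are computable directly from~$B$. Using Cartan's polar decomposition $G=K\,\overline{A^+}K$, the Jacobian formula
\[
dV_{\widetilde g} \,=\, c\,\prod_{\ga\in\Phi^+}\sinh(\ga(H))^{m_\ga}\,dH\,dk ,
\]
and the fact that the flats $\exp(\mathfrak{a})\cdot x_0$ remain totally geodesic for $F_B$ (so that $d_{F_B}(x_0,\exp(H)x_0)=\|H\|_B$ for $H\in\overline{\mathfrak{a}^+}$), a Laplace estimate of $\Vol(B_{x_0}(F_B,R))$ as $R\to\infty$ gives
\[
h_{\vol}(F_B) \,=\, 2\,h_B(\rho), \qquad h_B(\rho):=\max_{v\in B}\rho(v),
\]
where $\rho=\tfrac{1}{2}\sum_{\ga\in\Phi^+}m_\ga\,\ga$; in particular $h_{\vol}(\widetilde g)=2\|\rho\|_{\widetilde g^*}$. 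The Finsler volumes factor as $\vol_{F_B}^{\HT}(Q)=\kappa^{\HT}(B)\,\vol_g(Q)$ and $\vol_{F_B}^{\BH}(Q)=\kappa^{\BH}(B)\,\vol_g(Q)$, with constants $\kappa^{\HT}(B), \kappa^{\BH}(B)$ computable from the $K$-orbit integral over $\mathfrak{p}$ of the polar, respectively primal, body. Finally, the equality $h_{\top}(F_B)=h_{\vol}(F_B)$, complementing Theorem~\ref{t:manning}, follows as in Verovic by observing that the geodesic flow of a $G$-invariant Finsler metric on $\widetilde Q$ is algebraic, with unstable foliation spanned by the positive root spaces, whose exponential separation rate matches the exponential growth rate appearing in the volume estimate.

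Consequently the ratio $R^{\HT}(B) := \widehat h_{\vol}^{\HT}(F_B)/\widehat h_{\vol}(\widetilde g) = \bigl(h_B(\rho)/\|\rho\|_{\widetilde g^*}\bigr)\,\kappa^{\HT}(B)^{1/n}$ is independent of~$\Gamma$, and likewise for $R^{\BH}(B)$. The constants $c^{\HT}, c^{\BH}$ of the proposition are defined as the infima of these two ratios over $W$-invariant centrally symmetric convex bodies $B\subset\mathfrak{a}$, and both are strictly less than~$1$ because, in rank~$\geq 2$, the vector~$\rho$ lies in the open positive Weyl chamber, so one can take $B$ to be a $W$-symmetrized slab that is thin in the $\rho$-direction but wide in transverse directions; a direct computation with this family yields $R^{\HT}(B), R^{\BH}(B)<1$. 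The strict inequality $c^{\HT}<c^{\BH}$ follows from the Blaschke--Santal\'o inequality: the quotient $R^{\HT}(B)/R^{\BH}(B)$ is proportional to $(|B|_{\widetilde g}\,|B^*|_{\widetilde g}/\omega_n^2)^{1/n}$, which is strictly less than~$1$ unless $B$ is an ellipsoid, and the optimizing~$B$ is not ellipsoidal. Approximating a near-optimal $W$-invariant polytope by a smooth strictly convex body $B_\gve$ produces Finsler metrics $F_{B_\gve}$ satisfying~\eqref{e:HT3} and~\eqref{e:BH3}, with the $(1+\gve)$ factor absorbing the smoothing error and any non-attainment of the infima. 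The main obstacle is the Finsler Manning equality $h_{\top}=h_{\vol}$ for $G$-invariant metrics, where the non-positive curvature argument of the Riemannian case must be replaced by a direct use of the algebraic structure of the geodesic flow on a locally symmetric space.
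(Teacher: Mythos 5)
Your overall plan—reduce to a $W$-invariant convex body in a Cartan subspace $\mathfrak a$, compute $h_{\vol}$ and the Finsler volumes from it, then optimize—follows the same Verovic-style reduction as the paper. But there are three places where your argument has real gaps.

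First, the Finsler Manning equality $h_{\top}(F_B)=h_{\vol}(F_B)$. You flag this as ``the main obstacle'' and then dispose of it by ``observing'' that the geodesic flow is algebraic with unstable foliation spanned by positive root spaces. This is not a proof, and in fact Verovic's paper only treats the volume entropy, so the phrase ``follows as in Verovic'' does not apply. The paper handles this step by citing Deng--Hou (a $G$-invariant reversible Finsler metric on a symmetric space of non-compact type has negative flag curvature) and then Egloff's extension of Manning's equality to reversible Finsler metrics of negative flag curvature. Without a citable replacement for that pair of references, your proof of the displayed equalities in~\eqref{e:HT3}--\eqref{e:BH3} has a hole.

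Second, the claim $c^{\BH}<1$. You propose a $W$-symmetrized slab that is thin in the $\rho$-direction and wide transversally. But $\rho$ lies in the \emph{interior} of the positive chamber precisely because rank $\geq 2$, so the Weyl orbit of $\rho$ spans $\mathfrak a^*$; the $W$-symmetrization of $\{|\rho(\cdot)|\le a\}$ is then a bounded polytope, not a slab, and after normalizing the $\widetilde g$-volume to be fixed you cannot make $h_B(\rho)$ arbitrarily small. Whether this family actually realizes a ratio strictly below $1$ requires the ``direct computation'' you defer, and it also has to be weighted by the $\Ad(K)$-orbital measure on $\mathfrak p$ rather than plain Lebesgue measure on $\mathfrak a$. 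The paper avoids this entirely: Verovic proves that the unique minimizer $F_0$ of $\widehat h^{\BH}_{\vol}$ among $G$-invariant Finsler metrics corresponds to a body $C_0$ that (in rank $\geq 2$) is not an ellipsoid, hence $F_0\neq\sqrt{\widetilde g}$, and the strict inequality $c^{\BH}<1$ falls out of the uniqueness of the minimizer.

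Third, the inequality $c^{\HT}<c^{\BH}$. You attribute it to the Blaschke--Santal\'o inequality on $\mathfrak a$, saying the ratio is ``proportional to $(|B||B^*|/\omega_n^2)^{1/n}$.'' That is not the correct quantity. The Holmes--Thompson and Busemann--Hausdorff volumes compare via the Santal\'o product of the unit balls in the fibres $T_qQ\cong\mathfrak p$, which has dimension $n=\dim Q$, not $\dim\mathfrak a=\operatorname{rank}$. After reducing by $K$-invariance this becomes a comparison of two $K$-orbit-weighted integrals over $B\cap\overline{\mathfrak a^+}$ and $B^*\cap\overline{\mathfrak a^+}$, not a product of plain volumes on $\mathfrak a$. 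The paper instead just invokes the general reversible inequality $\vol^{\HT}_F(Q)\le\vol^{\BH}_F(Q)$ with equality iff $F$ is Riemannian (inequality \eqref{reversible}), which does follow from Blaschke--Santal\'o, but applied fibrewise in dimension $n$, and then notes that $F_0$ is not Riemannian.

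The smoothing step and the $(1+\gve)$ bookkeeping in your last paragraph match the paper's Step~\eqref{e:ver1} and are fine.
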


\proof
Fix a point $x_0 \in G$, let $K \subset G$ be the stabilizer of~$x_0$,
let $\frak{g}$ and~$\frak k$ be the Lie algebras of $G$ and~$K$, 
and let $\frak{g} = \frak{k} \oplus \frak{p}$ be the Cartan decomposition associated with~$x_0$.
(Then $\frak{p} \cong T_{x_0} \widetilde Q$.)
Choose a maximal abelian subalgebra $\frak a \subset \frak p$,
and let $W_{\frak a}$ be its Weyl group.

The set of $G$-invariant Finsler metrics on $\widetilde Q$ is in bijection with 
the set
$$
\cc \,=\, \left\{ C \subset \frak a \mid C \mbox{ centrally symmetric $W_{\frak a}$-invariant convex body} \right\} .
$$
In general, 
the Finsler metric assoiated with $C \in \cc$ is only continuous,
and it is smooth if and only if the boundary of~$C$ is smooth.
Let $C_0$ be ``the least convex" body in~$\cc$ of $\widetilde g$-volume one.
(For details we refer to \cite{Ver99}, but it should become clear from Examples~\ref{ex:ver} below
how to construct $C_0$.)
Since $\dim \frak a = \rank (G/K) \geq 2$, $C_0$ is not just a segment, 
and hence not an ellipsoid, i.e., the Finsler metric~$F_0$ associated with $C_0$
is not Riemannian. 
In fact, $F_0$ is not smooth.
Verovic shows that $F_0$ is the unique minimizer of $\widehat h_{\vol} (F)$ 
among all $G$-invariant continuous Finsler metrics on~$\widetilde Q$.
In particular, the constant~$c^{\BH}$ defined by
\begin{equation} \label{e:ver0}
\widehat h_{\vol}^{\BH}(F_0) \,=\, c^{\BH} \: \widehat h_{\vol}(\widetilde g)
\end{equation}
is strictly less than $1$. 
There is a simple formula computing this constant in terms of the Weyl data of~$\frak a$.

Fix $\gve >0$, choose a smooth body $C$ from $\cc$ such that
$$
C_0 \,\subset\, C \,\subset\, (1+\gve)\, C_0 ,
$$
and let $F$ be the associated Finsler metric.
Then
\begin{equation} \label{e:ver1}
\widehat h_{\vol}^{\BH} (F) \,\leq\, (1+\gve) \, \widehat h_{\vol}^{\BH} (F_0) ,
\end{equation}
cf.\ \S \ref{ss:volent}.
Since $\widetilde Q$ is of non-compact type, $G$ is semi-simple,
see for instance \cite[Proposition 6.38 (d)]{Zil10}.
It thus follows from \cite[Theorem 6.3 (2)]{DH07} that $F$ has negative flag curvature. 
Therefore, the extension of Manning's equality to reversible Finsler metrics  
in~\cite[Theorem~6.1]{Egl97} implies that
\begin{equation} \label{e:ver2}
\widehat h_{\top}^{\BH} (F) \,=\,  \widehat h_{\vol}^{\BH} (F) .
\end{equation}
The line~\eqref{e:BH3} follows from \eqref{e:ver2}, \eqref{e:ver1}, and \eqref{e:ver0}.

Define the constant $c^{\HT}$ by
$$
\widehat h_{\vol}^{\HT}(F_0) \,=\, c^{\HT} \; \widehat h_{\vol}(\widetilde g) .
$$
By \eqref{reversible} and the Santal\'o inequality, $c^{\HT} < c^{\BH}$.
Repeating the above arguments we obtain~\eqref{e:HT3}.
\proofend

\begin{exs}  \label{ex:ver}
{\rm
{\bf 1.}
Let $Q$ be a compact quotient of the symmetric space~$(\HH^2)^k$ of rank~$k$.
The maximal abelian subalgebra~$\frak a$ is~$\R^k$,
with Weyl chamber $\frak a^+ = \R_{>0}^k$.
The set of positive roots is given by the dual basis $\gve_1, \dots, \gve_k$
of the standard basis $e_1, \dots, e_k$ of~$\R^k$.

The standard Riemannian metric~$g$ on~$Q$ corresponds, up to to scaling, to the closed unit ball~$B$ 
in~$\frak a = \R^k$,
and we take $F_0$ to be the non-smooth Finsler metric corresponding to the cross-polytope~$C_0$
in~$\frak a$ with vertices $\pm e_1, \dots, \pm e_k$.
Now Proposition~2.2 in~\cite{Ver99} shows that
$$
\begin{array}{rclcl}
\widehat h_{\vol}^{\BH} (F_0) &=& 
\left( \vol_{F_0}^{\BH}(Q) \right)^{1/(2k)} 
     \displaystyle \max_{v \in C_0 \cap \overline{\frak a^+}} \bigl( \gve_1(v) + \dots + \gve_k(v) \bigr) &=&  
		\left( \vol_{F_0}^{\BH}(Q) \right)^{1/(2k)}  
		\\ [1em]
\widehat h_{\vol}^{\BH} (g) &=& 
\left( \vol_g^{\BH}(Q) \right)^{1/(2k)} 
     \displaystyle \max_{v \in B \cap \overline{\frak a^+}} \bigl( \gve_1(v) + \dots + \gve_k(v) \bigr) &=&
		\left( \vol_g^{\BH}(Q) \right)^{1/(2k)} \sqrt{k} .
\end{array}
$$
Let $D_{x_0}(F_0)$ resp.\ $D_{x_0}(g)$ be the unit ball of $F_0$ resp.\ $g$
in $T_{x_0}\widetilde Q \sim \frak p$.
By $G$-invariance of~$F_0$ and~$g$, and in view of the definition of the
Busemann--Hausforff volume in~\S \ref{ss:vol}, 
\begin{equation} \label{e:volq}
\frac{\vol_{F_0}^{\BH}(Q)}{\vol_g^{\BH}(Q)} \,=\,
\frac{\vol_{g}(D_{x_0}(g))}{\vol_g(D_{x_0}(F_0))} .
\end{equation}

\begin{lem}
The quotient on the right of \eqref{e:volq} is equal to $\frac{(2k)!}{2^k \2 k!}$.
\end{lem}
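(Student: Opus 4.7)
The plan is to make the ratio completely explicit by identifying $\mathfrak{p} \cong T_{x_0}\widetilde{Q}$ with $(\mathbb{R}^2)^k$ in such a way that the isotropy group $K$ and the Cartan subspace $\mathfrak{a}$ become transparent. For $\widetilde{Q} = (\mathbb{H}^2)^k$ one has $K = \SO(2)^k$ acting by rotation on each $\mathbb{R}^2$ factor, and $\mathfrak{a} = \mathbb{R}^k$ embeds as the subspace selecting the first coordinate of each factor. With this choice the Riemannian metric $g$ at $x_0$ is the standard Euclidean inner product on $\mathbb{R}^{2k}$, and the unit ball of any $G$-invariant Finsler metric is the $K$-orbit of its intersection with $\mathfrak{a}$, since every $K$-orbit in $\mathfrak{p}$ meets $\mathfrak{a}$ in a $W_{\mathfrak{a}}$-orbit.

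Under this identification I would read off the two unit balls. The ball $B \subset \mathfrak{a}$ sweeps out under $K$ to the standard Euclidean unit ball in $\mathbb{R}^{2k}$, so
$$
\vol_g (D_{x_0}(g)) \,=\, \omega_{2k} \,=\, \frac{\pi^k}{k!} .
$$
The cross-polytope $C_0 = \{(r_1,\dots,r_k) : \sum_i |r_i| \leq 1\}$ sweeps out under $K$ to
$$
D_{x_0}(F_0) \,=\, \Bigl\{ (v_1,\dots,v_k) \in (\mathbb{R}^2)^k \,\Big|\, \sum_{i=1}^k |v_i| \leq 1 \Bigr\} ,
$$
where $|v_i|$ is the Euclidean norm on the $i$-th $\mathbb{R}^2$ factor.

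To compute $\vol_g (D_{x_0}(F_0))$, I would pass to polar coordinates $(r_i,\theta_i)$ in each $\mathbb{R}^2$ factor. The Jacobian is $r_1 \cdots r_k$ and the angles integrate independently, contributing $(2\pi)^k$. The radial part is a standard Dirichlet integral,
$$
\int_{\substack{r_i \geq 0 \\ r_1 + \cdots + r_k \leq 1}} r_1 \cdots r_k \, dr_1 \cdots dr_k \,=\, \frac{\Gamma(2)^k}{\Gamma(2k+1)} \,=\, \frac{1}{(2k)!},
$$
so $\vol_g (D_{x_0}(F_0)) = (2\pi)^k/(2k)!$. Dividing,
$$
\frac{\vol_g (D_{x_0}(g))}{\vol_g (D_{x_0}(F_0))} \,=\, \frac{\pi^k / k!}{(2\pi)^k / (2k)!} \,=\, \frac{(2k)!}{2^k \, k!} ,
$$
which is the claimed value. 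The only conceptual step is the identification of the $K$-orbit of $C_0$; the two volume computations are then routine and present no real obstacle.
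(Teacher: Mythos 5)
Your proof is correct and follows essentially the same route as the paper: you make explicit the identification $\mathfrak{p}\cong(\mathbb{R}^2)^k$ with $K=\SO(2)^k$ acting by rotation on each factor, which is precisely what the paper uses when it observes that the $\Ad(K)$-orbit of $p=(p_1,\dots,p_k)$ is the $k$-torus of circles of radii $|p_i|$, and the angular factor $(2\pi)^k$ cancels so that your volume computation reduces to the paper's two integrals of $r_1\cdots r_k$ over $B\cap\overline{\mathfrak a^+}$ and $C_0\cap\overline{\mathfrak a^+}$. The only cosmetic difference is that you invoke the Dirichlet integral formula where the paper says "Fubini's theorem and induction."
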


\proof
We have
$D_{x_0}(F_0) = \Ad (K) (C_0)$ and $D_{x_0}(g) = \Ad (K) (B)$.
For $k=1$, when $G = \SL(2;\R)$ and $K = \SO(2;\R)$,
a computation in the orthogonal basis 
$\begin{brsm} 1&0\\0&-1\end{brsm}$,
$\begin{brsm} 0&1\\1&0\end{brsm}$ of~$\frak p$
shows that the orbit $\Ad (K) p$ of a point $p \in \frak p$
is the circle through~$p$.
For general~$k \geq 1$, the $\Ad(K)$-orbit of $p = (p_1, \dots, p_k) \in \frak p$
is the $k$-torus made of circles of radius~$|p_i|$.
Since the restrictions of $F_0$ and $g$ to $\frak p$ are $\Ad (K)$-invariant,
it follows that the quotient on the right of~\eqref{e:volq} is equal to the quotient of 
the two integrals
$$
\int_{B \cap \overline{\frak a^+}} 
   \left( x_1 x_2 \cdots x_k \right)  dx_1 dx_2 \cdots dx_k,
\qquad
\int_{C_0 \cap \overline{\frak a^+}} 
   \left( x_1 x_2 \cdots x_k \right)  dx_1 dx_2 \cdots dx_k  .
$$
The first integral equals $\frac{1}{2^k \2 k!}$ and the second equals $\frac{1}{(2k)!}$
as one finds
using Fubini's theorem and induction.
\proofend

Together with the lemma we conclude that
$$
c^{\BH}_k \,:=\, c^{\BH} \left((\HH^2)^k \right) \,=\, \frac{\widehat h_{\vol}^{\BH} (F_0)}{\widehat h_{\vol}^{\BH} (g)}
 \,=\,   \left( \frac{(2k)!}{k!} \right)^{1/(2k)} \frac{1}{\sqrt{2k}} .
$$

We next compute the Holmes--Thompson volumes $\vol_{F_0}^{\HT}(Q)$ and $\vol_{g}^{\HT}(Q)$.
Denote by $C_0^*$ and~$B^*$ the polar sets of $C_0$ and~$B$
in~$\frak p^*$, respectively,
and by $g^*$ the dual Riemannian metric on~$\frak p^*$.
By $G$-invariance of~$F_0$ and~$g$, and in view of the definition of the
Holmes--Thompson volume in~\S \ref{ss:vol}, 
\begin{equation} \label{e:volqHT}
\frac{\vol_{F_0}^{\HT}(Q)}{\vol_g^{\HT}(Q)} \,=\,
\frac{\vol_{g^*}(D_{x_0}^*(F_0))}{\vol_{g^*}(D_{x_0}^*(g))} .
\end{equation}
Using that $D_{x_0}^*(F_0) = \Ad^* (K) (C_0^*)$ and $D_{x_0}^*(g) = \Ad^* (K) (B^*)$,
that the polar set $C_0^*$ of the cross-polytope~$C_0$ is the unit cube, 
and the computations
$$
\int_{B^*   \cap \R_{\geq 0}^k} \left( x_1 \cdots x_k \right) \, dx_1 \cdots dx_k \,=\, \frac{1}{2^k\, k!} ,
\qquad
\int_{C_0^* \cap \R_{\geq 0}^k} \left( x_1 \cdots x_k \right) dx_1 \cdots dx_k \,=\, \frac{1}{2^k} ,
$$
we find that the right quotient in \eqref{e:volqHT} is $k!$.
Therefore,
$$
c_k^{\HT} \,:=\, c^{\HT} \left((\HH^2)^k \right) \,=\, 
\frac{\widehat h_{\vol}^{\HT} (F_0)}{\widehat h_{\vol}^{\HT} (g)}
 \,=\,   \left( k! \right)^{1/(2k)} \frac{1}{\sqrt{k}} .
$$

It is shown in \cite{Mer16} that
$$
h_{\vol}^{\sym} (Q) \,=\, h_{\vol}(Q) .
$$
Together with Theorem~\ref{t:Finslerintro} and Proposition~\ref{prop:ver} we obtain
\begin{eqnarray*}
2 \2 c_{2k} \,h_{\vol} (Q) 
&\leq& \widehat h_{\top}^{\HT} (Q,\cf_{\rev}) \;\leq\; 
c_k^{\HT} \,h_{\vol} (Q) , \\
2 \2 c_{2k} \,h_{\vol} (Q) 
&\leq& \widehat h_{\top}^{\BH} (Q,\cf_{\rev}) \;\leq\; 
c_k^{\BH} \,h_{\vol} (Q) .
\end{eqnarray*}

The sequence $c_k^{\BH}$, $k \geq 2$, is monotone decreasing to $\sqrt{\frac 2 e} \approx 0.858$, 
starting with
$$
c_2^{\BH} \approx 0.931
\quad \mbox{ and } \quad 
c_3^{\BH} \approx 0.907 .
$$
The sequence $c_k^{\HT}$ is monotone decreasing to $\sqrt{\frac 1 e} \approx 0.616$, 
starting with
$$
c_2^{\HT} \approx 0.841
\quad \mbox{ and } \quad 
c_3^{\HT} \approx 0.778 .
$$
In contrast, the sequence $2 \2 c_{2k} =  \frac{2}{\bigl( (2k)! \, \omega_{2k} \bigr)^{1/(2k)}}$ 
is monotone decreasing like
$\sqrt{\frac{e}{2\pi}} \frac{1}{\sqrt{2k}}$, starting with
$$
2 \2 c_4 \approx 0.606
\quad \mbox{ and } \quad 
2 \2 c_6 \approx 0.508 .
$$

The constant $h_{\vol}(Q)$ can be computed as follows.
On $(\HH^2)^k$ the minimum of the volume entropies among symmetric metrics
is attained exactly by multiples of $g^k := g \times \dots \times g$,
where $g$ is the metric on~$\HH$ of constant curvature~$-1$,
see~\cite[\S 2]{CF03}.
Since $h_{\vol}(g) =1$, we have 
$h_{\vol}(g^k) = \sqrt{\sum (h_{\vol}(g))^2}    
= \sqrt{k}$.
Hence
$$
h_{\vol}(Q) \,=\, h_{\vol}^{\sym} (Q) \,=\, 
\left( \vol_{g^k}(Q)\right)^{1/2k} \sqrt{k} . 
$$
For instance, if $Q$ is the product of orientable surfaces of genus~$k_j$,
then
$$
\vol_{g^k}(Q) \,=\, \prod_{j=1}^k \area_g(Q_{k_j}) \,=\, 
2^k \prod_{j=1}^k \sqrt{\pi (k_j-1)} .
$$

\medskip
{\bf 2.}
Take the $5$-dimensional symmetric space $\SL (3;\R) / \SO (3;\R)$ of rank~$2$.
The ``least convex" body $C_0$ from~$\cc$ is a regular hexagon.
We scale this hexagon such that it is the hexagon~$H_{{\mbox{\tiny in}}}$ 
inscribed the unit disc~$B$ of~$\R^2 = \frak a$.
Verovic computed in \cite[p.\ 1644]{Ver99}
that for the Finsler metric corresponding to $\sqrt{\frac 23} \, H_{{\mbox{\tiny in}}}$, 
the volume growth is~$2$. 
Hence the volume growth of the Finsler metric corresponding to~$H_{{\mbox{\tiny in}}}$ 
is $\sqrt{\frac 32} \,2$.
Further, the volume growth of the Riemannian metric corresponding to~$B$  
is $2 \sqrt{2}$.

To compute the volumes,
since $\SO(3;\R)$ is 3-dimensional
we now have to take $r^3 \2 dx dy$ as density on~$\frak a$.
The integral of~$r^3$ over~$B$ and over $H_{{\mbox{\tiny in}}}$ are, respectively,
$\frac{2\pi}{5}$ and 
$$
I_{{\mbox{\tiny in}}} \,=\, \frac{3 \sqrt{3}}{640}  \left( 27 \ln 3 + 68 \right) .
$$
With this 
we find along the lines of the previous examples that 
$$
c^{\BH} (\SL (3;\R) / \SO (3;\R)) \,=\,  \left( \frac{2\pi}{5 I_{{\mbox{\tiny in}}}} \right)^{1/5} \frac{\sqrt{3}}{2}
 \,\approx\, 0.95 .
$$

The polar set of $H_{{\mbox{\tiny in}}}$ is a regular hexagon $H_{{\mbox{\tiny out}}}$ 
circumscribed the unit co-disc. 
After identifying $\frak a$ with $\frak a^*$ by the inner product, 
we have that $H_{{\mbox{\tiny out}}}$ is obtained from $H_{{\mbox{\tiny in}}}$
by dilation by $\frac{2}{\sqrt{3}}$ and rotation by~$\frac{\pi}{6}$.
Hence the integral of $r^3$ over $H_{{\mbox{\tiny out}}}$ is
$$
I_{{\mbox{\tiny out}}} \,=\, \left( \frac{2}{\sqrt{3}} \right)^{5} I_{{\mbox{\tiny in}}}.
$$
Therefore, 
$$
c^{\HT} (\SL (3;\R) / \SO (3;\R)) \,=\,  
\left( \frac{5 I_{{\mbox{\tiny out}}} }{2\pi} \right)^{1/5} \frac{\sqrt{3}}{2} \,=\, 
\frac{\sqrt{3}}{2 \: c^{\BH}}
 \,\approx\, 0.912  .
$$

These two constants
should be compared with the constant $2 c_5 \approx 0.551$
for the lower bound in Corollary~\ref{minhtop}.
}
\end{exs}

\begin{question}
{\rm 
Recall that the non-smooth Finsler metric~$F_0$ is the unique minimizer of
$\widehat h_{\vol}^{\BH}(F)$ among $G$-invariant continuous Finsler metrics on~$Q$.
This Finsler metric has a high degree of symmetry: 
Its restriction to $\frak a$ is invariant under the Weyl group, and it is $G$-invariant.
Since in Theorems~\ref{t:katok} and~\ref{t:BCG} the Riemannian minimizers are the locally symmetric
metrics, 
one may expect that $F_0$ minimizes $\widehat h_{\vol}^{\BH}(F)$ and
$\widehat h_{\vol}^{\HT}(F)$
among {\it all} continuous Finsler metrics on~$Q$.
Would this imply that there are no smooth minimizers?
}
\end{question}


\section{A lower entropy bound for Reeb flows on spherizations} \label{s:Floer}    

Recall from Theorem~\ref{t:mainintro} that there cannot be a uniform 
lower bound for the normalized topological entropy of Reeb flows.
In this section we show that for many base manifolds~$Q$,
one nevertheless has a control on the entropy collapse of Reeb flows 
on the spherization~$S^*Q$ in terms of the geometry of their defining star fields:
Entropy collapse can only happen if some fibers are far from convex.
The proof relies on Floer homology.

We consider a closed manifold $Q$ and two Reeb flows on~$S^*Q$,
one arbitrary and one Finsler. 
As in the previous section and as in Appendix~\ref{ss:RSG} we work in~$T^*Q$.
We then have two Hamiltonian functions $H,F \colon T^*Q \to \R$
that are fiberwise positively homogeneous of degree one 
and smooth and positive away from the zero section.
Again we denote by $\phi_H^t$ the flow of~$H$ on $S^*(H) = H^{-1}(1)$,
and similarly for~$F$.
Let $\sigma_-$ and $\sigma_+$ be the smallest positive numbers such that
$$
\frac{1}{\sigma_-}\, F \,\leq\, H \,\leq\, \sigma_+\, F  \quad \mbox{ on } T^*Q .
$$
For the co-disk bundles we then have
$$
\frac{1}{\sigma_+}\, D^*(F) \,\subset\, D^*(H) \,\subset\, \sigma_- \, D^*(F),
$$
see Figure~\ref{fig.GHG}.

\begin{figure}[h]   
 \begin{center}
  \psfrag{H}{$D^*_q (H)$}  \psfrag{F-}{$\frac{1}{\sigma_+} D^*_q (F)$} \psfrag{F}{$\sigma_- \, D^*_q (F)$} 
  \leavevmode\includegraphics{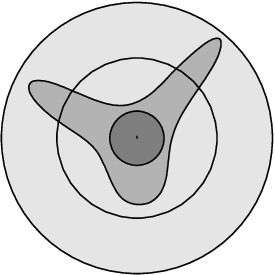}
 \end{center}
 \caption{The co-disks $\frac{1}{\sigma_+} D^*_q (F) \subset D^*_q (H) \subset \sigma_- \, D^*_q (F)$ in $T_q^*Q$}   
\label{fig.GHG} 
\end{figure}

The number
$$
\sigma (H;F) \,:=\, \sigma_- \, \sigma_+ .
$$
does not change under rescalings of~$H$ or~$F$. 
We have $\sigma (H;F) \geq  1$ with equality if and only if $H=cF$ for some positive number~$c$. 
Moreover, $\sigma (H;F) \leq  \sqrt n$ if $H$ is a reversible Finsler Hamiltonian and 
$F$ is chosen to be the Riemannian Hamiltonian associated with the outer Loewner ellipsoids of~$D^*(H)$, see~\eqref{john}.

\begin{prop} \label{p:control}
Let $F$ be a (possibly irreversible) $C^\infty$-regular Finsler metric on the closed manifold~$Q$.
Then for every $C^\infty$-smooth Reeb flow $\phi_H^t$ on~$S^*Q$ we have
$$
\widehat h_{\top}^{\HT}(\phi_H) \,\geq\, \frac{1}{\sigma (H;F)} \, \widehat h_{\vol}(F) .
$$
\end{prop}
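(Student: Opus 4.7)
\emph{Plan of proof.}
The strategy, following the argument used in \cite{MacSch11} to prove Theorem~\ref{t:MacSch}, is to split the desired inequality into an entropy part and a volume part,
\begin{equation*}
h_{\top}(\phi_H) \,\geq\, \sigma_-^{-1}\, h_{\vol}(F),
\qquad
\vol_H^{\HT}(Q)^{1/n} \,\geq\, \sigma_+^{-1}\, \vol_F^{\HT}(Q)^{1/n} ,
\end{equation*}
whose product gives the claim since $\sigma(H;F) = \sigma_-\sigma_+$. The volume estimate follows at once from the inclusion $\sigma_+^{-1} D^*(F) \subset D^*(H)$ and the monotonicity of the Liouville integral over co-disk bundles. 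The entire work therefore lies in the entropy estimate.

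I would prove this entropy inequality by Lagrangian Floer homology of cotangent fibers, combined with a Yomdin-type lower bound for topological entropy. Fix two generic distinct points $q,q' \in Q$ and consider the cotangent fibers $L_q, L_{q'} \subset T^*Q$; for a fiberwise homogeneous degree-one Hamiltonian~$\tilde H$, let $n_{\tilde H}(T;q,q')$ denote the number of time-$\leq T$ Hamiltonian chords of~$X_{\tilde H}$ from $L_q$ to~$L_{q'}$ at energy one. A Yomdin-type growth-of-intersection estimate for the image Lagrangian $\phi_H^T(L_q)$, applied inside the compact region $S^*(H)$, gives
\begin{equation*}
h_{\top}(\phi_H) \,\geq\, \limsup_{T\to\infty} \frac{1}{T}\, \log n_H(T;q,q').
\end{equation*}
On the other hand, the filtered wrapped Floer homology of the pair $(L_q, L_{q'})$ in~$T^*Q$ is, via the Abbondandolo--Schwarz/Viterbo isomorphism, identified with the filtered singular homology of the based path space $P_{q,q'}(Q)$, where the action of a Hamiltonian chord on the Floer side matches the length of a path (measured in the associated homogeneous metric) on the topological side. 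Applying this identification to both $H$ and~$F$, the pointwise inequality $F/\sigma_- \leq H$ translates into a monotone continuation map whose effect on the action filtration is precisely a rescaling by~$\sigma_-$, yielding
\begin{equation*}
n_H(\sigma_- T;\, q, q') \,\geq\, n_F(T;\, q, q')
\qquad \text{for $T$ sufficiently large.}
\end{equation*}

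For the Finsler side, $n_F(T;q,q')$ counts $F$-geodesic arcs from $q$ to $q'$ of length $\leq T$, and a Manning-type computation in the universal cover~$\widetilde Q$ (as in Proposition~\ref{p:manning}) gives that its exponential growth rate is at least~$h_{\vol}(F)$ for~$q'$ in a set of positive measure. Chaining the three displayed inequalities then produces $h_{\top}(\phi_H) \geq \sigma_-^{-1} h_{\vol}(F)$, and combined with the volume estimate this is the proposition.

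The main obstacle is the Floer-theoretic step. Since $H$ is not assumed fiberwise convex, one cannot argue geometrically via a ball inclusion $B_q(H, T) \subset B_q(F, \sigma_- T)$ in~$\widetilde Q$; such an inclusion may simply fail, as noted in the discussion preceding~\eqref{e:hsigmaG}. The entire comparison has to be performed inside the Lagrangian Floer chain complex, where $F/\sigma_- \leq H$ does induce a well-defined monotone continuation map with a controlled action shift. Translating a Floer-homological lower bound back into a lower bound on the actual chord count $n_H(T;q,q')$ (rather than only on a Betti number) is the delicate part, and here I would follow closely the argument of~\cite{MacSch11}.
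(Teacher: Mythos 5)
Your decomposition into an entropy estimate and a volume estimate is exactly how the paper proceeds (after a normalization that makes $\sigma_-=1$), and the identification of Lagrangian Floer homology plus a Yomdin-type argument as the key tools is also correct. However, the entropy part as you have written it contains two concrete gaps that must be repaired, and both are avoided by the way the paper actually packages the Floer input.

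\textbf{Gap 1: the Yomdin step.} Your displayed inequality $h_{\top}(\phi_H) \geq \limsup_{T\to\infty}\frac{1}{T}\log n_H(T;q,q')$ for a \emph{fixed} pair $(q,q')$ is not a consequence of Yomdin's theorem. Yomdin bounds the topological entropy from below by the exponential growth rate of the \emph{volume} of the image of a compact submanifold; here the correct object is the fiber disk $D^*_q(H)$ (not the noncompact fiber $L_q$), and one gets
$h_{\top}(\phi_H) = h_{\top}(\Phi_H|_{D^*(H)}) \geq \lim_n \tfrac1n \log \mu_{g^*}\bigl(\Phi_H^n(D^*_q(H))\bigr)$,
where $\Phi_H$ is the flow of $H^2$ (to ensure smoothness at the zero section). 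Passing from this volume to chord counts goes via a Crofton/coarea inequality that gives the \emph{integrated} count $\int_{q'} n_H(T;q,q')\,dq'$ as a lower bound for the volume. A pointwise bound on the chord count at a fixed $q'$ by the entropy does not follow, and for non-generic $q'$ the count can a priori grow faster than $e^{h_{\top}T}$. Your argument would therefore need to pass to the integrated count before invoking Yomdin.

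\textbf{Gap 2: the Floer step.} The monotone continuation map arising from $F/\sigma_- \leq H$ gives a rank inequality for the persistence maps, $\operatorname{rank}(\Psi_H^{\sigma_- T}) \geq \operatorname{rank}(\Psi_F^{T})$, not the chord-count inequality $n_H(\sigma_- T;q,q') \geq n_F(T;q,q')$ that you write. The number $n_F$ can greatly exceed the Floer rank (cancellations in the differential), so nothing forces the count for $H$ to dominate the count for $F$. What does follow, for generic $q'$, is the chain $n_H(\sigma_- T;q,q') \geq \operatorname{rank}(\Psi_H^{\sigma_- T}) \geq \operatorname{rank}(\Psi_F^{T})$, and then one uses the Abbondandolo--Schwarz identification to bound $\operatorname{rank}(\Psi_F^T)$ from below by the number of homotopy classes representable by paths from $q$ to $q'$ of $F$-length $\leq T$, whose growth rate is $\gamma(F) = h_{\vol}(F)$. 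Your third displayed inequality should really be a statement about this Floer rank, not about $n_F$.

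Both gaps are precisely what is encapsulated in the step the paper cites from~\cite{MacSch11} (Theorem~4.6 and \S5.1): the Floer argument is there formulated directly as a lower bound on $\mu_{g^*}\bigl(\Phi_H^n(D^*_q(H))\bigr)$, already integrated over the base, so that Yomdin can be applied once to volume growth and the pointwise chord-count issue never arises. You flag the right delicacy (passing from Floer rank to an honest chord count) at the end of your proposal, but you under-estimate what needs fixing: your first inequality is not Yomdin, and your second inequality is stated for the wrong quantities.
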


\proof
After scaling $F$ we can assume that $\sigma_- =1$. 
We abbreviate $\sigma (H;F) = \sigma_+ =: \sigma$.

\begin{lem} \label{le:topvol}
$h_{\top}(\phi_H) \geq h_{\vol}(F)$.
\end{lem}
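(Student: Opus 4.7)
The plan is to adapt the Lagrangian Floer--theoretic argument of Macarini--Schlenk \cite{MacSch11} to our (possibly non-convex) setting.

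Fix two points $q_0, q_1 \in Q$ in generic position and, for each $T > 0$, let $N_H(q_0, q_1; T)$ denote the number of Reeb chords of $\phi_H$ from $S^*_{q_0}(H)$ to $S^*_{q_1}(H)$ of length at most~$T$. First, I would invoke the chord-to-entropy criterion used in~\cite{MacSch11}: for a transversal pair of cotangent fibers, the topological entropy of $\phi_H$ bounds the exponential growth rate of $N_H(q_0, q_1; T)$ from above, i.e.,
$$
h_{\top}(\phi_H) \;\geq\; \limsup_{T \to \infty} \tfrac{1}{T} \log N_H(q_0, q_1; T).
$$

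Next, I would bound $N_H(q_0, q_1; T)$ from below by the rank of the action-filtered Lagrangian Floer homology $HF^{\leq T}(T^*_{q_0}Q, T^*_{q_1}Q; H)$, whose generators are precisely these chords (after a small Morse-type perturbation). Because $F \leq H$ on $T^*Q$, a filtered Floer continuation map allows the filtered Floer homology of~$F$ at action level~$T$ to inject into that of~$H$ at the same level. The Abbondandolo--Schwarz isomorphism, adapted to fiberwise positively homogeneous Hamiltonians as in~\cite{MacSch11}, identifies the filtered Floer homology of~$F$ with the singular homology of the path space $\Omega^{\leq T}_{q_0, q_1}Q$ of Lipschitz paths from $q_0$ to $q_1$ of $F$-length at most~$T$.

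Finally, I would estimate $\rank H_0\!\left( \Omega^{\leq T}_{q_0, q_1}Q \right)$ below by the number of elements $g \in \pi_1(Q, q_0)$ for which the deck translate $g \cdot \tilde q_1$ lies in the $F$-ball $B_{\tilde q_0}(F, T)$ in the universal cover~$\widetilde Q$; a standard tiling argument in the universal cover (compare Appendix~\ref{a:Manning}) shows that this count grows exponentially at rate $h_{\vol}(F)$, and combining with the previous steps yields the lemma.

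The main obstacle is the second step: the Abbondandolo--Schwarz isomorphism is classically stated for fiberwise quadratic (Tonelli) Hamiltonians, whereas~$H$ here is only fiberwise positively homogeneous of degree one and possibly non-convex. The workaround of~\cite{MacSch11} is to extend $H$ to a fiberwise quadratic Hamiltonian outside a large disc bundle while preserving its behaviour on $S^*(H)$, to use the monotonicity of the continuation map only where the Hamiltonians are convex (in particular for~$F$), and to absorb the non-convexity of $H$ into the chain-level Floer complex rather than into any homotopy argument.
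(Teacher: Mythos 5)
Your proposal is essentially in the same spirit as the paper's argument, which also rests on the Lagrangian Floer homology results of Macarini--Schlenk, but the two differ in how the Floer data is converted into an entropy bound, and your version glosses over the step that makes this conversion possible.

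The paper's proof first replaces $\phi_H$ by the Hamiltonian flow $\Phi_H$ of~$H^2$ on the compact disk bundle $D^*(H)$ (using homogeneity and the variational principle), then invokes Yomdin's theorem to bound $h_{\top}(\Phi_H|_{D^*(H)})$ from below by the exponential growth rate of the Riemannian volume $\mu_{g^*}\bigl(\Phi_H^n(D_q^*(H))\bigr)$, and finally cites \cite[Theorem~4.6 and \S 5.1]{MacSch11}, where the Floer continuation argument you sketch is carried out to produce the estimate $\mu_{g^*}\bigl(\Phi_H^n(D_q^*(H))\bigr) \geq \vol_g(Q)\,e^{(\gamma(F)-\gve)n}$, with $\gamma(F) = h_{\vol}(F)$. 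Your proposal instead routes everything through a single-pair chord count $N_H(q_0,q_1;T)$, and invokes a ``chord-to-entropy criterion.'' This criterion is valid, but it is not an independent black box: its proof is exactly the passage from transverse Lagrangian intersections to volume growth to entropy via Yomdin's theorem, which is the part of the argument you have folded away. If you were to make your first step rigorous, you would recover the paper's volume-growth formulation rather than bypass it, so the paper's route of working directly with volume and citing Yomdin is the more economical packaging. There are also two smaller imprecisions worth flagging: (i) the direction of the filtered continuation map and the normalization $\sigma_-=1$ (giving $F\leq H$, hence $D^*(H)\subset D^*(F)$) must be checked carefully against the action sign conventions in \cite{MacSch11} and Abbondandolo--Schwarz — as stated, ``inject into'' is not the right formulation, and what is actually needed is that the rank of the image of $HF^{\leq T}(F)$ in the total wrapped Floer homology $H_*(\Omega_{q_0,q_1}Q)$ factors through (and hence bounds from below the rank of) the filtered piece associated to $H$; and (ii) your final estimate should use $H_0$ of the $F$-length sublevel set of the path space together with the deck-transformation count, as in the paper's appeal to \cite[Prop.~9.6.6]{HK95}, and this indeed yields $\gamma(F)=h_{\vol}(F)$ as you say.
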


\proof
The lemma can be extracted from~\cite{MacSch11}. We briefly review the proof.
Instead of working with $\phi_H$ and~$\phi_F$, we work with the Hamiltonian flows 
$\Phi_H$ and $\Phi_F$ on~$T^*Q$ of the functions $H^2$ and~$F^2$.
Then $\Phi_H = \phi_H$ on~$S^*(H)$.
Using the variational principle for topological entropy and the homogeneity of~$H^2$ one finds
$$
h_{\top} (\phi_H) \,=\, h_{\top} (\Phi_H |_{S^*(H)}) \,=\, h_{\top} (\Phi_H |_{D^*(H)}) .
$$
Fixing a point $q \in Q$ we can further estimate, using Yomdin's theorem from~\cite{Yom87}
and the $C^\infty$-smoothness of~$\Phi_H$,
$$
h_{\top} (\Phi_H |_{D^*(H)}) \,\geq\, \lim_{n \to \infty} \frac 1n \log \mu_{g^*} \bigl( \Phi_H^n (D^*_q(H)) \bigr) .
$$ 
Here
$\mu_{g^*} (S)$ denotes the Riemannian volume of the submanifold~$S \subset T^*Q$ with respect to the restriction to~$S$ of the Riemannian metric on~$T^*Q$
induced by a Riemannian metric~$g$ on~$Q$.
In Theorem~4.6 and Section~5.1 of~\cite{MacSch11} it is shown by Lagrangian Floer homology 
that for every $\gve >0$ there exists $N(\gve)$ such that
$$
\mu_{g^*} \bigl( \Phi_H^n (D^*_q(H)) \bigr) \,\geq\, 
  \vol_g(Q) \, e^{(\gamma (F)-\gve) \2 n} \quad
\mbox{ for } n \geq N(\gve) 
$$
where $\gamma (F)$ is the exponential growth rate of the number of elements in the fundamental group of~$Q$ 
that can be represented by a loop of $F$-length $\leq R$.
It is easy to see that $\gamma (F) = h_{\vol} (F)$.
(The proof in \cite[Prop.\ 9.6.6]{HK95} given for a Riemannian~$F$ applies without
changes to a general Finsler metric.)
The lemma follows.
\proofend

In view of the inclusion $\frac 1 \sigma D^*(F) \subset D^*(H)$ we infer from Lemma~\ref{le:topvol} that
\begin{eqnarray*}
\widehat h_{\top}^{\HT} (\phi_H) &=& \left( \vol_H^{\HT}(Q)  \right)^{1/n} \, h_{\top} (\phi_H) \\
&\geq& \frac{1}{\sigma} \left( \vol_F^{\HT}(Q) \right)^{1/n} \, h_{\vol} (F) \\
&=& \frac{1}{\sigma} \, \widehat h_{\vol}(F) 
\end{eqnarray*}
as claimed.
\proofend

We now define the module of starshapedness of~$H$ by
$$
\sigma (H) \,:=\, \inf \left\{ \sigma (H;F) \mid 
          \mbox{$F$ is a Finsler metric} \right\} .
$$
While an individual $\sigma (H;F)$ can be large even for a Riemannian Hamiltonian~$H$,
the number~$\sigma (H)$ is a measure for the maximal starshapedness, 
or non-convexity, of the fibers of~$D^*(H)$. 
For instance, $\sigma (H) = 1$ if and only if $H$ is Finsler. 
%
From Proposition~\ref{p:control} and Corollary~\ref{minhvol} we obtain
the following result.

\begin{corollary} \label{c:RR}
Let $Q$ be a closed manifold. 
For every $C^\infty$-smooth Reeb flow $\phi_H^t$ on~$S^*Q$ we have
$$
\widehat h_{\top}^{\HT}(\phi_H) \,\geq\, \frac{c_n}{\sigma (H)} \, \widehat h_{\vol}(Q) .
$$
\end{corollary}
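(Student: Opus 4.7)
The plan is to derive Corollary \ref{c:RR} as a direct consequence of the two results already at our disposal: the Floer-theoretic lower bound in Proposition \ref{p:control}, which compares the topological entropy of $\phi_H$ against the volume entropy of an auxiliary Finsler metric~$F$, and the dimensional inequality in Corollary \ref{minhvol}, which bounds $\widehat h_{\vol}^{\HT}(F)$ from below by the Riemannian invariant $h_{\vol}(Q)$. The two bounds are designed to chain together, so essentially all the work is already done; it only remains to optimize the resulting inequality over the auxiliary metric~$F$.

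More concretely, I would first fix an arbitrary smooth regular Finsler metric $F$ on $Q$. Applying Proposition \ref{p:control} with this $F$ yields
$$
\widehat h_{\top}^{\HT}(\phi_H) \,\geq\, \frac{1}{\sigma(H;F)} \, \widehat h_{\vol}^{\HT}(F).
$$
By Corollary \ref{minhvol}, the right-hand factor satisfies
$$
\widehat h_{\vol}^{\HT}(F) \,\geq\, c_n \, h_{\vol}(Q),
$$
since this dimensional estimate is valid for any Finsler metric on~$Q$, with no restriction on convexity or symmetry. Combining the two inequalities gives
$$
\widehat h_{\top}^{\HT}(\phi_H) \,\geq\, \frac{c_n}{\sigma(H;F)} \, h_{\vol}(Q)
$$
for every admissible Finsler metric~$F$.

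The final step is to note that the right-hand factor $c_n \2 h_{\vol}(Q)$ is independent of~$F$, so we may take the supremum in~$F$ of the coefficient $1/\sigma(H;F)$. By the definition
$$
\sigma(H) \,=\, \inf \bigl\{ \sigma(H;F) \mid F \text{ Finsler} \bigr\},
$$
taking this supremum yields exactly $1/\sigma(H)$, and the claimed inequality follows. There is no real obstacle here: the two ingredients are already tailored to each other, and the optimization step is purely formal. If anything, the only point requiring a line of care is that Proposition \ref{p:control} was stated for smooth regular Finsler~$F$, but since $\sigma(H)$ is defined as an infimum, approximating any convex star body by smooth ones from inside does not affect the bound in the limit.
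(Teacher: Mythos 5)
Your proof is correct and takes exactly the paper's route: the paper states Corollary~\ref{c:RR} as an immediate combination of Proposition~\ref{p:control} and Corollary~\ref{minhvol}, and the optimization over $F$ to pass from $\sigma(H;F)$ to $\sigma(H)$ is the same formal step. Your closing remark about approximating an arbitrary Finsler metric by regular ones (so that the infimum defining $\sigma(H)$ is not affected by restricting to the class to which Proposition~\ref{p:control} applies) addresses a genuine subtlety that the paper leaves implicit, and is correctly resolved.
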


\begin{rems}
{\rm
(1)
In the special case that $H=F$ is a $C^\infty$-regular reversible Finsler Hamiltonian, 
Proposition~\ref{p:control} applied to Riemannian metrics and 
the Loewner bound~\eqref{john} yield the uniform lower bound
$$
\widehat h_{\top}^{\HT}(F) \,\geq\, \tfrac{1}{\sqrt n} \, \widehat h_{\vol} (Q) .
$$
Even in this special case, this lower bound for $\widehat h_{\top}^{\HT}(F)$ 
coming from Floer homology and from the Loewner bound is 
only slightly weaker than the lower bound
$$
\widehat h_{\top}^{\HT}(F) \,\geq\, 2 c_n \, \widehat h_{\vol} (Q)
$$
from Corollary~\ref{minhtop} that comes from Manning's inequality and the 
Loewner bound.
Indeed, recalling that $c_n = \frac{1}{ (n! \, \omega_n)^{1/n} }$,
the function $f(n) = 2 c_n \sqrt{n} \colon \N \to [1, \infty)$ is strictly monotone increasing, 
with 
$$
f(2) = \tfrac{2}{\sqrt{\pi}} \approx 1.13 
\quad \mbox{and} \quad 
\lim_{n \to + \infty} f(n) = \sqrt{\tfrac{2e}{\pi}} \approx 1.315 .
$$


\medskip
(2)
In the case that $H$ is a Finsler Hamiltonian and $F$ is a Riemannian Hamiltonian, 
we have obtained the inequality in Lemma~\ref{le:topvol} 
in \S \ref{ss:lbne} by estimating
$$
h_{\top}(\phi_H) \geq h_{\vol}(\phi_H)  \geq h_{\vol}(F) .
$$
The first inequality, which is Manning's inequality, also holds for $C^\infty$-smooth Reeb flows, 
see Theorem~\ref{t:manninggen}.
The second inequality holds in the Finsler case in view of the inclusion of balls~\eqref{monotonicity-h},
which follows from the triangle inequality. 
But in the Reeb case there is no triangle inequality.
Floer homology 
(or, more precisely: properties of Floer continuation maps that stem from the Floer--Gromov compactness theorem for $J$-holomorphic strips)
makes up for this.

\medskip
(3)
Proposition~\ref{p:control} and Corollary \ref{c:RR} are  
interesting only if $\widehat h_{\vol}(F)$ and $h_{\vol}(Q)$ are positive, 
which is possible only if the fundamental group of~$Q$ has exponential growth. 
The results in~\cite{MacSch11} imply meaningful variations of 
Proposition~\ref{p:control} and Corollary~\ref{c:RR}
for many other manifolds. For instance, 
assume that $Q$ is a simply connected manifold such that the exponential growth rate
$\gamma (\Omega Q)$ of the dimension of the $\Z_2$-homology of degree $\leq k$ of the based loop space~$\Omega Q$
is positive.
Then 
$$
\widehat h_{\top}^{\HT} (\phi_H) \,\geq\, \frac{1}{\sigma (H;F)} \; C(F) \: \gamma (\Omega Q) 
$$
with a positive constant $C(F)$ that does not change under rescalings of~$F$.  

\medskip
(4)
We refer to \cite{Dah20} 
for a thorough study of continuity properties of topological entropy implied
by Floer homological techniques.
}
\end{rems}

\section{Entropy collapse for Reeb flows in dimension $3$}  \label{s:collapse3}

In this section we prove Theorem~\ref{t:mainintro} in dimension 3.

\subsection{Recollections on open books}
\label{ss:recollections}
In this paragraph we collect results on open books needed in our proof.
For more information and details we refer to~\cite{Etn06} and \cite[\S 4.4]{Gei08}.

Let $M$ be a closed connected orientable 3-manifold.
An {\it open book} for~$M$ is a triple $(\Sigma,\psi,\Psi)$,
where $\Sigma$ is a compact oriented surface with non-empty boundary~$\partial \Sigma$
and $\psi$ is a diffeomorphism of~$\Sigma$ that is the identity near the boundary
such that there is a diffeomorphism~$\Psi$ from
$$ 
M(\psi) := \Sigma(\psi) \cup_{\id} \left( \overline \D \times \partial \Sigma \right) 
$$
to $M$.
Here $\Sigma(\psi)$ denotes the mapping torus
$$
\Sigma(\psi) \,=\, \left( [0,2\pi] \times \Sigma \right) / \sim 
$$
where $(2\pi,p) \sim (0,\psi(p))$ for each $p \in \Sigma$,
and $\overline \D$ is the closed unit disk. 
Viewing $S^1$ as the interval~$[0,2\pi]$ with endpoints identified, 
we write $\partial (\Sigma(\psi))$ as $S^1 \times \partial \Sigma$.
The manifold~$M$ is thus presented as the union of the mapping torus~$\Sigma(\psi)$
and finitely many full tori, one for each boundary component of~$\Sigma$,
glued along their boundaries by the identity map
$$
\partial (\Sigma(\psi)) \,=\, S^1 \times \partial \Sigma
\,\stackrel{\id\,}{\longrightarrow}\, 
\partial ( \overline \D \times \partial \Sigma ) .
$$
We remark that the diffeomorphism 
$$
\Psi \colon M(\psi) \to M
$$ 
is part of the definition of the open book. 
If $\psi'$ is another diffeomorphism of~$\Sigma$ that is the identity near the boundary and is 
isotopic to~$\psi$ via an isotopy that fixes each point of~$\partial \Sigma$, 
then $M(\psi')$ is diffeomorphic to~$M(\psi)$.
We also remark that what we call an open book is usually called 
an abstract open book decomposition
in the literature.

For each $\theta \in S^1$ denote by $\Sigma_{\theta}^\circ$ the image under the diffeomorphism~$\Psi$ 
of the union of $\{\theta\} \times \Sigma$ with the union of half-open annuli 
$$
A_{\theta} \,=\, \bigl\{ (\theta,r) \in \overline \D \setminus \{0\} \bigr\} 
          \times \partial \Sigma  .
$$
The closure $\Sigma_{\theta}$ of $\Sigma_{\theta}^\circ$, called a page, is diffeomorphic to~$\Sigma$, 
and the common boundary of the~pages~$\Sigma_{\theta}$, called the binding of the open book, 
is the image under~$\Psi$ of $ \{0\} \times \partial \Sigma \subset \D \times \partial \Sigma$.  
The orientation of~$\Sigma$ induces orientations on the pages and the binding.

There are several different beautiful constructions proving the existence
of an open book for every 3-manifold~$M$ as above.
The first of these constructions was given by 
J.~W.~Alexander~\cite{Ale20} as early as~1920,
who used his findings that every such~$M$ is a branched covering of the 3-sphere 
branching along a link
and that every link in~$\R^3$ can be obtained as the closure of a braid,
see also \cite[p.\ 340]{Rol76}. 
Alexander's construction in fact provides an open book such that $\Sigma$ has just one boundary component. 

\medskip \noindent
{\bf Contact structures.}
Let $M$ be a closed connected oriented 3-manifold and $(\Sigma, \psi,\Psi)$ be an open book for~$M$. 

\begin{defn}
{\rm
A contact form $\alpha$ on $M$ is said to be {\it adapted to the open book} $(\Sigma,\psi,\Psi)$ if 
\begin{itemize}
\item[$\bullet$]
$\alpha$ is positive on the binding,
\item[$\bullet$]
$d\alpha$ is a positive area form on the interior of every page.
\end{itemize}
}
\end{defn}

It is not hard to see that a contact form $\alpha$ is adapted to an open book if and only if
\begin{itemize}
  \item the Reeb vector field $R_\alpha$ is positively transverse to the interior of the pages,
  \item the Reeb vector field is tangent to the binding and induces the positive orientation on the binding.
\end{itemize}

\begin{defn}
{\rm 
A contact $3$-manifold $(M,\xi)$ is said to be supported by an open book 
$(\Sigma,\psi,\Psi)$ 
if there exists a contact form~$\alpha$ on~$(M,\xi)$ adapted to this open book.
}
\end{defn}

\begin{rem} \label{rem:isotopy}
{\rm If a contact $3$-manifold $(M,\xi)$ is supported by an open book 
$(\Sigma,\psi,\Psi)$ 
and if $\psi'$ is another diffeomorphism of~$\Sigma$ that is the identity near $\partial \Sigma$ 
and that is isotopic to~$\psi$ via an isotopy that fixes~$\partial \Sigma$ 
pointwise, then $(M,\xi)$ is also supported by an open book~$(\Sigma,\psi',\Psi')$. 
This follows easily from the fact that for such a~$\psi'$ there exists 
a diffeomorphism from $M(\psi)$ to~$M(\psi')$ that takes pages to pages.
}
\end{rem}

The following result of Giroux shows the central role played by 
open books in $3$-dimen\-sional contact topology.

\begin{theorem}[Giroux] \label{t:Giroux}
Given a closed connected oriented contact $3$-manifold~$(M,\xi)$, 
there exists an open book for~$M$ supporting~$(M,\xi)$. 
Moreover, the open book can be chosen to have connected binding.
Two contact structures supported by the same open book are diffeomorphic.
\end{theorem}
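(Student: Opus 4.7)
The plan is to prove the three assertions of Theorem~\ref{t:Giroux} in sequence.

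\textbf{Existence.} I would construct a \emph{contact triangulation} of $(M,\xi)$: a smooth triangulation whose $1$-skeleton is Legendrian and each $2$-simplex is a convex surface in the sense of Giroux, with Legendrian boundary and a single dividing arc joining opposite edges. Such a triangulation exists: start from any smooth triangulation, apply Legendrian approximation in local Darboux charts to make the $1$-skeleton Legendrian, and subdivide the $2$-faces until the characteristic foliation on each can be put in the required normal form. Let $N$ be a standard contact tubular neighborhood of the $1$-skeleton; then $N$ is a handlebody, $\partial N$ is a convex surface whose dividing set $\Gamma$ is a disjoint union of simple closed curves, and by Giroux's flexibility/criterion for convex surfaces the complement $M \setminus \mathrm{int}(N)$ is again a handlebody carrying the unique tight product contact structure compatible with~$\Gamma$. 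Slicing this complement along surfaces adapted to $\Gamma$ presents $M$ as the mapping torus $\Sigma(\psi)$ of a diffeomorphism $\psi$ of a compact surface $\Sigma$, glued to solid tori $\overline{\D} \times \partial\Sigma$ along their boundaries by the identity, and the resulting triple $(\Sigma,\psi,\Psi)$ is an open book supporting $(M,\xi)$.

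\textbf{Connected binding.} I would reduce to connected binding by \emph{positive Hopf stabilization}. Given an open book $(\Sigma, \psi, \Psi)$, the positive stabilization $(\Sigma', \psi')$ attaches an oriented $1$-handle to $\Sigma$ and replaces $\psi$ by $\psi \circ \tau_c$, where $\tau_c$ is the right-handed Dehn twist along a simple closed curve $c \subset \Sigma'$ crossing the co-core of the handle once. A direct computation with Thurston--Winkelnkemper-type adapted forms shows that the supported contact structure does not change up to isotopy under positive stabilization. Choosing the $1$-handle so that it joins two distinct boundary components of $\Sigma$ decreases the number of binding components by one, and finitely many such stabilizations yield an open book with connected binding still supporting~$(M,\xi)$.

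\textbf{Uniqueness.} Let $\alpha_0, \alpha_1$ be contact forms on $M$ both adapted to the open book $(\Sigma,\psi,\Psi)$. I would show they can be joined by a path of adapted contact forms. First, build an explicit Thurston--Winkelnkemper model $\alpha_{\mathrm{TW}}$ adapted to $(\Sigma,\psi,\Psi)$: on the mapping torus $\Sigma(\psi)$ set $\alpha_{\mathrm{TW}} = d\theta + \lambda$, where $\lambda$ is a $1$-form on $\Sigma$ with $d\lambda$ a positive area form and $\lambda$ a standard radial model near $\partial \Sigma$; on each solid torus neighborhood of a binding component take a rotationally symmetric model $h_1(r)\, d\theta + h_2(r)\, d\phi$ matching the mapping torus side on the overlap. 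The two conditions defining adaptedness (positivity on the binding and $d\alpha$ positive on the interior of each page) are open convex conditions in $\alpha$ away from the binding, and near the binding any adapted form can be deformed to coincide with the radial model after a positive rescaling and a change of the radial coordinate. This produces paths of adapted contact forms from $\alpha_0$ and $\alpha_1$ to (a rescaling of) $\alpha_{\mathrm{TW}}$, hence a path of contact structures connecting $\ker \alpha_0$ to $\ker \alpha_1$. Gray's stability theorem on the closed manifold $M$ then integrates this path into an ambient isotopy, yielding the desired diffeomorphism.

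The main obstacle is the existence part: producing the contact cell decomposition and verifying that the complement $M \setminus \mathrm{int}(N)$ is a handlebody whose contact structure is the tight product determined by the dividing set on $\partial N$. This is where genuine convex surface theory enters, and it is also what makes the invariance under positive stabilization in the second part nontrivial. By contrast, uniqueness is essentially a convexity argument for adapted forms together with Gray's theorem, and is expected to be considerably more routine.
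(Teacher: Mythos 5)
The paper does not prove Theorem~\ref{t:Giroux}; it cites Giroux~\cite{Gir02}, Etnyre~\cite{Etn06}, and Colin--Honda~\cite{CoHo08}, so there is no in-text proof to compare against. Your outline follows the standard route of those references: existence via a contact cell decomposition and convex surface theory (Giroux), connected binding via positive stabilizations whose attaching $1$-handle joins two distinct boundary components (\cite[Cor.~4.25]{Etn06}, \cite{CoHo08}), and uniqueness via interpolation of adapted forms together with Gray stability. The first two parts are reasonable sketches.

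The uniqueness step, however, is where you go wrong, and it is the opposite of ``considerably more routine''. You argue that the adaptedness conditions are convex in $\alpha$, and they are, but being a \emph{contact form} is the extra quadratic condition $\alpha\wedge d\alpha>0$, and this is \emph{not} preserved under linear interpolation of adapted contact forms. Concretely, in local coordinates $(\theta,x,y)$ with pages $\{\theta=\mathrm{const}\}$ and $\alpha=f\,d\theta+a\,dx+b\,dy$, the contact density is $f(b_x-a_y)+af_y-bf_x$ and adaptedness is $b_x-a_y>0$. The adapted contact forms $\alpha_0=d\theta+x\,dy$ and $\alpha_1=(5y-4)\,d\theta+dx+x\,dy$ have contact densities $1$ and $5y+1$, both positive near the origin, while their midpoint $\tfrac{1}{2}(\alpha_0+\alpha_1)$ has contact density $\tfrac{1}{4}(10y-1)$, which is negative at the origin. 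So normalizing near the binding is not enough: the straight-line path leaves the space of contact forms in the interior. The missing ingredient, which this very paper spells out in the discussion preceding Proposition~\ref{prop21}, is to first inflate both endpoints by adding $s\eta$ for a fixed $1$-form $\eta$ that vanishes on $\ker d\Theta$ (a positive multiple of $d\Theta$ away from $K$, suitably cut off near $K$) and $s$ large, so that $(1-t)\alpha_0+t\alpha_1+s\eta$ is contact for all $t$ uniformly, and only then concatenate the three legs $\alpha_0\leadsto\alpha_0+s\eta\leadsto\alpha_1+s\eta\leadsto\alpha_1$ before applying Gray stability.
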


For the proof of the first and the third assertion we
refer to~\cite[Theorem~3 and Proposition~2]{Gir02}
and to~\cite[Theorem~4.6 and Proposition 3.18]{Etn06}.
That the binding can be assumed to be connected is shown 
in~\cite[Corollary~4.25]{Etn06} and in~\cite{CoHo08}.

\subsection{Proof of entropy collapse in dimension $3$} \label{ss:proof3}

We now proceed with the proof of the main result of this section.
 
\begin{theorem}\label{t:3d}
Let $(M,\xi)$ be a closed co-orientable contact manifold of dimension~$3$. 
Then for every $\varepsilon > 0$ there exists a contact form~$\alpha$ on~$(M,\xi)$ such that 
$\vol_{\alpha} (M) =1$ and $h_{\top}(\alpha) \leq \gve$.
\end{theorem}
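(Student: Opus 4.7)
The strategy is to combine Giroux's open-book existence theorem (Theorem~\ref{t:Giroux}) with an explicit ``slow suspension'' construction. Fix an open book $(\Sigma, \psi, \Psi)$ supporting $(M,\xi)$ with connected binding $B$, a tubular neighborhood $\mathcal{N}(B) \cong S^1 \times \mathbb{D}^2$ with coordinates $(z, r, \theta)$, and identify the complement with the mapping torus of $\psi$ on the subsurface $\Sigma^\circ$ obtained from $\Sigma$ by removing a collar of $\partial\Sigma$. Since $\widehat{h}_{\top}(\alpha) = \sqrt{\vol_\alpha(M)}\, h_{\top}(\alpha)$ is invariant under $\alpha \mapsto c\alpha$, the claim in Theorem~\ref{t:3d} reduces to producing, for every $\varepsilon > 0$, an adapted contact form with $\widehat{h}_{\top}(\alpha) \leq \varepsilon$; rescaling by $c = \vol_\alpha(M)^{-1/2}$ then gives $\vol_{c\alpha}(M) = 1$ and $h_{\top}(c\alpha) = \widehat{h}_{\top}(\alpha) \leq \varepsilon$.

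For each parameter $K \gg 1$ I would construct an adapted contact form $\alpha_K$ from two glued local models. On $\mathcal{N}(B)$ use the standard, $K$-independent binding model $\alpha_K = h_1(r)\, dz + h_2(r)\, d\theta$ with $h_1 > 0$, $h_2(r) = r^2$ near $r=0$, and contact condition $h_1 h_2' - h_1' h_2 > 0$; the Reeb field is a positive combination of $\partial_z$ and $\partial_\theta$, hence a linear flow on each invariant 2-torus $\{r = \mathrm{const}\}$, contributing zero topological entropy and bounded volume. On the mapping-torus complement use a Thurston--Winkelnkemper form
\[
\alpha_K \,=\, K\, dt + \widetilde\beta_t,
\]
where $\widetilde\beta_t$ is a smooth $t$-family of 1-forms on $\Sigma^\circ$ with $d\widetilde\beta_t$ a positive area form and $\widetilde\beta_{2\pi} = \psi^*\widetilde\beta_0$, chosen to match the binding model smoothly across a thin collar whose geometry is $K$-independent. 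The Reeb vector field on the mapping-torus core is then a small perturbation of $(1/K)\,\partial_t$, so the first return to the page $\{t = 0\}$ occurs in time of order $K$ with return map isotopic to $\psi$.

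The two key estimates are $\vol_{\alpha_K}(M) \geq c_1 K$, which follows from $\alpha_K \wedge d\alpha_K = K\, dt \wedge d\widetilde\beta_t$ on the mapping-torus core, and
\[
h_{\top}(\alpha_K) \,\leq\, \frac{h_{\top}(\psi)}{K} + o(1/K),
\]
which follows from the standard identity that the topological entropy of a suspension flow with return time $T$ and return map $\psi$ equals $h_{\top}(\psi)/T$, combined with the vanishing of the entropy contribution from $\mathcal{N}(B)$. Since $\psi$ is a diffeomorphism of a compact surface, $h_{\top}(\psi) < \infty$, hence $\widehat{h}_{\top}(\alpha_K) = O(1/\sqrt{K}) \to 0$ as $K \to \infty$. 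Taking $K$ large and rescaling to unit volume proves the theorem.

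The main obstacle is the rigorous bound on $h_{\top}(\alpha_K)$, since the split of $M$ into the binding region and the mapping-torus region is not flow-invariant across the interpolation collar. The cleanest fix is to design the collar so that any Reeb orbit traverses it in bounded (in $K$) time, and then bound $h_{\top}(\alpha_K)$ via Bowen's estimates for $(n,\delta)$-separated sets, with the dominant contribution coming from the slow suspension of $\psi$. Equivalently, one can invoke the variational principle and Abramov's formula to bound the entropy of every invariant probability measure by $h_{\top}(\psi)/\!\int \tau\, d\mu$, where $\tau$ is the return time to a page (of order $K$ on the support of any non-trivial invariant measure).
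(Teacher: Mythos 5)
Your approach is essentially the same as the paper's: use Giroux's theorem, put a Thurston--Winkelnkemper-type form on the mapping torus, use an integrable model near the binding, and observe that the normalized entropy goes to $0$. Your ``large $K$'' normalization is just the paper's ``small $s$'' after rescaling $\alpha_K = s^{-1}\alpha_s$ with $s = 1/K$. There are, however, two concrete problems with the proposal as written.

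First, your binding model cannot be $K$-independent. Near the binding you want $\alpha_K = h_1(r)\,dz + h_2(r)\,d\theta$ with $h_1,h_2$ fixed, while on the mapping-torus side you have $\alpha_K = K\,dt + \widetilde\beta_t$, whose $d\theta$-coefficient is $K$. A collar ``whose geometry is $K$-independent'' cannot interpolate between a coefficient $\approx 1$ and a coefficient $\approx K$ without the interpolation itself blowing up; concretely, the contact inequality $h_1h_2' - h_1'h_2 > 0$ then forces $K$-dependence in $h_1$ as well, and the volume and entropy contributions from the collar become uncontrolled unless designed carefully. The fix, as in the paper (after rescaling), is to carry the parameter into the binding model: take $\alpha_K = f(r)\,dz + K\,g(r)\,d\theta$ with suitable $f,g$, so that the whole form scales coherently.

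Second, the ``main obstacle'' you identify --- that the decomposition into binding region and mapping torus is not flow-invariant --- should not arise at all in a correct design, and your proposed fix (bounding traversal time through the collar and using Bowen/Abramov) is both harder than necessary and delicate, since orbits wandering back and forth between the regions could couple the suspension dynamics to the binding dynamics. The right move is to make the Reeb vector field \emph{tangent} to the separating torus $\{r = r_0\}$. In the paper this is automatic: near $\partial\Sigma$ the monodromy is the identity so the vector field $Y$ vanishes, hence the Reeb field on the mapping torus side is $\partial_\theta$ near the boundary; and the functions $f,g$ in the binding model are chosen with $g' \equiv 0$ near $r = r_0$ so the Reeb field there is also $\partial_\theta$. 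Then both pieces are compact flow-invariant sets, $h_{\top}$ is the maximum of the two contributions, and the binding piece contributes $0$ because the flow there is a one-parameter family of linear flows on tori. For the mapping-torus piece, the paper bounds $h_{\top}$ directly and cleanly via Ohno's inequality $h_{\top}(\phi_{fX}) \leq \max f \cdot h_{\top}(\phi_X)$ (with $X = \partial_\theta + Y$ a fixed vector field and $f$ the uniformly bounded return-time density), rather than via Abramov's formula --- and note that the first-return map is not $\psi$ itself but a $K$-independent composition of $\psi$ with an isotopy, which matters since $h_{\top}$ is not an isotopy invariant; only boundedness is needed, not an asymptotic formula.
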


While our proof works verbatim when $\partial \Sigma$ is not connected, the geometry in our argument is easier to visualize for connected~$\partial \Sigma$, so we assume this property.

\smallskip
The structure of the proof is as follows.
\begin{itemize}
\item 
Given $(M,\xi)$ as in Theorem~\ref{t:3d} we use the first statement in Theorem~\ref{t:Giroux} 
to obtain an open book~$(\Sigma,\psi,\Psi)$ for~$M$ that supports~$(M,\xi)$.

\item 
We then apply a classical recipe due to Thurston--Winkelnkemper to construct for each $\gve >0$ 
a contact form~$\widetilde \alpha_\gve$ adapted to~$(\Sigma,\psi,\Psi)$ 
with $\vol_{\widetilde \alpha_\gve}(M) = 1$ 
and $h_{\top}(\widetilde \alpha_\gve) \leq \gve$.
    
\item 
By the second statement of Theorem \ref{t:Giroux}, 
$\ker \Psi_* \widetilde\alpha_\gve$ is diffeomorphic to~$\xi$ by a diffeomorphism~$\rho_\gve$. 
Hence $(\rho_\gve \circ \Psi)_* (\widetilde \alpha_\gve)$ is a contact form on~$(M,\xi)$ 
with the properties asserted in Theorem~\ref{t:3d}.
\end{itemize}

For the construction of $\widetilde \alpha_{\gve}$, we first construct on the 
mapping torus~$\Sigma(\psi)$ for all small $s>0$ contact forms~$\alpha_s$
with $\vol_{\alpha_s}(\Sigma (\psi)) = O(s)$ and $h_{\top}(\alpha_s) = O(1)$.
Crucially, near the boundary of~$\Sigma(\psi)$ these contact forms are such that 
they extend to contact forms (also denoted~$\alpha_s$) 
on the full torus $\overline \D \times \partial \Sigma$
in such a way that the Reeb flows are linear on each torus $S^1(r) \times \partial \Sigma$.
Therefore, even though the Reeb vector fields ``explode" in the interior of the full torus 
as $s \to 0$ (see Figure~\ref{XA.fig}),
the topological entropy on the full torus vanishes for all~$s$.
Since also $\vol_{\alpha_s}(\overline \D \times \partial \Sigma) = O(s)$,
we find that $\vol_{\alpha_s}(M) = O(s)$ and $h_{\top}(\alpha_s) = O(1)$
for all small $s >0$.
The form $\widetilde \alpha_\gve$ is now obtained by taking $s$ small and rescaling~$\alpha_s$.

\medskip
\noindent
{\it Proof of Theorem \ref{t:3d}.} 

\medskip \noindent
{\bf Step 1: A family of contact forms $\alpha_s$ on $\Sigma(\psi)$.}
By Theorem~\ref{t:Giroux} there exists an open book $(\Sigma,\psi,\Psi)$ for~$M$ that supports~$(M,\xi)$. 
We first choose a collar neighbourhood $N \subset \Sigma$ of~$\partial \Sigma$ on which 
$\psi$ is the identity. Thus $N$ is diffeomorphic to $[1,1+\delta] \times \partial \Sigma$, 
and we have polar coordinates $(r,x)$ for~$N$,
where $x$ is the angular coordinate for $\partial \Sigma$,
such that the boundary of~$\partial \Sigma$ corresponds to~$r=1$. 

Choose an area form~$\omega$ on~$\Sigma$ such that $\omega = dx \wedge dr$ on~$N$. 
By Remark~\ref{rem:isotopy} and by Moser's isotopy theorem 
we can assume that the diffeomorphism~$\psi$ is a symplectomorphism 
of~$(\Sigma, \omega)$.
Since $H^2(\Sigma,\partial \Sigma;\R)$ vanishes, there exists a primitive~$\lambda$ of~$\omega$ 
that equals $(2-r) \2 dx$ on~$N$.

\begin{figure}[h]  
 \begin{center}
  \psfrag{r}{$r$} \psfrag{1}{$1$} \psfrag{1d}{$1 \! + \:\!\! \delta$}  \psfrag{N}{$N$}  \psfrag{th}{$x$}   \psfrag{S}{$\Sigma$}
  \leavevmode\includegraphics{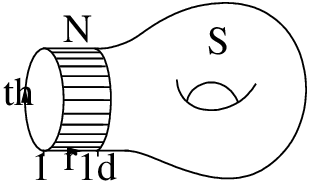}
 \end{center}
 \caption{The neighbourhood $N$ of $\partial \Sigma$}   \label{NSigma.fig}
\end{figure}

We now construct for each sufficiently small $s>0$ a contact form~$\alpha_s$ 
on the mapping torus~$\Sigma(\psi)$. For this, let $\chi \colon [0,2\pi] \to [0,1]$ 
be a smooth monotone function such that $\chi(0)=0$, $\chi(2\pi)=1$, and $\chi'$ 
has support in~$(0,2\pi)$. 
On $[0,2\pi] \times \Sigma$ define the $1$-form 
\begin{equation}
\alpha_s \,:=\, d\theta + 
       s \2 \bigl( (1-\chi(\theta)) \2 \lambda + \chi(\theta) \2 \psi^*\lambda \bigr).   
\end{equation}
By the properties of~$\chi$, each 1-form $\alpha_s$ descends to a 1-form 
on~$\Sigma(\psi)$, that we still denote by~$\alpha_s$.
Using that $\psi$ is a symplectomorphism of $(\Sigma, \omega)$
we compute that
\begin{equation} \label{e:s^2}
\alpha_s \wedge d \alpha_s \,=\, s\2 d\theta \wedge \omega + O(s^2) .
\end{equation}
Hence there exists $s_0>0$ such that $\alpha_s$ is a contact form 
for all $s \in (0,s_0]$. 

We now compute the Reeb vector field $R_{\alpha_s}$. 
With $\lambda_{\theta} := (1-\chi(\theta)) \2 \lambda + \chi(\theta) \2 \psi^*\lambda$
one checks that
\begin{equation} \label{e:Ralpha}
  R_{\alpha_s} \,=\, \frac{\partial_{\theta} + Y}{1 + s \lambda_{\theta} (Y)},
\end{equation}
where $Y$ is the vector field that is tangent to $\{\theta\} \times \Sigma$ 
for all $\theta \in S^1$ 
and satisfies 
$$
\iota_Y \omega \,=\, \chi'(\theta) (\psi^*\lambda - \lambda) .
$$
The formula~\eqref{e:Ralpha} shows that $R_{\alpha_s}$ is positively 
transverse to each surface $\{\theta\} \times \Sigma$.
 
The next lemma gives an upper bound for $h_{\top}(\phi_{\alpha_s})$ when $s$ 
is sufficiently small. 
Notice that it makes sense to talk about $h_{\top}(\phi_{\alpha_s})$, 
since $\Sigma(\psi)$ is compact and $R_{\alpha_s}$ is tangent to~$\partial \Sigma(\psi)$. 
 
\begin{lem} \label{returntimebounded}
There exists $s_1 \in (0,s_0)$ and a constant $E>0$ such that for every $s \in (0,s_1]$,
\begin{eqnarray}\label{returntimebound}
h_{\top} (\phi_{\alpha_s}) \leq E.
\end{eqnarray}
\end{lem}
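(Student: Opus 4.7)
The plan is to exploit the fact that the Reeb vector field $R_{\alpha_s}$ and the vector field $\partial_\theta + Y$ generate the same oriented foliation on the mapping torus $\Sigma(\psi)$: they differ only by the positive factor $1 + s\lambda_\theta(Y)$ appearing in \eqref{e:Ralpha}. Hence these two flows share the same orbits and, in particular, the same first-return data relative to any transverse section. I take as section the page $\Sigma_0 := \{0\} \times \Sigma \subset \Sigma(\psi)$.

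First, I would check that $\Sigma_0$ is a genuine global cross-section for $\phi_{\alpha_s}$, i.e.\ every orbit meets $\Sigma_0$ in finite time bounded below. Since the $\theta$-component of $R_{\alpha_s}$ equals $1/(1+s\lambda_\theta(Y))$ and is bounded away from zero for small $s$, this is clear in the interior; on $\partial \Sigma(\psi) = S^1 \times \partial \Sigma$ the vector field $Y$ vanishes (because $\psi^* \lambda = \lambda$ on the collar $N$) so the Reeb orbits there are simply the circles $S^1 \times \{x\}$, which meet $\Sigma_0$ periodically. The first return map is then
$$
P \,=\, \psi \circ \Phi_Y^{2\pi},
$$
where $\Phi_Y^t$ denotes the time-$t$ map of the non-autonomous vector field $Y(\theta,\cdot)$. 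The crucial observation is that $P$ is a smooth diffeomorphism of the compact surface $\Sigma$ which is \emph{independent of $s$}; therefore $h_{\top}(P)$ is a fixed finite constant.

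Next, I would compute the return time of $\phi_{\alpha_s}$ to $\Sigma_0$. Parametrising the orbit of $\partial_\theta + Y$ issued from $(0,p)$ by $\theta \mapsto (\theta,\gamma_p(\theta))$, one obtains
$$
\tau_s(p) \,=\, \int_0^{2\pi} \bigl(1 + s\,\lambda_\theta(Y)(\gamma_p(\theta))\bigr)\,d\theta \,=\, 2\pi + s\,C(p),
$$
where $|C(p)| \leq 2\pi\,\|\lambda_\theta(Y)\|_\infty$ is bounded independently of $s$. Choosing $s_1 \in (0,s_0)$ small enough ensures $\tau_s(p) \geq \pi$ uniformly in $p \in \Sigma$ and $s \in (0,s_1]$.

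Finally, I would invoke Abramov's formula relating the topological entropy of a flow with cross-section to that of its return map. More precisely, for any invariant Borel probability measure $\mu$ of $P$, the corresponding suspension measure on $\Sigma(\psi)$ satisfies $h_\mu(\phi_{\alpha_s}^1) = h_\mu(P)/\!\int \tau_s\, d\mu \leq h_\mu(P)/\pi$. Combining with the variational principle one obtains
$$
h_{\top}(\phi_{\alpha_s}) \,\leq\, \frac{h_{\top}(P)}{\pi} \,=:\, E,
$$
uniformly in $s \in (0,s_1]$. The main subtlety to handle carefully is the correct invocation of the Abramov--Bowen identity for a flow on a manifold with boundary whose cross-section includes boundary points; this is unproblematic here because on $\partial \Sigma$ the return map degenerates to the identity (with constant return time $2\pi$) and hence contributes nothing to the entropy.
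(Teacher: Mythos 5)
Your proof is correct, and it takes a genuinely different route from the paper. The paper's argument is shorter on the surface: it writes $R_{\alpha_s} = f_s \, (\partial_\theta + Y)$ with $f_s = \frac{1}{1+s\lambda_\theta(Y)}$, chooses $s_1$ so that $\tfrac{1}{2} \leq f_s \leq 2$, and then cites Ohno's time-change theorem (for a non-vanishing vector field $X$ and positive function $f$ on a compact manifold, $h_{\top}(\phi_{fX}) \leq \max f \cdot h_{\top}(\phi_X)$) to conclude $h_{\top}(\phi_{\alpha_s}) \leq 2\, h_{\top}(\phi_{\partial_\theta+Y}) =: E$. Your argument opens the black box: you use the fact that $\Sigma_0$ is a global surface of section for the common foliation, note that the first return map $P = \psi \circ \Phi_Y^{2\pi}$ is $s$-independent (this is the content of Remark~\ref{returnmapindependent} in the paper, in a different guise), bound the return time $\tau_s$ below by $\pi$, and apply the measure-theoretic Abramov formula plus the variational principle. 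Both approaches yield a bound uniform in $s$; in fact, since the paper's $h_{\top}(\phi_{\partial_\theta+Y})$ equals $h_{\top}(P)/2\pi$ by Abramov applied to the constant-ceiling suspension, the two constants $E$ literally coincide. What your argument buys is that it avoids citing Ohno and makes the mechanism transparent; what the paper's buys is brevity and a direct match with Proposition~\ref{p:ER}, which plays the analogous role in the higher-dimensional induction of Section~\ref{s:collapsegeq3}.

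Two small points worth tightening if you were to write this up formally. First, when you invoke the variational principle you are implicitly using that the flow-invariant probability measures of $\phi_{\alpha_s}$ on $\Sigma(\psi)$ are in bijection with the $P$-invariant probability measures on $\Sigma_0$ via the suspension construction with ceiling $\tau_s$; this is standard but should be said. Second, your assertion that the boundary "contributes nothing to the entropy" is not actually needed as a separate argument: the Abramov/variational computation already handles boundary measures correctly, since on $\partial\Sigma$ one has $P = \mathrm{id}$ and $\tau_s \equiv 2\pi$, so those measures simply give $h_\mu(P) = 0$.
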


\proof 
Choose $s_1 >0$ such that $\frac{1}{2} \leq \frac{1}{1+ s \lambda_\theta (Y)} \leq 2$ on~$\Sigma (\psi)$
for every $s \in (0,s_1]$. 
For each such $s$ define the function $f_s = \frac{1}{1+ s \lambda_\theta (Y)}$ 
on~$\Sigma (\psi)$.
Then for every $s \in (0,s_1]$ we have $R_{\alpha_s} = f_s \, (\partial_{\theta} + Y)$
with $\frac{1}{2} \leq f_s \leq 2$. 
By Ohno's result from~\cite{Ohno80}, for a non-vanishing vector field~$X$ and a positive function~$f$ 
on a compact manifold, 
$$
h_{\top} (\phi_{fX}) \,\leq\, \max f \cdot h_{\top} (\phi_{X}).
$$
It follows that $h_{\top} (\phi_{\alpha_s}) \leq 2 \2 h_{\top} (\phi_{\partial_{t} + Y }) = :E$. 
\qed

\medskip \noindent
{\bf Step 2: A family of contact forms $\sigma_s$ on $\overline{\mathbb{D}} \times \partial \Sigma$.}
We have constructed a family of contact forms~$\alpha_s$ on the mapping torus~$\Sigma(\psi)$. 
We now wish to extend these forms to contact forms on~$M(\psi)$. 
For this let $V$ be the collar neighbourhood of $\partial \Sigma(\psi)$ defined by 
$$
V := [0,2\pi]\times N / \sim 
$$
where $ (2\pi,p) \sim  (0,p)$ for each $p \in \Sigma$. On $V$ the contact form~$\alpha_s$ reads
\begin{equation} \label{eq:alpha}
    \alpha_s \,=\, d \theta + s \2 (2-r) \2 dx
\end{equation}
where $(r,x)$ are the coordinates on~$N$ introduced above and $\theta \in S^1$.

We proceed to construct for each $s \in (0,s_1)$ a contact form $\sigma_s$ 
on~$\overline \D \times \partial \Sigma$, where $\overline{\D}$ is again the closed unit disk in~$\R^2$.
Consider polar coordinates $(\otheta, \ovr) \in S^1 \times (0,1]$ on $\overline{\D} \setminus \{0\}$ 
and the coordinate $\ox$ on~$\partial \Sigma$.
We can then consider coordinates $(\otheta, \ovr, \ox)$ 
on $\overline{\D} \setminus \{0\} \times\partial \Sigma$.
We pick a smooth function $f \colon (0,1] \to \R$ such that 
\begin{itemize}
\item $f'<0$,
\item $f(\ovr) = 2-\ovr$ on a neighbourhood of $1$,
\item $f(\ovr) = 2 -\ovr^4$ on a neighbourhood of $0$,
\end{itemize}

\smallskip \noindent
and we pick another smooth function $g \colon (0,1]\to \R$ satisfying
\begin{itemize}
\item $g'>0$ on $(0,1)$, 
\item $g(1) =1$ and all derivatives of $g$ vanish at $1$,
\item $g(\ovr) = \frac{\ovr^2}{2}$ on a neighbourhood of $0$.
\end{itemize}

\begin{figure}[h]   
 \begin{center}
  \psfrag{r}{$\ovr$}  \psfrag{1}{$1$}  \psfrag{2}{$2$}  \psfrag{f}{$f$}  \psfrag{g}{$g$}
  \leavevmode\includegraphics{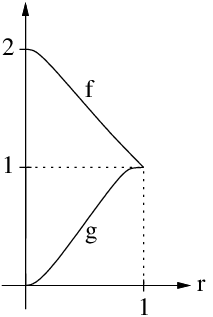}
 \end{center}
 \caption{The functions $f$ and $g$}   \label{fg.fig}
\end{figure}

Define the $1$-form 
\begin{equation} \label{e:la}
\sigma_s (\otheta, \ovr, \ox) \,=\, 
                 g(\ovr) \2 d\otheta + s f(\ovr) \2 d\ox 
\end{equation}
on $\overline{\D} \setminus \{0\} \times \partial \Sigma$.
Then 
\begin{equation} \label{e:dla}
\sigma_s \wedge d \sigma_s \,=\, 
s \2 h(\ovr) \2 d\ovr \wedge d \otheta \wedge d\ox
\end{equation}
where $h(\ovr) = (fg'-f'g)(\ovr)$.
It follows that $\sigma_s$ is a contact form on $\overline{\D} \setminus \{0\} \times \partial \Sigma$.
For $\ovr$ near~$0$ we have $h(\ovr) = \ovr (2+\ovr^4)$,
whence $\sigma_s$ extends to a smooth contact form on 
$\overline{\D} \times \partial \Sigma$, that we also denote by~$\sigma_s$.
The Reeb vector field of $\sigma_s$ is given by
\begin{equation} \label{e:Xle}
R_{\sigma_s} (\otheta, \ovr, \ox) \,=\, 
\frac{1}{h(\ovr)} \left( - f'(\ovr) \2 \partial_\otheta + \frac 1 s \2 g'(\ovr) \2 \partial_{\ox} \right).
\end{equation}
It follows that $R_{\sigma_s}$ is tangent to the tori $\T_{\ovr} := \{ \ovr = \text{const} \}$
and that for each $\ovr \in (0,1]$ the flow of $R_{\sigma_s}$ is linear:
\begin{equation} \label{e:linear}
\phi_{\sigma_s}^t (\otheta, \ovr, \ox) \,=\, 
\left( \otheta - \frac{f'(\ovr)}{h(\ovr)} \2 t, \, \ovr, \, \ox + \frac{g'(\ovr)}{s \2 h(\ovr)} \2 t \right) .
\end{equation}
In particular, using our choices of $f$ and $g$ we see that $R_{\sigma_s} = \partial_{\otheta}$
on the boundary torus~$\T_1$,
and that $R_{\sigma_s} = \frac{1}{2 s} \partial_{\ox}$ is tangent along the core circle $C= \{ \ovr=0 \}$ 
of the full torus, and gives the positive orientation to $\partial \Sigma$.
Furthermore, \eqref{e:Xle} shows that $R_{\sigma_s}$ is positively transverse to the half-open annuli 
$$
A_{\otheta} := \{\otheta\} \times (0,1] \times \partial \Sigma \,\subset\, \overline{\D} \setminus \{0\} \times \partial \Sigma.
$$

\begin{figure}[h]   
 \begin{center}
  \psfrag{C}{$C$} \psfrag{2e}{$\frac{1}{2 s} \partial_{\ox}$} \psfrag{dt}{$\partial_\otheta$} \psfrag{A}{$A_{\bm 0}$}
  \leavevmode\includegraphics{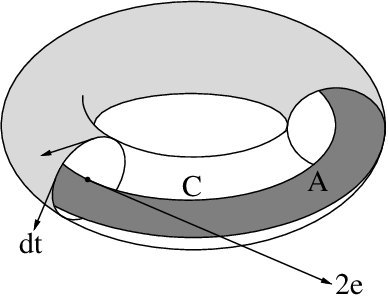}
 \end{center}
 \caption{Some vectors of $R_{\sigma_s}$ and the annulus $A_{\bm 0}$}   \label{XA.fig}
\end{figure}

The Reeb flow $\phi_{\sigma_s}^t$ on the full torus $\overline \D \times \partial \Sigma$ is integrable. 
More precisely, the core circle~$C$ of~$\overline \D \times \partial \Sigma $ 
is the trace of a periodic orbit of $\phi_{\sigma_s}^t$,  
and $(\overline \D \times \partial \Sigma ) \setminus C$ is foliated by the flow-invariant tori~$\T_\ovr$,
on which $\phi_{\sigma_s}^t$ is the linear flow~\eqref{e:linear}.
The topological entropy of these linear flows of course vanishes.
By the variational principle for topological entropy we therefore find that
\begin{equation} \label{e:h0}
h_{\top}(\phi_{\sigma_s}) \,=\, 
       \sup_{0 \leq r \le 1} h_{\top}(\phi_{\sigma_s} |_{\T_{\ovr}}) \,=\, 0.
\end{equation}

\medskip \noindent
{\bf Step 3: A family of contact forms on $M$.}
We first observe that the coordinates $\theta$ and $\otheta$, $r$ and $\ovr$, $x$ and $\ox$ are glued 
via the identification map used to glue $\Sigma(\psi)$ and $\overline \D \times \partial \Sigma$. 
It follows that they extend to coordinates on 
$$
V \cup_{\id} \left( \overline \D \times \partial \Sigma \right) .
$$
In view of the expressions \eqref{eq:alpha} and \eqref{e:la} for the contact forms~$\alpha_s$ 
on~$V$, and $\sigma_s$ on $\overline \D \times \partial \Sigma$, 
these two contact forms are glued to a smooth contact form~$\tau_s$ on~$M(\psi)$. 

As mentioned above, the Reeb vector field $R_{\tau_s}$ is positively transverse to 
the surfaces $\{\theta\} \times \Sigma$ in~$\Sigma(\psi)$ and to the annuli~$A_{\otheta}$ in 
$\overline \D \times \partial \Sigma$. 
Also, $R_{\tau_s}$ is tangent to the core circle~$C$ of $\overline \D \times \partial \Sigma$,
giving the positive orientation.
It follows that the Reeb vector field of $\Psi_* \tau_s$ is positively transverse to 
the interior of the pages of the open book $(\Sigma, \psi,\Psi)$, 
and positively tangent to the binding of the open book. 
By Theorem~\ref{t:Giroux} the contact structure $\ker \Psi_* \tau_s$ 
is diffeomorphic to~$\xi$, 
by some diffeomorphism $\rho_s \colon M \to M$.
Summarizing, there are diffeomorphisms $\Psi$ and~$\rho_s$ such that
\begin{equation} \label{e:MPP}
\left( M(\psi), \tau_s \right)  \stackrel{\Psi}{\longrightarrow} 
\left( M, \Psi_* \tau_s \right)  \stackrel{\rho_s}{\longrightarrow} 
\left( M, (\rho_s \circ \Psi)_*\tau_s \right)  
\end{equation}
with $\ker ( (\rho_s \circ \Psi)_* \tau_s) = \xi$.

\medskip \noindent
{\bf Step 4: Estimating the volume and the topological entropy of $\tau_s$.}
The Reeb flow $\phi_{\tau_s}^t$ of $\tau_s$ leaves the compact sets~$\Sigma(\psi)$ 
and~$\overline{\D} \times \partial \Sigma$ invariant. 
Since these compact sets cover~$M$ and since 
$\phi_{\tau_s}^t |_{\Sigma(\psi)} = \phi_{\alpha_s}^t$ and
$\phi_{\tau_s}^t |_{\overline{\D} \times \partial \Sigma} = \phi_{\sigma_s}^t$,
it follows from \cite[ Proposition~3.1.7 (2)]{HK95} 
and from Lemma~\ref{returntimebounded} and \eqref{e:h0} that 
\begin{equation} \label{entropypartition}
h_{\top}(\phi_{\tau_s}) = 
 \max \left\{ h_{\top}(\phi_{\alpha_s}) , h_{\top}(\phi_{\sigma_s}) \right\} \,\leq\, E.
\end{equation}

We decompose the integral of $\tau_s \wedge d \tau_s$ as
\begin{equation} \label{e:vol}
\int_{M(\psi)} \tau_s \wedge d \tau_s \,=\,
\int_{\Sigma(\psi)} \alpha_s \wedge d \alpha_s + \int_{\overline{\D} \times \partial \Sigma} \sigma_s \wedge d \sigma_s .
\end{equation}

For the first summand we have by \eqref{e:s^2} that
\begin{equation} \label{e:vol1}
\int_{\Sigma(\psi)} \alpha_s \wedge d \alpha_s \,=\, 
\int_{[0,2\pi] \times \Sigma} s \,d\theta \wedge \omega + O(s^2) 
\,=\, s \, 2\pi \int_\Sigma \omega + O(s^2) \,=\, O(s) .
\end{equation}

To estimate the second term in~\eqref{e:vol} we use~\eqref{e:dla}:
\begin{equation}
\int_{\overline{\D} \times \partial \Sigma} \sigma_s \wedge d \sigma_s \,=\, 
\int_{\overline{\D} \setminus \{0\} \times \partial \Sigma} \sigma_s \wedge d \sigma_s \,=\, 
s \int_{\overline{\D} \setminus \{0\} \times \partial \Sigma} h(\ovr) \,d\ovr \wedge d\otheta \wedge d\ox .
\end{equation}
Since the right integral is finite, it follows that also
\[
\int_{\overline{\D} \times \partial \Sigma} \sigma_s \wedge d \sigma_s = O(s).
\]
Together with \eqref{e:vol} and \eqref{e:vol1} we conclude that
\begin{equation} \label{e:sigma}
\int_{M(\psi)} \tau_s \wedge d \tau_s\,=\, O(s).
\end{equation}

\medskip \noindent
{\bf Step 5: End of proof.}
By \eqref{e:sigma} we know that given $\gve >0$ there exists $s \in (0,s_1]$ 
such that
\begin{equation} \label{volest1}
    \vol_{\tau_s} \bigl( M(\psi) \bigr)  \,=\, 
		\frac{1}{2\pi}\, \int_{M(\psi)} \tau_s \wedge d \tau_s  \,\leq\, 
		\frac{\gve^2}{E^2}.
\end{equation} 
Defining $\widetilde{\tau} := \left( \vol_{\tau_s} \bigl( M(\psi) \bigr) \right)^{-\frac{1}{2}} \, \tau_s$ 
we obtain 
$$
\vol_{\widetilde{\tau}} \bigl( M(\psi) \bigr)  \,=\, 1,
$$
and by \eqref{entropypartition} and \eqref{volest1} 
$$
h_{\top }(\phi_{\widetilde{\tau}}) 
\,=\, \left( \vol_{\tau_s} \bigl( M(\psi) \bigr) \right)^{1/2} \, h_{\top} (\phi_{\tau_s}) \,\leq\, 
\gve .
$$
Together with \eqref{e:MPP} and in view of the conjugacy invariance of topological entropy
if follows that $(\rho_s \circ \Psi)_* \widetilde \tau$ is a contact form on~$(M,\xi)$ 
of volume~$1$ and topological entropy~at most~$\gve$. 
\qed

\begin{question}
{\rm
It would be interesting to see how in the case of a spherization~$S^*Q_2$ 
over the closed orientable surface of genus~$2$ 
the open book decomposition used in the above proof looks like. 
Since our construction of the contact form~$\widetilde \tau$ is explicit, 
one could then maybe understand the star field $\{ D_q^*(H) \}$ 
corresponding to~$\widetilde \tau$.
In view of~\eqref{e:hHs} some of the stars must be very spiky.
How many spikes appear in these stars? }
\end{question}


\section{Generalities on Giroux's correspondence in higher dimensions} 
\label{s:Giroux}
In this section, we summarize those concepts and results on the Giroux correspondence 
between contact structures and supporting open books in higher dimensions that we shall
use in the proof of Theorem~\ref{t:mainintro}. 
While we reprove those parts that we use in a somewhat different form, 
we refer to~\cite{Gir02} and~\cite{Gir17} for the parts that we can cite and for further results.

\subsection{Ideal Liouville domains}
Let $F$ be a $2n$-dimensional compact manifold with non-empty boundary~$K$, 
and denote by~$F^\circ$ the interior of~$F$. 
A symplectic form $\omega$ on~$F^\circ$ is called an \emph{ideal Liouville structure}\/ on~$F$ 
(abbreviated~ILS)
if $\omega$ admits a primitive $\lambda$ on~$F^\circ$ such that for some (and then any) smooth function
\begin{equation} \label{prop_of_u}
u \colon F \rightarrow [0,+\infty) \; \mbox{ for which } K=u^{-1}(0)  \mbox{ is a regular level set} 
\end{equation}
the 1-form $u \lambda$ on $F^\circ$ extends to a smooth 1-form~$\beta$ on~$F$ which is a contact form along~$K$. 

If such a 2-form $\omega$ exists, the pair $(F,\omega)$ is called an \emph{ideal Liouville domain}\/ (ILD), 
and any primitive~$\lambda$ with the above property is called an \emph{ideal Liouville form}\/ (ILF). 
Given an ILD $(F,\omega)$, the contact structure 
$$
\xi := \ker (\beta|_{TK})
$$
depends on the 2-form $\omega$ but neither on~$\lambda$ nor~$u$, see Proposition~2 in~\cite{Gir17}. 
Moreover, once $\lambda$ is chosen, one can recover every (positive) contact form on~$(K,\xi)$ 
as the restriction to~$K$ of the extension of $u\lambda$ for some function~$u$ 
with property~\eqref{prop_of_u}. 
This is why the pair $(K,\xi)$ is called the \emph{ideal contact boundary} of~$(F,\omega)$. 
We note that the orientation of~$K$ that is determined by the co-oriented contact structure~$\xi$ 
coincides with the orientation of~$K$ as the boundary of~$(F,\omega)$. 

A very useful feature of an ILD is that a neighborhood of its boundary admits an explicit parametrization 
in which any ILF has a very nice form.  

\begin{lem} \label{le:nearK} 
Let $(F,\omega)$ be an ILD and $\lambda$ be an ILF. Let $u$ be a function satisfying~\eqref{prop_of_u} 
and let $\beta$ be the extension of~$u\lambda$. Then for any  contact form $\alpha_0$ on~$(K,\xi)$, 
there exists an embedding 
\begin{eqnarray*}
\imath \colon [0,+\infty) \times K \rightarrow F
\end{eqnarray*}
such that 
$$
\imath^*\lambda=\frac{1}{r}\alpha_0 \: \textrm{ and } \: \imath(0,q)=q \: \textrm{ for all } \: q\in K ,
$$ 
where $r \in [0,+\infty)$. In particular, 
$$
\imath^*\beta = \frac{u \circ\imath}{r}\alpha_0 \: \textrm{ on }\: F^\circ$$ and for all $q\in K$,
$$
(\beta |_{TK})(q) = \left( \frac{\partial (u \circ \imath)}{\partial r}(0,q) \right) \alpha_0.
$$
\end{lem}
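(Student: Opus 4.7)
The strategy is to build $\imath$ from the Liouville vector field of $\lambda$ on $F^\circ$, starting from a carefully chosen section of the resulting stable foliation of $K$.

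First I would set up the Liouville vector field $X$ on $F^\circ$, defined by $\iota_X\omega = \lambda$. Since $\omega$ is antisymmetric, $\lambda(X) = \omega(X,X) = 0$, and Cartan's formula then gives $\mathcal{L}_X\lambda = d\iota_X\lambda + \iota_Xd\lambda = \lambda$, so $(\phi^t_X)^*\lambda = e^t\lambda$ wherever the flow is defined. Writing $\lambda = \beta/u$ and using that the 2-form $u^2\omega = u\,d\beta - du\wedge\beta$ extends smoothly across $K$, the equation $\iota_X(u^2\omega) = u\beta$ can be analyzed at $K$; a short linear-algebra computation (carried out explicitly in the symplectization model $((0,+\infty)\times K,\,\lambda = r^{-1}\alpha_0)$, where it yields $X = -r\,\partial_r$) shows that $X$ extends smoothly to all of $F$, vanishes on $K$, and at each $q\in K$ has linearization with eigenvalue $-1$ transverse to $K$ and eigenvalue $0$ tangent to $K$. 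Consequently $\phi^t_X$ is complete on $F^\circ$: forward trajectories decay exponentially to $K$ without reaching it in finite time, and backward trajectories move away from $K$ and remain in the compact set $F$. The resulting normal hyperbolicity provides smooth stable manifolds $W^s(q) = \{p\in F^\circ : \phi^t_X(p)\to q \text{ as } t\to +\infty\}$ foliating a neighborhood of $K$, together with a smooth asymptote map $\pi$ from a neighborhood of $K$ in $F$ to $K$ satisfying $\pi|_K = \id$.

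Next I would normalize a section of $\pi$ by means of $\lambda$. Pick any smooth section $\sigma_1\colon K\to F^\circ$ of $\pi$; since $\xi$ is the contact structure associated with $\lambda$, the pullback $\sigma_1^*\lambda$ is a positive contact form for $\xi$ and hence equals $h\cdot\alpha_0$ for a unique positive function $h\in C^\infty(K)$. Setting
\[
\sigma(q) \,:=\, \phi^{-\log h(q)}_X(\sigma_1(q))
\]
keeps $\sigma(q)$ on the stable leaf $W^s(q)$, so $\pi\circ\sigma=\id_K$. The identities $\lambda(X)=0$ and $(\phi^t_X)^*\lambda=e^t\lambda$ then yield $\sigma^*\lambda = h^{-1}\sigma_1^*\lambda = \alpha_0$ by direct differentiation.

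Finally I would define $\imath(r,q):=\phi^{-\log r}_X(\sigma(q))$ for $r>0$ and $\imath(0,q):=q$. On $(0,+\infty)\times K$, the equivariance $(\phi^t_X)^*\lambda=e^t\lambda$ combined with $\lambda(X)=0$ immediately gives $\imath^*\lambda=\tfrac{1}{r}\alpha_0$, and the identity $\imath(0,q)=q$ is the asymptote property $\pi\circ\sigma=\id_K$. Injectivity and immersivity on the stratum $r>0$ are clear from the injectivity of the Liouville flow and from $X\neq 0$ on $F^\circ$. The main obstacle I anticipate is establishing smoothness of $\imath$ across $r=0$ and the embedding property on all of $[0,+\infty)\times K$; both should follow from the normally hyperbolic structure above, since in the local symplectization model the construction reduces to the identity map $(r,q)\mapsto(r,q)$ after the substitution $r=e^{-t}$, and the eigenvalue $-1$ transverse to $K$ makes this reparametrization smooth through $r=0$. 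The equalities in the ``in particular'' part are then immediate from $\beta=u\lambda$, applying L'H\^opital to $(u\circ\imath)/r$ at $r=0$ and using $(u\circ\imath)(0,q)=u(q)=0$.
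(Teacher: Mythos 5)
Your overall plan --- flow off $K$ using the Liouville vector field $Y$ of $\lambda$ (which you call $X$) --- is natural, but the two steps you treat as routine are precisely the difficulties, and your arguments for them do not close.

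First, the smoothness of $Y$ across $K$ does \emph{not} follow from the equation $\iota_Y(u^2\omega)=u\beta$. Along $K$ one has $u=0$, so $u^2\omega|_K = -du\wedge\beta|_K$ is a $2$-form of rank two and $u\beta|_K = 0$; the equation at a point $q\in K$ therefore only says $du(Y(q))=0$ and $\beta(Y(q))=0$, which constrains $Y(q)$ to a codimension-two subspace of $T_qF$ but does not force $Y(q)=0$, let alone produce a smooth extension. The paper avoids this by contracting into the top form $\mu := u^{n+1}\omega^n = (u\,d\beta + n\,\beta\wedge du)\wedge(d\beta)^{n-1}$, which is a nondegenerate volume form on all of $F$; the identity $\iota_X\mu = -n\,\beta\wedge(d\beta)^{n-1}$ then determines a unique smooth vector field $X$ on $F$, and one verifies $Y = -uX$ on $F^\circ$. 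If you prefer to stay with $Y$, you could instead use $\iota_Y\mu = n\,u\,\beta\wedge(d\beta)^{n-1}$, but the point is that you must pass to the top exterior power; the $2$-form is too degenerate at $K$ to determine anything.

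Second, and more seriously, the smoothness of $\imath$ at $r=0$ is the real content of the lemma, and you dispose of it with ``the eigenvalue $-1$ transverse to $K$ makes this reparametrization smooth through $r=0$''. The eigenvalue only controls the leading-order decay of $\phi^{-\log r}_Y$; one still has to show that \emph{all} higher-order corrections in the substitution $t\mapsto r=e^{-t}$ are compatible with a $C^\infty$ extension, and this does not simply fall out of normal hyperbolicity (whose smoothness conclusions are in general only finite, and which is in any case overkill here: the stable leaves are just the flow lines of the smooth, nonvanishing field $X=-Y/u$). The paper's proof handles this head-on: it uses the flow of $X$, which is transverse to $K$ and does not vanish there, to get a smooth collar $\Phi\colon[0,+\infty)\times K\to F$ by an ordinary flow-box argument, solves the linear ODE $\partial_t\hat\beta = \frac{\partial_t\hat u - 1}{\hat u}\,\hat\beta$ to identify the conformal factor $\Lambda$ with $\hat\lambda=\Lambda\,\alpha_0$, and then shows by an explicit formula for $\partial_t g$ that the inverse reparametrization extends smoothly with nonvanishing derivative at $t=0$. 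Replacing $Y$ by $X$ is exactly what converts the asymptotic-flow analysis you gesture at into a finite, elementary computation; your approach would need an independent argument to supply that last step.
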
 

\begin{proof} 
The above statement is a reformulation of Proposition~3 in~\cite{Gir17}. 
We give a similar but more explicit proof. 

Let $\dim F=2n$. Since $\beta$ is by assumption a positive contact form on~$K$, 
$\beta \wedge (d\beta)^{n-1}$ is a positive volume form on~$K$.
Using $\omega = d \lambda = d(\beta / u)$ on~$F^\circ$ we compute
\begin{equation} \label{e:obu}
\omega^n = \left( d(\beta/u) \right)^n = u^{-n-1} (u \, d\beta + n \beta\wedge du) \wedge (d\beta)^{n-1} = u^{-n-1}\mu
\end{equation}
where $\mu$ is the $2n$-form
$$
\mu := (u \, d\beta + n\beta \wedge du) \wedge (d\beta)^{n-1}.
$$
The above expression shows that $\mu$ is smooth on $F$ and, 
together with~\eqref{e:obu} and the fact that $0$ is a regular value of~$u$, 
that it is a positive volume form on~$F$.
Define the smooth vector field~$X$ on~$F$ by
\begin{equation} \label{e:Xmu}
\iota_X \mu = -n \beta \wedge (d\beta)^{n-1} .
\end{equation}

Recall that the Liouville vector field~$Y$ of~$\lambda$ is the vector field on~$F^\circ$
defined by $\iota_Y d\lambda = \lambda$.
Using $\beta = u \lambda$ on~$F^\circ$ we compute
$$
-n\beta \wedge (d\beta)^{n-1} = -n \2 u^n \2 \lambda \wedge (d\lambda)^{n-1} = -u^n \2 \iota_Y \omega^n = 
-u^{-1} \2 \iota_Y \mu.
$$
Comparing with~\eqref{e:Xmu} we find $Y = -u X$. Then
\[
\beta(X) = - \lambda(Y) = - d\lambda (Y,Y) = 0
\]
on $F^{\circ}$, and by continuity $\beta(X)=0$ on $F$. Hence on $F^\circ$,
\begin{equation} \label{e:Lb}
L_X \beta = \iota_X d\beta = -\frac{1}{u} \2 \iota_Y \left(du \wedge \lambda + u d\lambda \right) = 
      -\frac{1}{u} \left( du(Y) \2 \lambda + u \lambda \right) = \frac{1}{u} \left( du(X)-1\right) \beta .
\end{equation}
This shows that $du (X) =1$ along $K$ and that the function $\frac 1u (du(X)-1)$ is smooth on~$F$.
In particular, $X$ points inwards on $K=\partial F$ and hence the flow $\phi_X^t$ of~$X$ 
on the compact manifold~$F$ is well-defined for every $t\geq 0$. 
We define the smooth embedding 
$$
\Phi \colon [0,+\infty) \times K \rightarrow F, \quad (t,q) \mapsto \phi^t_{X}(q) .
$$ 
By construction we have $\Phi^*X=\partial_t$. Put $\hat{\beta} := \Phi^*\beta$, $\hat{u} := \Phi^*u$, 
and $\hat{\lambda} := \Phi^*\lambda$. 
The identities $\beta (X)=0$ and~\eqref{e:Lb} say that on $[0,+\infty) \times K$, 
\begin{equation} \label{e:bdot}
\hat{\beta}(\partial_t)=0, \quad \partial_t \1 \hat{\beta} = \frac{\partial_t \hat u -1}{\hat u} \2 \hat \beta .
\end{equation}
Here the function $\frac{\partial_t \hat u -1}{\hat u}$ is smooth and bounded 
on $[0,+\infty) \times K$ since by \eqref{e:Lb} the function $\frac 1u (du(X)-1)$ is smooth on~$F$.
Define the smooth function $v_0 \colon [0,+\infty) \times K \to \R$ by
$$
v_0(t,q) \,=\, \int_0^t \frac{\partial_t \hat{u} (\tau,q)-1}{\hat{u}(\tau,q)} \,d\tau .
$$
The solution of the problem \eqref{e:bdot} with initial condition $\beta_0(q) = \beta (0,q)$ is then
$$
\hat{\beta}(t,q) = \exp \left( v_0(t,q) \right) \beta_0(q), \qquad \forall \, (t,q)\in [0,+\infty) \times K,
$$
and therefore
$$
\hat{\lambda}(t,q) \,=\, 
\frac{1}{\hat{u}(t,q)} \exp \left( v_0(t,q) \right) \beta_0(q), \qquad \forall \, (t,q)\in (0,+\infty) \times K .
$$
Now let $\alpha_0$ be a positive contact form on $(K,\xi)$. 
Then there is a positive function~$\kappa$ on~$K$ 
such that $\beta_0 = \kappa \2 \alpha_0$.
On $(0,+\infty)\times K$ define the function  
\begin{equation} \label{e:Lambda}
\Lambda(t,q) \,=\, 
  \frac{\kappa(q)}{\hat{u}(t,q)} \exp \left( v_0(t,q) \right).
\end{equation}
Then $\hat{\lambda} = \Lambda \2 \alpha_0$. It is clear that $\Lambda >0$, 
and $\lim_{t \rightarrow 0} \Lambda(t,q) = +\infty$ for all $q \in K$. 
We note that 
$$
\frac{\partial \Lambda}{\partial t} =-\frac{\Lambda}{\hat{u}}<0
$$
and therefore
\begin{equation} \label{e:f}
\Lambda(t,q) \,=\, \Lambda(1,q) \exp \left( -\int_{1}^t\frac{1}{\hat{u}(\tau,q)} \,d\tau \right) .
\end{equation}
On $[0,+\infty) \times K$, $\hat{u}$ is bounded from above since $F$ is compact. 
Therefore $\lim_{t \rightarrow +\infty} \Lambda(t,q)=0$ for all $q \in K$. 
It follows that $\Lambda(\cdot,q)$ is an orientation reversing diffeomorphism from 
$(0,+\infty)$ onto $(0,+\infty)$ for all~$q$. 
Hence there exists a positive smooth function~$f$ on $(0,+\infty) \times K$ such that 
$$
\Lambda(f(r,q),q) \,=\, \frac{1}{r} \quad \forall \, (r,q) \in (0,+\infty) \times K,
$$
and for every $q\in K$ the function $f(\cdot,q)$ is an orientation preserving diffeomorphism  from $(0,+\infty)$ onto $(0,+\infty)$.
Define the embedding
$$
\Psi \colon (0,+\infty) \times K \rightarrow [0,+\infty) \times K, \quad (r,q) \mapsto (f(r,q),q) .
$$
By construction $\Psi^*\hat{\lambda}=\frac{1}{r}\alpha_0$. 
We claim that $\Psi$ extends to a smooth embedding 
$$
\Psi \colon [0,+\infty) \times K \rightarrow [0,+\infty) \times K \quad \mbox{ with $\Psi(0,q)=(0,q)$.}
$$ 
Postponing the proof of the claim, we note that $\imath = \Phi \circ \Psi$ is then the desired embedding. 
The rest of the statement of the lemma follows immediately from the identity $\imath^*\lambda = \frac 1r \2 \alpha_0$.

We now show that the extension of $\Psi$ given by $\Psi (0,q) = (0,q)$ is smooth.
Combining \eqref{e:Lambda} and~\eqref{e:f} we get
\begin{equation}\label{e:formulaforf}
\Lambda(1,q)^{-1}\exp \left( \int_{1}^{f(r,q)}\frac{1}{\hat{u}(\tau,q)} \,d\tau \right)=r.    
\end{equation}
We consider the function 
$$
g(t,q) \,:=\, \Lambda(1,q)^{-1} \exp \left( \int_{1}^{t} \frac{1}{\hat{u}(\tau,q)} \,d\tau \right)
$$
on $(0,+\infty) \times K$ and we define 
$$
\widetilde{\Psi} \colon (0,+\infty) \times K \rightarrow (0,+\infty) \times K, \quad \widetilde{\Psi}(t,q)=(g(t,q),q).$$
Then we have
$$
\widetilde{\Psi} \circ \Psi(r,q) \,=\, \widetilde{\Psi} (f(r,q),q) \,=\, \bigl( g(f(r,q),q),q \bigr) \,=\, (r,q)
$$
on $(0,+\infty)\times K$.
We claim that $\widetilde{\Psi}$ extends smoothly to $[0,+\infty)\times K$ by $\widetilde{\Psi}(0,q)=q$ 
for all $q\in K$. To see this we define the function $v_1 \colon [0,+\infty) \times K \to \R$ 
by  
$$
v_1(t,q) \,:=\, \int_1^t \frac{\partial_t \hat u(\tau,q)-1}{\hat u(\tau,q)} \,d\tau.
$$
We recall that the above integrand is smooth and bounded on $[0,+\infty) \times K$ and so is the function~$v_1$. 
For every $t \in (0,+\infty)$ and $q \in K$,
\begin{eqnarray*}
e^{v_1} \,=\, \exp \left( \int_1^t \frac{\partial_t \hat u-1}{\hat u} \, d\tau\right)
        &=& \exp \left(\log \hat u(t,q)-\log \hat u(1,q) -\int_1^t \frac{1}{\hat u} \, d\tau \right) \\
&=& \frac{\hat u(t,q)}{\hat u(1,q)} \exp \left( -\int_1^t \frac{1}{\hat u} \, d\tau \right)
\end{eqnarray*}
and so 
$$
g(t,q) \,=\, \Lambda(1,q)^{-1} \, e^{-v_1(t,q)} \, \frac{\hat u(t,q)}{\hat u(1,q)}.
$$
Note that $\Lambda(1,q)\neq 0$. The above expression says that $g$ extends smoothly to $[0,+\infty) \times K$ 
by $g(0,q)=0$ for $q \in K$. For $t>0$ we compute 
\begin{eqnarray*}
\partial_t g (t,q) &=& \Lambda(1,q)^{-1} \,e^{-v_1(t,q)} \left[- \partial_t v_1 (t,q) \, 
           \frac{\hat u(t,q)}{\hat u(1,q)} + \frac{\partial_t \hat u (t,q)}{\hat u(1,q)} \right] \\
&=&\Lambda(1,q)^{-1} \, e^{-v_1(t,q)} \left[-\frac{(\partial_t \hat u(t,q)-1)}{\hat u(t,q)} 
           \frac{\hat u(t,q)}{\hat u(1,q)} + \frac{\partial_t \hat u(t,q)}{\hat u(1,q)} \right] \\
&=&\Lambda(1,q)^{-1} \, e^{-v_1(t,q)} \,\frac{1}{\hat u(1,q)} .
\end{eqnarray*}
By the smoothness of $g$, this expression also holds true for $t=0$.
In particular, $\partial_t g(t,q) >0$ for all $(t,q) \in [0,+\infty) \times K$.
It follows that $D \widetilde{\Psi}(t,q)$ is invertible for all $(t,q) \in [0,+\infty) \times K$. 
By the inverse function theorem, the extension of $\Psi$ over $[0,+\infty)\times K$ is~$C^1$ 
and in fact $C^\infty$-smooth since $\widetilde{\Psi}$ is smooth. 
\end{proof}

\subsection{Ideal Liouville domains and contact structures}
Ideal Liouville domains are particularly useful for clarifying the existence and uniqueness of contact structures supported by open books in higher dimensions. We first recollect some facts about open books. 

An \textit{open book}\/ in a closed manifold~$M$ is a pair~$(K,\Theta)$ where
\begin{enumerate}[label=\mbox{(ob\arabic*)}]
\item \label{ob1}
$K\subset M$ is a closed submanifold of co-dimension two with trivial normal bundle;

\smallskip
\item \label{ob2}
$\Theta \colon M \setminus K \rightarrow S^1= \R /2 \pi \Z$ is a locally trivial smooth fibration 
that on a deleted neighbourhood~$(\D \setminus \{0\}) \times K$ of~$K$ reads
$\Theta (re^{i\theta},q) = \theta$. 
\end{enumerate}  
The submanifold $K$ is called the \textit{binding}\/ of the open book, and the closures of the
fibres of~$\Theta$ are called the~\textit{pages}. 
The pages are compact submanifolds with common boundary~$K$.  
The canonical orientation of~$S^1$ induces co-orientations of the pages. 
Hence if $M$ is oriented, then so are the pages, 
and then also the binding as the boundary of a page. 

Another way of defining an open book is as follows. Let $h \colon M \rightarrow \mathbb{C}$ 
be a smooth function such that $0$ is a regular value. 
Set $K := h^{-1}(0)$, and assume that
$\Theta :=h/|h| \colon M \setminus K \rightarrow S^1$ has no critical points.
Then the pair $(K,\Theta)$ is an open book in $M$. Moreover, any open book in~$M$ can be recovered 
via a \emph{defining function}~$h$ as above, and such a defining function is unique up to multiplication 
by a positive function on~$M$.

Given an open book $(K,\Theta)$ in a closed manifold~$M$, one finds a vector field~$X$ on~$M$, 
called a \emph{spinning vector field}, such that 
\begin{enumerate}[label=\mbox{(m\arabic*)}]
\item \label{m1}
$X$ lifts to a smooth vector field on the manifold with boundary obtained from~$M$ 
by real oriented blow-up along~$K$, in which each disk $\D \times \{q\}$ 
of the neighbourhood $\D \times K$ is replaced by the annulus $S^1 \times [0,1] \times \{q\}$; 

\smallskip
\item \label{m2} 
$X=0$ on $K$ and $d\Theta (X) = 1$ on $M \setminus K$.
\end{enumerate}
Then the time-$2\pi$ map of the flow of~$X$ is a diffeomorphism 
$$
\phi \colon F\rightarrow F
$$ 
of the $0$-page $F := \Theta^{-1}(0)\cup K$, which fixes $K$ pointwise. 
The isotopy class $[\phi]$ of~$\phi$ among the diffeomorphisms of~$F$ that fix~$K$ pointwise
is called the \emph{monodromy}\/ of the open book. It turns out that the open book is characterized 
by the pair $(F,[\phi])$. Namely, given the pair $(F,\phi)$, one defines the mapping torus 
$$
\MT(F,\phi) := ([0,2\pi] \times F ) \big/\sim  \quad \mbox{ where } (2\pi,p) \sim (0,\phi(p)) .
$$
This is a manifold with boundary. One has the natural fibration 
$$
\widehat{\Theta} \colon \MT(F,\phi)\rightarrow S^1 
$$
with fibres diffeomorphic to $F$, and there is a natural parametrization of 
the fibre $\widehat{\Theta}^{-1}(0)$ via the restriction of the above quotient map to $\{0\} \times F$. 
For every $\phi'\in [\phi]$ there is a diffeomorphism between $\MT(F,\phi)$ and~$\MT(F,\phi')$ 
that respects the fibrations over~$S^1$ and the natural parametrizations of the $0$-fibres. 
Now, given $\MT(F,\phi)$ one collapses its boundary, which is diffeomorphic to $S^1 \times K$, 
to~$K$ and obtains the so-called \emph{abstract open book} $\OB(F,\phi)$. 
In fact, the closed manifold~$\OB(F,\phi)$ admits an open book given by the pair $(K,\Theta)$, 
where $\Theta$ is induced by~$\widehat{\Theta}$. 
Moreover, for $\phi' \in [\phi]$, 
the diffeomorphism between $\MT(F,\phi)$ and~$\MT(F,\phi')$ descends to a diffeomorphism between the corresponding abstract open books. In particular, $M$ and $\OB(F,\phi)$ may be identified 
together with their open book structures. 
We note that one can choose the spinning vector field~$X$ smooth on~$M$
and such that its flow is $2\pi$-periodic near~$K$. 
However, not every representative of the monodromy class can be obtained 
via a smooth spinning vector field, see Remark~12 in~\cite{Gir17}. 
To obtain all representatives of the monodromy class, 
one needs to use the whole affine space of spinning vector fields. 

Open books meet with contact topology via the following definition. 
Let $M$ be a compact manifold with a co-oriented contact structure~$\xi$. 
We say that $\xi$ \emph{is supported}\/ by an open book~$(K,\Theta)$ on~$M$
or that the open book $(K,\Theta)$ \emph{supports}~$\xi$ 
if there exists a contact form~$\alpha$ on~$(M,\xi)$, that is $\xi=\ker \alpha$, 
such that 
\begin{itemize}
\item $\alpha$ restricts to a positive contact form on $K$;
\item $d\alpha$ restricts to a positive symplectic form on each fibre of $\Theta$.
\end{itemize}
It turns out that given a closed manifold $M$, the isotopy classes of co-oriented contact structures on~$M$ are in one-to-one correspondence with equivalence classes of supporting open books. This statement is 
a very rough summary of what is called the Giroux correspondence. We will recall certain pieces of this celebrated statement in detail.

\begin{theorem}[Theorem~10 in~\cite{Gir02}] \label{thm10} 
Any contact structure on a compact manifold is supported by an open book with Weinstein pages.   
\end{theorem}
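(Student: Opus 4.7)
The plan is to follow the strategy sketched by Giroux in \cite{Gir02}, producing the open book as the dual of a cellular decomposition of $M$ adapted to $\xi$. Write $\dim M = 2n+1$ and fix a contact form $\alpha$ for $\xi$. First I would pick a smooth triangulation $T$ of $M$ so fine that each closed simplex lies in a Darboux chart. Since in such a chart $\xi$ is the kernel of the standard contact form on a ball in~$\R^{2n+1}$, all subsequent modifications reduce to local Euclidean questions, with the piecewise-smooth structure of~$T$ to be preserved across faces.

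Next I would inductively deform the $k$-skeleton of $T$, for $k=1,\dots,n$, into an isotropic embedded subcomplex. The case $k=1$ rests on the $C^0$-density of Legendrian arcs inside a Darboux ball, together with a relative version fixing the already-chosen vertices. For $k \geq 2$ one appeals to Gromov's h-principle for isotropic embeddings in contact manifolds, applied relatively cell by cell so as to fix the previously modified $(k-1)$-skeleton. When the procedure terminates at $k=n$, the resulting cellular decomposition $T'$ has a Legendrian $n$-skeleton $L$, and its top-dimensional cells are Darboux balls whose boundaries meet $L$ in a Legendrian way; this is what Giroux calls a \emph{contact cellular decomposition} of $(M,\xi)$.

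From $T'$ I would extract the open book as follows. Let $L^*$ denote the dual $(n-1)$-skeleton of $T'$, thickened to a small compact neighborhood of itself in~$M$. One constructs a smooth function $h\colon M \to \C$ that vanishes transversally along a closed codimension-two submanifold $K$ sitting inside this neighborhood and such that $h/|h|\colon M\setminus K \to S^1$ has no critical point and matches the standard angular model on a tubular neighborhood of $K$. The function $h$ is built top-cell by top-cell from the Darboux model of an open book on the standard contact ball, and the compatibility across lower-dimensional faces is ensured by the Legendrian character of~$L$. The pair $(K, h/|h|)$ is then an open book of $M$ satisfying \ref{ob1}--\ref{ob2}, each page of which deformation-retracts onto~$L$. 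The Weinstein structure on the pages comes for free: since $L$ is Legendrian, $d\alpha$ restricts to a symplectic form on each page, and the Liouville vector field transverse to the pages retracts each page onto $L$, exhibiting it as a Weinstein manifold with skeleton~$L$. After replacing $\alpha$ by $f\alpha$ with $f$ positive and suitably large near $K$, one obtains a form that restricts positively to $K$ and whose differential restricts positively to every page, i.e., a supporting contact form in the sense of Section~\ref{s:Giroux}.

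The main obstacle is the construction of the Legendrian $n$-skeleton: the relative h-principle for isotropic embeddings must be applied so as to preserve the piecewise-smooth combinatorial structure across lower-dimensional faces, while keeping the approximations in adjacent top cells compatible. This is the technical heart of Giroux's argument and is what prevents the proof from reducing to a short exercise; once this skeleton is in place, the passage to the open book and the verification of the Weinstein/supporting conditions are essentially a dualization modeled on the local picture of the standard contact ball together with its standard open book with disk pages.
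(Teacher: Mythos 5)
The paper does not prove this statement: it is quoted verbatim as Theorem~10 of Giroux's ICM address~\cite{Gir02}, a deep external result on which Section~\ref{s:Giroux} and the entire induction in Section~\ref{s:collapsegeq3} rest. So there is no ``paper's own proof'' to compare against; what can be assessed is whether your sketch reflects the argument that actually establishes the theorem.

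Your sketch follows the \emph{three-dimensional} strategy of Giroux (contact cell decomposition with Legendrian $1$-skeleton, then dualize to get an open book with Weinstein --- in that dimension, just exact symplectic --- pages). This is not the argument that proves Theorem~10 in dimension $\geq 5$. Giroux's proof in higher dimensions, carried out with Mohsen, is based on the approximately holomorphic / asymptotically holomorphic geometry of Donaldson, in the contact adaptation of Ibort--Mart\'{\i}nez--Presas: one chooses an almost complex structure on $\xi$ compatible with $d\alpha$, considers a complex line bundle $L \to M$ whose curvature is $-i\,d\alpha$, and for $k \gg 0$ produces a pair of asymptotically holomorphic, uniformly transverse sections of $L^{\otimes k}$. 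Their common zero set is the binding $K$ and the argument of their ratio is the map $\Theta$; uniform transversality estimates are what force $d\alpha$ to be symplectic on the pages and give the Weinstein (Stein, in fact) structure. This is stated explicitly in \cite{Gir02} (and is the content of the ``Giroux--Mohsen'' theorem); the cell-decomposition approach you describe has never been pushed through beyond dimension three.

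There are also concrete gaps inside your proposal, independent of the choice of route. First, the Gromov h-principle for isotropic embeddings covers subcritical dimensions; for the critical, Legendrian, $n$-cells the embedding h-principle fails (it only holds for immersions, or in the loose range), so ``apply the h-principle relatively cell by cell'' does not produce a Legendrian $n$-skeleton without serious additional work, and there is no reference carrying this out for $n \geq 2$. Second, even granting a Legendrian $n$-skeleton $L$, your passage from $L$ to the open book is not a formality: the phrase ``one constructs a smooth function $h\colon M\to\C$ ... built top-cell by top-cell from the Darboux model'' is precisely where the content would have to be, and in dimension $\geq 5$ the dual skeleton is no longer a graph, so the local models do not glue by elementary general position. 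Third, the last claim --- that the Liouville vector field of $d\alpha$ on a page automatically ``retracts each page onto $L$'' --- needs a Lyapunov function compatible with the Liouville flow, which is the definition of a Weinstein structure and is exactly what has to be arranged; it is not a consequence of $L$ being Legendrian. The honest statement is that this theorem should be treated as a black box cited from~\cite{Gir02}, and a self-contained proof would require reproducing the Donaldson-type transversality estimates rather than the cell-decomposition heuristic.
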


This result is the core of the correspondence between supporting open books and contact structures. 
The existence statement of the opposite direction of the correspondence
is relatively easy to achieve, especially in dimension three. 
Namely, given an open book in a 3-dimensional compact manifold, it is not hard to construct a contact form on the corresponding abstract open book whose kernel is supported. 
In higher dimensions, however, one needs that the pages are exact symplectic and that the monodromy is symplectic 
in order to construct a contact form on an abstract open book whose kernel is supported,  
see Proposition~9 in~\cite{Gir02} and Proposition~17 in \cite{Gir17}. 
We will carry out such a construction in Sections~\ref{s:cfaway} and~\ref{s:cfnear}. 
Concerning the uniqueness features of the Giroux correspondence, we are mainly interested in one side, 
namely the ``uniqueness'' of supported contact structures. 
This result is again more involved in higher dimensions. 
Heuristically, given an open book, the symplectic geometry of the pages determines the supported contact structures. In dimension three, any two symplectic structures on a page are isotopic since they are just 
area forms on a given surface, but in higher dimensions this is not the case.  

In \cite{Gir17} Giroux introduced the notion of a Liouville open book, which clears out the technicalities
to which we pointed above. 

A \emph{Liouville open book} (LOB) in a closed manifold $M$ is a triple $(K,\Theta,(\omega_\theta)_{\theta\in S^1})$ where
\begin{enumerate}[label=\mbox{(lob\arabic*)}]
\item \label{lob1}
$(K,\Theta)$ is an open book on $M$ with pages $F_\theta=\Theta^{-1}(\theta)\cup K$,
$\theta\in S^1$;

\smallskip
\item \label{lob2}
$(F_\theta,\omega_\theta)$ is an ILD for all $\theta\in S^1$ and the following holds:  there is a defining function $h \colon M \rightarrow \mathbb{C}$ for $(K,\Theta)$ and a $1$-form $\beta$ on~$M$ such that the restriction of $d(\beta/|h|)$ to each page is an~ILF. More precisely, 
$$
\omega_\theta=d(\beta/|h|) |_{TF_\theta^\circ}
$$
for all $\theta \in S^1$.
\end{enumerate} 
The 1-form $\beta$ is called a \emph{binding 1-form} associated to~$h$. 
If $h'$ is another defining function for $(K,\Theta)$, then  $h'=\kappa \1 h$ 
for a positive function~$\kappa$ on~$M$, and $\beta' := \kappa \1 \beta$ is a binding 1-form associated 
to~$h'$. We also note that for a fixed defining function, the set of associated binding 1-forms is 
an affine space. 
                       
Similar to the case of classical open books, LOBs are characterized by their monodromy, which now 
has to be symplectic: 
One considers \emph{symplectically spinning vector fields}, namely vector fields~$X$ 
satisfying~\ref{m1} and~\ref{m2} and generating the kernel of a closed 2-form on $M \setminus K$ 
which restricts to~$\omega_\theta$ for all $\theta \in S^1$. 
Given such a vector field, the time-$2\pi$ map of its flow, 
say~$\phi$, is a diffeomorphism of $F := F_0$ which fixes~$K$ and preserves $\omega := \omega_0$. 
The isotopy class $[\phi]$, among the symplectic diffeomorphisms that fix~$K$, is called the 
\emph{symplectic monodromy}\/ and characterizes the given~LOB. 
For the construction of a LOB in the abstract open book $\OB(F,\phi)$, where $\phi^*\omega=\omega$, 
we refer to Proposition~17 in~\cite{Gir17} and to our construction at the end of the next section. 

Again, symplectically spinning vector fields form an affine space, 
and all representatives of the symplectic monodromy can be obtained by sweeping out this affine space. 
The obvious choice of a symplectically spinning vector field~$X$ is smooth, 
and by suitably modifying a given binding 1-form 
without affecting its restriction to the kernel of~$d\Theta$
one can arrange that the flow of~$X$ is $2\pi$-periodic near the binding. 

\begin{lem}[Lemma 15 in \cite{Gir17}]  \label{lem15}
Let $(K, \Theta,(\omega_\theta)_{\theta \in S^1})$ be a LOB in a closed manifold~$M$, and let 
$h \colon M \rightarrow \mathbb{C}$ be a defining function for~$(K,\Theta)$. 
Then for every binding 1-form $\beta$, the vector field~$X$ on~$M \setminus K$ spanning the kernel 
of $d(\beta/|h|)$ and satisfying $d \Theta (X) = 1$ extends to a smooth vector field on~$M$ 
which is zero along~$K$.
Furthermore, $\beta$ can be chosen such that the flow of~$X$ is $2\pi$-periodic near~$K$.
\end{lem}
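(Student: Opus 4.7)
The plan is to analyze $X$ in local coordinates near the binding $K$, leveraging a parametric version of Lemma~\ref{le:nearK}. I would first fix a tubular neighborhood $U$ of $K$ in $M$ and use the trivialization of the normal bundle (available by (ob1)) to identify $U$ with $\D \times K$. After rescaling $h$ by a positive function (which multiplies $\beta$ by the same factor and preserves the binding property), I may assume $h(z,q) = z$. In polar coordinates $z = re^{i\theta}$ we have $\Theta = \theta$ and $|h| = r$ on $U \setminus K$. Setting $\alpha_0 := \beta|_{TK}$, I apply Lemma~\ref{le:nearK} to each page $F_\theta$, smoothly in $\theta$, to obtain a $\theta$-family of parametrizations in which $(\beta/|h|)|_{F_\theta^\circ} = \alpha_0/r$. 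Assembling these into a parametrization of $U$ by $(r,\theta,q)$ yields
\[
\beta/|h| \,=\, \frac{\alpha_0}{r} + g(r,\theta,q)\, d\theta
\]
for a smooth function $g$ that vanishes on $K$, the vanishing being forced by smoothness of $\beta$ in Cartesian coordinates.

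Writing $X = \partial_\theta + a\, \partial_r + Y$ with $Y$ tangent to $K$, I would next solve the system $d\Theta(X) = 1$, $\iota_X d(\beta/|h|) = 0$. Decomposing the resulting $1$-form equation along the splitting $T^*K = \R\, \alpha_0 \oplus \xi^*$, where $\xi = \ker \alpha_0 \subset TK$ is the boundary contact structure, and using the non-degeneracy of $d\alpha_0|_\xi$, one finds
\[
a = r^2\, dg(R_0), \qquad Y = r\, \widetilde Y - r^2 (\partial_r g)\, R_0,
\]
where $R_0$ is the Reeb vector field of $\alpha_0$ and $\widetilde Y \in \xi$ is the unique smooth vector determined by the $\xi^*$-component of $d_q g$. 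Passing back to Cartesian $(x,y,q)$ coordinates, $\partial_\theta$ becomes the smooth field $x\partial_y - y\partial_x$, and the $r$-prefactors in $a$ and $Y$ exactly cancel the polar singularities of $\partial_r$ and the non-smoothness of $\widetilde Y$ in $(x,y)$, so that $X$ extends smoothly to $M$ with $X|_K = 0$.

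For the periodicity, I exploit the affine freedom in choosing $\beta$. The $1$-form $\psi \, r^2 d\theta = \psi(-y\, dx + x\, dy)$ is smooth on $M$ for any smooth function $\psi$, and adding it to $\beta$ modifies $\beta/|h|$ only by $\psi\, r\, d\theta$, whose differential restricts to zero on every page; hence the modified $1$-form is again a binding form for the same LOB. Using this freedom I would arrange $g \equiv 0$ in a neighborhood of $K$, for instance by taking $\beta$ equal to (a smooth extension of) $\alpha_0$ near $K$ in the chosen coordinates. Plugging $g \equiv 0$ into the formulas above yields $a = 0$ and $Y = 0$, so that $X = \partial_\theta = x\partial_y - y \partial_x$ on a neighborhood of $K$, whose flow is the standard rotation of the $\D$-factor and is therefore $2\pi$-periodic.

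The main obstacle is verifying the smooth extension across $K$. A priori $X$ is defined only on $M \setminus K$ and involves terms with $1/r^2$ singularities; two ingredients are needed to overcome this. The first is the smooth $\theta$-dependence of the embeddings in Lemma~\ref{le:nearK}, which follows by inspecting its proof (the relevant vector field and reparametrization depend smoothly on the ILF, hence on $\theta$), but requires a careful check at $r = 0$ to see that the $\theta$-family assembles into a parametrization which is smooth in Cartesian coordinates on $\D$. The second is the contact non-degeneracy on $K$, which solves uniquely for $\widetilde Y$ with exactly one power of $r$ — precisely what is needed for the polar singularities of $\partial_r$ to cancel against the coefficient $a$ and for the radial component of $Y$ to vanish to second order in $r$.
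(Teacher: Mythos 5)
The paper cites Lemma~15 of Giroux~\cite{Gir17} and gives no proof, so your attempt is assessed on its own. Your strategy --- a local computation near $K$ using a $\theta$-parametric Lemma~\ref{le:nearK} to normalize $\beta/|h|$ to $\alpha_0/r + g\,d\theta$, then splitting $\iota_X d(\beta/|h|) = 0$ along $\R\alpha_0 \oplus \xi^*$ --- is reasonable, and the formulas you obtain for $a$ and the components of $Y$ have the right shape (modulo sign). You also correctly flag the assembly of the page parametrizations into a single smooth chart near $K$ as needing care.

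However, two steps have real gaps. First, the assertion that $g$ vanishes on $K$ because ``$\beta$ is smooth in Cartesian coordinates'' is incorrect reasoning: before reparametrizing one has $g = \beta(\partial_\theta)/r$, which at $r=0$ tends to $B|_K\cos\theta - A|_K\sin\theta$ where $A=\beta(\partial_x)$ and $B=\beta(\partial_y)$, and this is generically nonzero. The actual reason $g|_K = 0$ \emph{after} the reparametrization is that $\imath^*(\beta/|h|) = \alpha_0/r$ has no $dr$-component, so $\beta$ annihilates the radial vector $W_\theta := \partial_r\imath_\theta|_{r=0}$; differentiating $\beta(W_\theta)=0$ in $\theta$ gives $\beta(\partial_\theta W_\theta)=0$, which is precisely $g|_K=0$. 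Without this identification the prefactor analysis does not start, and you would then still need to control the Taylor expansion of $g$ in $r$ and its Fourier content in $\theta$ to deduce that $r\,(d_Kg)(R_0)$, $r^2\partial_r g$ and $r\widetilde Y$ are smooth in Cartesian coordinates --- the powers of $r$ alone do not give this. Second, and more seriously, the periodicity modification is unverified: taking $\beta$ equal to the trivially extended $\alpha_0$ near $K$ is not shown to be a binding form of the given LOB --- you would need $d(\alpha_0/|h|)$ to restrict to $\omega_\theta$ on each page, which does not follow from anything you have written. The allowed modifications add a smooth $\phi$ with $d(\phi/|h|)$ vanishing on pages; taking $\phi = \mu\,(-y\,dx+x\,dy)$ changes $g$ by $\mu r^2/|h|$, so to kill $g$ near $K$ you would need $g|h|/r^2$ (equivalently $g/r$) to be smooth in Cartesian coordinates near $K$, a statement strictly finer than $g|_K=0$ which you neither state nor prove. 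Both points would need to be closed for the argument to stand.
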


Natural sources of LOBs are contact manifolds: 

\begin{prop}[Proposition 18 in \cite{Gir17}]  \label{prop18}
Let $(M, \xi)$ be a closed contact manifold, and let $(K, \Theta)$ be a supporting open book with defining function $h \colon M \rightarrow \mathbb{C}$.
Then the contact forms $\alpha$ on~$(M,\xi)$ such that $d(\alpha/|h|)$ induces an ideal Liouville structure
on each page form a non-empty convex cone.
\end{prop}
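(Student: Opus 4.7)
My plan is to prove non-emptiness by a construction near the binding coupled with a Weinstein page structure, and to prove the convex cone property by a direct algebraic computation. First, I unpack the condition on $\alpha$: requiring $d(\alpha/|h|)$ to induce an ideal Liouville structure on each page $F_\theta$ amounts to two demands. First, $d(\alpha/|h|)|_{F_\theta^\circ}$ must be symplectic. Second, taking $u=|h|_{F_\theta}$, which is smooth on $F_\theta$ with $K$ as a regular zero level in the model $\D\times K$ where $h=re^{i\theta}$, the form $u\cdot(\alpha/|h|)=\alpha|_{F_\theta}$ must extend smoothly across $K$ and restrict to a contact form on $K$. The second demand is automatic whenever $\alpha$ is a smooth contact form on $M$ with $\alpha|_K$ a positive contact form on $(K,\xi|_K)$.

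For non-emptiness, I would begin with a contact form $\alpha_0$ on $(M,\xi)$ witnessing that $(K,\Theta)$ supports $\xi$, and normalize $\alpha_0$ on the collar $\D\times K$ to the shape $\alpha_0=f_1(r)\alpha_K+f_2(r)\,d\theta$, with $f_1(0)=1$, $f_2(0)=0$, $f_2'(0)>0$ and $f_1$ strictly decreasing. Since $(\alpha_0/|h|)|_{F_{\theta_0}}=(f_1/r)\alpha_K$ on a page $F_{\theta_0}$ near $K$, a direct computation gives
$$
\omega=(f_1/r)'\,dr\wedge\alpha_K+(f_1/r)\,d\alpha_K,\quad \omega^n=n(f_1/r)'(f_1/r)^{n-1}\,dr\wedge\alpha_K\wedge(d\alpha_K)^{n-1},
$$
which is a nonvanishing volume form near $K$ since $\alpha_K$ is a contact form on $K$ and $(f_1/r)'<0$. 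In the bulk of $M\setminus K$, I would use Theorem~\ref{thm10} to pick $\alpha_0$ whose restriction to the pages provides a Weinstein/Liouville structure, and verify by a direct check that this ensures $d(\alpha_0/|h|)|_{F_\theta^\circ}$ is symplectic there as well, so that $\alpha_0$ lies in the claimed set.

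Closure under positive rescaling is immediate from $d(c\alpha/|h|)=c\,d(\alpha/|h|)$. For convexity, given two members $\alpha_1,\alpha_2$ of the set, write $\alpha_1=f\alpha_2$ with $f>0$ smooth on $M$, set $\lambda=\alpha_2/|h|$ and $\omega=d\lambda$ on $M\setminus K$, so that $d(\alpha_1/|h|)=df\wedge\lambda+f\omega$. For $t_1,t_2>0$ the form $\alpha=t_1\alpha_1+t_2\alpha_2=(t_1f+t_2)\alpha_2$ is a contact form for $\xi$ and $\alpha|_K$ is a positive contact form on $K$. Using the nilpotence $(df\wedge\lambda)^2=0$, a direct expansion on $F_\theta^\circ$ yields
$$
(d(\alpha/|h|))^n=(t_1f+t_2)^{n-1}\bigl[t_1(f+n\mu)+t_2\bigr]V,
$$
where $V=\omega^n|_{F_\theta^\circ}>0$ and $\mu$ is the function defined by $df\wedge\lambda\wedge\omega^{n-1}=\mu V$. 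The hypothesis that $\alpha_1$ is in the set forces $\omega_1^n=f^{n-1}(f+n\mu)V>0$, so $f+n\mu>0$, and therefore $t_1(f+n\mu)+t_2>0$. Hence $\alpha$ belongs to the set. The main obstacle lies in the non-emptiness step, requiring a careful matching between the collar normal form near $K$ and a global Liouville structure on the pages; convexity, by contrast, reduces to the algebraic identity above via the nilpotence $(df\wedge\lambda)^2=0$.
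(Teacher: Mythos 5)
The paper does not prove this statement; it cites Giroux's Proposition~18 in~\cite{Gir17}, so there is no in-paper proof to compare against. Your convexity and cone argument is correct and is essentially Giroux's own computation: writing $\alpha_1=f\alpha_2$, expanding $\bigl(t_1\,df\wedge\lambda+(t_1f+t_2)\omega\bigr)^n$ using the nilpotence $(df\wedge\lambda)^2=0$, and observing that the factor $t_1(f+n\mu)+t_2$ inherits positivity from the case $t_1=1,t_2=0$. This part is fine and complete, including the easy verification that $(t_1\alpha_1+t_2\alpha_2)|_K$ is a positive contact form on~$K$.

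Your non-emptiness argument, however, contains a genuine gap that you yourself flag at the end. The step ``In the bulk of $M\setminus K$, I would use Theorem~\ref{thm10} to pick $\alpha_0$ whose restriction to the pages provides a Weinstein/Liouville structure, and verify by a direct check that this ensures $d(\alpha_0/|h|)|_{F_\theta^\circ}$ is symplectic there as well'' does not go through as written. The obstruction is that an adapted contact form $\alpha_0$ (i.e.\ $d\alpha_0$ symplectic on the interior of each page) need not make $d(\alpha_0/|h|)$ symplectic on the pages, since on a page
\[
d\bigl(\alpha_0/|h|\bigr)\big|_{F_\theta^\circ}
=\frac{1}{|h|}\,d\alpha_0\big|_{F_\theta^\circ}
-\frac{d|h|}{|h|^2}\wedge\alpha_0\big|_{F_\theta^\circ},
\]
and the second term can destroy positivity of the $n$-th power. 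Choosing $|h|$ constant outside a collar of~$K$ kills this term away from the binding, but then there is an intermediate annulus where $|h|$ interpolates between~$r$ and a constant, and it is precisely there that the two terms compete; this is the ``careful matching'' you name without carrying out. Appealing to Theorem~\ref{thm10} does not by itself close the gap, because the Weinstein/Liouville structure it provides on pages is with respect to $d\alpha_0|_{\text{page}}$, not $d(\alpha_0/|h|)|_{\text{page}}$, and moreover Theorem~\ref{thm10} produces \emph{some} supporting open book rather than the given one $(K,\Theta)$. A complete non-emptiness argument must either construct $\alpha$ directly via a Thurston--Winkelnkemper type model on the abstract open book and then identify the result with the given $(K,\Theta,h)$, or else deform an adapted $\alpha_0$ in a controlled way (e.g.\ by rescaling by a function of $|h|$) so that positivity of $\bigl(d(\alpha/|h|)\bigr)^n$ holds on all pages simultaneously, including across the transition region. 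As it stands, your proposal establishes the convex-cone property but leaves non-emptiness unproven.
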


Let $(K,\Theta,(\omega_\theta)_{\theta\in S^1})$ be a LOB in a closed manifold $M$ with a defining function~$h$. 
A co-oriented contact structure~$\xi$ on~$M$ is said to be \emph{symplectically supported}\, by 
$(K,\Theta,(\omega_\theta)_{\theta \in S^1})$ if there exists a contact form~$\alpha$ on~$(M,\xi)$ 
such that $\alpha$ is a binding 1-form of the LOB associated to~$h$. 

By our remark following the definition of a binding 1-form, the property of being symplectically supported 
is independent of the given defining function. But the crucial fact is that once a defining function is fixed, a contact binding 1-form is unique whenever it exists, see Remark~20 in~\cite{Gir17}. 
Hence, once a defining function~$h$ is fixed, there is a one-to-one correspondence between contact structures supported by $(K,\Theta,(\omega_\theta)_{\theta\in S^1})$ and contact binding 1-forms associated to~$h$.

Given two contact structures $\xi_0$ and~$\xi_1$ supported by $(K,\Theta,(\omega_\theta)_{\theta\in S^1})$, 
after fixing $h$ we therefore have unique contact binding 1-forms $\alpha_0$ and~$\alpha_1$, respectively. Since the set of binding 1-forms associated to~$h$ is affine, there is a path $(\beta_t)_{t\in [0,1]}$ of binding 1-forms such that $\beta_0 = \alpha_0$ and $\beta_1 = \alpha_1$. 
Now it is not hard to explicitely deform the forms $\beta_t$ 
without affecting their restrictions to~$\ker d\Theta$
in such a way that

\begin{itemize}
\item
for all $s \geq 0$ and $t \in [0,1]$, $\beta^s_t$ is a binding 1-form for $(K,\Theta,(\omega_\theta)_{\theta\in S^1})$ associated to~$h$ 
(since the deformation of $\beta_t$ leaves unchanged the restriction to the pages);

\smallskip
\item
$\beta_t^s$ is a contact form for $s$ large enough, uniformly in $t \in [0,1]$;

\smallskip
\item 
if $\beta_t$ is already a contact form, then $\beta_t^s$ is a contact form for all $s \geq 0$.
\end{itemize}  

\noindent
By the first property of these deformations and by the uniqueness discussed above, 
whenever $\beta^s_t$ is a contact form then $\ker \beta^s_t$ is symplectically supported by 
$\left(K,\Theta,(\omega_\theta)_{\theta\in S^1}\right)$ and $\beta_t^s$ is the unique contact binding 1-form 
associated to~$h$. 
Together with the other two properties we see that there exists $c>0$ such that the concatenation 
of the paths $(\ker \beta_0^s)_{s\in[0,c]}$, $(\ker \beta^c_t)_{t\in [0,1]}$ and 
$(\ker \beta^{c-s}_1)_{s\in [0,c]}$ gives an isotopy between $\xi_0$ and~$\xi_1$ along contact structures that are symplectically supported by $(K,\Theta,(\omega_\theta)_{\theta\in S^1})$.

\begin{figure}[h]   
 \begin{center}
  \psfrag{t}{$t$} \psfrag{s}{$s$} \psfrag{a0}{$\alpha_0$} \psfrag{a1}{$\alpha_1$} \psfrag{c}{$c$}   
  \psfrag{bt}{$\beta_t$} \psfrag{btc}{$\beta_t^c$} \psfrag{b0s}{$\beta_0^s$} \psfrag{b1c}{$\beta_1^{c-s}$} 
  \leavevmode\includegraphics{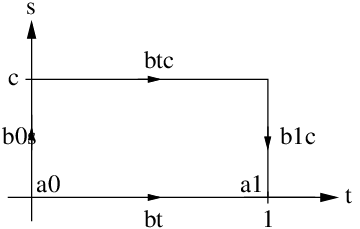}
 \end{center}
 \caption{}   
\label{betas.fig}
\end{figure}

In fact the following more general statement holds. 

\begin{prop}[Proposition 21 in \cite{Gir17}]  \label{prop21} 
On a closed manifold, contact structures supported by a given Liouville open book
form a non-empty and weakly contractible subset in the space of all contact structures.
\end{prop}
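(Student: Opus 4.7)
The plan is to prove the statement by reducing it, via the correspondence between symplectically supported contact structures and contact binding 1-forms, to a question about an open subset of an affine space, and then to generalize the two-point deformation argument already sketched in the paper to arbitrary spheres.

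First I would fix a defining function $h \colon M \rightarrow \mathbb{C}$ for the open book $(K,\Theta)$, and let $\mathcal{B}$ be the affine space of binding 1-forms associated to $h$ for the given LOB. Let $\mathcal{B}^c \subset \mathcal{B}$ be the subset of those binding 1-forms that are contact forms on $M$; this is an open subset of $\mathcal{B}$ in the $C^\infty$-topology, since the contact condition is open. By the uniqueness statement recalled from Remark~20 of~\cite{Gir17}, the map $\alpha \mapsto \ker \alpha$ induces a continuous bijection, with continuous inverse, from $\mathcal{B}^c$ onto the space of contact structures symplectically supported by the LOB. It therefore suffices to show that $\mathcal{B}^c$ is non-empty and weakly contractible. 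Non-emptiness is the existence half of the Giroux correspondence for LOBs, namely Proposition~17 in~\cite{Gir17}.

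For weak contractibility I would exhibit, for some fixed 1-form $\gamma$ on $M$, a deformation ray $\beta \mapsto \beta + s\,\gamma$, $s \geq 0$, enjoying the three properties recollected in the paper's sketch preceding Proposition~\ref{prop21}: $\gamma$ vanishes on $\ker d\Theta$, so the ray stays in $\mathcal{B}$; for every compact $L \subset \mathcal{B}$ there is $s_0$ with $\beta + s\,\gamma \in \mathcal{B}^c$ for all $\beta \in L$ and $s \geq s_0$; and if $\beta \in \mathcal{B}^c$ then $\beta + s\,\gamma \in \mathcal{B}^c$ for every $s \geq 0$. Such a $\gamma$ is built from a cutoff multiple of $d\Theta$ on the mapping-torus part of $M$ together with an explicit model 1-form near $K$ provided by Lemma~\ref{le:nearK}, along the lines of the existence proof of Proposition~17 in~\cite{Gir17}. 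Granting the existence of such a $\gamma$, I would show $\pi_k(\mathcal{B}^c) = 0$ for every $k \geq 0$ by taking a continuous $f \colon S^k \rightarrow \mathcal{B}^c$, extending it affinely to $\tilde{f} \colon D^{k+1} \rightarrow \mathcal{B}$ via a cone construction, choosing $c>0$ with $\tilde{f}(t) + c\,\gamma \in \mathcal{B}^c$ for every $t \in D^{k+1}$ (possible by compactness and the second property), and defining the extension $F \colon D^{k+1} \rightarrow \mathcal{B}^c$ by
\[
F(t,r) := f(t) + rc\,\gamma \quad \text{on a collar } S^k \times [0,1] \subset D^{k+1},
\]
and $F := \tilde{f} + c\,\gamma$ on the complementary disk, smoothly glued at $r=1$. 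The third property of $\gamma$ keeps the collar portion in $\mathcal{B}^c$, the second property keeps the complementary portion there, and $F|_{\partial D^{k+1}} = f$ by construction.

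The principal obstacle is the construction of the 1-form $\gamma$ with the three properties, especially property~(iii): adding $s\,\gamma$ to an already contact binding 1-form must preserve the contact condition for every $s \geq 0$. Near the binding this can be checked from Lemma~\ref{le:nearK}, where the contact condition on $\beta + s\,\gamma$ reduces to positivity of a scalar function into which $s\,\gamma$ enters with a non-negative contribution. Away from the binding, the contact condition on a binding 1-form decouples into the ideal Liouville condition on each page (unaffected by adding $s\,\gamma$ since $\gamma|_{\ker d\Theta} = 0$) and transverse positivity with respect to $d\Theta$ (strengthened by adding a suitable positive multiple of $d\Theta$). Once this technical ingredient is in place, the rest of the proof is a routine homotopy-extension manipulation.
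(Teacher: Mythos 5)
Your reduction to $\mathcal{B}^c\subset\mathcal{B}$ and the deformation-ray plus homotopy-extension argument are exactly the generalization to all $\pi_k$ of the $\pi_0$ (path-connectedness) sketch that the paper gives in the paragraphs preceding Proposition~\ref{prop21}, and it is indeed the route Giroux takes in~\cite{Gir17}. Two small points to tidy up: the continuity of the inverse of $\alpha\mapsto\ker\alpha$ is asserted but does not follow from uniqueness alone (it comes from the explicit construction behind Remark~20 in~\cite{Gir17}); and the gluing formula $F(t,r)=f(t)+rc\,\gamma$ on the collar does not match $\tilde f+c\,\gamma$ at the inner sphere, because the cone $\tilde f$ is not equal to $f$ there, so you need a radial reparametrization of $\tilde f$ (equivalently, set $F(x):=\tilde f(\rho(x))+\phi(|x|)\,c\,\gamma$ with $\rho$ collapsing the collar and $\phi$ a suitable cut-off).

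The genuine gap is the one you flag yourself and then understate: the 1-form $\gamma$ must actually be produced, and property~(iii) is not the soft fact your closing paragraph makes it out to be. Since $\gamma$ must vanish on $\ker d\Theta$ and extend smoothly across $K$, away from $K$ one has $\gamma=\rho\,d\Theta$ with $\rho\geq 0$ vanishing to second order at $K$; then $\gamma\wedge d\gamma=0$ and $d\gamma\wedge d\gamma=0$, so the $(2n+1)$-form
\[
(\beta+s\gamma)\wedge\bigl(d(\beta+s\gamma)\bigr)^n
\;=\;
\beta\wedge(d\beta)^n
\;+\;
s\Bigl[\gamma\wedge(d\beta)^n+n\,\beta\wedge d\gamma\wedge(d\beta)^{n-1}\Bigr]
\]
is \emph{affine} in $s$, and (iii) is equivalent to the bracket being $\geq 0$ everywhere on $M$. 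The first term inside the bracket, $\rho\,d\Theta\wedge(d\beta)^n$, is indeed $\geq 0$, but the second term is $n\,\beta\wedge d\rho\wedge d\Theta\wedge(d\beta)^{n-1}$ and its sign is by no means free wherever $d\rho\neq 0$ --- precisely the transition annulus where $\rho$ interpolates between its forced second-order vanishing at $K$ and a constant value away from $K$. Making that term non-negative requires the normal form for binding 1-forms near $K$ provided by Lemma~\ref{le:nearK}, together with monotonicity/sign constraints on the cut-off of exactly the kind imposed on $f$ and $g$ in Section~\ref{s:cfnear}. Until that computation is carried out, the ray $s\mapsto\beta+s\gamma$ is not known to stay in $\mathcal{B}^c$, and the homotopy-extension step has nothing to stand on.
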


\section{Entropy collapse for Reeb flows in dimension $\geq 3$} \label{s:collapsegeq3}

This section is devoted to the proof of the following main result.

\begin{theorem} \label{t:mainn}
Let $(M,\xi)$ be a closed co-orientable contact manifold.
Then for every real number $\gve >0$ there exists 
a contact form~$\alpha$ for~$\xi$
such that $\widehat \Gamma(\alpha) \leq \gve$.
\end{theorem}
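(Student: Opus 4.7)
The plan is to prove Theorem~\ref{t:mainn} by induction on the dimension of~$M$. The base case is $\dim M = 1$: then $M$ is a disjoint union of circles, the Reeb flow of $d\theta$ is an isometry, and $\widehat{\Gamma}(d\theta) = 0$. Assuming the statement holds in dimension $2n-1$, let $(M,\xi)$ be a closed co-orientable contact $(2n+1)$-manifold. Giroux's Theorem~\ref{thm10} provides a supporting open book $(K,\Theta)$ with Weinstein pages; the binding~$K$ is a closed contact manifold of dimension $2n-1$, so for any prescribed $\delta > 0$ the induction hypothesis yields a contact form $\beta$ on~$K$ with $\widehat{\Gamma}(\beta) \leq \delta$, hence $\Gamma(\phi_\beta) \leq \delta\, V_\beta^{-1/n}$ where $V_\beta = \vol_\beta(K)$.

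Following the three-dimensional template of Section~\ref{s:collapse3}, I would construct for each $s > 0$ a contact form $\tau_s$ on~$M$ by gluing two pieces across a collar of the binding. On the mapping torus side, using the Liouville-open-book framework of Section~\ref{s:Giroux}, take an adapted contact form $\alpha_s$ that perturbs $d\theta$ by $s$ times a Liouville form on the page; exactly as in Lemma~\ref{returntimebounded} (and its analogue for norm growth, which follows from the fact that $\alpha_s$ induces a conformal reparametrization $R_{\alpha_s} = f_s\,(\partial_\theta + Y)$ of the smooth flow $\partial_\theta + Y$ with $f_s \to 1$), the dynamics on this piece has $\Gamma \leq E$ with $E$ depending only on the Weinstein data, and $\int \alpha_s \wedge (d\alpha_s)^n = O(s^n)$. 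Near the binding, use Lemma~\ref{le:nearK} to parametrize a tubular neighbourhood as $\overline{\mathbb{D}} \times K$ and set
\[
\sigma_s \,=\, g(\bar r)\, d\bar\theta + s\, f(\bar r)\, \alpha_K ,
\]
with $f,g$ the cutoff functions from Section~\ref{s:collapse3} and $\alpha_K = \beta/s$ (a rescaling that leaves $\widehat{\Gamma}$ invariant). The Reeb vector field becomes $R_{\sigma_s} = -\tfrac{f'}{h}\partial_{\bar\theta} + \tfrac{g'}{sh}R_{\alpha_K}$ with $h = fg' - f'g$, so the flow on each slice $\{\bar r = \mathrm{const}\}$ is a skew product of a rotation in~$\bar\theta$ with a reparametrization of $\phi_{\alpha_K}$.

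The two pieces glue smoothly, and the relevant estimates are as follows. Since $(d\alpha_K)^n = 0$ on the collar by dimensional reasons, a direct expansion gives $\vol_{\sigma_s}(\overline{\mathbb{D}} \times K) \leq C_1 s^n \vol_{\alpha_K}(K) = C_1 V_\beta$, whence $\vol_{\tau_s}(M) \leq C_1 V_\beta + O(s^n)$. The skew-product structure on the collar yields
\[
\Gamma(\phi_{\sigma_s}) \;\leq\; C_2 \cdot \tfrac{\Gamma(\phi_{\alpha_K})}{s} \;=\; C_2\, \Gamma(\phi_\beta) \;\leq\; C_2\, \delta\, V_\beta^{-1/n},
\]
using $\alpha_K = \beta/s$ and the rescaling of $\Gamma$, so that $\Gamma(\phi_{\tau_s}) \leq \max\bigl(E,\, C_2 \delta V_\beta^{-1/n}\bigr)$ and therefore
\[
\widehat{\Gamma}(\tau_s) \;\leq\; \bigl(C_1 V_\beta + O(s^n)\bigr)^{1/(n+1)} \, \max\!\bigl(E,\; C_2\, \delta\, V_\beta^{-1/n}\bigr).
\]
Given $\varepsilon > 0$, balance by choosing $V_\beta = (C_2 \delta/E)^n$, so that the two terms in the max coincide; the resulting bound becomes $C_3\, \delta^{n/(n+1)}$. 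Taking $\delta$ small enough that $C_3 \delta^{n/(n+1)} < \varepsilon/2$ and then $s$ small enough that the $O(s^n)$ correction is negligible gives $\widehat{\Gamma}(\tau_s) < \varepsilon$.

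Finally, $\ker \tau_s$ need not equal~$\xi$, but both are symplectically supported by the same Liouville open book on~$M$. By Proposition~\ref{prop21} they lie in a path-connected family of supported contact structures, so Gray's stability theorem provides a diffeomorphism of~$M$ pulling $\ker \tau_s$ back to~$\xi$; pulling $\tau_s$ back by this diffeomorphism does not change~$\widehat{\Gamma}$, producing the required contact form on~$(M,\xi)$. The main obstacle in this scheme is the simultaneous control of volume and norm growth in the inductive step: in dimension three the binding is one-dimensional and contributes no norm growth, only volume, but in higher dimensions the non-trivial dynamics on the binding must be tamed by the induction hypothesis and then balanced against the scaling in~$s$ via the specific rescaling $\alpha_K = \beta/s$, which is precisely what makes the estimates close up across dimensions.
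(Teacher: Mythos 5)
Your proposal follows the paper's strategy very closely: induction on dimension via Giroux's supporting open book, the mapping-torus / binding-collar decomposition with the Thurston--Winkelnkemper-type forms $g(\bar r)\,d\bar\theta + f(\bar r)\,\beta$ near the binding, the norm-growth bound via the reparametrization argument (the paper's Proposition~\ref{p:ER}), and the volume bound via $(d\beta)^n = 0$ on the binding. Your normalization scheme is a minor variation of the paper's: the paper first rescales the binding form $\sigma_\gve = \gve^{1/n}\alpha_0$ so that $\Gamma(\sigma_\gve) \leq E$ and $\vol_{\sigma_\gve}(K) = \gve$, which makes $\Gamma(\alpha_{s,\gve}) \leq 2E$ bounded once and for all and pushes all the tuning into the volume; you instead leave $V_\beta$ free and balance the two $\Gamma$-contributions at the end, obtaining $\widehat\Gamma \sim \delta^{n/(n+1)}$. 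Both routes use the scale-invariance of $\widehat\Gamma$ in the same way and close up identically.

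However, there is a genuine gap at the final step. You write that $\ker\tau_s$ and $\xi$ ``are symplectically supported by the same Liouville open book on~$M$'' and then invoke Proposition~\ref{prop21} together with Gray stability. This is not correct as stated: a Liouville open book includes, as part of its data, the ideal Liouville structure on each page, and the form $\alpha_{s,\gve}$ you construct induces on each page the 2-form $d\bigl(\alpha_{s,\gve}/|\tilde h|\bigr)$, which is \emph{not} the same as the ideal Liouville structure $d(\alpha/|h|)$ determined by the original contact form $\alpha$. So a priori the two contact structures support \emph{different} Liouville open books, and Proposition~\ref{prop21} cannot be applied directly. The paper resolves this in Lemma~\ref{supporting}: Claim~1 verifies that $d(\alpha_{s,\gve}/|\tilde h|)$ does induce an ideal Liouville structure on each page, so that one does get an LOB, and Claim~2 runs a Moser-type isotopy on the pages (using that both ILS's agree near the binding, by construction) to produce a fibration-preserving diffeomorphism $\Phi$ of $\OB(F,\psi)$ that intertwines the two LOBs; only after composing with $\Phi$ do both contact structures support the \emph{same} LOB, at which point Proposition~\ref{prop21} and Gray stability finish the job. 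In dimension three this subtlety disappears because Theorem~\ref{t:Giroux} gives directly that two contact structures supported by the same abstract open book are diffeomorphic, but in higher dimensions that statement fails and the Moser step is essential. Your proposal needs this argument supplied.
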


We prove the statement by induction on~$n$, where $\dim M=2n+1$. 
The initial case $n=0$ is clear: Then $M$ is a circle, and 
the Reeb flow generated by the vector field~$\partial_\theta$ has vanishing norm growth.
It may be interesting to read the subsequent proof for~$n=0$
and to compare the line of argument with the one of Section~\ref{s:collapse3}. 

We now assume by induction that Theorem~\ref{t:mainn} holds for $n-1 \geq 0$
and fix a contact manifold~$(M,\xi)$ of dimension~$2n+1$. 
By Theorem~\ref{thm10} there exists an open book $(K,\Theta)$ in~$M$ supporting~$\xi$. 
Let $F_\theta := \Theta^{-1}(\theta) \cup K$, $\theta \in S^1=\R/2\pi \Z$, denote the pages of the open book 
and let $h \colon M \rightarrow \mathbb{C}$ be a defining function for~$(K,\Theta)$.
We wish to construct a contact form on the abstract open book defined via the $0$-page
\begin{eqnarray}\label{0thpage}
F := F_0 = \Theta^{-1}(0) \cup K.
\end{eqnarray}

By Proposition~\ref{prop18}, there exists a contact form~$\alpha$ on~$(M,\xi)$ such that 
$(K,\Theta, d(\alpha/|h|) |_{TF_\theta^\circ})$ is a~LOB which supports $\xi$ symplectically. 
By Lemma~\ref{lem15}, we can modify the contact binding 1-form~$\alpha$ without affecting its restriction to the kernel of~$d\Theta$, 
to obtain a binding 1-form~$\widehat \alpha$, not necessarily contact, 
such that the flow of the associated symplectically spinning vector field~$X$ 
is $2\pi$-periodic near~$K$. 
Hence the time-$2\pi$ map of the flow of~$X$ gives us a diffeomorphism 
$\psi \colon F \rightarrow F$ such that  
\begin{equation} \label{monodromy}
\psi^*(d\lambda) = d\lambda
\end{equation}
where $\lambda \in \Omega^1(F^\circ)$ is the ILF 
\begin{eqnarray} \label{lambda}
\lambda := (\widehat \alpha /|h|) |_{TF^\circ}=(\alpha/|h|) |_{TF^\circ}
\end{eqnarray}
and $\psi={\rm id}$ on some neighbourhood of $K$ in~$F$. 
Now our aim is to recover~$M$ as the abstract open book induced by 
the pair~$(F,\psi)$ and to define a contact form on the abstract open book 
with small norm growth. 
We first consider the mapping torus 
$$
\MT (F,\psi) := \bigl( [0,2\pi] \times F \bigr) \big/ \bigl( (2\pi,p)\sim (0,\psi(p)) \bigr).
$$ 
Since $\psi={\rm id}$ on some neighbourhood of $K$, the boundary $\partial \MT(F,\psi)$ has an open neighbourhood given as a product of~$K$ with an annulus, in which we collapse the boundary and get the abstract open book~$\OB(F,\psi)$. We postpone the precise collapsing procedure since it will involve choices 
of coordinates, but note that the abstract open book is independent of these choices, and that we can make
the identifications
$$
\MT(F^\circ,\psi) = \MT(F,\psi) \setminus \partial \MT(F,\psi) = \OB(F,\psi)\setminus K.
$$

\subsection{A family of contact forms away from the binding}
\label{s:cfaway} 
On $[0,2 \pi]\times F^\circ$ with $\theta$ the coordinate on $[0,2\pi]$, 
we define the family of 1-forms 
\begin{eqnarray}\label{alpha_s}
\alpha_s = d\theta +s \left(\lambda + \chi (\theta) \2 \lambda_\psi \right) , \qquad s > 0
\end{eqnarray}
where $\lambda_\psi := \psi^*\lambda-\lambda$ and $\chi \colon [0,2\pi] \rightarrow [0,1]$ is 
a smooth function such that $\chi (0)=0$, $\chi (2\pi)=1$ and $\chi'$ has support in~$(0,2\pi)$. 
By the choice of~$\chi$, each 1-form~$\alpha_s$ descends to a 1-form on~$\MT(F^\circ,\psi)$,
that we still denote by~$\alpha_s$.

\begin{lem}\label{contactaway} 
There exists $s_0>0$, depending on $\psi, \lambda, \chi$, such that 
$\alpha_s$ is a contact form on $\MT(F^\circ,\psi)$ for all $s \in (0,s_0]$.
\end{lem}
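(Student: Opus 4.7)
\medskip\noindent
\textbf{Proof sketch.}
The plan is to directly verify the contact condition $\alpha_s \wedge (d\alpha_s)^n > 0$ pointwise on $\MT(F^\circ,\psi)$, where $\dim F = 2n$, by extracting a leading term in $s$ that is a positive volume form and controlling the remainder on a compact set.

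First I would compute $d\alpha_s$. Writing $\omega := d\lambda$ (a symplectic form on $F^\circ$) and $\lambda_\psi = \psi^*\lambda - \lambda$, the key observation is that \eqref{monodromy} gives $d\lambda_\psi = \psi^*(d\lambda) - d\lambda = 0$, so
\[
d\alpha_s \,=\, s \bigl( \omega + \chi'(\theta)\, d\theta \wedge \lambda_\psi \bigr).
\]
Since $(d\theta \wedge \lambda_\psi)^2 = 0$, the binomial expansion of $(d\alpha_s)^n$ collapses to two terms:
\[
(d\alpha_s)^n \,=\, s^n \omega^n + n\, s^n \chi'(\theta)\, \omega^{n-1} \wedge d\theta \wedge \lambda_\psi .
\]

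Next I would wedge with $\alpha_s = d\theta + s\lambda + s\chi \lambda_\psi$ and discard the terms that vanish for formal reasons: the $d\theta \wedge d\theta$ term vanishes, the $\lambda_\psi \wedge \lambda_\psi$ term vanishes, and any wedge of $\lambda$ or $\lambda_\psi$ with $\omega^n$ is a $(2n{+}1)$-form on the $2n$-dimensional factor $F^\circ$ and hence zero. What remains is
\[
\alpha_s \wedge (d\alpha_s)^n \,=\, s^n\, d\theta \wedge \omega^n \;+\; n\, s^{n+1}\, \chi'(\theta)\, \lambda \wedge \omega^{n-1} \wedge d\theta \wedge \lambda_\psi .
\]
The first summand is a positive volume form on all of $[0,2\pi] \times F^\circ$, since $\omega$ is symplectic on $F^\circ$ and the orientation conventions agree; it descends to a positive volume form on $\MT(F^\circ,\psi)$.

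The main (and only mildly delicate) point is uniform control of the correction term, because $F^\circ$ is not compact and $\lambda$ blows up near the binding $K$. Here the construction of $\psi$ via Lemma~\ref{lem15} pays off: since $\psi = \id$ on some neighbourhood of $K$ in $F$, we have $\lambda_\psi \equiv 0$ there, so $\supp \lambda_\psi$ is a compact subset of $F^\circ$. Combined with $\supp \chi' \subset (0,2\pi)$, the correction term is supported in a compact subset of $[0,2\pi] \times F^\circ$, which descends to a compact subset of $\MT(F^\circ,\psi)$. On this compact set the smooth function
\[
f \,:=\, \frac{n\, \chi'(\theta)\, \lambda \wedge \omega^{n-1} \wedge \lambda_\psi}{\omega^n}\bigg|_{F^\circ}
\]
is bounded, say $|f| \leq C$, so $\alpha_s \wedge (d\alpha_s)^n = s^n (1 + s\, \tilde f)\, d\theta \wedge \omega^n$ with $|\tilde f| \leq C$ everywhere. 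Choosing $s_0 := 1/(2C)$ (depending on $\psi$, $\lambda$, $\chi$) then guarantees $\alpha_s \wedge (d\alpha_s)^n > 0$ on all of $\MT(F^\circ,\psi)$ for every $s \in (0,s_0]$, completing the proof. The main conceptual obstacle---extending this construction across the binding---is deferred to Sections~\ref{s:cfnear} and beyond and will rely on the inductive hypothesis.
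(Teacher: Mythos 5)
Your proposal is correct and follows essentially the same route as the paper: compute $\alpha_s\wedge(d\alpha_s)^n$ using $d\lambda_\psi=0$, identify the leading term $s^n\,d\theta\wedge(d\lambda)^n$ as a volume form, and observe that the $O(s^{n+1})$ correction is compactly supported in $\MT(F^\circ,\psi)$ because $\lambda_\psi$ vanishes near $K$ and $\chi'$ vanishes near $0$ and $2\pi$, so the ratio is bounded and a small $s_0$ works. The only difference is that you spell out the explicit choice $s_0 = 1/(2C)$, which the paper leaves implicit.
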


\begin{proof} 
Since $d\lambda_\psi=0$, we get $d\alpha_s = s \left( \chi' d\theta\wedge \lambda_\psi+d\lambda \right)$
and
\begin{eqnarray*}
\alpha_s \wedge (d\alpha_s)^{n} &=&
\bigl( d\theta+s \left( \lambda + \chi \1 \lambda_\psi \right) \bigr) \wedge s^n 
   \Bigl( n \1 \chi' d\theta \wedge \lambda_\psi \wedge (d\lambda)^{n-1}+(d\lambda)^n \Bigr) \\
&=&
s^n \2 d\theta \wedge \Bigl( (d\lambda)^n - n \1 s \chi' \lambda \wedge \lambda_\psi \wedge (d\lambda)^{n-1} \Bigr).
\end{eqnarray*}
Since $d\theta\wedge (d\lambda)^n$ is a volume form and since with $\lambda_\psi$ also 
$\lambda \wedge d\theta \wedge \lambda_\psi \wedge (d\lambda)^{n-1}$ is compactly supported in~$\MT(F^\circ,\psi)$,  there exists $s_0>0$ such that $\alpha_s \wedge (d\alpha_s)^{n}$
is a positive volume form for all $s \in(0,s_0]$.
\end{proof}

Next, we study the Reeb vector field $R_{\alpha_s}$ of~$\alpha_s$ on~$\MT(F^\circ,\psi)$. 
Let $Y$ be the vector field on~$\MT(F^\circ,\psi)$ that is tangent to $\{\theta\}\times F^\circ$ for each~$\theta$ 
and along each $\{\theta\} \times F^\circ$ satisfies 
$$
\imath_Yd\lambda= -\chi'\lambda_\psi .
$$
Since $\chi'=0$ near $0$ and $\2\pi$, $Y$ is well defined, and since
$\psi$ is compactly supported in $F^\circ$, $Y$ is compactly supported in $\MT(F^\circ,\psi)$. 
We compute 
\begin{eqnarray*}
\imath_{(\partial_\theta+Y)} d \alpha_s
 &=& s \2 \bigl( \imath_{(\partial_\theta+Y)} \chi' d\theta \wedge \lambda_\psi + \imath_{(\partial_\theta+Y)} d \lambda \bigr) \\
 &=& s \, \chi' \bigl( \lambda_\psi - \lambda_\psi (Y) \, d\theta - \lambda_\psi \bigr) \\
 &=& s \, d\lambda (Y,Y)\,   d\theta \,=\, 0 .
\end{eqnarray*}
Hence on $\MT(F^\circ,\psi)$ the Reeb vector field of $\alpha_s$ is
\begin{eqnarray} \label{ReebsonW'comp}
 R_{\alpha_s}  =\frac{\partial_\theta+Y}{\alpha_s(\partial_\theta+Y)}.
\end{eqnarray}
Note that $R_{\alpha_s} = \partial_\theta$ near~$K$. Since the $\partial_\theta$ component of $R_{\alpha_s}$ never vanishes and since $Y$ is tangent to the pages, $R_{\alpha_s}$ is transverse to $F^\circ\times \{\theta\}$ 
for all~$\theta$. Hence $F^\circ$ is a global hypersurface of section for~$R_{\alpha_s}$ on~$\MT(F^\circ,\psi)$.
We have the first return time map 
\begin{eqnarray} \label{returntimeaway}
T_s \colon F^\circ \rightarrow \R, \quad
  T_s(p) = \inf \left\{ t>0 \mid \phi_{R_{\alpha_s}}^t (0,p) \in \{0\} \times F^\circ \right\}
\end{eqnarray}
and the first return map 
\begin{eqnarray} \label{returnmapaway}
\Upsilon \colon F^\circ \rightarrow F^\circ , \quad 
(0,\Upsilon(p)) = \phi_{R_{\alpha_s}}^{T_s(p)}(0,p) \quad \forall \; p \in F^\circ .
\end{eqnarray}

\begin{rem} \label{returnmapindependent}
{\rm 
Since $R_{\alpha_s}$ is a multiple of the vector field $\partial_\theta+Y$ that does not depend on~$s$, 
the return map~$\Upsilon$ is independent of~$s$. This justifies the absence of the subscript in~\eqref{returnmapaway}.
}
\end{rem}

We note that for all $s \in(0,s_0]$,
\begin{eqnarray} \label{tau_sh_snearK}
T_s \equiv 2\pi \,\mbox{ and }\, \Upsilon = {\rm id} \; \mbox{ on } F^\circ \setminus {\rm supp}\,\psi .
\end{eqnarray}

Recall that we write $\Gamma (\alpha_s)$ for the norm growth $\Gamma (\phi_{\alpha_s})$ 
of the Reeb flow $\phi_{\alpha_s}^t$. 

\begin{lem} \label{l:return} 
There exists $s_1 \in (0,s_0)$ such that for every $s \in (0,s_1]$,
\begin{equation} \label{e:tau2}
\pi \leq T_s \leq 4\pi \quad \mbox{on } F^\circ
\end{equation}
and such that 
\begin{equation} \label{e:htopE}
\Gamma (\alpha_s) \leq E 
\end{equation}
for some constant $E >0$ that depends only on $\psi, \lambda, \chi$.
\end{lem}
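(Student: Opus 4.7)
The plan is to reduce both claims to uniform $C^1$-bounds on the Reeb vector field $R_{\alpha_s}$, exploiting the fact that $Y$ (and hence the deviation of $R_{\alpha_s}$ from $\partial_\theta$) is compactly supported in $\MT(F^\circ,\psi)$. Indeed, since $\chi'$ vanishes near $\theta\in\{0,2\pi\}$ and $\lambda_\psi=\psi^*\lambda-\lambda$ vanishes on the neighbourhood of $K$ where $\psi=\mathrm{id}$, the defining equation $\imath_Y d\lambda=-\chi'\lambda_\psi$ makes $Y$ compactly supported in $\MT(F^\circ,\psi)$. Consequently the smooth function
$$
g \,:=\, \lambda(Y)+\chi\,\lambda_\psi(Y)
$$
extends by zero across $S^1\times K$ to a smooth bounded function on the compact manifold-with-boundary $\MT(F,\psi)$, and a direct computation from~\eqref{alpha_s} gives $\alpha_s(\partial_\theta+Y)=1+sg$. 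I would then choose $s_1\in(0,s_0]$ small enough that $1+sg\in[1/2,2]$ uniformly on $\MT(F,\psi)$ for every $s\in(0,s_1]$.

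For the return-time bound, set $X_0:=\partial_\theta+Y$ and $f_s:=1/(1+sg)$, so that~\eqref{ReebsonW'comp} reads $R_{\alpha_s}=f_s\,X_0$. Since $d\theta(X_0)=1$, the $X_0$-orbit $\eta_p(\tau):=\phi^\tau_{X_0}(0,p)$ first meets $\{0\}\times F^\circ$ (via the identification $2\pi\sim 0$) at $\tau=2\pi$, which in particular recovers $\Upsilon(p)=\eta_p(2\pi)$ and the $s$-independence of $\Upsilon$ noted in Remark~\ref{returnmapindependent}. The $R_{\alpha_s}$-orbit through $(0,p)$ traces the same point set as $\eta_p$ with speed rescaled by $f_s$, so a change of variable along the orbit shows that its first return time is
$$
T_s(p) \,=\, \int_0^{2\pi}\frac{d\tau}{f_s(\eta_p(\tau))} \,=\, \int_0^{2\pi}\bigl(1+s\,g(\eta_p(\tau))\bigr)\,d\tau,
$$
which lies in $[\pi,4\pi]$ thanks to $1+sg\in[1/2,2]$. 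This proves~\eqref{e:tau2}.

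For the norm growth, I would observe that $R_{\alpha_s}$ extends to a smooth vector field on the compact manifold-with-boundary $\MT(F,\psi)$ that is tangent to $S^1\times K$ (being equal to $\partial_\theta$ in a neighbourhood of this boundary). Fix a Riemannian metric on $\MT(F,\psi)$ and a compatible connection $\nabla$. Differentiating $R_{\alpha_s}=X_0/(1+sg)$ gives
$$
\nabla R_{\alpha_s} \,=\, \frac{\nabla X_0}{1+sg}-\frac{s\,X_0\otimes dg}{(1+sg)^2},
$$
and since $X_0$ and $g$ are fixed smooth objects with bounded derivatives, the bounds $1+sg\in[1/2,2]$ and $s\leq s_1$ yield a uniform estimate $\|\nabla R_{\alpha_s}\|_\infty\leq L$, with $L$ depending only on $\psi$, $\lambda$, $\chi$. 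The Jacobi-field equation $\dot v=(\nabla R_{\alpha_s})v$ then gives, via Gronwall, $\|d\phi^t_{R_{\alpha_s}}\|_\infty\leq e^{Lt}$; applying the same argument to the reverse-time flow $\phi^t_{-R_{\alpha_s}}$ yields the analogous bound for the inverse, so $\Gamma(\alpha_s)\leq L=:E$, which is~\eqref{e:htopE}.

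The only subtlety I anticipate is that the 1-form $\alpha_s$ itself is singular along $K$ (because $\lambda$ is), while $R_{\alpha_s}$ and all the auxiliary quantities are not; what legitimises the uniform estimates is precisely the compact support of $Y$ inside $\MT(F^\circ,\psi)$, which allows every tensor we need to bound to be extended smoothly by its trivial value across $S^1\times K$ and hence to be controlled on the compact mapping torus $\MT(F,\psi)$.
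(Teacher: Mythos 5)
Your treatment of the return-time bound is essentially the paper's: both rest on $d\theta(R_{\alpha_s})=1/(1+s\,g)\in[\frac12,2]$ for $s\le s_1$, you just write out the elementary integral
$T_s(p)=\int_0^{2\pi}(1+s\,g(\eta_p(\tau)))\,d\tau$
that the paper leaves implicit. For the norm-growth bound, however, you take a genuinely different route. The paper invokes Proposition~\ref{p:ER}, the time-change estimate $\Gamma(\phi_{fX})\le\|f\|_\infty\,\Gamma(\phi_X)$, applied to $X_0=\partial_\theta+Y$ and $f_s=1/(1+sg)$, obtaining $\Gamma(\alpha_s)\le 2\,\Gamma(\phi_{\partial_\theta+Y})=:E$; in other words it bounds the norm growth of the reparametrised flow by the norm growth of a fixed reference flow, at the cost of one lemma proved in Appendix~\ref{app:norm}. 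You instead establish a uniform $C^1$-bound $\|\nabla R_{\alpha_s}\|_\infty\le L$ (valid since $X_0$, $g$, and their derivatives are fixed and bounded and $1+sg\ge\frac12$) and then apply Gronwall to the linearised flow equation to get $\Gamma(\alpha_s)\le L=:E$. Both are correct and both produce constants depending only on $\psi,\lambda,\chi$, which is all the lemma needs. Your approach is self-contained and elementary, avoiding the appendix; the paper's approach is slightly sharper conceptually, since $\Gamma(\phi_{\partial_\theta+Y})$ can be strictly smaller than any crude $C^1$-bound of the type you use (e.g.\ when the reference flow has subexponential derivative growth), but that refinement is not needed here. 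Your remark that all the relevant objects extend smoothly by their trivial values across $S^1\times K$ because $Y$ is compactly supported in $\MT(F^\circ,\psi)$ correctly addresses the only real subtlety, namely that $\lambda$ itself blows up at the binding.
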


\begin{proof} 
We compute
$$
d\theta \left(R_{\alpha_s}\right) = 
\frac{1}{\alpha_s(\partial_\theta+Y)} = \frac{1}{1 + s \bigl(\lambda(Y) + \chi \2 \lambda_\psi(Y) \bigr)}    .
$$
Since $Y$ is compactly supported, we find $s_1 \in (0,s_0)$ such that for every $s \in (0,s_1]$,
\begin{equation} \label{e:122}
\frac 12 \leq \frac{1}{\alpha_s(\partial_\theta+Y)} \leq 2  \quad \mbox{on } \MT (F^\circ,\psi) .
\end{equation}
The inequality~\eqref{e:tau2} follows.
For the second claim,
we apply Proposition~\ref{p:ER} to the vector field
$\partial_\theta + Y$ and the positive function $\frac{1}{\alpha_s(\partial_\theta+Y)}$,
and in view of~\eqref{ReebsonW'comp} and~\eqref{e:122} find that
$\Gamma (\alpha_s) \leq 2 \, \Gamma (\phi_{\partial_\theta +Y}) =:E$.
\end{proof}

\subsection{A family of contact forms near the binding}
\label{s:cfnear}
Let $E>0$ be the constant from Lemma~\ref{l:return}. 
By our inductive hypothesis, 
for any $\gve>0$ there exists a contact form~$\sigma_\gve$ 
on~$(K, \xi |_K)$ such that 
\begin{eqnarray} \label{sigma_eps}
\vol_{\sigma_\gve} (K) = \gve \quad \mbox{ and } \quad \Gamma (\sigma_\gve) \leq E.
\end{eqnarray}
Indeed, there is a contact form $\alpha_0$ on $(K, \xi |_K)$ such that 
$$
\vol_{\alpha_0} (K) = 1 \quad \mbox{ and } \quad \Gamma (\alpha_0) \leq \gve^{1/n} E .
$$ 
We can thus take $\sigma_\gve := \gve^{1/n} \, \alpha_0$. 

Applying Lemma~\ref{le:nearK} to $\sigma_\gve$ we obtain an embedding 
\begin{eqnarray}\label{lambdanearK}
\imath_\gve \colon [0,+\infty) \times K \hookrightarrow F \,\mbox{ such that }\,
         \imath_\gve^* \lambda = \frac{1}{r} \2 \sigma_\gve ,
\end{eqnarray}
and $\imath_\gve(0,q)=q$ for every $q \in K$. 
This embedding induces the smooth coordinate $r \in [0,+\infty)$ on a neighborhood 
of $K=\partial F$ in~$F$. 
There exists $r_{\gve} > 0$ that depends only on $\psi$ and~$\sigma_\gve$ such that 
\begin{equation} \label{psi=id}
\imath_\gve \left([0,r_\gve] \times K \right) \cap \, \supp \psi = \emptyset .
\end{equation}
We define
\begin{eqnarray}\label{F_eepsilon}
F_\gve := F \setminus \bigl( \imath_\gve ([0,r_\gve) \times K) \bigr)
\end{eqnarray}
and note that near the boundary of $\MT (F_\gve,\psi)$ the 
expression~\eqref{alpha_s} for~$\alpha_s$ reads 
\begin{eqnarray} \label{e:ase}
\alpha_s = d\theta+\frac{s}{r} \, \sigma_\gve.
\end{eqnarray}

\begin{lem} \label{fandg} 
For every $\gve >0$ and $s \in(0,r_\gve/2)$ there exist smooth functions 
$$
f,g \colon [0,r_\gve] \rightarrow \R
$$ 
with the following properties. 
\end{lem}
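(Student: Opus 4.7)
The plan is to build $f$ and $g$ by a direct interpolation, generalizing the three-dimensional prescription preceding~\eqref{e:la}. The intended role of the functions is that the $1$-form
\[
\alpha \,:=\, g(r)\, d\theta + s\, f(r)\, \sigma_\gve
\]
on the solid-torus neighborhood $\overline{\D} \times K$ of the binding (with polar coordinates $(\theta,r)$ on $\overline{\D}$, $r \in [0,r_\gve]$) should: (a) glue smoothly along $\{r=r_\gve\}$ to $\alpha_s = d\theta + (s/r)\,\sigma_\gve$ from~\eqref{e:ase} on the mapping-torus side; (b) extend smoothly across the core $\{r=0\}\cong K$; (c) be a positive contact form. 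These requirements translate into the following scalar conditions, which I expect the lemma to list: $g\equiv 1$ with all derivatives vanishing, and $f(r)\equiv 1/r$, on a neighborhood of~$r_\gve$; $g(r) = r^2/2$ near $r=0$ and $f$ the restriction of a smooth even function of~$r$ there; and the contact inequality $fg' - gf' > 0$ on $(0,r_\gve)$. Together with $g'\ge 0$, $f'\le 0$ and $f>0$, these fix the geometric shape of $f$ and $g$.

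For $g$, I would use a standard cutoff argument to produce a smooth, monotonically increasing function $g\colon[0,r_\gve]\to[0,1]$ equal to $r^2/2$ on $[0,r_\gve/3]$ and to~$1$ (with all higher derivatives vanishing) on $[2r_\gve/3,r_\gve]$, strictly increasing on the transition interval. For $f$, I would set $f(r)=1/r$ on $[r_\gve/2, r_\gve]$ and extend it to $[0,r_\gve/2]$ as a strictly decreasing smooth function whose Taylor expansion at $r=0$ is even, with $f(0)$ chosen as large as the subsequent volume and norm-growth estimates demand. Smoothness at the junction $r=r_\gve/2$ is a routine Borel-type matching.

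The contact condition is then essentially automatic from the signs: $f>0$, $g\ge 0$, $g'\ge 0$ and $-f'\ge 0$ give $fg'\ge 0$ and $-gf'\ge 0$, and these cannot vanish simultaneously on the interior once $f$ is strictly decreasing and $g$ strictly increasing on the transition interval. A short computation near $r=0$ gives $fg' - gf' = f(0)\, r + O(r^3)$; dividing by the polar Jacobian $r$ produces the smooth positive density $f(0) + O(r^2)$, so $\alpha\wedge (d\alpha)^n$ extends smoothly and non-degenerately across the core. Near $r=r_\gve$ one has $g'=0$ but $-gf' = 1/r^2 > 0$, so the contact condition holds there as well.

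The main obstacle I anticipate is not this verification but the quantitative tuning of $f$ on the transition interval, which must be coordinated with the later steps. Explicitly, the Reeb field on the near-binding piece is
\[
R_\alpha \,=\, \frac{1}{fg' - gf'}\left(-f'(r)\, \partial_\theta + \frac{g'(r)}{s}\, R_{\sigma_\gve}\right),
\]
so its flow preserves the tori $\{r=\mathrm{const}\}$ and reparametrizes $\phi_{\sigma_\gve}^t$ on each. Consequently the norm growth on this piece inherits the bound $\Gamma(\sigma_\gve)\le E$ from~\eqref{sigma_eps} provided the reparametrization coefficient $g'/(s(fg' - gf'))$ stays bounded, which is the real constraint on the $f,g$-profiles in the interior. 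The contact volume of the piece, which by direct computation is proportional to $s^n \gve \int_0^{r_\gve} f^{n-1}(fg' - gf')\, dr$, is then made small by the overall $s^n$ scaling together with an $O(1)$ bound on the integral coming from the chosen profile.
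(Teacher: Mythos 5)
The overall strategy (interpolate smooth profiles $f,g$ between the forced behaviour at $r=r_\gve$ and at the core $r=0$, subject to monotonicity and the contact inequality $fg'-f'g>0$) is the same as the paper's, and your verification that the contact condition extends smoothly across the core is correct. However, your specific choices fail to produce functions with the properties the lemma actually asserts, and the failure propagates to the two later estimates you are (rightly) trying to anticipate.

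The root of the problem is the factor~$s$ you insert in $\alpha=g\,d\theta+s\,f\,\sigma_\gve$, combined with your decision to set $f(r)=1/r$ on the \emph{entire} interval $[r_\gve/2,r_\gve]$. The paper's form is $\alpha_{s,\gve}=g\,d\theta+f\,\sigma_\gve$ with $f(r)=s/r$ only \emph{near} $r_\gve$, together with the crucial anchor $f(r_\gve/2)=1/2$ (this is property~\ref{f2}) and the derivative bound $-2/r_\gve\le f'\le 0$ (property~\ref{f'}). These quantitative conditions, which your proposal omits, are exactly what is used afterwards to prove $h:=fg'-f'g\le 6/r_\gve$ and the key bound $g'/h\le 2$. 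That last bound is obtained by writing $h/g'=f-f'g/g'\ge f\ge 1/2$ on $(0,r_\gve/2]$. In your normalization the analogous bound is $g'/(sh)\le 2$, which requires $s f\ge 1/2$, i.e.\ your $f\ge 1/(2s)$ on the transition region. But your prescription $f(r)=1/r$ on $[r_\gve/2,r_\gve]$ forces $f(r_\gve/2)=2/r_\gve$, which is \emph{independent of~$s$} and hence far smaller than $1/(2s)$ once $s$ is small. So the reparametrization coefficient $g'/(sh)$ you correctly identify as "the real constraint" is \emph{not} uniformly bounded for your choice of $f$, and the norm-growth estimate of Lemma~\ref{le:S2S} would break down. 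Leaving $f(0)$ "as large as the estimates demand" is not enough: the constraint lives on the transition interval where $g'>0$, not just at $r=0$, and there $f$ is already pinned by your $1/r$ prescription.

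Your volume claim is also off. With your normalization one does have the prefactor $s^n$, but then the profile you describe forces $f\sim 1/s$ near the core, so $\int_0^{r_\gve} f^{n-1}h\,dr$ is of order $s^{-n}$, not $O(1)$. The two factors cancel and the binding-piece volume is $O(\gve)$, controlled by choosing $\gve$ small; it is the mapping-torus piece, not the binding piece, that is made small by the $s^n$ scaling. The paper's normalization makes this bookkeeping transparent: $f\le 1$, $h\le 6/r_\gve$, so $\int_0^{r_\gve}f^{n-1}h\,dr\le 6$ and the volume is $\le 12\pi n\gve$. The fix to your proposal is to match the paper's conditions: require $f=s/r$ (your $f=1/r$) only on a small neighbourhood of $r_\gve$, pin $f(r_\gve/2)=1/2$ (your $f(r_\gve/2)=1/(2s)$), keep $g\equiv 1$ on all of $[r_\gve/2,r_\gve]$ rather than only $[2r_\gve/3,r_\gve]$, and impose the explicit derivative bounds $|f'|\le 2/r_\gve$, $|g'|\le 4/r_\gve$; all of these are easy to arrange with elementary cutoffs and are precisely what the lemma lists.
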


\begin{enumerate}[label=\mbox{(f\arabic*)}]
\item \label{f1}  {\it $f(r)=s/r$ near $r=r_\gve$ and $f(r)=1$ near $r=0$. }

\smallskip
\item \label{f2}  $f(r_\gve/2)=1/2$.                     

\smallskip
\item \label{f'}  {\it $-2/r_\gve \leq f' \leq 0 $ on $[0,r_\gve]$ and $f'<0$ on $[r_\gve /2, r_\gve ]$. }
\end{enumerate}

\begin{enumerate}[label=\mbox{(g\arabic*)}]
\item \label{g1}   {\it $g=1$ on $[r_\gve/2,r_\gve]$ and $g(r)=r^2/2$ near $r=0$. }

\smallskip
\item \label{g2}  {\it $0\leq g'\leq 4/r_\gve$ on $[0,r_\gve]$ and $0 < g'$ on $(0,r_\gve/2]$.}
\end{enumerate}

\smallskip
\noindent 
The easy proof is left to the reader.  
For later use we note that the function
\[
h:= fg' - f'g \colon [0,r_\gve] \rightarrow \R
\]
is positive on $(0,r_\gve]$, satisfies $h(r)=r$ near $r=0$, and
\begin{equation} \label{e:h}
h \,\leq\, \frac{4}{r_\gve} + \frac{2}{r_\gve} = \frac{6}{r_\gve} \;\mbox{ on $[0,r_\gve]$.} 
\end{equation}
Furthermore,
\begin{equation} \label{le:g'h}
\frac{g'}{h} \leq 2 \;\mbox{ on $[0,r_\gve]$.} 
\end{equation}
Indeed, for $r \in [r_\gve / 2, r_\gve]$ we have $( \frac{g'}{h} )(r) =0$ by~\ref{g1}.
In~$0$ we have $\frac{g'}{h} =0$. Further, for $r \in (0, \frac{r_\gve}{2}]$ property~\ref{g2} shows that $g'(r) >0$,
and hence $-f' \2 \frac{g}{g'} \geq 0$. Since $f(r) \geq \frac 12$ by \ref{f2} and~\ref{f'},
we conclude that
$$
\frac{h}{g'} \,=\, f - f' \2 \frac{g}{g'} \,\geq\, \frac 12, 
$$ 
as claimed.

\begin{figure}[h]   
 \begin{center}
  \psfrag{r}{$r$}  \psfrag{1}{$1$}  \psfrag{12}{$\frac 12$}  
  \psfrag{f}{\textcolor{red}{$f$},\textcolor{blue}{$\2 g$}}  
  \psfrag{sr}{\textcolor{red}{$\frac sr$}} \psfrag{re}{$r_\gve$} \psfrag{re2}{$\frac{r_\gve}{2}$} 
  \psfrag{r2}{\textcolor{blue}{$\frac{r^2}{2}$}} 
  \leavevmode\includegraphics{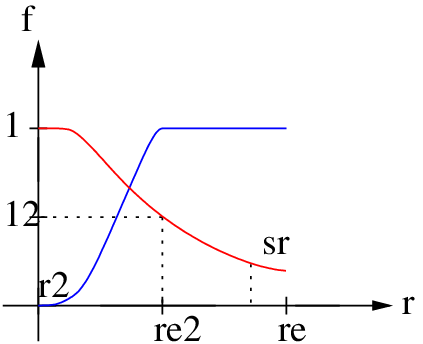}
 \end{center}
 \caption{The functions $f$ and $g$}   \label{fgn.fig}
\end{figure}

Given $\gve>0$ and $s \in(0,r_\gve/2)$, we define the 1-form 
\begin{eqnarray}\label{alpha_sepsnearK}
\alpha_{s,\gve} (\theta,r,q) \,:=\, g(r) \2 d\theta + f(r) \2 \sigma_\gve (q)
\end{eqnarray}
on $S^1 \times [0,r_\gve] \times K$. 
We note that by \ref{f1} and \ref{g1},
$$
\alpha_{s,\gve} (\theta,r,q) \,=\, \frac{r^2}{2} \, d\theta + \sigma_\gve (q)
\quad \mbox{ near $r=0$.} 
$$ 
By considering $\theta$ and $r$ as angular and radial 
coordinates on the disk~$r_\gve \D$, 
we thus see that the 1-form $\alpha_{s,\gve}$ is smooth on~$ r_\gve \D \times K$.

\begin{lem} \label{contactnear} 
For $\gve>0$ and $s \in(0,r_\gve/2)$, $\alpha_{s,\gve}$ is a contact form on $r_\gve \D \times K$.
\end{lem}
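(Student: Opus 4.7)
The plan is to verify the contact condition $\alpha_{s,\gve} \wedge (d\alpha_{s,\gve})^n \neq 0$ by direct calculation. Write $\alpha := \alpha_{s,\gve}$ and $\sigma := \sigma_\gve$ for brevity, and compute
$$
d\alpha \,=\, g'(r)\, dr \wedge d\theta + f'(r)\, dr \wedge \sigma + f(r)\, d\sigma.
$$
The three summands on the right are all $2$-forms and hence commute under the wedge product, so $(d\alpha)^n$ can be expanded by the multinomial formula. Two key simplifications eliminate most terms: the first two summands each contain the factor $dr$, so they square to zero and their mutual wedge product also vanishes; and since $K$ is $(2n-1)$-dimensional, $(d\sigma)^n = 0$. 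Consequently only the ``mixed'' terms of order~one in the first two summands survive, giving
$$
(d\alpha)^n \,=\, n\, f(r)^{n-1} \bigl( g'(r)\, dr \wedge d\theta + f'(r)\, dr \wedge \sigma \bigr) \wedge (d\sigma)^{n-1}.
$$

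Next, I would wedge with $\alpha = g(r)\, d\theta + f(r)\, \sigma$. The term $g\, d\theta \wedge n g' f^{n-1} dr \wedge d\theta \wedge (d\sigma)^{n-1}$ vanishes because it contains $d\theta$ twice, and the term $f\sigma \wedge n f' f^{n-1} dr \wedge \sigma \wedge (d\sigma)^{n-1}$ vanishes because it contains $\sigma$ twice. Collecting the two remaining contributions and using $h = fg' - f'g$ from Lemma~\ref{fandg}, one obtains
$$
\alpha \wedge (d\alpha)^n \,=\, n\, f(r)^{n-1}\, h(r)\, dr \wedge d\theta \wedge \sigma \wedge (d\sigma)^{n-1}.
$$

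Finally, I would check that this is a smooth positive volume form on $r_\gve \D \times K$. Since $\sigma$ is a positive contact form on the closed $(2n-1)$-manifold $K$, the form $\sigma \wedge (d\sigma)^{n-1}$ is a positive volume form on~$K$. The function $f$ is positive on $[0,r_\gve]$: it equals $1$ near $0$, is non-increasing by \ref{f'}, and equals $s/r_\gve > 0$ near $r_\gve$ by \ref{f1}. The function $h$ is positive on $(0,r_\gve]$ as noted after Lemma~\ref{fandg}. Hence the form is positive on the open set $\{r > 0\} \times K$. The only point that requires care is smoothness and non-degeneracy at $r = 0$: there $h(r) = r$ and $g(r) = r^2/2$, so in Cartesian coordinates $(x,y) = (r\cos\theta, r\sin\theta)$ on the disk we have $g(r)\, d\theta = \tfrac{1}{2}(x\, dy - y\, dx)$ and $h(r)\, dr \wedge d\theta = dx \wedge dy$, showing that $\alpha$ extends smoothly to the origin and that $\alpha \wedge (d\alpha)^n = n\, f(r)^{n-1}\, dx \wedge dy \wedge \sigma \wedge (d\sigma)^{n-1}$ stays positive across $r = 0$. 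No step presents a genuine obstacle; the only subtlety is this coordinate-singularity check, handled precisely by the matching orders of vanishing of $h$ and of the polar area form.
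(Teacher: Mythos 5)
Your proof is correct and follows essentially the same route as the paper's: both expand $\alpha_{s,\gve}\wedge(d\alpha_{s,\gve})^n$ to obtain $n\,h\,f^{n-1}\,dr\wedge d\theta\wedge\sigma_\gve\wedge(d\sigma_\gve)^{n-1}$ with $h=fg'-f'g$, and then argue positivity on $(0,r_\gve]$ plus smooth non-degeneracy at $r=0$ via $h(r)=r$ near the binding. Your explicit Cartesian rewriting $r\,dr\wedge d\theta=dx\wedge dy$ and $g(r)\,d\theta=\tfrac12(x\,dy-y\,dx)$ simply spells out in more detail what the paper's phrase ``which is a positive volume form at any point on $K$'' is tacitly using.
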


\begin{proof} 
We compute
\begin{eqnarray}
\alpha_{s,\gve} \wedge (d\alpha_{s,\gve})^n
&=& (g \, d\theta + f \2 \sigma_\gve) \wedge (g' dr \wedge d\theta + f' dr \wedge \sigma_\gve + f \, d\sigma_\gve)^n 
     \notag \\
&=& n \,h\, f^{n-1} \big( dr \wedge d\theta \wedge \sigma_\gve \wedge (d \sigma_\gve)^{n-1} \big) 
    \label{e:adan}
\end{eqnarray}
where $h =fg'-f'g$.
Since $f^{n-1}h>0$ on $(0,r_\gve]$, it follows that $\alpha_{s,\gve}$ is a contact form away from~$K$. 
Near $K$ we have $h(r)=r$, so that there $\alpha_{s,\gve} \wedge (d\alpha_{s,\gve})^{n}$ reads 
$$
n \1 \bigl( r dr \wedge d\theta \wedge \sigma_\gve \wedge (d \sigma_\gve)^{n-1} \bigr) ,
$$
which is a positive volume form at any point on $K$.
\end{proof}

Away from $K$ the Reeb vector field is
\begin{equation*} 
R_{\alpha_{s,\gve}}(\theta,r,q) = -\frac{f'(r)}{h(r)} \, \partial_\theta + \frac{g'(r)}{h(r)} \, R_{\sigma_\gve}(q)    
\end{equation*}
and has the flow
\begin{equation} \label{reebflownearK}
\phi^t_{\alpha_{s,\gve}} (\theta,r,q) = 
\left( \theta - \frac{f'(r)}{h(r)}\,t,\, r,\, \phi_{\sigma_\gve}^{\frac{g'(r)}{h(r)} \,t}(q)\right),    
\end{equation}
where $\phi_{\sigma_\gve}^t$ is the flow of $R_{\sigma_\gve}$.

\begin{lem} \label{le:S2S}
For $s \in (0,r_{\gve}/2)$ we have $\Gamma  (\alpha_{s,\gve} |_{r_\gve \D \times K}) \leq 2 \2 E$.
\end{lem}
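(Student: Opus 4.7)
The main idea is to read the norm growth of $\phi^t_{\alpha_{s,\gve}}$ off the explicit integrable formula~\eqref{reebflownearK}. The flow preserves the radial coordinate $r$, acts on $\theta$ as rotation by $a(r) t$ with $a(r) := -f'(r)/h(r)$, and runs $\phi_{\sigma_\gve}^t$ on the $K$-factor with time-rescaling factor $b(r):=g'(r)/h(r)$. The choices of $f$ and $g$ ensure $a\equiv 0$ and $b\equiv 1$ near the binding, so the flow extends smoothly to $\{r=0\}$ when read in Cartesian coordinates on the disk.

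Next I would differentiate. In the polar chart $(\theta,r,q)$ away from $K$, $d\phi^t_{\alpha_{s,\gve}}$ is upper block-triangular: the diagonal blocks are the identity on $T_\theta S^1\oplus T_r[0,r_\gve]$ and $d\phi_{\sigma_\gve}^{b(r) t}$ on $T_q K$, and the only non-trivial off-diagonal entries are linear in~$t$ with coefficients involving $a'(r)$, $b'(r)$ and $R_{\sigma_\gve}$. Since $a$ and $b$ are locally constant near $r=0$, $a'$ and $b'$ extend smoothly over $[0,r_\gve]$; together with the boundedness of $R_{\sigma_\gve}$ on the compact~$K$, this yields constants $C_1, C_2$ depending only on $\psi$, $\lambda$, $\chi$, and $\sigma_\gve$ such that
\[
\|d\phi^t_{\alpha_{s,\gve}}\|_\infty \,\leq\, C_1 (1+t) + C_2 \sup_{r\in[0,r_\gve]} \|d\phi_{\sigma_\gve}^{b(r) t}\|_\infty.
\]
Near the binding the off-diagonal coupling vanishes outright since $a\equiv 0$ and $b'\equiv 0$, so the same estimate transfers to the Cartesian chart.

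The conclusion is then immediate from the bound~\eqref{le:g'h}: since $b(r)\in[0,2]$, we have $b(r) t\in[0,2t]$, and continuity of $\|d\phi_{\sigma_\gve}^s\|_\infty$ on unit intervals gives, for $t=n$, a control of the right-hand supremum by $\sup_{0\leq k\leq 2n}\|d\phi_{\sigma_\gve}^k\|_\infty$ up to a multiplicative constant. By the inductive hypothesis~\eqref{sigma_eps} the latter grows at exponential rate at most $2\Gamma_+(\phi_{\sigma_\gve}^1)\leq 2\Gamma(\sigma_\gve)\leq 2E$, while the polynomial term $C_1(1+n)$ is subexponential; taking $\tfrac{1}{n}\log$ yields $\Gamma_+(\phi_{\alpha_{s,\gve}}^1)\leq 2E$. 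Running the same argument for $\phi^{-t}_{\alpha_{s,\gve}}$, whose Jacobian has the identical block structure with $-b(r) t$ in place of $b(r) t$, gives $\Gamma_+(\phi^{-1}_{\alpha_{s,\gve}})\leq 2\Gamma_+(\phi^{-1}_{\sigma_\gve})\leq 2E$, and together these establish $\Gamma(\alpha_{s,\gve}|_{r_\gve\D\times K})\leq 2E$. The only mild technical point is the polar-to-Cartesian passage at the binding, which is harmless precisely because $a$ and $b$ are locally constant there.
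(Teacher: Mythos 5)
Your proof is correct and is essentially the paper's own argument: both read off the block-triangular structure of $d\phi^{\pm n}_{\alpha_{s,\gve}}$ from the explicit formula~\eqref{reebflownearK}, note that the off-diagonal terms grow only linearly in~$n$ and hence drop out under $\lim_{n\to\infty}\frac1n\log$, and then invoke the bound $g'/h\leq 2$ from~\eqref{le:g'h} together with $\Gamma(\sigma_\gve)\leq E$. The only cosmetic difference is how the passage across $\{r=0\}$ is justified — the paper appeals to continuity of the differential while you observe that $f'/h\equiv 0$ and $(g'/h)'\equiv 0$ near the binding — but these are interchangeable.
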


\proof
By continuity it suffices to estimate the differentials of 
$\phi_{\alpha_{s,\gve}}^{\pm n}$ away from~$K$.
We choose a basis $\partial_\theta, \partial_r, \partial_{q_1}, \dots, \partial_{q_{2n-1}}$
of the tangent space at $(\theta,r,q)$.
In view of~\eqref{reebflownearK}, the images of these vectors under $d \phi^{\pm n}_{\alpha_{s,\gve}}(\theta,r,q)$
are
$$
\partial_\theta, \qquad 
\mp \biggl( \frac{f'}{h}\biggr)' \!(r) \,n\, \partial_\theta + \partial_r \pm \biggl( \frac{g'}{h} \biggr)' \!(r) \,n\, R_{\sigma_\gve}, \qquad
d \phi_{\sigma_\gve}^{\pm \bigl( \frac{g'}{h} \bigr)(r) \, n} (\theta,r,q) \, \partial_{q_j} . 
$$
The size of the functions $\bigl( \frac{f'}{h} \bigr)'$ and $\bigl( \frac{g'}{h} \bigr)'$ plays no role when we apply
$\displaystyle \lim_{n \to \infty} \frac 1n \log$, and together with \eqref{le:g'h} we find that
$$
\Gamma (\alpha_{s,\gve} |_{r_\gve \D \times K}) \,=\, \max_r \left| \biggl( \frac{g'}{h} \biggr) (r) \right| \, \Gamma (\sigma_\gve) 
\,\leq\, 2 \2 \Gamma (\sigma_\gve) .
$$
The lemma follows together with assumption \eqref{sigma_eps}.
\proofend

\subsection{A family of contact forms on $\OB(F,\psi)$}
For every $\gve>0$ and $s \in(0,r_\gve/2)$, we define 
\begin{eqnarray} \label{alpha_epseps}
\alpha_{s,\gve} =  \left\{
\begin{array}{ll}
      \alpha_{s} & \textrm{ on } \MT(F_{\gve},\psi) \\ [.2em]
      \alpha_{s,\gve} = g(r)\, d\theta + f(r) \,\sigma_\gve &\textrm{ on } r_\gve\D\times K 
\end{array}      \right.
\end{eqnarray}
on the abstract open book 
$$
\OB(F,\psi) = \MT(F_{\gve},\psi) \cup \left(r_\gve\D\times K\right)
$$ 
where $\alpha_s$ on $\MT(F_{\gve},\psi)$ is defined by \eqref{alpha_s} and $f$ and $g$ 
are given in Lemma~\ref{fandg}. 
By~\eqref{e:ase} and the properties~\ref{f1} and~\ref{g1}, each $\alpha_{s,\gve}$ is a well-defined 
contact form on~$\OB(F,\psi)$.

We first estimate the volume. 
$$
\int_{\OB(F,\psi)}\alpha_{s,\gve}\wedge (d\alpha_{s,\gve})^n \,=\,
   \int_{\MT(F_{\gve},\psi)} \alpha_{s} \wedge (d\alpha_{s})^n + 
   \int_{r_\gve \D \times K} \alpha_{s,\gve} \wedge (d\alpha_{s,\gve})^n .
$$
For $s \in(0,s_1]$, where the positive number $s_1$ is given by Lemma~\ref{l:return}, we have 
$$
\int_{\MT (F_\gve,\psi)} \alpha_{s} \wedge (d \alpha_{s})^n 
\,=\, \int_{F_{\gve}} T_s\, (d\alpha_s|_{\{0\} \times F_\gve})^n 
   \,=\, \int_{F_{\gve}} T_s\, s^n (d\lambda)^n \,\leq\, 
	       2 \2 s^n\int_{F_{\gve}}(d\lambda)^n,
$$
where we used \eqref{e:tau2} in the last inequality. 
For the second term we use \eqref{e:adan}, the assumption~\eqref{sigma_eps}, 
and $f \leq 1$ and \eqref{e:h} to estimate
\begin{eqnarray*}
\int_{r_\gve\D\times K}\alpha_{s,\gve}\wedge (d\alpha_{s,\gve})^n
&=& \int_{r_\gve \D \times K} n \, h \, f^{n-1} 
            \bigl( dr \wedge d\theta \wedge \sigma_\gve \wedge (d \sigma_\gve)^{n-1} \bigr) \\
&=& 2\pi n  \vol_{\sigma_\gve} (K) \int_0^{r_\gve} h \2 f^{n-1} dr \\
&\leq& 12 \2 \pi n \2 \gve .
\end{eqnarray*}
Together we get
\begin{equation}\label{volumebound}
  \int_{\OB(F,\psi)} \alpha_{s,\gve} \wedge (d\alpha_{s,\gve})^n
	\,\leq\, 2 \2 s^n\int_{F_{\gve}} (d\lambda)^n + 12 \2 \pi n \2 \gve. 
\end{equation}

Next we estimate the norm growth~$\Gamma$. 
The bound~\eqref{e:htopE} also applies to the subset $\MT(F_\gve,\psi)$ of~$\MT(F^\circ,\psi)$
since $\Gamma$ is monotone with respect to inclusion of compact invariant 
subsets.
Since $\MT(F_\gve,\psi)$ and $r_\gve \D \times K$ are invariant under the flow,
we conclude together with assertion (3) of Proposition~\ref{p:Gammaele} 
and Lemma~\ref{le:S2S} that
\begin{eqnarray}\label{entropyoverall}
\Gamma (\alpha_{s,\gve}) = 
\max \left\{ \Gamma (\alpha_s |_{\MT(F_\gve,\psi)}) , 
           \Gamma ( \alpha_{s,\gve} |_{r_\gve \D \times K}) \right\} 
\leq 2E.
\end{eqnarray}

Now given any $\varepsilon_0 >0$, we choose $\gve>0$ such that $12 \2 \pi n \2 \gve \leq \varepsilon_0 /2$. 
Once $\gve$ is fixed, so are $r_\gve$ and $\int_{F_{\gve}}(d\lambda)^n$. 
If we choose $s>0$ such that 
$$
s \leq \min \left\{ s_1, \frac{r_\gve}{2},
    \left(\frac{\varepsilon_0}{4\int_{F_{\gve}}(d\lambda)^n} \right)^{\frac{1}{n}}\right\},
$$ 
then the right-hand side of \eqref{volumebound} is $\leq \gve_0$.
Since $n! \2 \omega_n \geq 1$ we thus get a contact form $\alpha_{s,\gve}$ on~$\OB(F,\psi)$ such that 
\begin{equation}  \label{e:2Ee0}
\Gamma (\alpha_{s,\gve}) \leq 2 E \quad \mbox{ and } \quad
\vol_{\alpha_{s,\gve}} \bigl( \OB (F,\psi) \bigr) \leq \varepsilon_0 .
\end{equation}
The last step of the proof consists of pushing the contact form $\alpha_{s,\gve}$ to~$M$ in such a way that 
the contact structure $\ker  \alpha_{s,\gve}$ is mapped to~$\xi$. 
This is possible thanks to the following lemma.

\begin{lem} \label{supporting} 
There exists a diffeomorphism $\rho$ of $M$ such that $\rho_* (\xi) = \ker \alpha_{s,\gve}$. 
\end{lem}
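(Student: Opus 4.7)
The plan is to apply the Giroux correspondence in its higher-dimensional form: I identify $\OB(F,\psi)$ with $M$ via an open-book-preserving diffeomorphism $\Phi$, show that $\Phi_*(\ker \alpha_{s,\gve})$ and $\xi$ are contact structures on $M$ supported by a common Liouville open book, and then apply Proposition~\ref{prop21} together with Gray's stability theorem. The identification $\Phi \colon \OB(F,\psi) \to M$ comes from the symplectically spinning vector field $X$: by Lemma~\ref{lem15} its flow is $2\pi$-periodic near $K$, and its time-$2\pi$ map on the $0$-page~$F$ is by construction~$\psi$. Hence the map $(\theta,p) \mapsto \phi_X^\theta(p)$ descends to a diffeomorphism $\MT(F^\circ,\psi) \to M \setminus K$, which extends smoothly across the binding collapse to yield~$\Phi$; by construction $\Phi$ sends pages to pages and binding to binding.

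Next I would verify that $\Phi_*\alpha_{s,\gve}$ is a contact binding 1-form for a LOB on $(M,K,\Theta)$ equivalent to the one supporting~$\xi$ provided by Proposition~\ref{prop18}. On $\MT(F_\gve,\psi)$ the page-restriction of $d\alpha_s$ equals $s\, d\lambda$, so the induced page ideal Liouville structure is $(F^\circ, s\,d\lambda)$, which coincides (after rescaling $\lambda$ by $s$) with the page ILS of the LOB supporting~$\xi$. Near the binding the form $\alpha_{s,\gve} = g(r)\,d\theta + f(r)\,\sigma_\gve$, together with the collar coordinates from $\imath_\gve$ and an appropriate defining function, produces an ILF whose ideal contact boundary is~$\sigma_\gve$ and whose differential extends the mapping-torus ILS smoothly; this uses the properties of $f$ and~$g$ from Lemma~\ref{fandg}. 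A Moser-type argument within the convex affine space of ILFs with fixed ideal contact boundary then identifies the two page ILSs up to a symplectomorphism of~$F$ relative to~$K$. Since the symplectic monodromy of $\alpha_{s,\gve}$ is again $\psi$ by construction, the resulting LOB on $M$ is equivalent to the original one.

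Finally, Proposition~\ref{prop21} asserts that the space of contact structures on~$M$ supported by this common LOB is non-empty and weakly contractible; in particular, there is a smooth path $(\xi_t)_{t\in [0,1]}$ of such contact structures with $\xi_0 = \xi$ and $\xi_1 = \Phi_*\ker\alpha_{s,\gve}$. Gray's stability theorem promotes this path to a smooth isotopy $(\rho_t)$ of $M$ with $\rho_0 = \id$ and $(\rho_t)_*\xi = \xi_t$; the diffeomorphism $\rho := \rho_1$, viewed under the identification $\OB(F,\psi) \cong M$ via~$\Phi$, is the desired one. The main obstacle is the second step: matching the two page ideal Liouville structures near the binding up to symplectomorphism relative to the ideal contact boundary. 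This requires a careful Moser-type interpolation inside the convex space of ILFs sharing a common ideal contact boundary, using the compatibility properties of $f$ and~$g$ (items \ref{f1}, \ref{f2}, \ref{g1}, \ref{g2} of Lemma~\ref{fandg}) with the collar coordinates from~$\imath_\gve$; once that technical point is in place, the rest of the argument is a routine application of the Giroux correspondence and Gray stability.
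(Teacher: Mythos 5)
Your overall strategy coincides with the paper's: construct a defining function and show that $\alpha_{s,\gve}$ is a contact binding $1$-form for a Liouville open book on $\OB(F,\psi)$ whose underlying open book and symplectic monodromy agree with the LOB supporting $\xi$, interpolate the two page ideal Liouville structures by a Moser argument, and then invoke Proposition~\ref{prop21} and Gray stability. The step you flag as ``the main obstacle'' --- the convex interpolation $\lambda_t = (1-t)\lambda + t\tilde\lambda$ and the verification that each $\omega_t := d\lambda_t$ is symplectic on $F^\circ$ --- is indeed the technical heart of the paper's Claim~2; and your observation that the two page ILSs agree on a deleted neighbourhood of $K$ (forced by~$\imath_\gve$ and by~\ref{f1}, so that $\omega = \widetilde\omega = d(\tfrac{1}{r}\sigma_\gve)$ there) is exactly what makes the Moser isotopy relative to $K$ work.

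There is however a concrete further step you have glossed over. Running Moser produces a family of diffeomorphisms $(\psi_t)_{t\in[0,1]}$ of $F$, rel a neighbourhood of $K$, with $\psi_t^*\omega_t = \omega$. It is tempting to conclude, as you do, that because both LOBs have symplectic monodromy represented by $\psi$, the single map $\psi_1$ already identifies them. But $\psi_1$ need not commute with $\psi$, so the naive page-by-page identification $(\theta,p)\mapsto(\theta,\psi_1(p))$ does not descend to the mapping torus $\MT(F,\psi)$, nor does it pull $\widetilde\omega|_{F_\theta^\circ}$ back to $\omega|_{F_\theta^\circ}$ for $\theta\neq 0$. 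The paper resolves this by using the \emph{entire} Moser family and threading the monodromy through it:
\[
\Phi(\theta,p) = \bigl(\theta,\; \psi_1\circ\psi_{\theta/2\pi}^{-1}\circ\psi^{-1}\circ\psi_{\theta/2\pi}(p)\bigr),
\]
and then checks both that $\Phi$ descends to $\OB(F,\psi)$ (using $\psi_0=\id$) and, via the chain of identities $\psi^*\omega_t=\omega_t$ and $\psi_t^*\omega_t = \omega_0$, that $\Phi^*\widetilde\omega = \omega$ on every page. This construction is precisely the content needed to conclude that $\ker\Phi^*\alpha_{s,\gve}$ and $\ker\alpha$ are supported by the \emph{same} LOB, which is the hypothesis of Proposition~\ref{prop21}. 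Without it, ``same monodromy class plus a Moser symplectomorphism of the $0$-page'' is not by itself sufficient, and your argument has a gap at exactly this point.
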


Postponing the proof, we use the lemma to complete the proof of Theorem~\ref{t:mainn}.
Thanks to the lemma, the 1-form
\begin{equation} \label{e:tauvr}
\tau \,:=\, 
\left( \vol_{\alpha_{s,\gve}} \bigl( \OB (F,\psi) \bigr) \right)^{-1/(n+1)} \, \rho^* \alpha_{s,\gve}
\end{equation}
is a contact form on $(M,\xi)$, and $\vol_\tau(M) = 1$.
By the elementary properties (1) and (4) in Proposition~\ref{p:Gammaele}
and by~\eqref{e:2Ee0}, 
$$
\widehat \Gamma (\tau) \,=\, \Gamma (\tau) \,=\, 
\left( \vol_{\alpha_{s,\gve}} \bigl( \OB (F,\psi) \bigr) \right)^{1/ (n+1) } \, \Gamma (\alpha_{s,\gve}) 
\,\leq\, (2E) \, (\gve_0)^{1/ (n+1) } .
$$
Since the constant $E$ from Lemma~\ref{l:return} depends only on $\psi, \lambda, \chi$, 
which are fixed data associated with~$(M,\xi)$,
and since $\gve_0 >0$ is arbitrarily small, 
we obtain $\tau$ with $\widehat \Gamma (\tau)$ as small as we like.

\subsection{Proof of Lemma \ref{supporting}.}
We first show that the obvious open book structure on $\OB(F,\psi)$ is a Liouville open book 
with contact binding form $\alpha_{s,\gve}$. 
Let 
$$
\widetilde \Theta \colon \OB(F,\psi)\setminus K \rightarrow S^1
$$
be the fibration induced by the projection $\MT(F,\psi) \rightarrow S^1$. 
We construct a defining function~$\tilde h$ as follows. 
As before, we consider the variable $r \in [0,+\infty)$ on a neighborhood of $K=\partial F$ in~$F$ 
that is induced by the embedding~\eqref{lambdanearK}.
Let
$$
\tilde u \colon F \rightarrow [0,\infty)
$$
be a smooth function and $d>0$ and $\delta>0$ be constants such that

\smallskip
\begin{enumerate}[label=\mbox{(df\arabic*)}]
\item \label{df1}  $\tilde u (r,q)=r$ for $(r,q) \in [0,r_\gve] \times K$, 

\smallskip
\item \label{df2}  $\tilde u \equiv d$ on $([0,r_\gve+\delta) \times K)^c$ \,and\, 
                          $\supp \psi \subset ([0,r_\gve+\delta] \times K)^c$,
				
\smallskip
\item \label{df3}  $\tilde{u}$ depends only on $r$ and 
                          $\partial_r \1 \tilde u \geq 0$ on $[0,r_\gve+\delta] \times K$.
\end{enumerate}

\begin{figure}[h]   
 \begin{center}
  \psfrag{u}{$\tilde u$} \psfrag{d}{$d$} 
  \psfrag{re}{$r_\gve$} 
  \psfrag{red}{$r_{\gve} \!+\! \delta$}
  \psfrag{rK}{$r \times K$}  
  \leavevmode\includegraphics{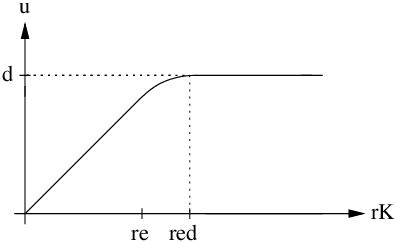}
 \end{center}
 \caption{The function $\tilde u$, schematically}   \label{utilde}
\end{figure}

\smallskip \noindent
Since $\tilde u$ is constant on $\supp \psi$, the $S^1$-invariant extension of~$\tilde u$ 
is a well-defined smooth function on~$\MT(F,\psi)$, which constitutes the function $|\tilde h|$. 
Pairing $|\tilde h|$ with $\widetilde \Theta$ leads to a well-defined defining function $\tilde h$ 
for the open book $(K,\widetilde \Theta)$ on~$\OB(F,\psi)$. 
Note that on $r_\gve \D \times K$, $\tilde h$ is simply the projection to the disk~$r_\gve \D$, 
which is smooth. 

\medskip \noindent
\textbf{Claim 1:} 
\textit{$d \bigl( \alpha_{s,\gve}/|\tilde{h}| \bigr)$ induces an ideal Liouville structure on each fibre 
of $\widetilde{\Theta}$.}

\begin{proof} 
For each $\theta \in S^1$ we abbreviate
\begin{eqnarray} \label{lambdatilde}
\tilde{\lambda}_\theta := \bigl( \alpha_{s,\gve}/|\tilde{h}| \bigr) |_{T(\{\theta\} \times F^\circ)} .
\end{eqnarray} 
We need to check that $d\tilde \lambda_\theta$ is a positive symplectic form on 
$\{\theta\} \times F^\circ = \widetilde{\Theta}^{-1}(\theta)$. 
We do this on the three regions separately:

\medskip \noindent
\underline{On $\{\theta\} \times (0,r_\gve] \times K$}: 
By \eqref{alpha_epseps} and~\ref{df1} we have 
\begin{eqnarray} \label{er_eps}
\tilde{\lambda}_\theta = \frac{f(r)}{r} \, \sigma_{\gve}
\end{eqnarray}
and so 
$$
(d\tilde{\lambda}_\theta)^n = n \2 f^{n-1} \2 \frac{f'r-f}{r^{n+1}}\,dr \wedge \sigma_{\gve} \wedge (d\sigma_{\gve})^{n-1}.
$$
In view of the parametrization \eqref{lambdanearK},
$d (\frac 1r \sigma_{\gve})$ is a positive symplectic form on $\{\theta\} \times F^\circ$,
whence $(d(\frac 1r \sigma_{\gve}))^n = - \frac{n}{r^{n+1}} \, dr \wedge \sigma_{\gve} \wedge (d\sigma_{\gve})^{n-1}$
is a positive volume form. 
Since $f'r-f<0$ by~\ref{f1} and~\ref{f'}, it follows that
$(d \tilde{\lambda}_\theta)^n$ is a positive volume form, i.e., $d \tilde{\lambda}_\theta$ is a 
positive symplectic form.

\medskip \noindent
\underline{On $\{\theta\} \times [r_\gve,r_\gve+\delta]\times K$}: 
By~\ref{df2} we have $\psi = \id$ on this set. 
Hence, by~\eqref{e:ase},
\begin{eqnarray} \label{err_eps}
\tilde{\lambda}_\theta = \frac{s}{r \tilde u} \, \sigma_{\gve}.
\end{eqnarray}
By \ref{df3}, $\tilde u$ depends only on $r$. We thus obtain
$$
(d\tilde{\lambda}_\theta)^n = 
- s \2 n \2 \frac{\tilde{u} + r \2 \partial_r \tilde{u}}{r^{n+1} \tilde u^{n+1}} \, 
                  dr \wedge \sigma_{\gve} \wedge (d\sigma_{\gve})^{n-1}.
$$
Also by \ref{df3}, $\tilde{u} + r \2 \partial_r \tilde{u}>0$, and the claim follows as in the previous case.

\medskip \noindent
\underline{On $\{\theta\} \times ([0,r_\gve+\delta)\times K)^c$}: 
By \ref{df2}, $\tilde u \equiv d$ and 
\begin{eqnarray} \label{faraway}
\tilde{\lambda}_\theta = \frac{s}{d} \left(\lambda + \chi(\theta) \lambda_\psi\right) .
\end{eqnarray}
Hence $d \tilde{\lambda}_\theta=\frac{s}{d} d\lambda$, which is a positive symplectic form.
\end{proof}

Now we are in the following situation. On $\OB(F,\psi)$ we have the Liouville open book
\begin{eqnarray} \label{LOB1}
\left( K,\widetilde{\Theta}, d(\alpha/|h|)|_{T(\{\theta\} \times F^\circ)} \right),
\end{eqnarray}
which symplectically supports the contact structure $\xi = \ker \alpha$. 
Here $\alpha$, $\xi$, and~$h$ stand for the objects induced by the correspondence between 
$M$ and~$\OB(F,\psi)$ given by the symplectically spinning vector field~$X$ on~$M$. 
Moreover, by Claim~1 we have a second Liouville open book
\begin{eqnarray} \label{LOB2}
\left( K, \widetilde{\Theta}, d \bigl( \alpha_{s,\gve}/|\tilde{h}| \bigr)|_{T(\{\theta\} \times F^\circ)} \right),
\end{eqnarray}
which symplectically supports the contact structure $\ker \alpha_{s,\gve}$. 
By the identities \eqref{er_eps}, \eqref{err_eps}, and \eqref{faraway}, 
the ideal Liouville structures 
$\bigl( d \bigl( \alpha_{s,\gve}/|\tilde{h}| \bigr) |_{T(\{\theta\} \times F^\circ)} \bigr)_{\theta \in S^1}$ 
are invariant under the flow of the vector field~$\partial_\theta$. 
Although $\tilde \lambda_\theta$ in~\eqref{faraway} is not invariant under this flow, the symplectic form
$d \tilde \lambda_\theta = \frac s d \, d\lambda$ is.
The vector field~$\partial_\theta$ is therefore a symplectically spinning vector field on the LOB~\eqref{LOB2}.
Note that the symplectically spinning vector field~$X$ on~$M$ also reads $\partial_\theta$ on the LOB~\eqref{LOB1}.
                                
\medskip \noindent
\textbf{Claim 2.} 
\textit{There exists a diffeomorphism   
\begin{eqnarray}\label{Phi} 
\Phi \colon \OB(F,\psi) \rightarrow \OB(F,\psi) 
\end{eqnarray} 
such that $\Phi \circ \widetilde \Theta = \widetilde \Theta \circ \Phi$ and the restriction of~$\Phi$  
to each fibre is symplectic, that is,  
$$ 
\Phi^*d \bigl( \alpha_{s,\gve}/|\tilde{h}| \bigr)|_{T(\{\theta\} \times F^\circ)} =  
           d \bigl( \alpha/|h| \bigr)_{T(\{\theta\} \times F^\circ)}, 
\quad \forall \,\theta \in S^1 . 
$$ 
}

If such a diffeomorphism exists, then  $\ker \Phi^*\alpha_{s,\gve}$ and $\ker \alpha$ are two contact structures on $\OB(F,\psi)$ which symplectically support the Liouville open book~\eqref{LOB1}. 
Hence they are isotopic by Proposition~\ref{prop21}. 
By Gray's stability theorem we then find a diffeomorphism~$\hat \rho$ of~$M$ such that 
$\hat \rho_* (\ker \alpha) = \ker \Phi^* \alpha_{s,\gve}$. 
Set $\rho = \Phi \circ \hat \rho$.
Since $\ker \Phi^* \alpha_{s,\gve} = \Phi_*^{-1} (\ker \alpha_{s,\gve})$, we conclude 
that $\rho_* (\ker \alpha) = \ker \alpha_{s,\gve}$, as claimed in Lemma~\ref{supporting}.

\medskip \noindent
{\it Proof of Claim 2.} 
We have the following ideal Liouville structures on the $0$-page:
\begin{eqnarray}
\widetilde \omega &:=& d \bigl( \alpha_{s,\gve}/|\tilde{h}| \bigr)|_{T(\{0\} \times F^\circ)}, \label{omega_1} \\ [0.5em]
\omega            &:=& d \bigl( \alpha/|h| \bigr)|_{T(\{0\}\times F^\circ)}=d\lambda. \label{omega}
\end{eqnarray}
We first show that 
$$
\omega_t := (1-t) \2 \omega + t \2 \widetilde \omega
$$
is symplectic on $F^\circ$ for all $t\in [0,1]$. In fact, we claim that
\begin{eqnarray}\label{lambda_t}
\lambda_t = (1-t) \2 \lambda + t \2 \tilde{\lambda}
\end{eqnarray}
is a Liouville form on $F^\circ$ for all $t \in [0,1]$, 
where $\lambda$ is the primitive of~$\omega$ given by~\eqref{lambda}
and $\tilde \lambda$ is the primitive of $\widetilde \omega$ given by~\eqref{lambdatilde}.
Again, we compute $d\lambda_t$ on different subsets of~$F^\circ$:

\medskip \noindent
\underline{On $\{\theta\} \times (0,r_\gve]\times K$}: 
By \eqref{er_eps} we have 
$$
\lambda_t = (1-t) \,\frac{1}{r}\, \sigma_\gve + t \,\frac{f(r)}{r}\, \sigma_\gve = \frac{\kappa(r)}{r} \,\sigma_\gve
$$
where $\kappa=(1-t)+tf$. We have $\kappa>0$ and $\kappa'<0$, so that $\kappa'r-\kappa<0$. 
Hence the claim follows as in the first case of Claim~1.

\medskip \noindent
\underline{On $\{\theta\}\times [r_\gve,r_\gve+\delta]\times K$}: 
By \eqref{err_eps} we have
$$
\lambda_t = (1-t) \,\frac{1}{r}\, \sigma_\gve + t \,\frac{s}{r \tilde u}\, \sigma_\gve =
  \frac{\kappa(r)}{r} \, \sigma_\gve
$$
where $\kappa = (1-t) + t s/ \tilde u$. We have $\kappa>0$ and $\kappa' \leq 0$, so that $\kappa'r-\kappa<0$. 
The claim follows as above.

\medskip \noindent
\underline{On $\{\theta\} \times ([0,r_\gve+\delta)\times K)^c$}: 
By \eqref{faraway} we have
$$
d \lambda_t = (1-t) \2 d\lambda + t \,\frac{s}{d}\, d\lambda = \left( (1-t) + t \2 \frac sd \right) \, d \lambda.
$$
Hence $\omega_t = d\lambda_t$ is symplectic on $F^\circ$ for all $t \in [0,1]$.


%
 
\medskip
Recall that by \eqref{lambdanearK} and \ref{f1},
$$
\omega = \widetilde{\omega} = d \left( \frac{1}{r} \,\sigma_\varepsilon \right) 
\quad \mbox{on a deleted neighbourhood of $K$.}
$$
Hence this identity holds on the same deleted neighbourhood of~$K$ 
for all the symplectic forms $\omega_t$, $t \in [0,1]$.
Applying the standard Moser argument to the path $\omega_t$, 
we obtain a smooth isotopy $(\psi_t)_{t\in [0,1]}$ of~$F$ such that

\smallskip
\begin{enumerate}
[label=\mbox{($\Psi$\arabic*)}]   
\item   \label{psi1}   $\psi_0 = \id$; 

\smallskip
\item   \label{psi2}   $\psi_t = {\rm id}$ near $K$ for all $t \in [0,1]$; 

\smallskip
\item   \label{psi3}   $\psi_t^* \, \omega_t = \omega_0 = \omega$ for all $t \in [0,1]$.
\end{enumerate}

\smallskip \noindent
Now we define $\Phi \colon [0,2\pi] \times F \rightarrow [0,2\pi] \times F$ by
\begin{eqnarray}\label{isotopy}
\Phi(\theta,p) := \left( \theta,\psi_{1} \circ \psi^{-1}_{\frac{\theta}{2\pi}} \circ \psi^{-1} \circ \psi_{\frac{\theta}{2\pi}} (p) \right)
\end{eqnarray}
where $\psi$ is the monodromy that we fixed at the outset of the proof. 
We note that 
$$
\Phi(2\pi,p) = \bigl (2\pi, \psi^{-1} \circ \psi_{1}(p) \bigr),
$$
and by~\ref{psi1}, 
$$
\Phi(0,\psi(p)) 
= \left(0, \psi_{1}(p) \right) = \bigl( 0, \psi(\psi^{-1} \circ \psi_{1}(p)) \bigr).
$$
Hence $\Phi$ descends to a diffeomorphism on $\MT(F,\psi)$. Since $\psi = \id$ near~$K$
and $\psi_t = \id$ near~$K$ for each~$t$ by~\ref{psi2}, we have that $\Phi = \id$ on 
a neighbourhood of~$\partial \MT(F,\psi)$. 
Hence $\Phi$ descends to a diffeomorphism on~$\OB(F,\psi)$. 
By definition, $\Phi$ commutes with~$\widetilde {\Theta}$. 

Now recall that $\partial_{\theta}$ is a symplectically spinning vector field for both LOBs~\eqref{LOB1} and~\eqref{LOB2}.
In view of~\eqref{omega_1} and~\eqref{omega} and identifying $\{\theta\} \times F^\circ$ with $\{0\} \times F^\circ$ via the flow
of~$\partial_\theta$, we can therefore identify
\begin{eqnarray*}
d \bigl( \alpha / |h| \bigr) |_{T (\{\theta\} \times F^\circ)} & \mbox{ with } & 
                                            \omega |_{T (\{\theta\} \times F^\circ)} := \omega , \\
d \bigl( \alpha_{s,\gve} / |\tilde h| \bigr) |_{T (\{\theta\} \times F^\circ)} & \mbox{ with } & 
                         \widetilde \omega |_{T (\{\theta\} \times F^\circ)} := \widetilde \omega . 
\end{eqnarray*}
Also recall that $\psi^* \omega = \omega$. Since $\partial_\theta$ generates the monodromy~$\psi$
and preserves~$\widetilde \omega$, we also have $\psi^* \widetilde \omega = \widetilde \omega$.
Therefore, $\psi^*\omega_t = \omega_t$ for all~$t \in [0,1]$. 
Inserting \eqref{isotopy} and using~\ref{psi3} we obtain, 
with the abbreviation $F_\theta^\circ = T (\{\theta\} \times F^\circ)$,
\begin{eqnarray*}
\Phi^* d \bigl( \alpha_{s,\gve}/|\tilde{h}| \bigr) |_{F_\theta^\circ}
&=&\Phi^* \widetilde \omega  |_{F_\theta^\circ} \\
&=&\big(\psi_{1}\circ\psi^{-1}_{\frac{\theta}{2\pi}} \circ \psi^{-1}\circ \psi_{\frac{\theta}{2\pi}} \big)^* \widetilde \omega |_{F_\theta^\circ} \\
&=&\psi_{\frac{\theta}{2\pi}}^* \, (\psi^{-1})^* \, (\psi^{-1}_{\frac{\theta}{2\pi}})^* \, \psi_{1}^* \omega_{1} |_{F_\theta ^\circ} \\
&=&\psi_{\frac{\theta}{2\pi}}^* \, (\psi^{-1})^* \, (\psi^{-1}_{\frac{\theta}{2\pi}} )^* \, \omega_0 |_{F_\theta^\circ} \\
&=&\psi_{\frac{\theta}{2\pi}}^* \, (\psi^{-1})^* \omega_{\frac{\theta}{2\pi}} |_{F_\theta^\circ} \\
&=&\psi_{\frac{\theta}{2\pi}}^* \, \omega_{\frac{\theta}{2\pi}} |_{F_\theta^\circ} \\
&=&\omega_0 |_{F_\theta^\circ} = \omega |_{F_\theta^\circ}  = d(\alpha/|h|) |_{F_\theta^\circ} .
\end{eqnarray*} 
This concludes the proof of Lemma \ref{supporting},  
and hence of Theorem~\ref{t:mainn}.

\section{Full entropy spectrum} \label{s:spectrum}

By Theorem~\ref{t:mainn} every closed contact manifold $(M,\xi)$
admits normalized contact forms with arbitrarily small topological entropy.
On the other hand, one can always find normalized contact forms~$\alpha$
on $(M,\xi)$ with arbitrarily large topological entropy.
We first sketch a direct proof of this fact. 
A proof of a stronger statement relying on our previous construction is given thereon.

\smallskip \noindent
{\it Existence of contact forms with large entropy.}
Choose a transverse knot $\gamma$, that is, 
a simple closed curve $\gamma \colon S^1 \to M$ such that $\dot \gamma$ is everywhere
transverse to~$\xi$.
Let $B^{2n}$ be the closed ball in~$\R^{2n}$ of radius~$1$.
By the normal form theorem for transverse knots \cite[Example~2.5.16]{Gei08},
we find a full torus $\ct = B^{2n} \times S^1$ around~$\gamma$ with coordinates
$(\theta_1, \dots, \theta_n, r_1, \dots, r_n, q)$ such that $\gamma$ is parametrized by $r_j = 0$ and~$q$
and such that on~$\ct$ the contact structure~$\xi$ is the kernel of $\tau := d q + \sum_j r_j \2 d \theta_j$.

Next, take a second such full torus $(\ct_2, \tau_2)$ that is disjoint from~$\ct$,
perturb $\tau_2$ in the interior of~$\ct_2$ to a contact form~$\tau_2'$
with positive topological entropy, and take a contact form~$\alpha$ on~$(M,\xi)$ that agrees with 
$\tau$ on~$\ct$ and with $\tau_2'$ on~$\ct_2$.
Now for $\delta >0$ let $F \colon M \to \R$ be a positive smooth function 
that is equal to~$\delta$ outside~$\ct$,
on~$\ct$ depends only on the coordinates~$r_1, \dots, r_n$, 
and is such that $\vol_{F\tau} (\ct) = 1$. 
Take the smooth contact form $\alpha_{\delta, F}$ on~$M$ that is equal to~$F\tau$ on~$\ct$
and to $\delta \alpha$ on~$M \setminus \ct$.
Then $h_{\top} (\alpha_{\delta, F}|_\ct) = 0$ whence
$$
h_{\top}(\alpha_{\delta, F}) \,=\, h_{\top}(\alpha_{\delta, F} |_{M\setminus \ct})
\,=\, h_{\top}(\delta \alpha |_{M\setminus \ct}) \,=\,
\delta^{-1} \, h_{\top}(\alpha) .
$$
With this one then readily finds
$$
\left( \widehat h_{\top}(\alpha_{\delta, F}) \right)^{n+1} \,=\,
\left( \widehat h_{\top}(\alpha |_{M \setminus \ct}) \right)^{n+1} +   
     \left( \frac{h_{\top}(\alpha)}{\delta} \right)^{n+1} .
$$
Varying $\delta$ on $(0,\infty)$ we obtain a normalized contact form with 
topological entropy~$c$ for every $c > \widehat h_{\top}(\alpha |_{M \setminus \ct})$. 
\proofend

\begin{rem}
{\rm
For spherizations $S^*Q_k$ of closed orientable surfaces of genus $k \geq 2$
this result has been obtained in~\cite{EK19} inside the much smaller class of
geodesic flows of negatively curved Riemannian metrics:
For every $c \geq 2\pi \sqrt{2(k-1)}$ there exists a negatively curved Riemannian metric~$g$
on~$Q_k$ such that $\widehat h_{\top}(g) =c$.
In the class of all Riemannian metrics, 
geodesic flows with arbitrarily large $\widehat h_{\top}$
were constructed on all closed manifolds of dimension at least~two
already in~\cite{Man81}. 
}
\end{rem}

We shall now combine the above argument with the construction in the proof of Theorem~\ref{t:mainn} 
to prove the following more precise result.

\begin{prop} \label{p:spec}
Let $(M,\xi)$ be a closed co-orientable contact manifold of dimension $2n+1 \geq 3$. 
Then for every $c>0$ there exist normalized contact forms~$\alpha$ and~$\alpha'$ on~$(M,\xi)$
such that $h_{\top}(\alpha) = c$ and $\Gamma (\alpha') =c$.
\end{prop}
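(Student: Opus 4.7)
The plan is to combine the entropy-collapse from Theorem~\ref{t:mainn} with the one-parameter entropy-inflation construction described in the sketch before the proposition, and then hit $c$ exactly by an intermediate-value argument in the parameter~$\delta$.

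Concretely, suppose we can produce a contact form $\alpha$ on $(M,\xi)$ satisfying:
\begin{enumerate}[label=(\roman*)]
\item $\alpha$ equals the Darboux normal form $\tau = dq + \sum_j r_j \2 d\theta_j$ on a tubular neighborhood $\ct = B^{2n} \times S^1$ of a transverse knot;
\item $h_\top(\alpha) > 0$ (and similarly $\Gamma(\alpha)>0$ for the second part);
\item $\widehat h_\top(\alpha |_{M \setminus \ct}) < c$.
\end{enumerate}
Then I would apply the construction of the sketch, producing for each $\delta > 0$ a contact form $\alpha_{\delta,F}$ on $(M,\xi)$ equal to $F\tau$ on $\ct$ (with $F$ depending only on $r_1,\dots,r_n$, agreeing with $\delta$ near $\partial \ct$, and chosen so that $\vol_{F\tau}(\ct) = 1$) and equal to $\delta\,\alpha$ on $M \setminus \ct$. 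Since the Reeb flow of $F\tau$ is integrable on $\ct$ (tangent to the tori $\{|r|=\mathrm{const}\}$ and translational there), the restrictions to $\ct$ satisfy $h_\top=0$. Together with $\vol_{\alpha_{\delta,F}}(M) = 1 + \delta^{n+1} \vol_\alpha(M \setminus \ct)$ and $h_\top(\alpha_{\delta,F}) = \delta^{-1} h_\top(\alpha|_{M\setminus \ct})$, this gives the identity recorded in the sketch:
\[
\widehat h_\top (\alpha_{\delta,F})^{n+1} \;=\; \widehat h_\top (\alpha |_{M \setminus \ct})^{n+1} \;+\; \delta^{-(n+1)}\, h_\top(\alpha)^{n+1}.
\]
As $\delta$ decreases from $+\infty$ to $0$, the right-hand side is continuous and strictly monotone, ranging from $\widehat h_\top (\alpha |_{M \setminus \ct})^{n+1}$ up to $+\infty$. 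Thanks to (iii) it passes through $c^{n+1}$ at a unique $\delta_c$, and $\alpha_{\delta_c,F}$ is the desired normalized contact form with $\widehat h_\top = c$.

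To construct $\alpha$ satisfying (i)--(iii), I would start with a contact form $\alpha_0$ given by Theorem~\ref{t:mainn} with $\widehat h_\top(\alpha_0)$ much smaller than $c$. Choose a transverse knot $\gamma \subset (M,\xi)$ (e.g.\ a short Reeb orbit of $\alpha_0$, or obtained by a small transverse push-off of any null-homotopic loop) and, by the contact Darboux theorem for transverse knots, write $\alpha_0 = f \tau$ on a tubular neighborhood of $\gamma$. Replacing $\alpha_0$ by $g \alpha_0$ for a positive function $g$ equal to $1/f$ on a slightly smaller neighborhood $\ct$ and equal to $1$ outside produces a new contact form for $\xi$ in Darboux form on $\ct$, with normalized entropy still much smaller than $c$ provided $g$ was chosen $C^0$-close to~$1$; if needed, one then performs an additional local perturbation inside a disjoint Darboux ball to guarantee $h_\top(\alpha)>0$, keeping the perturbation small enough that (iii) is preserved.

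The norm-growth statement is handled identically, using that $\Gamma$ obeys the max-over-invariant-pieces identity of Proposition~\ref{p:Gammaele}(3), the scaling $\Gamma(c\alpha) = c^{-1}\Gamma(\alpha)$, and the fact that the integrable flow of $F\tau$ on $\ct$ has $\Gamma = 0$ (its differentials grow at most linearly on each invariant torus, hence subexponentially). The same computation produces
\[
\widehat \Gamma(\alpha_{\delta,F})^{n+1} \,=\, \widehat \Gamma(\alpha |_{M \setminus \ct})^{n+1} + \delta^{-(n+1)} \Gamma(\alpha)^{n+1},
\]
and the intermediate-value argument then fixes~$\delta$ to make $\widehat \Gamma = c$. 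The main obstacle I foresee is the construction of the base form $\alpha$: condition (i) is local, (ii) is semi-local, and (iii) is global, so the modifications of $\alpha_0$ must be quantitatively controlled to ensure that neither the conformal rescaling to Darboux form nor the entropy-producing perturbation destroys the smallness of $\widehat h_\top$ inherited from Theorem~\ref{t:mainn}.
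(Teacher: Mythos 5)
Your proposal follows the paper's preliminary \emph{sketch} (the ``direct proof'' the paper gives, before Proposition~\ref{p:spec}, that arbitrarily large $\widehat h_{\top}$ is achievable) combined with Theorem~\ref{t:mainn}, rather than the paper's actual proof of the proposition. The underlying scheme is the same: split $M$ into an integrable full torus $\ct$ and its complement, dilate by $\delta$ outside, renormalize inside, and hit $c$ by an intermediate value argument. But the paper's actual proof does \emph{not} pick a transverse knot and normalize via Darboux. It observes that the contact form $\alpha_{s,\gve}$ constructed in Theorem~\ref{t:mainn} already contains an integrable full torus: unwinding the recursion through the binding exhibits a neighborhood $\ct_r = B^{2n}(r)\times S^1$ of the core circle on which $\alpha_{s,\gve}$ is a sum $\sum \rho_j(\ovr)d\theta_j + \rho(\ovr)dq$, so the Reeb flow there foliates by Kronecker tori (Lemma~\ref{le:F123}). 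Conditions (i)--(iii) then come for free, and the positivity step ($h_{\top}(\alpha_{s,\gve})>0$) is carried out entirely within the same construction, via a $C^1$-perturbation near one of the many invariant tori produced by~\eqref{tau_sh_snearK}, propagated inductively from dimension~$3$.

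The gap in your route is exactly where you flag it. Writing $\alpha_0 = f\tau$ in a Darboux chart around a transverse knot and replacing $\alpha_0$ by $g\alpha_0$ with $g = 1/f$ near $\gamma$ does produce a contact form in Darboux form on $\ct$; but the claim that ``$g$ was chosen $C^0$-close to~$1$'' is not one you control. Even if you take $\gamma$ to be a Reeb orbit and rescale $\alpha_0$ so that $f\equiv 1$ on~$\gamma$, making $f$ $C^0$-close to~$1$ on a thin tube, the derivative $df$ does not shrink as the tube shrinks, so $g$ is not $C^1$-close to~$1$. Since $R_{g\alpha_0} - \tfrac{1}{g}R_{\alpha_0}$ is a $\xi$-valued term determined by $dg|_{\xi}$, $C^0$-closeness of $g$ to~$1$ gives no control on $h_{\top}(g\alpha_0)$ or $\Gamma(g\alpha_0)$, and therefore does not ensure that $\widehat h_{\top}(\alpha) < c$ in condition (iii), which is precisely what the intermediate value argument requires. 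The same quantitative-control issue recurs in the ``local perturbation to make $h_{\top}>0$'' step. These gaps are not fatal in principle, but closing them requires careful $C^1$-control that the paper avoids entirely by never leaving the open-book construction.
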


\proof
We give the proof for the topological entropy. The proof for the norm growth~$\Gamma$ is similar.
Fix $c>0$ as in the proposition. 
As in~\eqref{e:tauvr} let
$$
\tau \,=\, v^{-1/(n+1)} \, \rho^* \alpha_{s,\gve}
$$
be a contact form with $h_{\top}(\tau) \leq c$ and 
$\vol_\tau (M) = 1$.
Here we abbreviated $v := \vol_{\alpha_{s,\gve}} \bigl( \OB(F,\psi) \bigr)$.
In view of~\eqref{e:2Ee0} we can assume that
\begin{equation} \label{e:volase1}
v \leq 1.
\end{equation}
Recall that $\alpha_{s,\gve}$ on $r_\gve \D \times K$ was constructed
recursively, starting from the circle~$S^1$.
For $j=1, \dots, n$ let $f_j,g_j,h_j$ be the functions from Lemma~\ref{fandg}
that we used in~\eqref{alpha_sepsnearK} to construct $\alpha_{s,\gve_j}$ on $r_{\gve_j} \D \times K_j$.
Set $r = \min \{ r_{\gve_1}, \dots, r_{\gve_n} \} >0$.
Let $B^{2n}(r)$ be the closed $r$-ball in~$\R^{2n}$ with polar coordinates
$(\otheta, \ovr) \equiv (\theta_1, \dots, \theta_n, r_1, \dots, r_n)$,
and abbreviate $\ct_r = B^{2n}(r) \times S^1$.

For $\delta >0$ choose a positive smooth function $F \colon \OB(F,\psi) \to \R$ 
with the following properties:

\begin{enumerate}[label=\mbox{(F\arabic*)}]
\item \label{F1}
$F = \delta$ on $\ct_r^c := \OB(F,\psi) \setminus \ct_r$.

\smallskip
\item \label{F2}
$F$ only depends on $\ovr$ on $\ct_r$.

\smallskip
\item \label{F3}
$\vol_{F \alpha_{s,\gve}} (\ct_r) = 1$.
\end{enumerate}

\begin{lem} \label{le:F123}
$h_{\top} ( F \alpha_{s,\gve} |_{\ct_r}) =0$.
\end{lem}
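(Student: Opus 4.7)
The proof follows closely Step~2 of the proof of Theorem~\ref{t:3d}, adapted to the higher-dimensional nested structure of~$\ct_r$.

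\emph{First,} I would unravel the recursive construction of $\alpha_{s,\gve}$. Iterating the local formula~\eqref{alpha_sepsnearK} from the innermost binding outward through the nested tubular neighborhoods $r_{\gve_j}\,\D\times K_j$, and using that $r=\min_j r_{\gve_j}$ so that $\ct_r$ lies inside every one of them, one obtains the following explicit toric expression on $\ct_r = B^{2n}(r)\times S^1$ in the angular-radial coordinates $(\theta_1,\dots,\theta_n,r_1,\dots,r_n,q)$:
\begin{equation}\label{eq:plan1}
\alpha_{s,\gve} \,=\, \sum_{j=1}^n A_j(\ovr)\, d\theta_j + A_0(\ovr)\, dq ,
\end{equation}
where $\ovr = (r_1,\dots,r_n)$, $A_j(\ovr) = g_j(r_j)\prod_{i>j} f_i(r_i)$ for $1\leq j\leq n$ and $A_0(\ovr) = c_0 \prod_{i=1}^n f_i(r_i)$, with a positive constant $c_0$ coming from the base of the induction on the innermost circle. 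In particular, all coefficients depend only on~$\ovr$.

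\emph{Second,} by property~\ref{F2} the function $F$ also depends only on $\ovr$ on $\ct_r$, so $F\alpha_{s,\gve} = \sum_{k=0}^n B_k(\ovr)\,d\eta_k$, where $\eta_0 = q$, $\eta_j = \theta_j$, and $B_k = F A_k$. This $1$-form is invariant under the $(n+1)$-torus action by translation of the angular coordinates, and hence, by uniqueness, so is its Reeb field $R_{F\alpha_{s,\gve}}$; in particular the components of $R_{F\alpha_{s,\gve}}$ in the basis $(\partial_{r_i},\partial_{\eta_k})$ depend only on $\ovr$. Writing $R = \sum_i d_i \partial_{r_i}+\sum_k c_k \partial_{\eta_k}$, the equation $\iota_R d(F\alpha_{s,\gve}) = 0$ splits into $\sum_i d_i\,\partial_{r_i}B_k = 0$ for every $k$ and $\sum_k c_k\,\partial_{r_i}B_k = 0$ for every $i$. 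The contact condition $F\alpha_{s,\gve}\wedge (d(F\alpha_{s,\gve}))^n \neq 0$ forces the Jacobian $(\partial_{r_i}B_k)_{k,i}$ to have maximal rank~$n$, so $d_i\equiv 0$. Thus $R_{F\alpha_{s,\gve}} = \sum_k c_k(\ovr)\,\partial_{\eta_k}$ is tangent to each torus $\T_{\ovr} := \{\ovr=\mathrm{const}\}$, and on $\T_{\ovr}$ the flow is a constant-speed linear translation. At the degenerate loci where some $r_j$ vanish and the corresponding $\theta_j$ is ill-defined, the contact form and its Reeb field extend smoothly across, thanks to $g_j(r_j) = r_j^2/2$ near $r_j=0$, and the flow restricts to a linear translation on the corresponding lower-dimensional subtorus.

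\emph{Third,} I would conclude exactly as in~\eqref{e:h0}: the compact invariant set $\ct_r$ is foliated by the flow-invariant tori $\T_{\ovr}$ on each of which the flow is a linear translation with vanishing topological entropy, and the variational principle yields
\[
h_{\top}(F\alpha_{s,\gve}|_{\ct_r}) \,=\, \sup_{\ovr} h_{\top}\bigl(\phi_{F\alpha_{s,\gve}}|_{\T_{\ovr}}\bigr) \,=\, 0 .
\]

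The only nontrivial step is the first one, where the nested polar coordinates produced by the inductive construction must be fitted together so as to yield the explicit toric formula~\eqref{eq:plan1}. Once this description is in place the remainder is a verbatim higher-dimensional repeat of Step~2 of the proof of Theorem~\ref{t:3d}, and the crucial structural point that makes everything go through is that multiplication by a radial function $F(\ovr)$ preserves the toric shape of the contact form and hence the complete integrability of its Reeb flow.
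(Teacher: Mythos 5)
Your proof is correct and follows essentially the same approach as the paper: unravel the recursive construction to exhibit $\alpha_{s,\gve}$ on $\ct_r$ in the toric form $\sum_j \rho_j(\ovr)\,d\theta_j + \rho(\ovr)\,dq$, observe that multiplying by a radial $F(\ovr)$ preserves this form, deduce that the Reeb flow is a Kronecker flow on each invariant torus $\T_{\ovr}$, and invoke the variational principle. One small difference: where the paper handles a possibly degenerate Jacobian by an approximation argument, you deduce directly from the contact condition that the $(n+1)\times n$ matrix $(\partial_{r_i}B_k)$ has rank $n$ and hence that the Reeb field has no radial component, which is a somewhat cleaner way to make the key step rigorous.
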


\proof
The full torus $\ct_r$ is foliated by the tori
$$
\T_{\ovr} \,:=\, (S^1)^k \times \left\{ \ovr = (r_1, \dots, r_n) \right\} \times S^1
$$
with $r_1, \dots, r_n \geq 0$ constant, of dimension $k+1 \leq n+1$.
By \ref{f1} and~\ref{g1},
\begin{equation} \label{e:fjgj}
\frac{f_j'(r_j)}{h_j(r_j)} = -2r_j \quad \mbox{and} \quad \frac{g_j'(r_j)}{h_j(r_j)} = 1
\quad \mbox{for $r_j$ near $0$.}  
\end{equation}
Writing out \eqref{reebflownearK} recursively and using \eqref{e:fjgj} we see that
the Reeb flow of~$\alpha_{s,\gve}$ leaves the tori~$\T_{\ovr}$ invariant 
and on each $\T_{\ovr}$ is a Kronecker flow.

Applying now \eqref{alpha_sepsnearK} recursively we see that on $\ct_r$,
$$
\alpha_{s,\gve} (\otheta, \ovr, q) \,=\, 
\rho_1(\ovr) \2 d\theta_1 + \dots + \rho_n(\ovr) \2 d\theta_n
+\rho(\ovr) \2 dq
$$
with smooth functions $\rho_j, \rho$.
The Reeb flow of any $1$-form of this form leaves the tori $\T_\ovr$ invariant and there restricts to a Kronecker flow.
This is clear at~$\ovr$ if the Jacobian determinant of 
$\left( \frac{\partial \rho_i}{\partial r_j} (\ovr) \right)$ does not vanish, and in general follows by approximation.
Therefore, for each~$\ovr$ the Reeb flow of $F \alpha_{s,\gve}$ is a Kronecker flow on~$\T_\ovr$,
and hence $h_{\top} (F \alpha_{s,\gve} |_{\T_{\ovr}}) =0$.
The variational principle for topological entropy now implies that
$$
h_{\top} (F \alpha_{s,\gve}) \,=\, \sup_{\ovr} h_{\top} (F \alpha_{s,\gve} |_{\T_{\ovr}}) = 0 ,
$$
as claimed.
\proofend

By Lemma \ref{le:F123} the topological entropy of $\alpha_{s,\gve}$ and $F \alpha_{s,\gve}$
on~$\ct_r$ vanishes.
Together with~\ref{F1} we obtain
\begin{equation} \label{e:hd}
h_{\top}(F \alpha_{s,\gve}) \,=\, h_{\top}(F \alpha_{s,\gve} |_{\ct_r^c})
\,=\, \tfrac{1}{\delta} \, h_{\top}(\alpha_{s,\gve} |_{\ct_r^c}) 
\,=\, \tfrac{1}{\delta} \, h_{\top}(\alpha_{s,\gve}) . 
\end{equation}
Now consider the contact form
\begin{equation} \label{e:defFrho}
(F \circ \rho) \2 \tau \,=\, v^{-1/(n+1)} \, \rho^* (F \alpha_{s,\gve})
\end{equation}
on $(M,\xi)$.
By \eqref{e:defFrho} and \eqref{e:hd},
\begin{eqnarray*}
\widehat h_{\top} \bigl( (F \circ \rho) \2 \tau \bigr)^{n+1} 
&=& \vol_{(F \circ \rho) \2 \tau} (M) \; 
                         h_{\top} \bigl( (F \circ \rho) \2 \tau \bigr)^{n+1} \\
&=& 
v^{-1}  \vol_{F \alpha_{s,\gve}} \bigl( \OB(F,\psi) \bigr) \; 
                  v \, h_{\top} (F \alpha_{s,\gve})^{n+1} \\
&=& \vol_{F \alpha_{s,\gve}} \bigl( \OB(F,\psi) \bigr) \, 
                                 \delta^{-(n+1)} \, h_{\top} (\alpha_{s,\gve})^{n+1} .
\end{eqnarray*}
By \eqref{e:volase1}, $\underline{v} := \vol_{\alpha_{s,\gve}} (\ct_r^c) \in (0,1)$,
and by \ref{F1} and~\ref{F3},
$$
\vol_{F\alpha_{s,\gve}} \bigl( \OB(F,\psi) \bigr) \,=\, 1+\delta^{n+1} \underline{v} ,
$$
whence
$$
\widehat h_{\top} \bigl( (F \circ \rho) \2 \tau \bigr)^{n+1} \,=\, 
\left( \underline{v} + \delta^{-(n+1)} \right) 
               h_{\top} (\alpha_{s,\gve})^{n+1} \,=:\, (f(\delta))^{n+1} .
$$
Assume first that $h_{\top} (\alpha_{s,\gve}) >0$. 
Then the range of the function $f \colon (0,\infty) \to \R$ is $\left( \underline{v}^{1/(n+1)} \, h_{\top} (\alpha_{s,\gve}), \infty \right)$.
Since $\underline{v}<1$ and $h_{\top} (\alpha_{s,\gve}) \leq c$, we in particular find $\delta$
such that $\widehat h_{\top} \bigl( (F \circ \rho) \2 \tau \bigr) =c$.
If $h_{\top} (\alpha_{s,\gve}) =0$, 
Proposition~\ref{p:spec} follows from the following result.

\begin{lem}
We can assume that $h_{\top}(\alpha_{s,\gve} |_{\MT (F_\gve, \psi)}) >0$.
\end{lem}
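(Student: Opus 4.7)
The plan is to modify the monodromy $\psi$ --- without changing the supported contact structure --- so that the first return map of the Reeb flow on $\MT(F_\gve,\psi)$ contains a Smale horseshoe. Because the supported contact structure depends only on the symplectic isotopy class of the monodromy (Proposition~\ref{prop21}), we may replace $\psi$ by any symplectomorphism in its isotopy class relative to~$K$.

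Concretely, I would fix a Darboux ball $B$ in $F_\gve^\circ$ disjoint from $\supp\psi$ and from $\imath_\gve([0,r_\gve]\times K)$, and construct a Hamiltonian symplectomorphism $\phi$ of~$F$ compactly supported in~$B$ which is uniformly hyperbolic on a compact invariant set $\Lambda\subset B$ topologically conjugate to the shift on two symbols. Such $\phi$ exist in every even dimension: embed a planar linear horseshoe in a Lagrangian $2$-slice, extend by the identity in the normal directions, and smooth the construction. Replace $\psi$ by $\psi' := \phi \circ \psi$. Since $\phi$ is Hamiltonian isotopic to the identity inside~$B$, $\psi'$ is symplectically isotopic to~$\psi$ relative to~$K$. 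Then the full construction of Sections~\ref{s:cfaway}--\ref{s:cfnear} may be redone with $\psi'$ in place of $\psi$: the new compactly supported vector field $Y'$ still has its support in $F_\gve^\circ$, the estimates of Lemmas~\ref{contactaway} and~\ref{l:return} go through with a possibly smaller $s_1$ and a larger constant $E'$, and both \eqref{volumebound} and \eqref{entropyoverall} continue to hold, with constants that still depend only on fixed data attached to $(M,\xi)$.

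It remains to show $h_{\top}(\alpha_s|_{\MT(F_\gve,\psi')}) > 0$. By the computation in Section~\ref{s:cfaway}, the Reeb flow of $\alpha_s$ on $\MT(F_\gve,\psi')$ admits $F_\gve$ as a global hypersurface of section with first return time bounded in $[\pi, 4\pi]$ and $s$-independent first return map $\Upsilon' = \psi' \circ \phi_Z^{-1}$, where $\phi_Z$ is the time-one map of the fibrewise vector field $Z$ determined by $\iota_Z d\lambda = \lambda_{\psi'}$; $\phi_Z$ is in particular Hamiltonian isotopic to the identity. On $B$ we have $\psi = \id$, hence $\psi'|_B = \phi$. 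The main obstacle is that $\phi_Z|_B$ is not a priori small, so persistence of $\Lambda$ under $\Upsilon'$ does not come for free. The hard part is handled by a rescaling: choose the horseshoe $\phi$ supported in a ball of radius~$\varepsilon$ with Lyapunov exponents of order $\log(1/\varepsilon)$, so that the hyperbolicity of $\phi$ dominates the $C^1$-size of $\phi_Z|_B$ (which is controlled uniformly by the original $\psi$ and the support of the rescaled $\phi$). By structural stability of uniformly hyperbolic invariant sets under $C^1$-small symplectic perturbations, $\Lambda$ persists as a hyperbolic invariant set $\Lambda'$ of $\Upsilon'$ conjugate to the full shift on two symbols, so $h_{\top}(\Upsilon') \geq \log 2 > 0$. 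Finally, since the return time is bounded by $4\pi$, Abramov's formula for suspension flows yields
$$
h_{\top}(\phi_{\alpha_s}|_{\MT(F_\gve,\psi')}) \,\geq\, \frac{1}{4\pi}\, h_{\top}(\Upsilon') \,>\, 0,
$$
as required.
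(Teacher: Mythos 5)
Your approach is genuinely different from the paper's, and identifying this difference is worthwhile. The paper perturbs the \emph{contact form} $\alpha_{s,\gve}$ directly: it invokes Newhouse's theorem (and, for the 3-dimensional variant, Birkhoff normal form + KAM) to produce a $C^1$-small perturbation whose Reeb flow has a hyperbolic basic set, and then transfers this back to $(M,\xi)$ by Gray stability. You instead propose to perturb the \emph{monodromy} $\psi$ by post-composing with a compactly supported Hamiltonian horseshoe, and rerun the entire construction. Conceptually this is attractive, since it tries to build the chaos into the input data rather than perturbing the output.

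However, there is a genuine gap in the persistence step, and you have correctly located it but not resolved it. With your notation, the first return map restricted to the Darboux ball $B$ is $\Upsilon'|_B = \phi\circ\phi_Z^{-1}|_B$, where $\phi_Z$ is the time-one map of the Hamiltonian vector field $X_G$ with $dG = \phi^*\lambda - \lambda$. Your proposed rescaling does not make $\phi_Z$ $C^1$-small. If $\phi$ is the standard horseshoe rescaled to live in a ball of radius $\varepsilon$ with fixed hyperbolicity constants, then a direct computation in Darboux coordinates $\lambda = \sum p_i\,dq_i$ gives
$$
\|dG\|_{C^0} = \|\phi^*\lambda - \lambda\|_{C^0} = O(\varepsilon), \qquad
\|dG\|_{C^1} = \|\phi^*\lambda - \lambda\|_{C^1} = O(1),
$$
because the $C^1$-estimate involves $\lambda$ (of order $\varepsilon$ near the centre of the ball) paired with second derivatives of the rescaled $\phi$ (of order $1/\varepsilon$), plus first derivatives squared (of order $1$). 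So $\|G\|_{C^2}$, hence $\|DX_G\|$, hence $\|D\phi_Z - \id\|$, is bounded but does \emph{not} go to zero. Making the Lyapunov exponents of $\phi$ grow like $\log(1/\varepsilon)$ only worsens the derivative estimates. And the heuristic that ``strong hyperbolicity dominates a $C^1$-bounded perturbation'' is simply false: post-composition with a diffeomorphism $g$ whose derivative is bounded but far from the identity can destroy any horseshoe of $\phi$, regardless of its expansion rate, for instance if $Dg$ rotates the unstable cone into the stable cone. Structural stability of hyperbolic sets requires the perturbation to be $C^1$-small, which you do not have.

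There is a secondary issue: embedding a planar horseshoe in a symplectic $2$-slice and extending by the identity in the remaining $2n-2$ directions does not give a uniformly hyperbolic invariant set; the normal directions are neutral. Such a set is not robust under perturbation at all, so even if $\phi_Z$ were $C^1$-small, persistence would fail. This can be repaired (e.g.\ take a symplectic product of compactly supported planar horseshoes), but it must be addressed. For comparison, the paper's route sidesteps both problems: it directly perturbs the Reeb vector field by a $C^1$-small amount (so the return map changes by a $C^1$-small amount, which Newhouse's theorem and the hyperbolic basic set it produces are precisely built to exploit), and Gray stability automatically identifies the perturbed contact structure with $\xi$.
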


\proof
Assume that $h_{\top}(\alpha_{s,\gve} |_{ \MT (F_\gve, \psi)}) =0$.
By Theorem~6.2 in \cite{New77} there exists a contact form 
$\alpha_{s,\gve}'$ on~$\MT (F_\gve,\psi)$ that 
is $C^1$-close to~$\alpha_{s,\gve}$ and equal to~$\alpha_{s,\gve}$ 
near the boundary, and whose Reeb flow has a generic 1-elliptic periodic orbit.
Hence this flow contains a hyperbolic basic set, and therefore $h_{\top} (\alpha_{s,\gve}') >0$.
Further, the $C^1$-closeness of $\alpha_{s,\gve}$ and~$\alpha_{s,\gve}'$
implies that all the 1-forms 
$$
(1-s) \2 \alpha_{s,\gve} + s \2 \alpha_{s,\gve}', \quad s \in [0,1],
$$
are contact forms.
Gray's stability theorem therefore shows that there exists a diffeomorphism~$\zeta$ 
of~$\MT (F_\gve, \psi)$ that is the identity near the boundary
such that the kernel of $\zeta^* \alpha_{s,\gve}'$ is~$\xi$.
The contact form on $(M,\xi)$ that agrees with $\alpha_{s,\gve}$ 
on~$M \setminus \MT (F_\gve, \psi)$ and with $\zeta^* \alpha_{s,\gve}'$ 
on~$\MT (F_\gve, \psi)$ is the contact form we were looking for.

Newhouse's full Theorem~6.2 starts with the $C^1$-closing lemma, and holds in all dimensions.
We do not need to appeal to the closing lemma in our situation, 
and we only need the easier 3-dimensional result:
Assume that $\dim M = 3$. 
By~\eqref{tau_sh_snearK} we can choose a flow-invariant neighbourhood $U \subset \MT(F_\gve, \psi)$
of the boundary of~$\MT (F_\gve, \psi)$ such that all orbits in~$U$ are closed.
Let $\gamma$ be one of these orbits that is not on the boundary of~$\MT(F_\gve,\psi)$,
and choose a flow-invariant open neighbourhood $N(\gamma)$ whose closure is also disjoint 
from the boundary of~$\MT(F_\gve,\psi)$.
Since $N(\gamma)$ is foliated by closed orbits, $\gamma$ is elliptic.
Using the Birkhoff normal form theorem and the KAM theorem, 
one can $C^1$-perturb $\alpha_{s,\gve}$ to a contact form~$\alpha_{s,\gve}'$ 
that agrees with $\alpha_{s,\gve}$ outside~$N(\gamma)$
and whose Reeb flow has a transverse homoclinic connection near~$\gamma$,
see~\cite{Zeh73}.
This Reeb flow therefore contains a horse-shoe and thus has positive topological entropy.
As above we can isotope $\alpha_{s,\gve}'$ without changing it outside $N(\gamma)$ 
to a contact form~$\alpha_{s,\gve}''$ for~$(M^3,\xi)$. 
Proceeding from this contact form with positive topological entropy, 
our inductive construction in Section~\ref{s:cfnear}
shows that $\sigma_\gve$ on $M^{2n+1}$ also has positive topological entropy.
Hence this also holds true for~$\alpha_{s,\gve}$ on the boundary of $\MT(F_\gve, \psi)$
and therefore, by the monotonicity of topological entropy, also on~$\MT (F_\gve, \psi)$.
\proofend


\section{Collapsing the growth rate of symplectic invariants} \label{s:collapseSH}
Theorem~\ref{t:mainintro} on the collapse of topological entropy of Reeb flows implies the collapse 
of the growth rate of two symplectic invariants: 
symplectic homology and wrapped Floer homology.

\subsection{Liouville domains and fillings}
Recall that a Liouville domain is a compact exact symplectic manifold 
$\boldsymbol{W}=(W,\boldsymbol{\omega},\boldsymbol{\lambda})$ 
with boundary $\Sigma = \partial W$ and a primitive $\boldsymbol{\lambda}$ 
of~$\boldsymbol{\omega}$ such that $\boldsymbol{\alpha}_W = \boldsymbol{\lambda}|_{\Sigma}$ 
is a contact form on~$\Sigma$. 
The Liouville $1$-form $\boldsymbol{\lambda}$ also induces the contact structure 
$\xi_W = \ker \boldsymbol{\alpha}_W$ on~$\Sigma$.
The Liouville domain $\boldsymbol{W}$ is called an exact symplectic filling for the contact form $\boldsymbol{\alpha}_W$, and we say that the contact form~$\boldsymbol{\alpha}_W$ is exactly filled by~$\boldsymbol{W}$.

A standard construction (see for example \cite[Section 2.2.1]{AM19}) shows that 
if a contact form~$\alpha$ on a contact manifold~$(\Sigma,\xi)$ is exactly filled by a Liouville domain $\boldsymbol{W}_{\m2 \alpha}$, then we can construct for any other 
contact form~$\alpha'$ on~$(\Sigma,\xi)$ a Liouville domain $\boldsymbol{W}_{\m2 \alpha'}$ 
that fills~$\alpha'$. It therefore makes sense to say that a contact manifold is fillable 
by Liouville domains.

\subsection{Symplectic homology and collapse of its exponential growth}

Let $\boldsymbol{W}_{\m2 \alpha}$ be a Liouville domain filling the contact form~$\alpha$ on the contact manifold $(\Sigma,\xi)$.
The symplectic homology $\SH (\boldsymbol{W}_{\m2 \alpha})$ is a homology theory associated 
to~$\boldsymbol{W}_{\m2 \alpha}$. While there are various versions of symplectic homology, 
we here consider the one originally developed by Viterbo~\cite{Vit99}.

Geometrically, one can think of the chain complex associated to $\SH (\boldsymbol{W}_{\m2 \alpha})$ as the $\mathbb{Z}_2$-vector space generated by the periodic orbits of the Reeb flow of~$\alpha$ and 
by the critical points of a $C^2$-small non-positive Morse--Smale function 
$f \colon \boldsymbol{W}_{\m2 \alpha} \to \mathbb{R}$ such that $f^{-1}(0) = \Sigma$ 
is a regular energy level. The differential of $\SH(\boldsymbol{W}_{\m2 \alpha})$ counts Floer cylinders connecting generators. 
We refer the reader to~\cite{BO09, Oan04} for details. 

There is a filtration of $\SH (\boldsymbol{W}_{\m2 \alpha})$ by the action of its generators. 
For the Reeb orbits the action equals the period. 
For each real number~$a>0$ let $\SH^{<a}(\boldsymbol{W}_{\m2 \alpha})$ be the homology of the subcomplex generated by 
the Reeb orbits of action~$<a$ and the critical points of~$f$.
The inclusion of this subcomplex induces the homomorphism of $\Z_2$-vector spaces
$$
\Psi^a_{\alpha} \colon \SH^{<a} (\boldsymbol{W}_{\m2 \alpha}) \to \SH (\boldsymbol{W}_{\m2 \alpha}). 
$$
We define the exponential growth rate of $\SH (\boldsymbol{W}_{\m2 \alpha})$ by 
$$
\Gamma ( \SH (\boldsymbol{W}_{\m2 \alpha})) := 
  \limsup_{a \to +\infty} \frac{\log \left( \rank (\Psi^a_{\alpha}) \right)}{a} .
$$ 
The following remarkable result is due to Meiwes \cite{Meiwes}.

\begin{theorem*}
If $\boldsymbol{W}_{\m2 \alpha}$ is a Liouville domain filling a contact form $\alpha$ on~$(\Sigma,\xi)$, then 
$$
h_{\top}(\phi_\alpha) \,\geq\, \Gamma( \SH (\boldsymbol{W}_{\m2 \alpha})).
$$
\end{theorem*}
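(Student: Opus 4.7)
The plan is to reduce the claim to a volume growth estimate for a suitable submanifold of $\Sigma$ under the Reeb flow, and then invoke Yomdin's theorem. First I would interpret the chain-level generators of $\SH^{<a}$ as closed Reeb orbits of period at most~$a$. Concretely, $\SH(\boldsymbol{W}_\alpha)$ is computed as the direct limit, over Hamiltonians $H_s$ on the symplectic completion $\widehat{\boldsymbol{W}}_\alpha := W \cup_\Sigma (\Sigma \times [1,+\infty))$ that agree with a $C^2$-small Morse function on~$W$ and are linear of slope~$s$ on the end, of their Hamiltonian Floer homologies. The $1$-periodic orbits of the corresponding Hamiltonian vector field split into critical points of the Morse function (finite in number) and non-constant orbits lying on slices $\Sigma \times \{r\}$, which biject with closed Reeb orbits of $\alpha$ of period at most~$s$, each having Viterbo action equal to its period up to a bounded additive constant. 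Filtering by action and passing to the direct limit yields
\begin{equation*}
\rank \Psi^a_\alpha \,\leq\, C + N_\alpha(a),
\end{equation*}
where $C$ is an $a$-independent constant and $N_\alpha(a)$ is the number of closed Reeb orbits of $\alpha$ of period at most~$a$. Consequently $\Gamma(\SH(\boldsymbol{W}_\alpha)) \leq \limsup_{a \to \infty} \frac{1}{a} \log N_\alpha(a)$.

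Next I would translate this orbit count into Lagrangian volume growth. Choose a properly embedded exact Lagrangian $L \subset \widehat{\boldsymbol{W}}_\alpha$ transverse to $\Sigma$ with Legendrian boundary $\Lambda := L \cap \Sigma$ (for instance a cotangent-fiber-like Lagrangian disk). Transverse intersections $\phi_\alpha^T(\Lambda) \cap \Lambda$ correspond to Reeb chords of $\alpha$ from $\Lambda$ to itself of length at most~$T$. Using a neck-stretching argument in the spirit of SFT, or equivalently the algebraic relation between $\SH$ and the wrapped Floer homology of~$L$ provided by the open--closed map, one extracts a definite fraction of the closed orbits counted by $N_\alpha(T)$ as detectable by a transverse intersection of $\phi_\alpha^T(\Lambda)$ with~$\Lambda$, giving
\begin{equation*}
\#\bigl(\phi_\alpha^T(\Lambda) \pitchfork \Lambda\bigr) \,\geq\, c_1 \, N_\alpha(T) - c_2
\end{equation*}
for positive constants $c_1,c_2$ depending only on $L$ and $\boldsymbol{W}_\alpha$. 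Since each transverse intersection contributes a uniform positive amount to $\vol_{\dim\Lambda}(\phi_\alpha^T(\Lambda))$, one obtains $\limsup_{T\to\infty} \frac{1}{T}\log\vol_{\dim\Lambda}(\phi_\alpha^T(\Lambda)) \geq \Gamma(\SH(\boldsymbol{W}_\alpha))$. Finally, Yomdin's theorem applied to the $C^\infty$-smooth Reeb flow on the closed manifold~$\Sigma$ and the smooth submanifold~$\Lambda$ gives $h_{\top}(\phi_\alpha) \geq \limsup_{T\to\infty} \frac{1}{T}\log\vol_{\dim\Lambda}(\phi_\alpha^T(\Lambda))$, which chained with the previous estimates produces the desired conclusion.

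The main obstacle is the middle step, where one must produce a Lagrangian~$L$ whose chord count grows at rate at least $\Gamma(\SH)$. This requires genuinely quantitative control in the open--closed correspondence, with careful attention to action values and to the behavior of Floer trajectories under neck-stretching along~$\Sigma$; one also has to handle multiply covered orbits, whose contribution to $\SH$ is sign-sensitive while their contribution to the chord count is not. Meiwes' original argument partly sidesteps this by working with Rabinowitz--Floer homology, whose chain complex already has Reeb orbits on~$\Sigma$ as generators; a direct Gromov-type asymptotic compactness argument on finite-energy planes in the symplectization then supplies the needed volume-growth estimate without explicit recourse to the open--closed map.
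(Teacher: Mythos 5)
The paper cites this theorem from Meiwes's thesis and does not give a proof, so there is no in-paper argument to compare against; I can only assess the proposal on its own terms. The plan to end with Yomdin's theorem matches the spirit of the proofs in this circle of ideas, but your reduction contains a fatal step. In the first move you bound $\Gamma(\SH(\boldsymbol{W}_{\m2 \alpha}))$ from \emph{above} by the exponential growth rate of the closed Reeb orbit count $N_\alpha(a)$; to conclude, you would then need $h_{\top}(\phi_\alpha)$ to dominate this larger quantity. But there is no such inequality for general smooth flows: topological entropy does not bound the growth of the total number of periodic orbits (Katok's theorem controls only \emph{hyperbolic} periodic orbits, and Kaloshin-type phenomena show the full count can grow arbitrarily fast). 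The whole point of the theorem is that $\Gamma(\SH)$ is typically strictly smaller than the orbit-counting rate because $\Psi^a_\alpha$ has a large kernel, and this gap is exactly what makes the statement true. Replacing $\Gamma(\SH)$ by the orbit count turns the theorem into a strictly stronger and false statement.

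The second gap is the asserted estimate $\#\bigl(\phi^T_\alpha(\Lambda)\pitchfork\Lambda\bigr)\geq c_1 N_\alpha(T)-c_2$. Transverse intersections of $\phi^T_\alpha(\Lambda)$ with $\Lambda$ count Reeb \emph{chords}, not closed orbits; neither the open--closed map nor neck-stretching gives a quantitative conversion between the two with multiplicative control on counts. The arguments that actually work (Alves, Alves--Meiwes, Meiwes) do not pass through $N_\alpha(a)$ at all. Instead they bound $\rank\Psi^a_\alpha$ (or its Legendrian or Rabinowitz--Floer analogue) directly by a count of trajectories meeting a fixed reference hypersurface or Lagrangian: the continuation and acceleration maps are defined by holomorphic cylinders or strips, and a nonzero Floer count forces the Reeb flow to sweep a fixed submanifold across another, supplying the required volume growth. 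It is the Floer \emph{trajectories}, not the generators, that carry the information you need; once you have the volume-growth estimate, Yomdin closes the argument exactly as in your last step. You identify this as the main obstacle yourself, and you are right that it is the whole content of the theorem.
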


Together with Theorem \ref{t:mainintro} we obtain the following result.

\begin{corollary} \label{co:sympcollapse}
Let $(\Sigma,\xi)$ be a contact manifold fillable by Liouville domains. 
Then for every $\varepsilon >0 $ there exists a contact form~$\alpha$ with $\vol_\alpha (\Sigma) = 1$ 
such that
$$
\Gamma( \SH( \boldsymbol{W}_{\m2 \alpha})) \leq \varepsilon,
$$
for any Liouville filling $\boldsymbol{W}_{\m2 \alpha}$  of $\alpha$.
\end{corollary}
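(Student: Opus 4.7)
The plan is to observe that this corollary is an essentially immediate consequence of combining Theorem~\ref{t:mainintro} (our entropy collapse result) with the cited theorem of Meiwes bounding $\Gamma(\SH(\boldsymbol{W}_{\m2\alpha}))$ from above by $h_{\top}(\phi_\alpha)$.

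First, I would apply Theorem~\ref{t:mainintro} to the closed co-orientable contact manifold $(\Sigma,\xi)$. Given $\varepsilon>0$, this yields a contact form $\alpha$ on $(\Sigma,\xi)$ with $\vol_\alpha(\Sigma)=1$ and $h_{\top}(\phi_\alpha)<\varepsilon$. Note that no hypothesis on $(\Sigma,\xi)$ beyond closed co-orientability is needed to produce such an $\alpha$; the fillability hypothesis only enters to guarantee the existence of Liouville fillings.

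Next, I would invoke the hypothesis that $(\Sigma,\xi)$ is fillable by Liouville domains, together with the standard construction recalled just before the corollary (see \cite[Section 2.2.1]{AM19}): since some contact form on $(\Sigma,\xi)$ admits an exact symplectic filling, so does the contact form $\alpha$ constructed above. Let $\boldsymbol{W}_{\m2\alpha}$ be any such Liouville filling of $\alpha$. By Meiwes' theorem quoted in the excerpt, we then have
$$
\Gamma(\SH(\boldsymbol{W}_{\m2\alpha})) \,\leq\, h_{\top}(\phi_\alpha) \,<\, \varepsilon ,
$$
which is precisely the conclusion of the corollary.

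There is no real obstacle here: the corollary is a packaging of Theorem~\ref{t:mainintro} and Meiwes' inequality. The only point requiring a line of explanation is that the contact form produced by Theorem~\ref{t:mainintro} can indeed be assumed to admit a Liouville filling, which follows from the fillability assumption on the contact structure $\xi$ and the standard interpolation construction between Liouville fillings of different contact forms for the same contact structure.
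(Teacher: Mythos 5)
Your proposal is correct and follows exactly the paper's (implicit) argument: the corollary is an immediate combination of Theorem~\ref{t:mainintro} with Meiwes' inequality $h_{\top}(\phi_\alpha)\geq \Gamma(\SH(\boldsymbol{W}_{\m2\alpha}))$, together with the standard construction recalled before the corollary that produces a Liouville filling of $\alpha$ from any Liouville filling of $(\Sigma,\xi)$.
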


It follows that one cannot, in general, recover the volume of a contact form~$\alpha$ 
from the exponential growth rate of $\SH(\boldsymbol{W}_{\m2 \alpha})$ 
of a Liouville filling~$\boldsymbol{W}_{\m2 \alpha}$. 
To obtain a better geometric formulation of the corollary, we notice that 
if $\boldsymbol{W}_{\m2 \alpha}=(W^{2n}_\alpha, \boldsymbol{\omega}_{\alpha}, \boldsymbol{\lambda}_\alpha)$ is a Liouville filling of~$\alpha$, 
then the symplectic volume 
$
        \int_{W_{\m2 \alpha}} (\boldsymbol{\omega}_\alpha)^n  
$
equals the contact volume of~$\vol_\alpha (\Sigma)$. 
Corollary~\ref{co:sympcollapse} thus says that 
every Liouville fillable contact manifold admits fillings by Liouville domains of symplectic volume~$1$
and arbitrarily small growth of symplectic homology. 

\smallskip
In the opposite direction, one can ask if for a fixed contact manifold~$(\Sigma,\xi)$ 
there exists a constant~$\mathrm{K}_{\Sigma,\xi}$ such that 
$$ 
\Gamma( \SH (\boldsymbol{W})) \,\leq\, \mathrm{K}_{\Sigma,\xi}
$$ 
for every Liouville domain $\boldsymbol{W}$ that fills some normalized contact form on~$(\Sigma,\xi)$. 
A partial negative answer to this question is given by the following result.
Recall that the spherization of a closed manifold~$Q$ is 
the contact manifold~$(S^*Q,\xi_{\can})$ whose Reeb flows comprise the co-geodesic flows 
of Riemannian metrics on~$Q$.

\begin{lem} \label{le:EK}
Let $Q_k$ be the closed orientable surface of genus $k \geq 2$. 
Then for every real number~$c \geq 2 \pi \sqrt{2(k-1)}$  
there exists a contact form~$\alpha$ of volume~$1$ 
on $(S^*Q_k, \xi_{\can})$ and a Liouville domain~$\boldsymbol{W}_{\m2 \alpha}$ filling~$\alpha$ such that 
$$ 
\Gamma( \SH (\boldsymbol{W}_{\m2 \alpha})) = c.
$$
\end{lem}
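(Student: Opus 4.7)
The plan is to realize each value $c \geq 2\pi\sqrt{2(k-1)}$ via the geodesic Reeb flow of a negatively curved Riemannian metric on $Q_k$, exploiting the fact that the symplectic homology of its unit co-disk bundle is controlled by the loop-space homology of $Q_k$.

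First, I would invoke the result of \cite{EK19} recalled just before Proposition~\ref{p:spec}: for every $c \geq 2\pi\sqrt{2(k-1)}$ there exists a negatively curved Riemannian metric $g$ on $Q_k$ with $\widehat h_{\top}(g) = c$. Let $\alpha_g$ denote the contact form on $(S^*Q_k, \xi_{\can})$ obtained via the Legendre transform of $g$. Its Reeb flow is conjugate to the co-geodesic flow of $g$, so $h_{\top}(\alpha_g) = h_{\top}(g)$, and by the computations recalled in Section~\ref{ss:circle} one has $\vol_{\alpha_g}(S^*Q_k) = \area_g(Q_k)$; hence $\widehat h_{\top}(\alpha_g) = c$. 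As Liouville filling I would take the unit co-disk bundle $\boldsymbol{W}_{\m2 \alpha_g} := D^*_g Q_k \subset T^*Q_k$ equipped with the canonical Liouville form $p\,dq$.

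The key step is to identify $\Gamma(\SH(\boldsymbol{W}_{\m2 \alpha_g}))$ with $h_{\top}(g)$. By the filtered Viterbo isomorphism (Viterbo~\cite{Vit99}, Abbondandolo--Schwarz, Salamon--Weber), there are canonical isomorphisms $\SH^{<a}(\boldsymbol{W}_{\m2 \alpha_g}) \cong H_*(\Lambda^{<a} Q_k; \Z_2)$ that intertwine the symplectic action filtration with the $g$-length filtration on the free loop space. Consequently $\Gamma(\SH(\boldsymbol{W}_{\m2 \alpha_g}))$ equals the exponential growth rate in length of $H_*(\Lambda Q_k; \Z_2)$. On a closed negatively curved surface each non-trivial free homotopy class contains a unique closed geodesic, and the corresponding homology classes in $H_*(\Lambda Q_k; \Z_2)$ are linearly independent; since the number of free homotopy classes represented by a closed geodesic of length $\leq a$ grows like $e^{h_{\top}(g)\,a}$ by a Margulis-type estimate (see e.g.~\cite{PP}), this yields $\Gamma(\SH(\boldsymbol{W}_{\m2 \alpha_g})) \geq h_{\top}(g)$. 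The reverse inequality is Meiwes' theorem recalled above, so $\Gamma(\SH(\boldsymbol{W}_{\m2 \alpha_g})) = h_{\top}(g) = h_{\top}(\alpha_g)$.

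To conclude, set $\beta := \vol_{\alpha_g}(S^*Q_k)^{-1/2}$ and $\alpha := \beta \alpha_g$, so that $\vol_\alpha(S^*Q_k) = 1$. The corresponding Liouville filling $\boldsymbol{W}_{\m2 \alpha}$ is obtained by multiplying the Liouville form on $\boldsymbol{W}_{\m2 \alpha_g}$ by $\beta$, under which the action of every generator of symplectic homology is multiplied by $\beta$; hence $\Gamma(\SH(\boldsymbol{W}_{\m2 \alpha})) = \beta^{-1}\Gamma(\SH(\boldsymbol{W}_{\m2 \alpha_g})) = \vol_{\alpha_g}(S^*Q_k)^{1/2}\,h_{\top}(\alpha_g) = \widehat h_{\top}(\alpha_g) = c$. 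The main obstacle is the equality $\Gamma(\SH(\boldsymbol{W}_{\m2 \alpha_g})) = h_{\top}(g)$: Meiwes' inequality supplies one direction for free, while the matching lower bound requires combining the filtered Viterbo isomorphism with the exponential proliferation of free homotopy classes of closed geodesics on negatively curved surfaces. Both ingredients are standard in the literature, but their careful assembly is the only technical point of the argument.
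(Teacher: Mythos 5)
Your argument is correct and reaches the same conclusion, but it takes a noticeably different path in the key computational step. Both you and the paper start from the result of~\cite{EK19}, realize the target value $c$ as the topological entropy of a negatively curved Riemannian metric~$g$, take the unit co-disk bundle as the Liouville filling, and invoke Margulis' theorem on the exponential growth of the closed geodesic count. Where the arguments diverge is in passing from this count to $\Gamma(\SH)$. The paper argues directly at the Floer chain level: since every non-constant closed geodesic is non-contractible and contributes a Reeb orbit of Morse index zero, the Floer differential contains no rigid cylinders between them, so the rank of $\SH^{<a}$ agrees (up to a bounded error from the constant loops) with the number of Reeb orbits of action $<a$; Margulis then gives the growth rate $h_{\top}(g)$ directly, yielding equality in one stroke (the paper cites~\cite{MP12, A1} for these chain-level facts). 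You instead route through the filtered Viterbo isomorphism $\SH^{<a}(D^*Q_k) \cong H_*(\Lambda^{<a} Q_k;\Z_2)$ and read off the lower bound $\Gamma(\SH) \ge h_{\top}(g)$ from the homology of the length-filtered free loop space of a negatively curved surface (each non-trivial free homotopy class contributing a circle), then close the gap with Meiwes' inequality for the upper bound. Both routes are sound; yours leans on a heavier black box (the filtered Viterbo isomorphism) but avoids the explicit chain-level vanishing argument, and it uses Meiwes' theorem which the paper's direct computation makes unnecessary. Two small remarks: the Margulis-type counting estimate is more properly attributed to Margulis~\cite{Mar69} (or Bowen~\cite{Bow71}) than to~\cite{PP}, and your final normalization step (rescaling $\alpha_g$ by $\beta$) achieves the same thing the paper does by fixing $\area_g(Q_k)$ from the start; this is a cosmetic rather than substantive difference.
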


\proof 
It follows from \cite[Theorem A]{EK19} that for any 
real number~$c \geq 2 \pi \sqrt{2(k-1)}$ 
there exists a negatively curved Riemannian metric~$g$ with area~$1/(2\pi)$
such that the topological entropy of the geodesic flow~$\phi_g$ is equal to~$c$.
Let $\alpha$ be the contact form on $(S^*Q_k ,\xi_{\can})$ whose Reeb flow is the co-geodesic flow of~$g$. 
Then $\vol_\alpha (S^*Q_k) = 1$ and $h_{\top} (\phi_\alpha) = h_{\top}(\phi_g) = c$. 

Let $\boldsymbol{W}_{\m2 \alpha} = D^*(g) \subset (T^* Q_k, \lambda_{\can})$ be the unit co-disk bundle associated to the Riemannian metric~$g$, where $\lambda_{\can}$ is 
the Liouville form on the cotangent bundle~$T^*Q_k$. 
Then $\boldsymbol{W}_{\m2 \alpha}$ is a Liouville domain filling~$\alpha$. 

Since the Riemannian metric $g$ is negatively curved, 
a theorem of Margulis~\cite{Mar69} shows that 
$$
h_{\top}(\phi_\alpha) = \lim_{t \to +\infty} \frac{\log \left( P^t(\phi_\alpha) \right)}{t}, 
$$ 
where $P^t(\phi_\alpha)$ denotes the number of periodic orbits of the flow~$\phi_\alpha$ 
of length~$<t$,
see also~\cite{Bow71}.
Since all periodic Reeb orbits have Morse index zero and are non-contractible, 
there are no Floer cylinders starting or ending at these orbits.
Hence there is a bijection between the Reeb orbits of action~$<a$ and the generators 
of $\SH^a (\boldsymbol{W}_{\m2 \alpha})$, 
up to a finite error coming from the finitely many critical points of the function~$f$. 
It follows that 
$$ 
\Gamma( \SH (\boldsymbol{W}_{\alpha})) \,=\, 
       \lim_{t \to +\infty} \frac{\log \left( P^t(\phi_\alpha) \right)}{t}
$$
see \cite{MP12, A1} for details.
Combining these two equalities we get 
$$
\Gamma( \SH (\boldsymbol{W}_{\alpha})) \,=\, h_{\top}(\phi_\alpha) \,=\, c,
$$  
while as noted above $\vol_\alpha (S^*Q_k) = 1$.
\proofend

\subsection{Wrapped Floer homology and collapse of its exponential growth}
In a similar way we obtain a collapse result for the exponential growth of another symplectic invariant called wrapped Floer homology. The wrapped Floer homology~$\WH (\boldsymbol{W},L)$ 
is an invariant associated to a Liouville domain~$\boldsymbol{W}$ and an asymptotically conical exact Lagrangian submanifold~$L$ 
of~$\boldsymbol{W}$. 
One of several references giving the precise definition of~$\WH (\boldsymbol{W},L)$ 
is~\cite{AM19}.

With $\alpha$ the contact form on the boundary of 
a Liouville domain~$\boldsymbol{W}_{\m2 \alpha}$, 
one can think of the chain complex associated to~$\WH (\boldsymbol{W}_{\m2 \alpha},L)$ 
as the $\mathbb{Z}_2$-vector space generated 
by the Reeb chords of~$\alpha$ that start and end on~$\partial L$, 
and by the intersection points of~$L$ and a $C^2$-small perturbation of~$L$.
The differential of $\WH (\boldsymbol{W}_{\m2 \alpha},L)$ counts Floer strips connecting generators.

As in the case of symplectic homology there is a filtration of $\WH (\boldsymbol{W}_{\m2 \alpha},L)$ 
by the action of the generators, and again the action of Reeb chords is equal to their time. 
For each real number $a>0$ let $\WH^{<a}(\boldsymbol{W}_{\m2 \alpha},L)$ be the homology of the subcomplex generated by Reeb chords and intersection points of action~$<a$.
Again there are natural homomorphisms 
$$
\Psi^a_{\alpha} \colon \WH^{<a}(\boldsymbol{W}_{\m2 \alpha},L) \to \WH(\boldsymbol{W}_{\m2 \alpha},L) , 
$$
and we define the exponential growth rate of $\WH(\boldsymbol{W}_{\m2 \alpha},L)$ by 
$$
\Gamma (\WH (\boldsymbol{W}_{\m2 \alpha},L)) := 
        \limsup_{a \to +\infty} \frac{\log \left( \rank (\Psi^a_{\alpha}) \right)}{a}.
$$

The following result was obtained in~\cite{AM19}.

\begin{theorem*} 
Let $\boldsymbol{W}_{\m2 \alpha}$ be a Liouville domain filling a contact form~$\alpha$ 
on~$(\Sigma,\xi)$, and let $L$ be an asymptotically conical exact Lagrangian submanifold 
of~$\boldsymbol{W}_{\m2 \alpha}$ whose intersection with~$\partial \boldsymbol{W}_{\m2 \alpha}$ 
is a sphere.\footnote{The assumption that $L \cap \boldsymbol{W}_{\m2 \alpha}$ is a sphere has been removed in~\cite{Fender-Lee-Sohn} using the techniques introduced in~\cite{Cineli-Ginzburg-Gurel}. 
This can also be achieved with the methods developed by Meiwes in~\cite{Meiwes}.
}
Then
$$
h_{\top}(\phi_\alpha) \,\geq\, \Gamma( \WH(\boldsymbol{W}_{\m2 \alpha}), L).
$$
\end{theorem*}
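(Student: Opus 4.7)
The plan is to interpolate between the Floer invariant $\Gamma(\WH(\boldsymbol{W}_{\m2 \alpha}, L))$ and the topological entropy $h_{\top}(\phi_\alpha)$ via the exponential chord growth rate
$$
\Gamma_{\mathrm{ch}}(\alpha, L) \,:=\, \limsup_{T \to \infty} \frac{1}{T} \log N_T(\alpha, L),
$$
where $N_T(\alpha, L)$ denotes the number of Reeb chords of $\alpha$ of length at most $T$ with both endpoints on $\partial L$. I would prove the two inequalities
$$
\Gamma \bigl( \WH(\boldsymbol{W}_{\m2 \alpha}, L) \bigr) \,\leq\, \Gamma_{\mathrm{ch}}(\alpha, L) \,\leq\, h_{\top}(\phi_\alpha)
$$
separately, the first by a formal Floer-theoretic counting argument and the second via Yomdin's $C^\infty$ volume growth theorem combined with an integral geometric bound.

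For the Floer-side estimate, I would first reduce to the case where every Reeb chord between $\partial L$ and itself is non-degenerate by a generic $C^\infty$-small perturbation of $\alpha$. For such a generic $\alpha$, a basis of the underlying chain complex of $\WH^{<a}(\boldsymbol{W}_{\m2 \alpha}, L)$ consists of the Reeb chords of action strictly less than $a$ together with a finite, $a$-independent number of intersection points of $L$ with a small exact push-off. Since the rank of a linear map is bounded by the dimension of its source, one has $\rank(\Psi_\alpha^a) \leq \dim_{\mathbb{Z}_2} \WH^{<a}(\boldsymbol{W}_{\m2 \alpha}, L) \leq N_a(\alpha, L) + C$ for a constant $C$ independent of $a$, and passing to $\limsup \tfrac{1}{a} \log$ yields the first inequality.

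For the dynamical-side estimate, consider the smooth evaluation map
$$
F_T \colon (0, T] \times \partial L \,\longrightarrow\, \Sigma, \qquad (t, x) \,\longmapsto\, \phi_\alpha^t(x) ,
$$
whose image is an immersed $n$-dimensional submanifold of the $(2n-1)$-dimensional manifold $\Sigma$. By construction, the transverse intersections of this image with the $(n-1)$-dimensional submanifold $\partial L$ are in bijection with the Reeb chords of length at most $T$ between $\partial L$ and itself. An integral geometric Crofton-type inequality bounds the number of such intersections above by a fixed constant times the intrinsic $n$-dimensional volume of the image of $F_T$, which in turn is at most $T \cdot \sup_{t \in [0, T]} \Vol_{n-1}(\phi_\alpha^t(\partial L))$ by a coarea estimate. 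Yomdin's theorem~\cite{Yom87} applied to the $C^\infty$ flow $\phi_\alpha^t$ gives
$$
\limsup_{T \to \infty} \frac{1}{T} \log \Vol_{n-1}(\phi_\alpha^T(\partial L)) \,\leq\, h_{\top}(\phi_\alpha) ,
$$
and the two bounds combine to give the desired estimate $\Gamma_{\mathrm{ch}}(\alpha, L) \leq h_{\top}(\phi_\alpha)$.

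The main technical obstacle is concealed inside the Floer step: the hypothesis that $\partial L$ is a sphere is used to control pseudo-holomorphic disk bubbling in the compactifications of the moduli spaces defining $\WH$. For simply-connected $\partial L$, topological arguments rule out such bubbling in the relevant index range, so that the action filtration and the identification of generators with Reeb chords are well-defined; removing the sphere hypothesis requires the twisted-coefficient techniques of Meiwes alluded to in the footnote. A secondary delicate point is the transfer of the inequality from the generic perturbation back to the original $\alpha$, which relies on upper semicontinuity of $h_{\top}$ under $C^\infty$ perturbations and on a standard continuation-map argument ensuring that $\Gamma(\WH)$ is unaffected by the perturbation.
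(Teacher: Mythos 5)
The paper does not prove this theorem; it is cited from Alves--Meiwes~\cite{AM19}. So let me evaluate your argument on its own terms and against what the machinery of that line of work (following Frauenfelder--Schlenk and Macarini--Schlenk) actually does.

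Your Floer-side step is correct but carries almost no Floer-theoretic content: after a non-degenerate perturbation, the filtered chain complex is generated by chords of action below~$a$ plus a fixed number of "constant" generators, so $\rank \Psi_\alpha^a$ is trivially at most the number of generators, and $\Gamma(\WH) \leq \Gamma_{\mathrm{ch}}$ follows. The entire weight is then placed on $\Gamma_{\mathrm{ch}}(\alpha,L) \leq h_{\top}(\phi_\alpha)$, and that is where the argument breaks. The "Crofton-type inequality" you invoke is false as stated: for a \emph{fixed} pair of transversally intersecting submanifolds of complementary dimension in a $(2n-1)$-manifold, there is no upper bound on the number of intersection points in terms of the volume of one of them --- a thin immersed $n$-manifold can cross a fixed $(n-1)$-manifold arbitrarily many times while its $n$-volume stays bounded. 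The Crofton/coarea formula identifies the \emph{average} intersection number over a suitable measured family of $(n-1)$-submanifolds with the volume of the $n$-submanifold; it gives neither an upper nor a lower bound on the intersection number with a single fixed member of the family. Consequently the chain $\Gamma(\WH) \leq \Gamma_{\mathrm{ch}} \leq h_{\top}$ cannot be completed, and indeed the middle inequality is in serious doubt in general: chord and periodic-orbit counts for smooth flows are not controlled by entropy, which is precisely why a homological invariant is used here rather than a raw count.

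The correct logic runs in the opposite direction and does use Floer theory in an essential way, in the spirit of Frauenfelder--Schlenk~\cite{FS}, Macarini--Schlenk~\cite{MacSch11}, and the adaptation in~\cite{AM19}. One parametrizes a finite-dimensional family of conical exact Lagrangian (or Legendrian) perturbations $\{L_s\}_{s\in B}$ of~$L$ such that the projection from a neighbourhood of $\partial L$ to the parameter space is $1$-Lipschitz. For each~$s$, Lagrangian Floer theory (and continuation/invariance arguments relating the pair $(L,L_s)$ to the pair $(L,L)$) gives a \emph{lower} bound on the number of Reeb chords from $\partial L$ to $\partial L_s$ of length at most~$T$ in terms of the rank of the filtered map into wrapped Floer homology. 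The coarea formula for the $1$-Lipschitz projection then turns the uniform Floer lower bound, integrated over~$s\in B$, into a \emph{lower} bound on $\Vol_n(\bigcup_{0<t\le T}\phi_\alpha^t(\partial L))$, and Yomdin's $C^\infty$ volume growth theorem converts that into a lower bound on $h_{\top}(\phi_\alpha)$. In short: Floer theory supplies a robust lower bound on intersection numbers that survives averaging over a family, and that is exactly the ingredient your proposal replaces with a false pointwise upper bound. The remarks on upper semicontinuity of $h_{\top}$ and on the continuation argument for the perturbation are fine as auxiliary steps, and the Yomdin and coarea estimates you write down are correct, but the missing integral-geometric family and the misdirected inequality constitute a genuine gap rather than a detail to be checked.
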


Together with Theorem \ref{t:mainintro} we obtain

\begin{corollary} \label{co:wrappedcollapse}
Let $(\Sigma,\xi)$ be a contact manifold fillable by Liouville domains. 
Then for every $\varepsilon >0$ there exists a contact form~$\alpha$ with 
$\vol_\alpha (\Sigma) = 1$ such that
$$
\Gamma( \WH (\boldsymbol{W}_{\m2 \alpha},L)) \,\leq\, \varepsilon ,
$$
for any Liouville filling $\boldsymbol{W}_{\alpha}$ of $\alpha$ 
and any asymptotically conical Lagrangian submanifold~$L$ of~$\boldsymbol{W}_{\alpha}$ 
whose intersection with~$\partial \boldsymbol{W}_{\alpha}$ is a sphere.
\end{corollary}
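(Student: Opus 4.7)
The plan is to follow the same two-step pattern used for Corollary~\ref{co:sympcollapse}, combining the entropy collapse theorem with the Floer-theoretic upper bound on exponential growth. The argument is essentially immediate once both inputs are in place, so the bulk of the work is simply arranging the quantifiers correctly.

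First, I would apply Theorem~\ref{t:mainintro} to the contact manifold $(\Sigma,\xi)$: given $\varepsilon > 0$, it produces a contact form $\alpha$ on $(\Sigma,\xi)$ with $\vol_\alpha(\Sigma) = 1$ and $h_{\top}(\phi_\alpha) \leq \varepsilon$. Note that this form $\alpha$ is chosen independently of any particular Liouville filling, which is precisely what the statement of the corollary requires (the bound on $\Gamma(\WH)$ should hold for \emph{any} filling of the same $\alpha$).

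Next, I would fix an arbitrary Liouville filling $\boldsymbol{W}_{\m2\alpha}$ of the contact form $\alpha$ constructed above (such a filling exists by the standard construction recalled at the start of Section~\ref{s:collapseSH}: fillability of $(\Sigma,\xi)$ for \emph{some} contact form automatically yields a Liouville filling for every other contact form on $(\Sigma,\xi)$). For any asymptotically conical exact Lagrangian submanifold $L \subset \boldsymbol{W}_{\m2\alpha}$ whose intersection with $\partial \boldsymbol{W}_{\m2\alpha}$ is a sphere, the theorem of Alves--Meiwes quoted just before the corollary gives
\[
\Gamma(\WH(\boldsymbol{W}_{\m2\alpha}, L)) \;\leq\; h_{\top}(\phi_\alpha) \;\leq\; \varepsilon,
\]
which is the desired bound.

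There is no real obstacle here: the deep work has been done in Theorem~\ref{t:mainintro} (whose proof occupies Sections~\ref{s:collapse3} and~\ref{s:collapsegeq3}) and in the Alves--Meiwes inequality relating $\Gamma(\WH)$ to $h_{\top}$. The only point worth double-checking is that the entropy collapse form $\alpha$ produced by Theorem~\ref{t:mainintro} can be paired with \emph{every} Liouville filling and every admissible Lagrangian $L$, which is automatic because the right-hand side of the Alves--Meiwes inequality depends only on the Reeb dynamics of $\alpha$ and not on the choice of filling or Lagrangian.
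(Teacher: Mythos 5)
Your proof is correct and follows exactly the route the paper intends (it presents the corollary as an immediate consequence of Theorem~\ref{t:mainintro} combined with the Alves--Meiwes inequality, without spelling out the two lines you write). Your one extra remark --- that the collapsing form $\alpha$ is chosen before, and independently of, any filling or Lagrangian, so the bound holds uniformly --- is the right thing to double-check and you handle it correctly.
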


Classical examples of pairs $(\boldsymbol{W}_{\m2 \alpha}, L)$
are the unit co-disk bundles~$D^*(g)$ over a closed Riemannian manifold~$Q$, 
with $L$ a co-disk $D_q^*(g)$ over a point~$q \in Q$.
Here is the analogue of Lemma~\ref{le:EK}.

\begin{lem}
Let $Q_k$ be the closed orientable surface of genus $k \geq 2$. 
Then for every real number~$c \geq 2 \pi \sqrt{2(k-1)}$ 
there exists a Riemannian metric~$g$ on~$Q_k$ 
of area~$1/(2\pi)$ such that 
$$ 
\Gamma( \WH (D^*(g), D_q^*(g)) \,=\, c.
$$
\end{lem}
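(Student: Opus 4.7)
The strategy parallels that of Lemma~\ref{le:EK}, adapting it to the setting of wrapped Floer homology, where generators are Reeb chords from $\partial D_q^*(g)$ to itself rather than closed Reeb orbits. First I would apply \cite[Theorem A]{EK19} to obtain, for the prescribed $c \geq 2\pi\sqrt{2(k-1)}$, a negatively curved Riemannian metric $g$ on $Q_k$ of area $\tfrac{1}{2\pi}$ whose geodesic flow satisfies $h_{\top}(\phi_g) = c$. Let $\alpha$ be the associated contact form on $(S^*Q_k, \xi_{\can})$, so that $\vol_{\alpha}(S^*Q_k) = 1$ and $\phi_\alpha$ is conjugate to the co-geodesic flow of $g$. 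Take the standard Liouville filling $\boldsymbol{W}_{\m2\alpha} = D^*(g) \subset T^*Q_k$ and the fiber disk $L = D_q^*(g)$ for any $q \in Q_k$; then $L \cap \partial \boldsymbol{W}_{\m2\alpha}$ is a sphere, so the theorem of \cite{AM19} applies and immediately yields the upper bound
$$
\Gamma\bigl(\WH(\boldsymbol{W}_{\m2\alpha}, L)\bigr) \,\leq\, h_{\top}(\phi_\alpha) \,=\, c.
$$

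For the matching lower bound, I would identify Reeb chords of $\alpha$ from $\partial L$ to $\partial L$ with nontrivial geodesic loops at $q$ in $(Q_k,g)$, the action of each chord being the length of the corresponding loop. Since $g$ is negatively curved, every free homotopy class of based loops at $q$ contains a unique geodesic loop and every such loop is nondegenerate; moreover, by Manning's theorem for non-positively curved metrics, the exponential growth rate of the number of elements of $\pi_1(Q_k, q)$ representable by a $g$-loop of length $\leq t$ equals $h_{\vol}(g) = h_{\top}(\phi_g) = c$. (Equivalently, one can invoke Margulis's asymptotic $P^t_q(g) \sim C\,e^{ct}/(ct)$ for the number of geodesic loops at $q$ of length at most $t$, as in the proof of Lemma~\ref{le:EK}.)

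Next I would argue that no cancellations occur in the Floer differential at the chain level, so that the growth rate of $\rank \Psi^a_{\alpha}$ matches the growth rate of generators. A Floer strip between two Reeb chords $x_-$ and $x_+$ projects to a homotopy of paths in $Q_k$ with fixed endpoint $q$, hence $x_-$ and $x_+$ represent the same element of $\pi_1(Q_k,q)$. But distinct nontrivial geodesic loops at $q$ in a negatively curved surface represent distinct elements of $\pi_1(Q_k,q)$, so the Floer chain complex splits as a direct sum over $\pi_1(Q_k,q)$ of one-dimensional (or, including the Morse part, bounded-dimensional) subcomplexes with zero differential. Therefore
$$
\Gamma\bigl(\WH(\boldsymbol{W}_{\m2\alpha}, L)\bigr) \,\geq\, \limsup_{t \to \infty} \tfrac{1}{t} \log P^t_q(g) \,=\, c.
$$

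Combined with the opposite inequality above, this gives $\Gamma\bigl(\WH(\boldsymbol{W}_{\m2\alpha}, L)\bigr) = c$, as required. The main obstacle, and the only nontrivial input beyond what is already used in Lemma~\ref{le:EK}, is the verification that the Floer differential on chords strictly respects the splitting by $\pi_1(Q_k,q)$ so that all geodesic loops contribute surviving homology classes; this reduces to the homotopy interpretation of Floer strips in $T^*Q_k$ with boundary on a cotangent fiber, which is standard and follows, for instance, from the Abbondandolo--Schwarz isomorphism between $\WH(D^*(g), D_q^*(g))$ and $H_*(\Omega_qQ_k;\Z_2)$ at the filtered level.
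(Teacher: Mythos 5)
Your proof is correct, and its first half -- choosing the Erchenko--Katok metric $g$ with $h_{\top}(g)=c$, invoking Manning's equality in negative curvature, and equating the volume entropy with the exponential growth of geodesic loops at $q$ -- is exactly what the paper does. Where you diverge is in the justification that the growth of $\mathrm{rank}\,\Psi^a_\alpha$ matches the growth of generators. The paper's one-line argument is that all geodesics have Morse index zero, so the differential (which drops degree by one) must vanish for degree reasons, ``as for the symplectic homology''; you instead use the $\pi_1$-splitting of the wrapped complex: Floer strips with boundary on the cotangent fiber project to based homotopies at $q$, and in negative curvature each based homotopy class contains a unique geodesic loop, so the complex decomposes into one-dimensional (or, in the trivial class, finite-dimensional) summands with trivial differential. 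Both mechanisms prove the same vanishing; in fact, the $\pi_1$-splitting is arguably the cleaner adaptation of the SH argument, since there the paper explicitly invokes non-contractibility to rule out differentials to the Morse part, whereas the Morse-index observation alone doesn't by itself rule out strips between a geodesic chord of index zero and a critical point of the perturbing Morse function of the same degree. Your additional use of the Alves--Meiwes upper bound $\Gamma(\WH)\leq h_{\top}$ is logically redundant -- once the differential is known to vanish on the $\pi_1$-graded pieces, the equality is direct -- but it does no harm.
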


\proof
As in the proof of Lemma~\ref{le:EK} we appeal to~\cite{EK19}
and take a negatively curved Riemannian metric~$g$ on~$Q_k$ of area~$1/(2\pi)$
whose geodesic flow has topological entropy 
$$
h_{\top}(g) \,=\, c .
$$
Since $g$ is negatively curved, Manning's inequality in Theorem~\ref{t:manning}
is an equality, 
$$
h_{\top} (g) \,=\, h_{\vol}(g).
$$
On the other hand, it is clear that
$$
h_{\vol} (g) \,=\, \lim_{t \to \infty} \frac{\log C^t(\phi_g,q)}{t} 
          \quad \forall \, q \in Q_k,
$$
where $C^t(\phi_g,q)$ denotes the number of geodesics from $q$~to~$q$ of length~$<t$.
Furthermore, since the Morse indices of all geodesics vanish, we have as 
for the symplectic homology that
$$
\Gamma( \WH (D^*(g), D_q^*(g)) \,=\, 
       \lim_{t \to \infty} \frac{\log \left( C^t(\phi_g,q) \right)}{t} . 
$$
These four identities prove the lemma.
\proofend


\appendix

\section{Volume entropy and Manning's inequality for Finsler metrics}
\label{a:Manning}

In this appendix we first give an elementary proof of Manning's inequality for regular Finsler geodesic flows.
We then use Yomdin's theorem to prove a generalization of Manning's inequality to arbitrary $C^\infty$-smooth
flows on compact fiber bundles whose fibers are of dimension one less than the base.

\subsection{The volume entropy of a Finsler metric} 
Let $F$ be a Finsler metric on the closed $n$-dimensional connected manifold~$Q$: 
$F$ is a continuous real function on the tangent bundle~$TQ$ 
which is positive away from the zero section, fiberwise positively one-homogeneous and fiberwise convex. The Finsler metric~$F$ induces the length functional
\[
\ell_F(\gamma) \,:=\, \int_a^b F(\dot{\gamma}(t))\, dt
\]
on the space of Lipschitz curves $\gamma \colon [a,b] \rightarrow Q$ 
and the function $d_F \colon Q \times Q \rightarrow [0,+\infty)$,
\[
d_F(x,y) \,:=\, 
\inf \left\{ \ell_F(\gamma) \mid \gamma \colon [0,1] \rightarrow Q 
        \mbox{ Lipschitz curve with } \gamma(0)=x, \; \gamma(1)= y \right\}.
\]
By the Arzel\`a--Ascoli theorem, the above infimum is actually a minimum: 
There is a Lipschitz curve~$\gamma$ from $x$ to~$y$ such that 
$\ell_F(\gamma) = d_F(x,y)$. 
The function $d_F$ is positive away from the diagonal and satisfies the 
triangle inequality
$$
d_F(x,z) \leq d_F(x,y) + d_F(y,z) \qquad \forall \, x,y,z \in Q.
$$
In general, $d_F$ is not symmetric because we are not assuming $F$ to be reversible, 
i.e.\ to satisfy $F(-v)=F(v)$ for every $v \in TQ$. 
By the compactness of $Q$, the irreversibility ratio of $F$, 
i.e.\ the number
$$
\theta \,:=\, \max_{\substack{v\in TQ \\ F(v)=1}} F(-v) \,\in\, [1,+\infty),
$$
is well-defined and we have
\begin{equation} \label{irre}
d_F(y,x) \leq \theta\,  d_F(x,y) \qquad \forall \, x,y \in Q.
\end{equation}
The Finsler metric $F$ lifts to a Finsler metric $\widetilde F$ 
on the universal cover~$\widetilde{Q}$ of~$Q$. 
We denote by $\ell_{\widetilde F}$ and $d_{\widetilde F}$ the induced length functional and asymmetric distance on~$\widetilde{Q}$. 
Note that the lifted Finsler metric has the same irreversibility ratio~$\theta$ 
and \eqref{irre} holds also for the lifted asymmetric distance $d_{\widetilde{F}}$. The closed forward $R$-ball centered at $x \in \widetilde{Q}$ is the set
$$
B_x(\widetilde{F},R) \,:=\, 
   \{ y \in \widetilde{Q} \mid d_{\widetilde F}(x,y) \leq R\},
$$
which is easily seen to be compact also when $\widetilde Q$ is not compact. 
The compactness of the forward balls and the Arzel\`a--Ascoli theorem imply the existence of a Lipschitz curve~$\gamma$ of $\widetilde F$-length 
$\ell_{\widetilde F}(\gamma) = d_{\widetilde{F}}(x,y)$ joining two arbitrary 
points $x$ and~$y$ on~$\widetilde Q$. 
This in turn implies the following characterization of forward balls:
\begin{equation} \label{caraball}
B_x(\widetilde{F},R) \,=\, 
\left\{ \gamma(R) \mid \gamma \colon [0,R] \rightarrow \widetilde{Q} 
  \mbox{ Lipschitz curve with } \gamma(0)=x \mbox{ and } 
	\widetilde F \circ \dot\gamma \leq 1 \mbox{ a.e.} \right\}.
\end{equation}

We fix an arbitrary Riemannian metric on $Q$, lift it to~$\widetilde{Q}$, 
and denote by $\mathrm{Vol}$ the induced volume 
(i.e.\ $n$-dimensional Hausdorff measure) of Borel subsets of~$\widetilde{Q}$. 
The volume entropy of~$F$ is the number
\begin{equation} \label{thelimit}
h_{\vol}(F) \,:=\, \lim_{R\rightarrow \infty} 
   \frac{1}{R} \log \Vol \bigl( B_x(\widetilde{F},R) \bigr) \,\in\, [0,+\infty) .
\end{equation}

\begin{prop} \label{p:manning}
The limit \eqref{thelimit} exists, is finite, and is independent 
of the point $x \in \widetilde Q$ and of the choice of the Riemannian metric on~$Q$. 
\end{prop}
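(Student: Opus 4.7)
The plan is to adapt Manning's classical argument from the Riemannian to the (possibly irreversible) Finsler setting, with the only subtlety being the careful use of the forward asymmetric distance.

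\textbf{Independence of the auxiliary Riemannian metric on $Q$} follows immediately because any two such metrics are bi-Lipschitz equivalent on the compact base~$Q$ and hence on the cover~$\widetilde{Q}$; the corresponding Hausdorff measures of a fixed Borel set then differ by a multiplicative factor bounded between two positive constants, and this factor vanishes under the normalization $\frac{1}{R}\log$. \textbf{Independence of the basepoint}, once existence is established for some basepoint~$x$, follows from the forward triangle inequality
$$B_y(\widetilde{F}, R) \,\subset\, B_x(\widetilde{F}, R + d_{\widetilde{F}}(x,y)), \qquad B_x(\widetilde{F}, R) \,\subset\, B_y(\widetilde{F}, R + d_{\widetilde{F}}(y,x)),$$
since the finite additive constants $d_{\widetilde{F}}(x,y)$ and $d_{\widetilde{F}}(y,x)$ are killed by the factor $\frac{1}{R}$ in the $\limsup$ and the $\liminf$.

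The core of the proof is \textbf{existence and finiteness} for a fixed basepoint~$x_0 \in \widetilde{Q}$. I would choose a compact fundamental domain $D \ni x_0$ for the $\pi_1(Q)$-action on~$\widetilde{Q}$, and set
$$D_0 \,:=\, \max\Bigl( \sup_{x,y \in D} d_{\widetilde{F}}(x,y),\ \sup_{x,y \in D} d_{\widetilde{F}}(y,x) \Bigr) \,<\, +\infty.$$
Writing $a(R) := \Vol\bigl( B_{x_0}(\widetilde{F}, R) \bigr)$, the key step is the almost-submultiplicative estimate
\begin{equation} \label{Finsler_submult}
a(R_1 + R_2) \,\leq\, \frac{1}{\Vol(D)}\, a(R_1 + 2D_0)\, a(R_2 + D_0), \qquad R_1, R_2 \geq 0.
\end{equation}
For a point $w \in B_{x_0}(\widetilde{F}, R_1 + R_2)$, I would take a minimizing Lipschitz curve $\gamma$ from $x_0$ to $w$ (whose existence follows from Arzel\`a--Ascoli and~\eqref{caraball}), let $p := \gamma(R_1)$, and pick $g \in \pi_1(Q)$ with $p \in g \cdot D$. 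Since $\pi_1(Q)$ acts on~$\widetilde{Q}$ by $\widetilde{F}$-isometries (in the sense that $d_{\widetilde{F}}(gx, gy) = d_{\widetilde{F}}(x,y)$), both $d_{\widetilde{F}}(p, gx_0)$ and $d_{\widetilde{F}}(gx_0, p)$ are bounded by $D_0$; the forward triangle inequality then yields $d_{\widetilde{F}}(x_0, gx_0) \leq R_1 + D_0$ and $d_{\widetilde{F}}(gx_0, w) \leq R_2 + D_0$, so $w$ lies in $g \cdot B_{x_0}(\widetilde{F}, R_2 + D_0)$. The number of such $g$ is bounded by $a(R_1 + 2D_0)/\Vol(D)$, since the disjoint translates $gD$ all fit inside $B_{x_0}(\widetilde{F}, R_1 + 2D_0)$; this produces~\eqref{Finsler_submult}.

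Setting $b(T) := a(T + 3D_0)/\Vol(D)$, the inequality~\eqref{Finsler_submult} rewrites as the genuine submultiplicative bound $b(T_1 + T_2) \leq b(T_1)\, b(T_2)$, so Fekete's lemma applied to $\log b$ yields that $\frac{1}{T} \log b(T)$ has a limit in $[-\infty, +\infty)$, and a shift of the argument by $3D_0$ shows that this limit coincides with $\lim_{R \to \infty} \frac{1}{R} \log a(R)$. Finiteness of the limit follows from the continuity of~$F$ and compactness of the unit sphere bundle of any auxiliary Riemannian metric~$g$ on~$Q$, which give $F \geq c\, \sqrt{g(\cdot,\cdot)}$ for some $c > 0$; hence $B_{x_0}(\widetilde{F}, R)$ is contained in the Riemannian ball of radius $R/c$, whose volume grows at most exponentially (by a standard bounded-geometry argument for universal covers of compact manifolds). \textbf{The main obstacle} is the asymmetry of $d_{\widetilde{F}}$, which forces one to carefully track the order of the arguments throughout the proof of~\eqref{Finsler_submult}; but because $\pi_1(Q)$ acts by $\widetilde{F}$-isometries, this amounts only to bookkeeping rather than to any new idea.
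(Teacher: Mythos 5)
Your proof is correct, and it reaches the key sub\-multiplicative estimate by a genuinely different mechanism than the paper.

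The paper's argument is a packing argument: it fixes a maximal collection $Y\subset B_{x_0}(R)$ of centers of pairwise disjoint forward unit balls, bounds $\card Y$ from above by $V_{x_0}(R+1)/v$, and uses maximality together with the irreversibility ratio~$\theta$ to promote the packing to the covering $B_{x_0}(R+S)\subset\bigcup_{y\in Y}B_y(S+1+\theta)$; the bound $V_y(\cdot)\leq V_{x_0}(\cdot+a)$ coming from a fundamental domain (proved separately, and which also gives basepoint-independence) then converts this into $V_{x_0}(R+S)\leq\frac{1}{v}V_{x_0}(R+b)V_{x_0}(S+b)$. You instead work directly with the $\pi_1(Q)$-tiling of $\widetilde Q$ by translates $gD$ of a compact fundamental domain: for $w\in B_{x_0}(R_1+R_2)$ you break a connecting admissible path at time $R_1$, locate the break point in some $gD$, bound the number of admissible $g$ by $a(R_1+2D_0)/\Vol(D)$, and read off $a(R_1+R_2)\leq\frac{1}{\Vol(D)}a(R_1+2D_0)a(R_2+D_0)$ in one step. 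Your route is closer in spirit to the Milnor--\v{S}varc comparison between the growth of $\pi_1(Q)$ and the volume growth of $\widetilde Q$; it dispenses with the packing/separated-net step, and the irreversibility ratio~$\theta$ does not appear explicitly because the asymmetry is absorbed into the constant $D_0$. You also get basepoint-independence more cheaply, directly from the two triangle inclusions, rather than via deck transformations.

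Two minor points. First, to define $p:=\gamma(R_1)$ you should appeal to the parametrization of~\eqref{caraball} (a curve of the stated type on all of $[0,R_1+R_2]$) rather than to a minimizing curve, since a length-minimizer from $x_0$ to $w$ may have $\widetilde F$-length strictly less than~$R_1$; the fix is just to extend the minimizer by a constant arc, or to quote~\eqref{caraball} verbatim. Second, the appeal to bounded geometry for finiteness of the limit is unnecessary: Fekete's lemma already gives $\lim_{T}\frac{1}{T}\log b(T)=\inf_{T>0}\frac{1}{T}\log b(T)\leq\log b(1)<\infty$, and $b\geq 1$ (since $a$ is monotone and $a(3D_0)\geq\Vol(D)$) gives non-negativity, exactly as in the paper.
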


\proof
The independence of the choice of the Riemannian metric on $Q$ is clear because by the compactness of $Q$ the volume $\mathrm{Vol}'$ induced by another Riemannian metric satisfies 
\[
\frac{1}{c} \mathrm{Vol} \,\leq\, \mathrm{Vol}' \,\leq\, c \; \mathrm{Vol} 
\]
for a suitable positive number $c$. Given $x\in \widetilde{Q}$ and $R\geq 0$ we abbreviate
\[
B_x(R) := B_x(\widetilde{F},R) \qquad \mbox{and} \qquad V_x(R):= \Vol (B_x(R)).
\]
Choose a closed fundamental domain $N$ in $\widetilde Q$ and set
\[
a := \max \bigl\{ d_{\widetilde F}(y,z) \mid y,z \in N \bigr\}.
\] 
By the triangle inequality we have
\begin{equation*} 
B_{x'}(R) \subset B_x(R+a) \qquad \forall \, R \geq 0, \quad \forall \,x,x' \in N ,
\end{equation*}
and hence
\begin{equation} \label{e:Vxr}
V_{x'}(R) \,\leq\, V_x(R+a) \qquad \forall \, R\geq 0, \quad \forall \,x,x' \in N .
\end{equation}
Let $\tau$ be a deck transformation of $\widetilde Q$.
Since $\tau$ preserves the $\widetilde F$-length of oriented curves,
$$
B_{\tau (z)}(R) = \tau (B_z(R))  \qquad \forall \, R\geq 0, \qquad \forall \,z \in \widetilde Q.
$$
Since $\tau$ also preserves $\mathrm{Vol}$,  we have
$$
V_{\tau (z)}(R) = V_z(R)  \qquad \forall \, R\geq 0, \qquad \forall \,z \in \widetilde Q.
$$
Applying this to deck transformations that bring points into $N$ we can upgrade the inequalities~\eqref{e:Vxr} 
to
\begin{equation} \label{e:Vxrall}
V_{x'}(R) \,\leq\, V_x(R+a) \qquad \forall \, R\geq 0, \quad \forall \,x,x' \in \widetilde Q .
\end{equation}
This implies that the limit \eqref{thelimit} is independent of $x$, if it exists.
Set 
\[
v \,:=\, \inf_{z \in \widetilde Q} V_z(1) \,=\, \min_{z \in N} V_z(1) \,>\,0.
\]
Fix $R>0$, and let $Y$ be a maximal subset of $B_x(R)$ such that the balls $B_y(1)$, $y\in Y$, are pairwise disjoint. If $z$ belongs to~$B_y(1)$ for some $y\in Y$ then
\[
d_{\widetilde F}(x,z) \,\leq\, d_{\widetilde F}(x,y) + d_{\widetilde F}(y,z) \,\leq\, R + 1.
\]
Therefore,
\[
\bigcup_{y\in Y} B_y(1) \,\subset\, B_x(R+1),
\]
and from the fact that the balls on the left-hand side are pairwise disjoint and from the definition of~$v$ 
we obtain
$$
(\card Y) \, v \,\leq\, \sum_{y \in Y} V_y(1) \,\leq\, V_x(R+1) 
$$
and so
\begin{equation} \label{e:cardY}
\card Y \,\leq\, \frac 1v \,V_x(R+1).
\end{equation}
From the maximality property of $Y$ we deduce that for every $z\in B_x(R)$ there is a point $y\in Y$ such that $B_z(1)$ intersects $B_y(1)$. If $w$ is a point in this non-empty intersection, we find
\[
d_{\widetilde F}(y,z) \leq d_{\widetilde F}(y,w) + d_{\widetilde F}(w,z) \leq d_{\widetilde F}(y,w) + \theta \, d_{\widetilde F} (z,w) \leq 1 + \theta,
\]
where $\theta$ is the irreversibility ratio of $F$ and we have used \eqref{irre}. This proves the inclusion
\begin{equation} \label{incl1}
B_x(R) \,\subset\, \bigcup_{y\in Y} B_y(1+\theta).
\end{equation}
Let $S\geq 0$ and $z$ be a point in $B_x(R+S)$. By \eqref{caraball}, 
$z=\gamma(R+S)$ where $\gamma \colon [0,R+S] \rightarrow \widetilde{Q}$ is a Lipschitz curve satisfying 
$\gamma(0)=x$ and $\widetilde F\circ \dot{\gamma}\leq 1$ a.e.. 
Then $\gamma(R)$ belongs to~$B_x(R)$, and from the above inclusion we find $y \in Y$ such that 
$d_{\widetilde F}(y,\gamma(R)) \leq 1+\theta$. With this,
\[
d_{\widetilde F}(y,z) = d_{\widetilde F}(y,\gamma(R+S)) \leq d_{\widetilde F}(y,\gamma(R)) + d_{\widetilde F}(\gamma(R),\gamma(R+S)) \leq 1 + \theta + S,
\]
and hence \eqref{incl1} can be upgraded to
\[
B_x(R+S) \,\subset\, \bigcup_{y\in Y} B_y(S+1+\theta) \qquad \forall \, S\geq 0.
\]
From this, together with \eqref{e:cardY} and \eqref{e:Vxr}, we obtain
$$
V_x(R+S) \,\leq\, (\card Y ) \, V_y ( S+1+\theta)  \,\leq\, 
  \frac 1v\, V_x(R+1) \,V_x ( S+1+\theta+a) \qquad \forall\, S\geq0.
$$
Abbreviating $b := 1+\theta +a$, this implies
$$
V_x(R+S) \,\leq\, \frac 1v\, V_x(R+b) \, V_x(S+b) \qquad \forall \, R,S\geq 0.
$$
Taking logarithms, we find
$$
\log V_x(R+S) \,\leq\,  \log V_x(R+b) + \log V_x(S+b) -\log v \quad \qquad \forall \, R,S> 0.
$$
This inequality implies that the function
\[
f \colon [0,+\infty) \rightarrow \R, \qquad 
f(R) := \log V_x(R+2b) - \log v,
\]
is subadditive, i.e.\ $f(R+S)\leq f(R) + f(S)$ for every $R,S\geq 0$. Therefore, $f(R)/R$ converges to 
its infimum for $R \rightarrow \infty$, which is a non-negative finite number because $f$ 
is monotonically increasing. From the identity
\[
\frac{\log V_x(R)}{R}  \,=\, \left( \frac{f(R-2b)}{R-2b}+\frac{\log v}{R-2b} \right) \cdot \frac{R-2b}{R}
\]
we deduce that the limit \eqref{thelimit} exists and is a non-negative finite number. 
Proposition~\ref{p:manning} is proven.
\proofend

\subsection{The topological entropy} \label{ss:htop} 
We shall use the following definition of the topological entropy 
$h_{\top}(\phi)$ of a continuous flow~$\phi^t$ on a compact metrizable space~$X$:
Choose a metric~$d$ which generates the topology of~$X$.
For $T >0$ define a new metric $d_T$ on~$X$ by
\begin{equation} \label{e:dynmet}
d_T(x,x') \,=\, \max_{0 \leq t \leq T} d \left( \phi^t(x), \phi^t(x') \right) .
\end{equation}
For $\delta >0$,
a subset $Y$ of~$X$ is called $(T,\delta)$-separated if 
$d_T(y,y') \geq \delta$ for all $y \neq y'$ in~$Y$.
Abbreviate
$$
\nu (T,\delta) \,:=\, \mbox{maximum cardinality of a $(T,\delta)$-separated subset of $X$} .
$$
Then the function $\delta \mapsto \nu(T,\delta)$ is monotonically decreasing for every $T>0$,
and one possible definition of $h_{\top}(\phi)$ is
\begin{equation} \label{def:htop}
h_{\top}(\phi) \,:=\, 
\lim_{\delta\rightarrow 0} \limsup_{T \to \infty} \frac 1T\, \log \nu(T,\delta)  \,=\,
\sup_{\delta >0} \, \limsup_{T \to \infty} \frac 1T\, \log \nu(T,\delta) \,\in\, [0, + \infty].
\end{equation}
This number represents the exponential growth rate of the number of orbit segments 
that can be distinguished with arbitrary fine but finite precision.
As the notation suggests, one obtains the same number if one starts with 
another metric which generates the topology of~$X$.
We refer to \cite[\S 3.1]{HK95} and~\cite[\S 7]{Wal82}
for this fact and for much information on topological entropy. 
We here only mention that for a Lipschitz-continuous flow on a compact manifold,
$h_{\top} (\phi)$ is finite, see~\cite[Theorem~3.2.9]{HK95}.

\subsection{Manning's inequality} 
We now assume the Finsler metric $F$ to be regular: $F$ is of class~$C^2$ away from the zero section 
and the second fiberwise differential of~$F^2$ is positive definite at every non-zero tangent vector. 
Under these assumptions, the Euler--Lagrange equations for the energy functional
\[
E_F(\gamma) \,:=\, \frac{1}{2} \int_0^1 F(\dot\gamma(t))^2\, dt
\]
define a well-posed second order Cauchy problem on $Q$. Solutions of this Cauchy problem are $C^2$-curves 
defined on the whole~$\R$ and are called Finsler $F$-geodesics. The function $F\circ \dot{\gamma}$ 
is constant for every Finsler $F$-geodesic~$\gamma$.  After an orientation preserving reparametrization 
making $F\circ \dot{\gamma}$ constant, any minimizer~$\gamma$ of~$\ell_F$ among Lipschitz curves 
from $x$ to~$y$ is an arc of a Finsler $F$-geodesic.

The Cauchy problem for Finsler $F$-geodesics defines a $C^1$-flow $\phi^t$ on the unit sphere bundle
\[
S Q \,:=\, \{v\in TQ \mid F(v)=1\}.
\]
Denoting by $\pi \colon S Q \rightarrow Q$ the footpoint projection, $\gamma(t):= \pi\circ \phi^t(v)$ 
is the unique Finsler $F$-geodesic starting at~$\pi(v)$ with velocity~$v$ for $t=0$, 
and $\phi^t(v) = \dot\gamma(t)$. 

We denote by $h_{\top}(F)$ the topological entropy of the flow $\phi^t$ defined above. 
Manning's celebrated inequality from~\cite{Man79} states that for any Riemannian metric~$F=\sqrt{g}$
on the closed manifold~$Q$, the topological entropy is bounded from below by the volume growth, 
$$
h_{\top}(g) \,\geq\, h_{\vol}(g),
$$
where we are writing the argument of both $h_{\top}$ and $h_{\vol}$ as $g$ instead of~$\sqrt{g}$, 
as this is the standard notation when dealing with Riemannian geodesic flows. 
It is well-known that this inequality persists to hold for Finsler metrics:

\begin{theorem} \label{t:manning}
Let $F$ be a regular Finsler metric on a closed connected manifold~$Q$.
Then the topological entropy of the Finsler geodesic flow of~$F$ is at least the volume entropy of~$F$:
$$
h_{\top} (F) \geq h_{\vol} (F) .
$$
\end{theorem}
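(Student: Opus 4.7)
The plan is to adapt Manning's original argument~\cite{Man79} to the Finsler setting. Fix a basepoint $q \in Q$, a lift $\tilde q \in \tilde Q$, and let $\Gamma = \pi_1(Q,q)$ act on $\tilde Q$ by deck transformations, which act by $\tilde F$-isometries.

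First I would convert the volume growth into an orbit-growth rate. Setting $N(T) := \#\{\tau \in \Gamma \mid d_{\tilde F}(\tilde q, \tau \tilde q) \leq T\}$ and picking a compact fundamental domain $D \subset \tilde Q$ of $\tilde F$-diameter at most $a$, the disjointness of the $\Gamma$-translates of $D$ together with forward/backward triangle inequalities (using the irreversibility ratio $\theta$ of~\eqref{irre}) sandwich
$$
N(T-a)\,\mathrm{Vol}(D) \,\leq\, \mathrm{Vol}(B_{\tilde q}(\tilde F, T)) \,\leq\, N(T + \theta a)\,\mathrm{Vol}(D) ,
$$
so Proposition~\ref{p:manning} yields
$$
h_{\vol}(F) \,=\, \lim_{T \to \infty} \frac{1}{T} \log N(T) .
$$
For every $\tau$ in the counting set, write $|\tau| := d_{\tilde F}(\tilde q, \tau \tilde q)$ and pick a minimizing unit-speed $\tilde F$-geodesic $\tilde\gamma_\tau \colon [0,|\tau|] \to \tilde Q$ from $\tilde q$ to $\tau \tilde q$; record $v_\tau := \dot{\tilde\gamma}_\tau(0) \in S_{\tilde q}\tilde Q$ and its projection $\bar v_\tau \in S_q Q$.

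The heart of the proof is to extract from $\{\bar v_\tau \mid |\tau| \leq T\}$ a $(T,\delta_0)$-separated subset of $SQ$ whose cardinality still grows at rate $h_{\vol}(F)$. Equip $SQ$ with a Sasaki-type Riemannian metric induced from a Riemannian metric on~$Q$ and lift to a $\Gamma$-invariant metric $d^{S\tilde Q}$ on $S\tilde Q$. By proper discontinuity choose $\rho_0 > 0$ such that $d_{\tilde F}(\tilde q, \tau \tilde q) \geq 2\rho_0$ for all $\tau \neq 1$; by compactness of $Q$ choose $\delta_0 > 0$ such that every $\delta_0$-ball in $SQ$ lifts diffeomorphically to $S\tilde Q$ and $d^{S\tilde Q}(w_1, w_2) < \delta_0$ implies $d_{\tilde F}(\pi(w_1), \pi(w_2)) < \rho_0/2$, where $\pi$ is the footpoint projection. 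Suppose $\tau_1 \neq \tau_2$ satisfy $|\tau_1| \leq |\tau_2| \leq T$ and $d_T(\bar v_{\tau_1}, \bar v_{\tau_2}) < \delta_0$. Lifting the two flow trajectories starting from $v_{\tau_1}, v_{\tau_2} \in S_{\tilde q}\tilde Q$, they remain at $d^{S\tilde Q}$-distance $<\delta_0$ on $[0,T]$. Evaluating at $t = |\tau_2|$ and using that $\tilde\gamma_{\tau_1}$ has unit $\tilde F$-speed and passes through $\tau_1 \tilde q$ at time $|\tau_1|$, the triangle inequality gives
$$
d_{\tilde F}(\tau_1 \tilde q, \tau_2 \tilde q) \,\leq\, (|\tau_2| - |\tau_1|) + \rho_0/2 ,
$$
which combined with the lower bound $d_{\tilde F}(\tau_1 \tilde q, \tau_2 \tilde q) \geq 2\rho_0$ (using that $\Gamma$ acts by isometries) forces $|\tau_2| - |\tau_1| \geq 3\rho_0/2$. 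Hence inside any $(T,\delta_0)$-equivalence class, the values $|\tau|$ are pairwise $\rho_0$-separated in $[0,T]$, capping the class size at $\lceil T/\rho_0\rceil + 1$. Therefore
$$
\nu(T, \delta_0) \,\geq\, \frac{N(T)}{\lceil T/\rho_0\rceil + 1} ,
$$
and taking $\limsup \frac{1}{T} \log$ together with the first step yields $h_{\top}(F) \geq h_{\vol}(F)$.

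The main technical obstacle is the uniform lifting step: one must verify, using the $C^1$-regularity of the Finsler geodesic flow together with proper discontinuity of $\Gamma$ and compactness of $Q$, that a single $\delta_0$ works uniformly for all $\tau$ and all $T$. The irreversibility of $F$ enters these comparisons between forward and backward distances through the bounded factor~$\theta$, but does not affect the exponential growth rate. The remaining bookkeeping is elementary.
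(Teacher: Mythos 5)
Your proof is correct in essence, but it takes a genuinely different route from the one in the paper, and one step is stated imprecisely.

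\textbf{Comparison of approaches.} The paper works directly with the volume $V_{x_0}(R)$ of forward balls: it invokes a lemma (Lemma~\ref{le:Ri}) on functions of exponential growth to produce radii $R_n \to \infty$ for which the \emph{annular shell} $B_{x_0}(R_n+\delta/2)\setminus B_{x_0}(R_n)$ has volume $\geq e^{(h-\varepsilon)R_n}$, then takes a maximal $(2\theta+1)\delta$-separated net $X_n$ inside that shell and feeds the initial velocities of the corresponding minimizing geodesics into the flow. You instead first convert volume growth to the orbit-counting function $N(T)$ via a fundamental-domain sandwich, take initial velocities of minimizing geodesics to all orbit points $\tau\tilde q$ with $|\tau|\leq T$ at once, and absorb the ``radial slack'' by a polynomial division ($\nu(T,\delta_0)\gtrsim N(T)/(T/\rho_0)$), which is harmless for the exponential rate. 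Your version avoids Lemma~\ref{le:Ri} entirely and replaces the annular-shell device with the observation that nearby trajectories force $|\tau_1|,|\tau_2|$ to be $\Omega(\rho_0)$-separated. Both are faithful adaptations of Manning's strategy; the paper's packaging more closely mirrors Manning's 1979 argument, while yours is arguably a bit more self-contained. Your derivation of the displacement lower bound $||\tau_1|-|\tau_2||\geq 3\rho_0/2$ is correct, and it handles irreversibility correctly since you only need one-sided triangle inequalities along the forward geodesic.

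\textbf{The imprecision.} The phrase ``inside any $(T,\delta_0)$-equivalence class'' is not well-defined: $d_T(\cdot,\cdot)<\delta_0$ is not transitive, so it does not partition $\{\bar v_\tau\}$ into classes. The conclusion you want is still correct, but it should be derived differently. Partition $[0,T]$ into $\lceil T/\rho_0\rceil$ intervals of length $\rho_0$, and for each interval $I_j$ let $\mathcal T_j:=\{\tau: |\tau|\in I_j\}$. By your estimate, any two distinct $\tau_1,\tau_2\in\mathcal T_j$ satisfy $||\tau_1|-|\tau_2||<\rho_0<3\rho_0/2$, hence $d_T(\bar v_{\tau_1},\bar v_{\tau_2})\geq\delta_0$; in particular $\tau\mapsto\bar v_\tau$ is injective on $\mathcal T_j$ and $\{\bar v_\tau:\tau\in\mathcal T_j\}$ is $(T,\delta_0)$-separated. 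Since $\sum_j |\mathcal T_j|=N(T)$, pigeonhole gives $\nu(T,\delta_0)\geq N(T)/\lceil T/\rho_0\rceil$, which is what you need. With this rewording the argument is sound.

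Two smaller remarks. First, the lifted-trajectory step (``they remain at $d^{S\tilde Q}$-distance $<\delta_0$ on $[0,T]$'') deserves one more sentence: it holds by a continuity argument since $\pr_S$ is a local isometry, the lifted distance starts $<\delta_0$, and whenever it is $<\delta_0$ it equals the downstairs distance, which never reaches $\delta_0$. Second, since you use $d_{\tilde F}$ which is asymmetric, be aware that the fiber-over-point comparison $d^{S\tilde Q}(w_1,w_2)<\delta_0 \Rightarrow d_{\tilde F}(\pi(w_1),\pi(w_2))<\rho_0/2$ implicitly needs both orientations of $d_{\tilde F}$ to be small; this is fine because the irreversibility ratio $\theta$ is finite and can be absorbed into the choice of $\delta_0$.
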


The above theorem is stated for instance in \cite[Theorem~15]{Bar17} as well as 
\cite[Theorem~6.1]{Egl97} and \cite[Remarque~3.3]{Ver99}. 
Since there does not seem to be a proof in the literature, we give one here, 
by a straightforward adaptation of Manning's proof. 

We will use the following standard property of functions having exponential growth:

\begin{lem} \label{le:Ri}
Assume that the function $f \colon (0,+\infty) \rightarrow (0,+\infty)$ satisfies
\[
\lim_{R\rightarrow +\infty} \frac{1}{R} \log f(R) \,=\, h
\]
for some $h\in (0,+\infty)$. 
Then for every $\gve>0$ and every $\delta>0$ there exists a sequence $(R_n)\subset (0,+\infty)$  tending to $+\infty$
such that 
\[
\lim_{n\rightarrow \infty} \frac{f(R_n+\delta) - f(R_n)}{e^{(h-\gve)R_n}} \,=\, +\infty.
\]
\end{lem}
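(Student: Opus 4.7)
The plan is to argue by contradiction: if the conclusion fails, then the hypothesis on the growth rate of $\log f$ will also fail. Concretely, if the displayed limit is not $+\infty$ along any sequence $R_n \to \infty$, then for the given $\gve$ and $\delta$ there must exist a constant $C > 0$ and a threshold $R_0 > 0$ such that
\[
f(R+\delta) - f(R) \,\leq\, C \2 e^{(h-\gve)R} \qquad \text{for all } R \geq R_0,
\]
because otherwise we could extract a sequence $R_n \to \infty$ along which the quotient in the statement blows up.

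Next, I would iterate this one-step bound along the arithmetic progression $R_0, R_0+\delta, R_0+2\delta,\dots$ by telescoping. For every integer $k \geq 1$,
\[
f(R_0 + k\delta) \,=\, f(R_0) + \sum_{j=0}^{k-1}\bigl(f(R_0+(j+1)\delta) - f(R_0+j\delta)\bigr) \,\leq\, f(R_0) + C\2 e^{(h-\gve)R_0} \sum_{j=0}^{k-1} e^{(h-\gve)j\delta}.
\]
Summing the geometric series and absorbing constants, one obtains
\[
f(R_0 + k\delta) \,\leq\, A \2 e^{(h-\gve)k\delta}
\]
for some constant $A > 0$ independent of $k$ (possible because $h-\gve$ may be assumed positive by choosing $\gve < h$; if $h - \gve \leq 0$ the conclusion is even easier as the right-hand side is bounded by $Ck$).

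Finally, I would take logarithms and divide by $R_0 + k\delta$ to conclude
\[
\limsup_{k \to \infty} \frac{1}{R_0+k\delta} \log f(R_0+k\delta) \,\leq\, h-\gve \,<\, h,
\]
which contradicts the hypothesis that $\frac{1}{R}\log f(R) \to h$ as $R \to \infty$. There is no real obstacle here; the only point requiring any care is the extraction of the uniform upper bound $C\2 e^{(h-\gve)R}$ for $f(R+\delta) - f(R)$ from the negation of the conclusion, which is a straightforward compactness-type argument on sequences.
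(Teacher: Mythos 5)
Your proof is correct, and it takes a genuinely different route from the paper's. The paper argues directly: it substitutes $f(R) = e^{R(h+\sigma(R))}$ with $\sigma(R)\to 0$, proves that $L := \limsup_R R\bigl(\sigma(R+\delta)-\sigma(R)\bigr)$ lies in $[0,+\infty]$ (itself by telescoping along an arithmetic progression and invoking divergence of the harmonic series), then selects a sequence achieving this limsup and massages the quotient algebraically until the leading factor $e^{\delta h}e^L-1>0$ forces divergence. You instead argue by contradiction: if the conclusion fails, then $\limsup_{R\to\infty}\frac{f(R+\delta)-f(R)}{e^{(h-\gve)R}}<\infty$, so $f(R+\delta)-f(R)\le C\2 e^{(h-\gve)R}$ for $R\ge R_0$; telescoping then pins $f$ down to growth rate at most $h-\gve$ along $R_0+k\delta$, contradicting the hypothesis. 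Both proofs ultimately rest on telescoping along an arithmetic progression, but your version is shorter, avoids the auxiliary function $\sigma$ and the case analysis around $L$, and does not need the separate harmonic-series estimate. One small thing worth making explicit in a written version: the negation of the conclusion is exactly $\limsup_{R\to\infty} g(R)<\infty$ where $g(R):=\bigl(f(R+\delta)-f(R)\bigr)/e^{(h-\gve)R}$, since $\limsup_{R\to\infty}g(R)=+\infty$ is equivalent to the existence of a sequence $R_n\to\infty$ with $g(R_n)\to+\infty$; that equivalence is what licenses the uniform bound $g(R)\le C$ on $[R_0,\infty)$. Also note the reduction to $\gve<h$ you allude to is valid because for $n$ large the numerator $f(R_n+\delta)-f(R_n)$ is eventually positive along any sequence witnessing the conclusion, so enlarging $\gve$ only enlarges the quotient.
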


\begin{proof}
The assumption on $f$ is equivalent to
\begin{equation}
\label{ide} 
f(R) = e^{R(h+\sigma(R))} \qquad \forall \, R>0,
\end{equation}
where the function $R\mapsto \sigma(R)$ tends to zero for $R\rightarrow +\infty$.  We claim that
\[
L \,:=\, \limsup_{R \rightarrow +\infty} R \bigl( \sigma(R+\delta)-\sigma(R) \bigr) 
\]
belongs to $[0,+\infty]$. Indeed, if by contradiction $L$ is negative or $-\infty$, then we can find $R_0>0$ and $\mu>0$ such that
\[
R \bigl( \sigma(R+\delta)-\sigma(R) \bigr) \,\leq\, - \mu \qquad \forall \, R \geq R_0.
\]
This inequality can be rewritten as
\[
\sigma(R+\delta) \,\leq\, \sigma(R) - \frac{\mu}{R} \qquad \forall \, R \geq R_0,
\]
which implies
\[
\sigma(R_0+ n\delta) \,\leq\, \sigma(R_0) - \sum_{k=0}^{n-1} \frac{\mu}{R_0+ k \delta} \qquad \forall \, n \in \N.
\]
But then the divergence of the harmonic series implies that $\sigma(R_0+ n\delta)$ tends to $-\infty$, contradicting the fact that this sequence is infinitesimal.

Let $(R_n)$ be a sequence of positive numbers that diverges to $+\infty$ and such that
\[
\lim_{n\rightarrow \infty} R_n \bigl( \sigma(R_n+\delta)-\sigma(R_n) \bigr) \,=\, L.
\]
Then we have
\begin{equation} \label{versoeL}
\lim_{n\rightarrow \infty} e^{R_n \bigl( \sigma(R_n+\delta)-\sigma(R_n) \bigr)} \,=\, e^L ,
\end{equation}
where we are using the notation $e^{+\infty}:=+\infty$ in the case $L=+\infty$. 
The identity~\eqref{ide} and a simple algebraic manipulation produce the identity
\begin{equation} \label{newide}
\frac{f(R_n+\delta)-f(R_n)}{e^{(h-\gve) R_n}} \,=\, 
e^{R_n \bigl( \gve + \sigma(R_n) \bigr)} 
\left( e^{\delta \bigl( h+\sigma(R_n+\delta) \bigr)} 
           e^{R_n \bigl( \sigma(R_n+\delta) - \sigma(R_n) \bigr)} - 1 \right).
\end{equation}
By \eqref{versoeL} and the fact that $\sigma(R_n+\delta)$ is infinitesimal, the expression in brackets on the right-hand side of this identity tends to
\[
e^{\delta h} e^L - 1,
\]
which belongs to $(0,+\infty]$ because $\delta h>0$ and $L\in [0,+\infty]$. This, together with the positivity of $\gve$ and the fact that $\sigma(R_n)$ is infinitesimal, implies that the right-hand side of~\eqref{newide} tends to $+\infty$.
\end{proof}

\noindent
{\it Proof of Theorem~\ref{t:manning}.} 
We can assume that $h := h_{\vol}(F) > 0$.
Fix $\gve \in (0,h)$.
We must show that
\begin{equation*} 
h_{\top}(F) \,\geq\, h -\gve.
\end{equation*}
We use the following notation from the proof of Proposition~\ref{p:manning}: 
Given $x$ in the universal cover $\widetilde{Q}$ of~$Q$ and $R\geq 0$, $B_x(R):= B_x(R,\widetilde F)$ 
denotes the closed forward ball of radius~$R$ centered at~$x$ in~$\widetilde{Q}$, see~\eqref{caraball}, 
and $V_x(R):= \Vol (B_x(R))$ denotes its volume with respect to a Riemannian metric on~$Q$ which 
has been lifted 
to~$\widetilde{Q}$.

Fix $x_0 \in \widetilde Q$. By Proposition~\ref{p:manning}, there exists $R_0(\gve)$
such that
\begin{equation} \label{e:heR}
e^{(h+\gve)R} \,\geq\, V_{x_0}(R) \,\geq\, e^{(h-\gve)R} \qquad \forall\, R \geq R_0(\gve) .
\end{equation}
Fix some $\delta>0$. By Lemma \ref{le:Ri}, we find a diverging sequence of positive 
numbers $(R_n) \subset [R_0(\gve),+\infty)$ that satisfies
\begin{equation} \label{conclA3}
V_{x_0}(R_n+\delta/2)-V_{x_0}(R_n) \,\geq\, e^{(h-\gve) R_n} \qquad \forall \, n \in \N.
\end{equation}

The $F$-distance function $d_{\widetilde F}$ on~$\widetilde{Q}$ is not necessarily symmetric, because 
we are not assuming $F$ to be reversible, but we can symmetrize it and obtain the genuine distance function 
\[
\widetilde d \colon \widetilde{Q} \times \widetilde{Q} \rightarrow [0,+\infty), 
        \qquad \widetilde d(x,y):= d_{\widetilde F}(x,y) + d_{\widetilde F}(y,x).
\]
If $\theta\in [1,+\infty)$ denotes the irreversibility ratio of~$F$, then we have
\begin{equation}
\label{e:dzz}
\widetilde d(x,y) \,\leq\, (1+\theta) \, d_{\widetilde F}(x,y) \qquad \forall \, x,y \in \widetilde{Q}.
\end{equation}

For every $n \in \N$, we take a maximal subset $X_n$ of the annulus 
$B_{x_0}(R_n+ \delta/2) \setminus B_{x_0}(R_n)$
such that
\begin{equation} \label{e:dqq}
\widetilde d(x,x') \,>\, (2\theta +1)\, \delta  \qquad \forall\, x,x' \in X_n, \; x \neq x'.
\end{equation}
Since $X_n$ is maximal, for every point $z \in B_{x_0}(R_n+ \delta/2) \setminus B_{x_0}(R_n)$ 
there exists $x \in X_n$ such that $\widetilde d(x,z) \leq(2\theta+1)\, \delta$. 
A fortiori, $d_{\widetilde F}(x,z) \leq (2\theta+1) \2 \delta$ and hence 
$$
B_{x_0}(R_n+ \delta/2) \setminus B_{x_0}(R_n) \,\subset\, \bigcup_{x \in X_n} B_x \bigl( \delta (2\theta +1) \bigr).
$$
If we set 
\[
w \,:=\, \sup_{z \in \widetilde Q} V_z \bigr(\delta (2\theta +1) \bigr) > 0,
\]
we therefore find
\[
V_{x_0}(R_n+\delta/2) - V_{x_0}(R_n) \,=\, 
\Vol \bigl( B_{x_0}(R_n+ \delta/2) \setminus B_{x_0}(R_n) \bigr)  \leq\, (\card X_n)\, w
\]
and hence, by \eqref{conclA3},
\begin{equation} \label{e:Qna}
\card X_n \,\geq\, \frac 1w \, e^{(h-\gve) R_n}.
\end{equation}
Given $x\in X_n$, we denote by 
\[
\gamma_x \colon [0,d_{\widetilde F}(x_0,x)] \to \widetilde Q
\]
an $F$-geodesic segment from $x_0$ to $x$ realizing the minimal distance $d_{\widetilde F}(x_0,x)$ 
and such that $\widetilde F\circ \dot\gamma_x=1$.  
Since $X_n \subset B_{x_0}(R_n+\delta/2) \setminus B_{x_0}(R_n)$, we have 
\begin{equation} \label{e:Rnd}
R_n \,\leq\, d_{\widetilde F}(x_0,x) \,\leq\, R_n+\frac{\delta}{2}.
\end{equation}

Now comes the crux of the proof:
Consider the subset 
$$
Y_n := \left\{ \dot \gamma_x(0) \mid x \in X_n \right\} \subset S_{x_0} \widetilde Q
$$ 
of the $F$-unit sphere~$S_{x_0} \widetilde Q \subset T_{x_0} \widetilde Q$.
Choose any metric $\widetilde d_S$ on $S \widetilde Q$ such that the footpoint projection $\widetilde \pi \colon S \widetilde Q \to \widetilde Q$
is distance decreasing:
$$
\widetilde d_S \bigl( v,v' \bigr) \,\geq\, \widetilde d \2 \bigl( \widetilde \pi(v), \widetilde \pi(v') \bigr)  
               \qquad \forall \, v,v'\in S \widetilde{Q}.
$$
For instance, one can take 
$\widetilde d_S \bigl( v,v' \bigr) := 
       \widetilde d \2 \bigl( \widetilde \pi(v) ,\widetilde \pi(v') \bigr) + \overline{d} \2 \bigl( v,v')$,
where $\overline{d}$ is any metric on~$S \widetilde Q$.

\begin{lem} \label{le:Y}
The set $Y_n$ is $(R_n,\delta)$-separated for the Finsler geodesic flow $\widetilde\phi^t$ of $F$ on the metric space $(S \widetilde Q,\widetilde d_S)$,
and $\card Y_n = \card X_n$.
\end{lem}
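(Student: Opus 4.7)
The plan is to verify the two assertions separately, both exploiting the distance estimate
\[
\widetilde d(\gamma_x(R_n), x) \,\leq\, (1+\theta)\, d_{\widetilde F}(\gamma_x(R_n), x) \,\leq\, (1+\theta)\,\tfrac{\delta}{2},
\]
which follows from \eqref{e:dzz} and the fact that the $F$-unit speed geodesic $\gamma_x$ reaches $x$ at time $d_{\widetilde F}(x_0,x) \in [R_n, R_n+\delta/2]$, so only at most $\delta/2$ of $\gamma_x$ is traversed after time $R_n$.

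For the $(R_n,\delta)$-separation, I would pick distinct $x,x' \in X_n$, set $v=\dot\gamma_x(0)$, $v'=\dot\gamma_{x'}(0)$, and evaluate the dynamical metric at the single time $t=R_n$. The triangle inequality for $\widetilde d$ combined with the display above gives
\[
(2\theta+1)\delta \,<\, \widetilde d(x,x') \,\leq\, (1+\theta)\delta + \widetilde d\bigl(\gamma_x(R_n),\gamma_{x'}(R_n)\bigr),
\]
so $\widetilde d(\gamma_x(R_n),\gamma_{x'}(R_n)) > \theta\delta \geq \delta$. Since $\widetilde d_S$ was chosen to dominate the footpoint distance and $\gamma_x(R_n)=\widetilde\pi\bigl(\widetilde\phi^{R_n}(v)\bigr)$, this yields
\[
\widetilde d_S\bigl(\widetilde\phi^{R_n}(v),\widetilde\phi^{R_n}(v')\bigr) \,\geq\, \widetilde d\bigl(\gamma_x(R_n),\gamma_{x'}(R_n)\bigr) \,>\, \delta,
\]
which is all that is required.

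For the cardinality statement, I would argue by contradiction: if $v=v'$ for some $x\neq x'$ in $X_n$, then by uniqueness of Cauchy data for the Finsler geodesic flow the two minimizers $\gamma_x$ and $\gamma_{x'}$ coincide as parametrized curves, so both $x$ and $x'$ lie on the same ray at distinct parameter values in $[R_n, R_n+\delta/2]$. A short subarc then connects them in $\widetilde F$-length $\leq \delta/2$, which via \eqref{e:dzz} yields $\widetilde d(x,x') \leq (1+\theta)\delta/2$, contradicting the defining property \eqref{e:dqq} of $X_n$ as soon as $(2\theta+1)\delta > (1+\theta)\delta/2$, which holds trivially since $\theta\geq 1$.

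The construction is really just bookkeeping once the buffer constant $(2\theta+1)$ in the definition of $X_n$ is in place; the genuine subtlety has already been absorbed into that choice, which is tuned precisely to cover one contribution $(1+\theta)\delta/2$ from each endpoint $x$, $x'$ in the triangle inequality while still leaving a margin of at least $\delta$ for the midpoint separation. The only point that warrants care is the asymmetry of $d_{\widetilde F}$, which is why the symmetrized distance $\widetilde d$ and the factor $(1+\theta)$ in \eqref{e:dzz} must be used consistently in both directions; that is the main place where a reader could go astray, and I would emphasize it explicitly when writing out the argument.
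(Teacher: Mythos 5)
Your argument is correct and follows essentially the same route as the paper's: the same triangle-inequality bookkeeping with the buffer constant $(2\theta+1)$, the same estimate $\widetilde d(\gamma_x(R_n),x)\leq(1+\theta)\delta/2$ coming from \eqref{e:dzz} and \eqref{e:Rnd}, and the same uniqueness-of-geodesics argument for injectivity. The only cosmetic difference is that you isolate the endpoint estimate as a standalone display up front and phrase the injectivity step via uniqueness of Cauchy data rather than by explicitly comparing restriction intervals, but these are reformulations of the identical ideas.
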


\begin{proof}
We first show that the surjective map $X_n \to Y_n$, $x \mapsto \dot \gamma_x (0)$, that defines 
the set~$Y_n$, is a bijection.
Assume that $x,x' \in X_n$ give the same point $\dot \gamma_x(0) = \dot \gamma_{x'}(0)$ of~$Y_n$. We can assume that $d_{\widetilde F}(x_0,x') \leq d_{\widetilde F}(x_0,x)$. Then the fact that the vectors $\gamma_x'(0)$ and $\gamma_{x'}'(0)$ coincide implies that $\gamma_{x'}$ is the restriction of $\gamma_x$ to the interval $[0,d_{\widetilde F}(x_0,x')]$. The restriction of~$\gamma_x$ to the interval $[d_{\widetilde F}(x_0,x'),d_{\widetilde F}(x_0,x)]$ is a curve of  $\widetilde F$-length $d_{\widetilde F}(x_0,x) - d_{\widetilde F}(x_0,x')$ from $x'$ to~$x$, and by~\eqref{e:Rnd} we have
\[
d_{\widetilde F}(x,x') \,\leq\, d_{\widetilde F}(x_0,x) - d_{\widetilde F}(x_0,x') \,\leq\, \frac{\delta}{2}.
\]
Therefore, \eqref{e:dzz} implies 
$$
\widetilde d \2 (x',x) \,\leq\, (\theta +1)\, d_{\widetilde F}(x',x) \,\leq\, \frac{1}{2} \, (\theta +1)\, \delta .
$$
Our choice \eqref{e:dqq} now implies that $x=x'$.

By the definition of $(R_n,\delta)$-separated, for the first assertion it suffices to show that 
$$
\widetilde d_S \bigl( \widetilde\phi^{R_n}(y), \widetilde\phi^{R_n}(y') \bigr) \,\geq\, 
           \delta \qquad \forall\, y \neq y' \in Y_n.
$$
Let $x,x'$ be the points in $X_n$ with $y=\dot \gamma_x(0)$ and $y'=\dot \gamma_{x'}(0)$.
Then $x = \gamma_x (d_{\widetilde F}(x_0,x))$ and 
$\widetilde \pi \bigl( \widetilde\phi^{R_n}(y) \bigr) = \gamma_x(R_n)$, 
and so, by~\eqref{e:Rnd} again, 
\[
d_{\widetilde F}\bigl(\widetilde \pi (\widetilde \phi^{R_n}(y)) ,x\bigr) \,=\, d_{\widetilde F}(x_0,x) - R_n \,\leq\, \frac{\delta}{2}.
\]
In the same way, $d_{\widetilde F} \bigl( \widetilde \pi (\widetilde\phi^{R_n}(y')), x' \bigr) \leq \delta /2$.
Together with~\eqref{e:dqq}, we can now estimate
\begin{eqnarray*}
(2\theta +1)\,\delta \,\leq\, \widetilde d \2 (x,x') &\leq& \widetilde d \2 
\bigl( x, \widetilde{\pi} (\widetilde{\phi}^{R_n}(y)) \bigr) + \widetilde d \2 \bigl( \widetilde{\pi} (\widetilde{\phi}^{R_n}(y)),\widetilde{\pi}( \widetilde{\phi}^{R_n}(y')) \bigr) + 
                         \widetilde d \2 \bigl( \widetilde{\pi} (\widetilde{\phi}^{R_n}(y')),x' \bigr) \\
&\leq& \widetilde d \2 \bigl( \widetilde{\pi} ( \widetilde{\phi}^{R_n}(y)),\widetilde{\pi} (\widetilde{\phi}^{R_n}(y')) \bigr) + (\theta +1) \2 \delta .
\end{eqnarray*}
Hence 
\[
\widetilde d_S \bigl( \widetilde\phi^{R_n}(y), \widetilde\phi^{R_n}(y') \bigr) \,\geq\, 
\widetilde d \2 \bigl( \widetilde \pi (\widetilde\phi^{R_n}(y)),\widetilde \pi (\widetilde\phi^{R_n}(y')) \bigr)
\,\geq\, \theta \, \delta \,\geq\, \delta.
\]
The proof of Lemma~\ref{le:Y} is complete.
\end{proof}

We now consider the projection $\pr_S \colon S \widetilde Q \to S Q$ induced by the covering map
$\pr \colon \widetilde Q \to Q$ and we look at the set~$\pr_S (Y_n)$.
Since $Y_n \subset S_{x_0} \widetilde Q$, we have $\card \pr_S (Y_n) = \card Y_n$,
and so by the previous lemma and by~\eqref{e:Qna},
\begin{equation} \label{e:cardpr}
\card \pr_S (Y_n) \,\geq\, \frac 1w \, e^{(h-\gve) R_n}.
\end{equation}

\begin{figure}[h]   
 \begin{center}
  \psfrag{Q}{$X_n \subset \widetilde Q$} \psfrag{Md}{$(Q,d)$} 
  \psfrag{tMd}{$\bigl( \widetilde Q,\widetilde d \bigr)$} 
  \psfrag{TMd}{$(S Q, d_S) \supset \pr_S (Y_n)$}
  \psfrag{TtMd}{$\bigl( S\widetilde Q,\widetilde d_S \bigr)$}  \psfrag{pi}{$\widetilde \pi$}
  \psfrag{upi}{$\pi$} \psfrag{q}{$x$} 
  \psfrag{q'}{$x'$} \psfrag{x}{$x_0$}  \psfrag{gq}{$\dot \gamma_{x_0}(0)$}  
  \psfrag{y}{$y$} \psfrag{y'}{$y'$}   
  \psfrag{Y}{$Y_n \subset S_{x_0} \widetilde Q$}  
  \psfrag{pr}{$\pr$}  \psfrag{pr1}{$\pr_S$}  \psfrag{Phi}{$\Phi^t$} 
  \psfrag{phi}{$\phi^t$}  \psfrag{c}{{\huge $\curvearrowright$}} \psfrag{ci}{$\circ$}
  \leavevmode\includegraphics{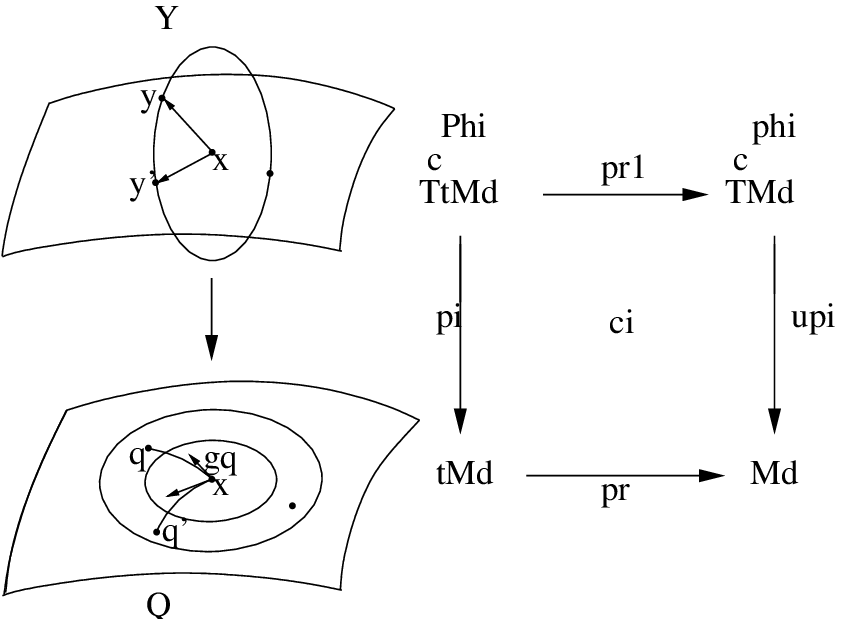}
 \end{center}
 \caption{Manning's construction}   \label{manning}
\end{figure}

As for $\widetilde Q$, we symmetrize the asymmetric distance $d_F$ on~$Q$ and obtain the genuine metric
$$
d(x,y) \,:=\, d_F(x,y) + d_F(y,x),
$$
and we choose a metric $d_S$ on $SQ$ such that the projection 
$\pi \colon (SQ, d_S) \to (Q,d)$
is distance decreasing. 
Note that the covering map $\pr \colon \bigl( \widetilde Q, \widetilde d \2 \bigr) \rightarrow (Q,d)$ is a local isometry. 
Therefore, there exists $\delta_1>0$ such that for all $z,z' \in \widetilde{Q}$ with 
$\widetilde d \2 (z,z') \leq \delta_1$ 
we have $d \bigl( \pr(z),\pr(z') \bigr) = \widetilde d(z,z')$.

\begin{lem} \label{l:deltasep}
For every $\delta >0$ smaller than $\delta_1$ the set $\pr_S (Y_n)$ is $(R_n,\delta)$-separated 
for the Finsler geodesic flow $\phi^t$ of~$F$ on~the metric space $(S Q,d_S)$.
\end{lem}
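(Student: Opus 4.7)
The plan is to argue by contradiction, imitating and refining the strategy used in the preceding lemma. Fix distinct $y, y' \in Y_n$ with corresponding minimizing unit-speed segments $\gamma_x, \gamma_{x'} \colon [0, R_n] \to \widetilde Q$ emanating from $x_0$, and suppose toward a contradiction that
\[
d_S\bigl(\phi^t(\pr_S y), \phi^t(\pr_S y')\bigr) \,<\, \delta \qquad \mbox{for every } t \in [0, R_n] .
\]
Since $\pr_S$ intertwines the two geodesic flows and the footpoint projection $\pi \colon (SQ, d_S) \to (Q, d)$ is $1$-Lipschitz, this assumption will immediately yield
\[
g(t) \,:=\, d\bigl(\pr\gamma_x(t), \pr\gamma_{x'}(t)\bigr) \,<\, \delta \qquad \mbox{for every } t \in [0, R_n] .
\]

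The crucial step is then to lift this estimate to the universal cover, i.e.\ to show that the analogous quantity
\[
f(t) \,:=\, \widetilde d\bigl(\gamma_x(t), \gamma_{x'}(t)\bigr)
\]
also stays below $\delta$ on $[0, R_n]$. The two functions $f$ and $g$ are continuous and satisfy $f \geq g$, and by the very definition of $\delta_1$ the equality $g(t) = f(t)$ holds whenever $f(t) \leq \delta_1$. Since $f(0) = \widetilde d(x_0, x_0) = 0$ and $\delta < \delta_1$, I would run the following continuity argument: if $f$ reached $\delta_1$ somewhere in $[0, R_n]$, then at the first such time $t^*$ the equality $g(t^*) = f(t^*) = \delta_1$ would force $g(t^*) \geq \delta_1 > \delta$, contradicting the standing hypothesis on $g$. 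Hence $f$ stays strictly below $\delta_1$ on $[0, R_n]$, and therefore $f = g < \delta$ throughout this interval.

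To derive the final contradiction I reuse the computation already made in the proof of the previous lemma. Combining the separation $\widetilde d(x, x') > (2\theta + 1)\delta$ of the set $X_n$ with the bounds $\widetilde d(\gamma_x(R_n), x), \widetilde d(\gamma_{x'}(R_n), x') \leq (1 + \theta)\delta/2$ coming from~\eqref{e:Rnd} and the irreversibility inequality~\eqref{irre}, the triangle inequality yields
\[
f(R_n) \,=\, \widetilde d\bigl(\gamma_x(R_n), \gamma_{x'}(R_n)\bigr) \,>\, \theta \delta \,\geq\, \delta ,
\]
contradicting the conclusion $f(R_n) < \delta$ obtained above. Consequently some $t_0 \in [0, R_n]$ must satisfy $d_S(\phi^{t_0}(\pr_S y), \phi^{t_0}(\pr_S y')) \geq \delta$, proving that $\pr_S(Y_n)$ is $(R_n, \delta)$-separated in $(SQ, d_S)$.

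The only genuine obstacle is the continuity step transferring the upper bound on $g$ to one on $f$. It is precisely here that the constant $\delta_1$ is indispensable, and it is this step that forces the assumption $\delta < \delta_1$ in the statement.
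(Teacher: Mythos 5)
Your proof is correct and uses essentially the same mechanism as the paper's: the constant $\delta_1$ guarantees that $\pr$ transfers $\widetilde d$-distances to $d$-distances whenever the former is at most $\delta_1$, and the lower bound $\widetilde d\bigl(\gamma_x(R_n),\gamma_{x'}(R_n)\bigr) \geq \delta$ from the previous lemma (which you rederive) then forces a separation. The paper argues directly, introducing the first time $t_\delta \in [0,R_n]$ at which the distance in $\widetilde Q$ equals $\delta$ and showing that the projected distance also equals $\delta$ there, whereas you phrase the contrapositive by contradiction together with a first-crossing argument at scale $\delta_1$; the two formulations are structurally dual and rely on the same ingredients.
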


\begin{proof}
Let $v = \pr_S (y)$ and  $v' = \pr_S (y')$ be two different points of $\pr_S (Y_n)$.
Recall from the previous proof that 
$\widetilde d \2 \bigl( \widetilde \pi ( \widetilde \phi^{R_n}(y)),\widetilde \pi(\widetilde \phi^{R_n}(y')) \bigr) 
\geq \delta$.
This does not imply that the same lower bound holds for
$d \bigl( \pi (\phi^{R_n}(v)), \pi (\phi^{R_n}(v')) \bigr)$,
because $\pr$ is only a local isometry.

Since 
$\widetilde d \bigl( \widetilde \pi (\widetilde \phi^{R_n}(y)),\widetilde \pi (\widetilde \phi^{R_n}(y')) \bigr) \geq \delta$, we can look at
$$
t_\delta \,:=\, \min \left\{ t \in [0,R_n] \mid \widetilde d \2 \bigl( \widetilde \pi ( \widetilde \phi^t(y)) ,\widetilde \pi (\widetilde \phi^t(y')) \bigr) = \delta \right\} .
$$
Since $\pr_S \circ \widetilde \phi^t = \phi^t \circ \pr_S$ and $\pr \circ \widetilde \pi = \pi \circ \pr_S$, 
we have
$\pr \circ \widetilde \pi \circ \widetilde \phi^t = \pi \circ \phi^t \circ \pr_S$.
Hence, by the fact that $\delta<\delta_1$ and by the local isometry property of $\pr$,
$$
\delta =
\widetilde d \2 \bigl(\widetilde \pi (\widetilde \phi^{t_\delta}(y)), \widetilde \pi (\widetilde \phi^{t_\delta}(y')) \bigr) =
d \bigl(\pr \circ \widetilde \pi \circ \widetilde \phi^{t_\delta}(y), \pr \circ \widetilde \pi \circ \widetilde \phi^{t_\delta}(y') \bigr) =
d \bigl( \pi (\phi^{t_\delta}(v)), \pi (\phi^{t_\delta}(v')) \bigr) .
$$
For the dynamical metric $(d_S)_{R_n}$ on~$SQ$ defined in~\eqref{e:dynmet} we therefore find
$$
(d_S)_{R_n} (v, v') \,\geq\, 
d_S \bigl( (\phi^{t_\delta} (v), \phi^{t_\delta}(v') \bigr)) \,\geq\,
d \bigl( \pi (\phi^{t_\delta}(v)), \pi (\phi^{t_\delta}(v')) \bigr) = \delta .
$$
Hence $\pr_S (Y_n)$ is $(R_n,\delta)$-separated.
\end{proof}

Using a $\delta_0 < \delta_1$ as in Lemma~\ref{l:deltasep}, 
the sequence $R_n \to \infty$ satisfying~\eqref{conclA3} for $\delta=\delta_0$, 
and the estimate~\eqref{e:cardpr},
and noting that $w = \sup_{z \in \widetilde Q} V_z(\delta_0(2\theta +1))$ depends only on~$\delta_0$ 
but not on~$R_n$,
we can conclude:
\begin{eqnarray*}
h_{\top}(\phi^t) &=& \sup_{\delta >0} \, \limsup_{T \to \infty} \frac 1T \, \log \nu (T,\delta) \\
&\geq& \limsup_{T \to \infty} \frac 1T \, \log \nu (T,\delta_0) \\
&\geq& \limsup_{R_n \to \infty} \frac{1}{R_n} \log \left( \card \pr_S (Y_n) \right) \\
&\geq& h-\gve ,                       
\end{eqnarray*}
as we wished to show.
\finedim

\bigskip
\begin{rems}
{\rm
Manning's inequality $h_{\top}(g) \geq h_{\vol}(g)$ for Riemannian geodesic flows has several improvements:

\medskip
(1)
Let $\cm_g \subset SQ$ be the set of minimal vectors, namely those vectors $v$ for which the lifts to~$\widetilde{Q}$ of the geodesic 
determined by $v$ are shortest paths between any of their points. 
The set~$\cm_g$ is invariant under the geodesic flow.
Then $h_{\top} (\phi_g |_{\cm_g}) \geq h_{\vol}(g)$, 
see~\cite[Theorem~9.6.7]{HK95} and~\cite[Theorem~1.1]{GKOS14}.
This follows from a modification of Manning's proof of $h_{\top}(g) \geq h_{\vol}(g)$,
and the same modification of our proof above shows that this stronger inequality generalizes to 
regular Finsler geodesic flows:
$$
h_{\top} (\phi_F |_{\cm_F}) \geq h_{\vol}(F) .
$$

(2)
Manning also showed in~\cite{Man79} that $h_{\top}(g) = h_{\vol}(g)$ if $g$ has non-positive sectional curvature.
This hypothesis was weakened by Freire and Ma$\tilde{\text n}$\'e~\cite{FrMa82},
who only need to assume that the geodesic flow has no conjugate points.
Manning's equality extends to Finsler geodesic flows if $F$ has non-positive flag curvature, see~\cite[Theorem~15]{Bar17}.
On the other hand, it is not known if the improvement of Freire--Ma$\tilde{\text n}$\'e extends to Finsler geodesic flows. 
We thank Thomas Barthelm\'e for pointing out this open problem.
}
\end{rems}

\subsection{A proof via Yomdin's theorem} 
\label{ss:Yomdin}
Given a $C^1$ flow $\phi^t$ on a closed manifold~$M$, 
define the volume growth of~$\phi$ by
$$
v (\phi) \,:=\, 
\sup_S \limsup_{T \to \infty} \frac{1}{T} \log \mathcal{H} (\phi^T(S)) .
$$
Here the supremum is taken over all compact smooth
submanifolds~$S$ of~$M$ of any dimension and $\mathcal{H}(S)$ denotes the Riemannian volume of the submanifold $S$ with respect to the restriction 
of a fixed Riemannian metric on~$M$ or, equivalently, the $k$-dimensional Hausdorff measure $\mathcal{H}^k(S)$ of $S$ in the metric space $M$, where $k=\dim S$.
The Riemannian metric is not specified, since $v(\phi)$ does not depend on its choice. 
Yomdin proved in~\cite{Yom87} that if the flow~$\phi \colon \R \times M \rightarrow M$ is smooth, i.e.\ $C^{\infty}$, then
\begin{equation} \label{e:Yom}
h_{\top}(\phi) \,\geq\, v(\phi) . 
\end{equation}
Actually, equality holds in \eqref{e:Yom},
since by a result of Newhouse~\cite{New88},
$h_{\top}(\phi) \leq v(\phi)$ for $C^{1+\gve}$ flows.
Hence the volume growth $v(\phi)$ is another way to think of the topological entropy of smooth flows.

Gabriel Paternain noticed in \cite[p.\ 72]{Pat99} that Yomdin's theorem yields a quick proof of Mannings' inequality
for smooth Riemannian geodesic flows. We conclude this appendix by showing that a variant of his argument applies to 
general smooth flows on the total space of a fiber bundle and in particular implies Theorem~\ref{t:manning} 
in the case of smooth Finsler metrics.

Consider the following setting: $\phi^t$ is an arbitrary smooth flow on the total space~$E$ of a smooth fiber bundle 
\[
\pi \colon E \rightarrow Q
\]
over the closed $n$-dimensional manifold $Q$ with typical fiber a closed manifold of dimension $k<n$. 
The universal cover of~$Q$ is denoted by
\[
\mathrm{pr} \colon \widetilde{Q} \rightarrow Q,
\]
and the pull-back bundle of $E$ by the map $\mathrm{pr}$ is denoted by
\[
\widetilde \pi \colon \widetilde E \rightarrow \widetilde Q.
\]
Therefore, we have the commutative diagram of smooth maps
\begin{equation*} 
\xymatrixcolsep{3pc}
\xymatrix{ 
\widetilde E \ar[d]_{\widetilde \pi} \ar[r]^{\pr_{E}} &
E \ar[d]^{\pi} 
\\
\widetilde Q \ar[r]^-{\pr}  & Q 
}
\end{equation*}
where $\mathrm{pr}_E$ is a covering map. We choose Riemannian metrics on $Q$, $\widetilde Q$, $E$ and~$\widetilde E$
such that the horizontal maps are local isometries and the vertical projections
are Riemannian submersions. 
The $d$-dimensional Hausdorff measures induced by these metrics are denoted~$\ch^d$.
The flow $\phi^t$ on $E$ lifts to a smooth flow $\widetilde \phi^t$ on $\widetilde E$ such that
\[
\mathrm{pr}_E \circ \widetilde \phi^t \,=\, \phi^t \circ \mathrm{pr}_E.
\]
We shall prove the following result:

\begin{prop}
\label{p:proiezioni}
For any $\widetilde x_0$ in $\widetilde Q$ let 
$\widetilde E_{\widetilde x_0} := \widetilde{\pi}^{-1} ( \widetilde x_0)$ be the fiber of~$\widetilde E$ 
at~$\widetilde{x}_0$ and $E_{x_0}:= \pi^{-1}(x_0)$ the fiber of~$E$ 
at~$x_0:= \widetilde{\pi}(  \widetilde x_0 )$. 
Then 
\begin{equation} \label{2limsup}
\limsup_{T\rightarrow \infty} \frac{1}{T} 
  \log \mathcal{H}^{k+1} \bigl(\widetilde{\pi} (\widetilde{\phi} ([0,T] \times \widetilde{E}_{\widetilde x_0} ))\bigr) \,\leq\, 
\max \left\{ 0, \limsup_{T\rightarrow \infty} \frac{1}{T} \log  \mathcal{H}^k\bigl(\phi^T(E_{x_0}) \bigr) \right\},
\end{equation}
where on the left-hand side we are using the convention $\log 0:= - \infty$.
\end{prop}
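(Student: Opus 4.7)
The plan is to decompose $[0,T]$ into unit-length subintervals, bound the $(k+1)$-dimensional Hausdorff measure of the projection to $\widetilde Q$ of each resulting piece by a uniform multiple of $\ch^k(\phi^j(E_{x_0}))$, and then sum these bounds. More precisely, for each integer $j \geq 0$ set $F_j := \widetilde\phi^j(\widetilde E_{\widetilde x_0}) \subset \widetilde E$ and consider the map $\Psi_j \colon [0,1] \times F_j \to \widetilde Q$ defined by $\Psi_j(s,v) := \widetilde \pi(\widetilde \phi^s(v))$. The cocycle relation $\widetilde\phi^{j+s} = \widetilde\phi^s \circ \widetilde\phi^j$ yields the inclusion
\[
\widetilde{\pi}\bigl(\widetilde{\phi}([0,T] \times \widetilde{E}_{\widetilde x_0})\bigr) \,\subset\, \bigcup_{j=0}^{\lfloor T \rfloor} \Psi_j\bigl([0,1] \times F_j\bigr),
\]
so it suffices to control each term on the right separately.

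The key geometric point is the identity $\ch^k(F_j) = \ch^k(\phi^j(E_{x_0}))$. To see this, first note that since $\widetilde E$ is the pullback of $E$ along $\pr$, the restriction of $\pr_E$ to a single fiber $\widetilde E_{\widetilde x_0}$ is a diffeomorphism onto $E_{x_0}$. The conjugation relation $\pr_E \circ \widetilde\phi^j = \phi^j \circ \pr_E$ then shows that $\pr_E$ is \emph{injective} on $F_j$: if $\pr_E(\widetilde\phi^j(v)) = \pr_E(\widetilde\phi^j(v'))$ with $v,v' \in \widetilde E_{\widetilde x_0}$, then $\phi^j \circ \pr_E(v) = \phi^j \circ \pr_E(v')$, and applying $(\phi^j)^{-1}$ together with fiber-injectivity forces $v = v'$. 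Since $\pr_E$ is moreover a Riemannian local isometry, its restriction to $F_j$ is a global isometry onto $\phi^j(E_{x_0})$, which gives the claim. Next, I would observe that $\Psi_j$ is Lipschitz with a constant $L > 0$ independent of $j$: the derivative of $\widetilde\phi^s$ for $s \in [0,1]$ is uniformly bounded because $\widetilde\phi^s$ is a lift of $\phi^s$ along the local isometry $\pr_E$ and $\phi^s$ is smooth on the compact manifold $E$, while $d\widetilde\pi$ is bounded because $\widetilde\pi$ is a Riemannian submersion. The Lipschitz--Hausdorff inequality then yields
\[
\ch^{k+1}\bigl(\Psi_j([0,1] \times F_j)\bigr) \,\leq\, L^{k+1}\, \ch^{k+1}\bigl([0,1] \times F_j\bigr) \,=\, L^{k+1}\, \ch^k(F_j) \,=\, L^{k+1}\, \ch^k\bigl(\phi^j(E_{x_0})\bigr).
\]

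Summing over $j = 0,\dots,\lfloor T \rfloor$ and passing to $\limsup_{T\to\infty} \tfrac{1}{T}\log$ reduces \eqref{2limsup} to the elementary inequality
\[
\limsup_{T \to \infty} \frac{1}{T} \log \sum_{j=0}^{\lfloor T \rfloor} a_j \,\leq\, \max\Bigl\{ 0,\, \limsup_{T \to \infty} \frac{1}{T} \log a_T \Bigr\}
\]
valid for any sequence $a_j \geq 0$, which follows from $\sum_{j=0}^{\lfloor T \rfloor} a_j \leq (T+1) \max_{j \leq T} a_j$ together with the fact that if $h := \limsup_T \tfrac{1}{T}\log a_T$ then $a_T \leq e^{(h+\gve)T}$ for all sufficiently large $T$, so $\max_{j \leq T} a_j$ grows at rate at most $\max\{0,h\}$. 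The main obstacle is the isometric identification $F_j \cong \phi^j(E_{x_0})$, which crucially uses the pullback bundle structure to upgrade the local isometry $\pr_E$ to a genuine isometry on each flowed fiber; everything else is a straightforward Lipschitz-measure estimate combined with elementary real analysis.
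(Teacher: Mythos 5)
Your proof is correct, and it takes a genuinely different route from the one in the paper. The paper works with the full map $\psi(t,\xi)=\widetilde\phi^t(\xi)$ on $[0,T]\times \widetilde E_{\widetilde x_0}$, invokes the area formula to bound $\ch^{k+1}$ by an integral of Jacobians, and uses the Schur determinant identity to split that Jacobian into the norm of the vector field times the spatial Jacobian $J\psi^t$; this yields $\ch^{k+1}\le c\int_0^T \ch^k(\widetilde\phi^t(\widetilde E_{\widetilde x_0}))\,dt$ and then Lemma~3.24.2 of~\cite{Pat99} handles the integral. You instead tile $[0,T]$ by unit intervals, push $F_j=\widetilde\phi^j(\widetilde E_{\widetilde x_0})$ forward by the uniformly Lipschitz maps $\Psi_j(s,v)=\widetilde\pi(\widetilde\phi^s(v))$, and use only countable subadditivity of Hausdorff measure together with the scaling $\ch^{k+1}(f(A))\le L^{k+1}\ch^{k+1}(A)$. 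Your isometric identification $\pr_E\colon F_j\xrightarrow{\sim}\phi^j(E_{x_0})$ is exactly the paper's \eqref{e:TT}, and your final elementary estimate (with the care you take about the integer versus continuous $\limsup$) is a correct discrete replacement of the Paternain lemma. Your approach is lower-tech — no area formula, no Schur determinant — but the paper's integral form is sharper and pays off in the remark that follows, where the multiplicity $\ch^0(\psi^{-1}(\{\xi\}))$ is retained to deduce the counting-function inequality~\eqref{e:chords}; a tiling-plus-subadditivity argument only bounds the measure of the image and loses that extra information.
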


Postponing the proof, we specialize to the case $k+1 = n$,
and for $x_0 \in Q$ define the volume entropy of~$\phi$ by the left-hand side of~\eqref{2limsup}:
\begin{equation} \label{def:hvolgen}
h_{\vol} (\phi;x_0) \,:=\, \limsup_{T\rightarrow \infty} \frac{1}{T} 
  \log \mathcal{H}^n \bigl(\widetilde{\pi} (\widetilde{\phi} ([0,T] \times \widetilde{E}_{\widetilde x_0} ))\bigr) .
\end{equation}
Note that the right-hand side indeed does not dependent on the lift $\widetilde x_0$ of~$x_0$
nor on the Riemannian metric used to define the Hausforff measure.
The set $\widetilde{\pi} (\widetilde{\phi} ([0,T] \times \widetilde{E}_{\widetilde x_0} ))$
is the set of points $\widetilde x \in \widetilde Q$
for which the fiber $\widetilde E_{\widetilde x}$ can be reached in time~$\leq T$
by a $\widetilde \phi$-flow line starting at $\widetilde E_{\widetilde x_0}$.
In the case of a Finsler geodesic flow, when $E$ is an $S^{n-1}$-bundle, this set is the forward ball
$B_{\widetilde x_0}(\widetilde F, T)$.
Hence \eqref{def:hvolgen} generalizes \eqref{def:hvol}. 

The right-hand side of~\eqref{2limsup} is, by definition, 
a lower bound for the volume growth~$v(\phi)$ of~$\phi$. 
Together with Yomdin's inequality~\eqref{e:Yom} we obtain
\[
 h_{\vol}(\phi;x_0) \,\leq\, v(\phi) \,\leq\, h_{\top}(\phi).
\]
We have shown the following result.

\begin{theorem} \label{t:manninggen}
Assume that $\phi$ is a $C^\infty$-smooth flow on the compact fiber bundle~$E$ over~$Q$
with fibers of dimension $\dim Q -1$.
Then 
$$
h_{\top} (\phi) \,\geq\, h_{\vol}(\phi;x) \qquad \forall \, x \in Q .
$$
\end{theorem}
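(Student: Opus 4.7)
The plan is to deduce the theorem as a direct consequence of Proposition~\ref{p:proiezioni}, the definition of the volume growth $v(\phi)$, and Yomdin's inequality \eqref{e:Yom}. The argument is a short chain of three inequalities; all the substantive work has been absorbed into those earlier results.

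First I would observe that the hypothesis $\dim E_x = k = \dim Q - 1$ means the projected set $\widetilde{\pi}(\widetilde{\phi}([0,T] \times \widetilde{E}_{\widetilde x}))$ lives in $\widetilde Q$ with top Hausdorff codimension, so its $n$-dimensional Hausdorff measure is exactly the quantity appearing in the definition \eqref{def:hvolgen} of $h_{\vol}(\phi;x)$. Applying Proposition~\ref{p:proiezioni} with $k+1=n$ therefore yields
$$
h_{\vol}(\phi;x) \,\leq\, \max \left\{ 0, \limsup_{T\to\infty} \frac{1}{T} \log \mathcal{H}^k(\phi^T(E_x)) \right\}.
$$

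Next I would bound the right-hand side by $v(\phi)$. The fiber $E_x=\pi^{-1}(x)$ is a compact smooth $k$-dimensional submanifold of $E$, so each $\phi^T(E_x)$ is admissible in the supremum defining $v(\phi)$, giving $\limsup_{T\to\infty} \frac{1}{T} \log \mathcal{H}^k(\phi^T(E_x)) \leq v(\phi)$. In addition $v(\phi) \geq 0$, as is seen by taking $S$ to be a single point so that $\mathcal{H}^0(\phi^T(S))=1$ contributes $0$ to the supremum. Hence the maximum with $0$ is itself bounded above by $v(\phi)$, and we obtain $h_{\vol}(\phi;x) \leq v(\phi)$.

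Finally Yomdin's theorem \eqref{e:Yom}, which applies because $\phi$ is $C^\infty$, delivers $v(\phi) \leq h_{\top}(\phi)$. Combining these estimates yields $h_{\vol}(\phi;x) \leq h_{\top}(\phi)$ for every $x \in Q$, as desired. The only non-formal step is the dimension count $k+1=n$, which is essential in order to identify the left-hand side of \eqref{2limsup} with $h_{\vol}(\phi;x)$; without that hypothesis the projected set would have intrinsic dimension strictly less than $n$, and the inequality from Proposition~\ref{p:proiezioni} would not produce a bound on the volume entropy defined via $\mathcal{H}^n$. The genuine obstacle in the overall Manning-type inequality is of course Proposition~\ref{p:proiezioni} itself, which encodes the geometry of the fiber-bundle projection via the Riemannian submersion $\widetilde\pi$; once that proposition is granted, the present theorem follows in a few lines.
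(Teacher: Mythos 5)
Your proposal is correct and follows exactly the paper's own route: specialize Proposition~\ref{p:proiezioni} to $k+1=n$, observe that the right-hand side of \eqref{2limsup} is bounded by the volume growth $v(\phi)$, and conclude with Yomdin's inequality $v(\phi)\leq h_{\top}(\phi)$. Your extra remark that $v(\phi)\geq 0$ (via a point submanifold), needed to absorb the maximum with $0$, is a small point the paper leaves implicit but is handled correctly.
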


In the case of Finsler geodesic flows, Theorem \ref{t:manninggen} together with Proposition~\ref{p:manning}
imply Theorem~\ref{t:manning}.
This proof is less satisfactory than the one in the previous paragraphs, however:
It needs the Finsler geodesic flow to be $C^\infty$-smooth and it is less elementary, 
as it relies on Yomdin's theorem whose proof is highly non-trivial. 
On the other hand, this proof shows that the special features of Finsler geodesic flows 
which are given by the underlying length functional and the triangle inequality 
are needed only to guarantee that the limit defining 
the volume entropy exists and is independent of the center of the balls, 
whereas for a general flow on the total space of a fiber bundle 
the limit superior in definition~\eqref{def:hvolgen} 
cannot be replaced by a limit and may depend on the choice of~$x_0$. 
These special features of a Finsler geodesic flow are instead not needed for proving that the volume entropy  
does not exceed the topological entropy.

Theorem~\ref{t:manninggen} in particular applies to Reeb flows on spherizations~$S^*Q$
and to magnetic flows on~$SQ$.
In the latter situation, Theorem~\ref{t:manninggen} improves the first statement in~\cite[Theorem~D]{BP02}.

\begin{proof}[Proof of Proposition~\ref{p:proiezioni}]
As it intertwines the flows $\widetilde \phi^t$ and~$\phi^t$, 
the projection~$\pr_E$ maps 
$\widetilde \phi^T(\widetilde E_{\widetilde x_0})$ bijectively to $\phi^T (E_{x_0})$
for every~$T$, and since $\pr_E$ is a local isometry we have
\begin{eqnarray} 
\label{e:TT} 
\mathcal{H}^k\bigl( \widetilde \phi^T(\widetilde E_{\widetilde x_0}) \bigr) 
\,=\, 
\mathcal{H}^k \bigl( \phi^T (E_{x_0}) \bigr) \qquad \forall \, T \in \R.
\end{eqnarray}
By the area formula, we have
\begin{equation} 
\label{area}
\mathcal{H}^{k+1} 
\bigl(\widetilde{\phi} ([0,T] \times \widetilde{E}_{\widetilde x_0} )\bigr) 
\,\leq\, 
\int_{[0,T] \times \widetilde{E}_{\widetilde x_0}} J\psi(t,\xi) \, dt \, d\xi
\end{equation}
where 
$J\psi(t,\xi) := \det \bigl( d\psi(t,\xi)^T d \psi (t,\xi) \bigr)^{\frac{1}{2}}$
is the Jacobian of the map
\[
\psi \colon [0,T] \times \widetilde{E}_{\widetilde x_0} \rightarrow \widetilde{E}, \qquad \psi(t,\xi) := \widetilde \phi^t(\xi),
\]
and $d\xi$ denotes the Riemannian volume form on the compact manifold 
$\widetilde E_{\widetilde x_0}$. If $X$ denotes the vector field on~$\widetilde{E}$ generating the flow~$\widetilde \phi^t$ and $\psi^t$ is the map
\[
\psi^t \colon \widetilde{E}_{\widetilde x_0} \rightarrow \widetilde{E}, 
                \qquad \psi^t(\xi) := \psi(t,\xi),
\]
we easily see that the symmetric linear endomorphism
\[
d\psi(t,\xi)^T d \psi (t,\xi) \colon 
     \R \times T_{\xi} \widetilde{E}_{\widetilde x_0} \rightarrow \R \times T_{\xi}   \widetilde{E}_{\widetilde x_0}
\]
has the form
\[
d\psi(t,\xi)^T d \psi (t,\xi) \,=\, 
\left( \begin{array}{cc} |X(\psi(t,\xi))|^2 & * \\ * & d\psi^t(\xi)^T d\psi^t(\xi) \end{array} \right).
\]
By the Schur determinant identity, this implies the bound
\[
J\psi(t,\xi) \,=\, \det \bigl( d\psi(t,\xi)^T d \psi (t,\xi) \bigr)^{\frac{1}{2}} 
\,\leq\, 
|X(\psi(t,\xi))| \det \bigl( d\psi^t(\xi)^T d \psi^t(\xi) \bigr)^{\frac{1}{2}} 
\,\leq\, c\, J\psi^t(\xi),
\]
where $c$ is the supremum norm of the vector field~$X$, 
which is bounded being the lift of a vector field on the compact manifold~$E$. 
Together with~\eqref{area} and using the area formula for injective maps, 
we find
\[
\mathcal{H}^{k+1} \bigl( \widetilde{\phi} ([0,T] \times \widetilde{E}_{\widetilde x_0} ) \bigr) \,\leq\, 
c \int_0^T \int_{\widetilde{E}_{\widetilde x_0}} J\psi^t(\xi)\, d\xi \, dt \,=\, 
c \int_0^T \mathcal{H}^k \bigl( \widetilde \phi^t ( \widetilde E_{\widetilde x_0}) \bigr)\, dt,
\]
and hence
\[
\limsup_{T\rightarrow \infty} \frac{1}{T} \log \mathcal{H}^{k+1} \bigl(\widetilde{\phi} ([0,T] \times \widetilde{E}_{\widetilde x_0} )\bigr) \,\leq\, \limsup_{T\rightarrow \infty} \frac{1}{T} \log \int_0^T \mathcal{H}^k\bigl(\widetilde \phi^t ( \widetilde E_{\widetilde x_0}) \bigr)\, dt.
\]
It is easy to see that if $f$ is a positive function on $[0,+\infty)$ then
\[
\limsup_{T\rightarrow \infty} \frac{1}{T} \log \int_0^T f(t)\, dt  \,\leq\, 
\max \left\{ 0, \limsup_{T\rightarrow \infty} \frac{1}{T} \log f(T)\right\},
\]
see \cite[Lemma 3.24.2]{Pat99}, so using also~\eqref{e:TT} we obtain the bound
\[
\limsup_{T\rightarrow \infty} \frac{1}{T} \log \mathcal{H}^{k+1} \bigl(\widetilde{\phi} ([0,T] \times \widetilde{E}_{\widetilde x_0} )\bigr) \,\leq\, 
\max \left\{ 0, \limsup_{T\rightarrow \infty} \frac{1}{T} \log  \mathcal{H}^k\bigl(\phi^T ( E_{x_0}) \bigr)\right\}.
\]
The desired bound \eqref{2limsup} now follows from the fact that the projection 
$\widetilde \pi$ is $1$-Lipschitz, by our choice of the Riemannian metrics.
\end{proof}

\begin{rem}
{\rm
If one replaces the inequality \eqref{area} in the above proof by the equality
$$
\int_{\widetilde E} \mathcal{H}^0(\psi^{-1}(\{\xi\}) \, d\mathcal{H}^{k+1}(\xi) \,=\,  
\int_{[0,T] \times \widetilde{E}_{\widetilde x_0}} J\psi(t,\xi) \, dt \, d\xi
$$
which is given by the area formula, one gets the more precise bound
$$
\limsup_{T\rightarrow \infty} \frac{1}{T} 
      \log \int_{\widetilde E} \mathcal{H}^0(\psi^{-1}(\{\xi\}) \, d\mathcal{H}^{k+1}(\xi) 
			\,\leq\, 
\max \left\{ 0, \limsup_{T\rightarrow \infty} \frac{1}{T} \log  \mathcal{H}^k\bigl(\phi^T(E_{x_0}) \bigr) \right\}.
$$

For $k+1=n$, this inequality and the argument leading to Theorem~\ref{t:manninggen},  
applied to~$Q$ instead of the universal cover~$\widetilde Q$, yield the bound
\begin{equation} \label{e:chords}
\limsup_{T\rightarrow \infty} \frac{1}{T}\log  \int_Q n_T(x_0,x)\, d\mathcal{H}^n(x) \,\leq\, h_{\top}(\phi),
\end{equation}
where $n_T(x_0,x)$ denotes the number of $\phi$-flow lines of time at most~$T$ from $E_{x_0}$ to~$E_x$. 
For Finsler geodesic flows, $n_T(x_0,x)$ is the number of $F$-geodesics arcs from $x_0$ to~$x$, and \eqref{e:chords} 
is a Finsler generalization of~\cite[Corollary 3.28]{Pat99}.
}
\end{rem}

\section{From Riemannian geodesic flows to Reeb flows} \label{s:GeotoReeb}

In this appendix we first recall from a historical and geometric perspective how Finsler and Reeb flows
are successive generalizations of Riemannian geodesic flows.
We then give for each of the four circles below at least two results on Riemannian geodesic flows that stop to hold
exactly at 
\raisebox{.5pt}{\textcircled{\raisebox{-.9pt} {1}}} or at
\raisebox{.5pt}{\textcircled{\raisebox{-.9pt} {2}}} or at
\raisebox{.5pt}{\textcircled{\raisebox{-.9pt} {3}}},
or extends all the way to Reeb flows (\raisebox{.5pt}{\textcircled{\raisebox{-.9pt} {4}}}).
\begin{equation} \label{e:1234}
\stackrel{\raisebox{.5pt}{\tiny \textcircled{\raisebox{-.9pt} {4}}}}{\phantom{\supsetneq}}\,
\{\mbox{Reeb}\}
\,\stackrel{\raisebox{.5pt}{\tiny \textcircled{\raisebox{-.9pt} {3}}}}{\supsetneq}\,
\{\mbox{irreversible Finsler}\}
 \,\stackrel{\raisebox{.5pt}{\tiny \textcircled{\raisebox{-.9pt} {2}}}}{\supsetneq}\,
\{\mbox{reversible Finsler}\}
\,\stackrel{\raisebox{.5pt}{\tiny \textcircled{\raisebox{-.9pt} {1}}}}{\supsetneq}\,
\{\mbox{geodesic}\}
\end{equation}

\subsection{Reeb flows on spherizations as a generalization of Riemannian geodesic flows}
\label{ss:RSG}

Fix an $n$-dimensional manifold $Q$.
Consider a smooth star field along~$Q$: 
At every point $q \in Q$ there is a set $S_qQ \subset T_qQ$
that is the smooth boundary of a subset~$D_qQ$ of~$T_qQ$ that is strictly starshaped 
with respect to the origin of~$T_qQ$, and $S_qQ$ varies smoothly with~$q$.
The star field $\{S_qQ\}_{q \in Q}$ can be used to define the length of oriented curves in~$Q$:
For a smooth curve $\gamma \colon [a,b] \to Q$ with non-vanishing derivative, set
$$
\length (\gamma) \,:=\, \int_a^b \ell (\dot \gamma (t)) \,dt
$$
where $\ell (\dot \gamma (t)) = s$ if $\frac 1s \1 \dot \gamma (t) \in S_{\gamma (t)}Q$.
The number $\length (\gamma)$ does not change under orientation preserving reparametrisations of~$\gamma$.
Given $q,q' \in Q$ set 
$$
d_S(q,q') \,:=\, \inf \left\{ \length (\gamma) \right\}
$$
where the infimum is taken over all curves as above from $q$ to~$q'$.
The function $d_S$ is non-degenerate in the sense that $d_S (q,q') = 0$
if and only if $q=q'$.
Furthermore, $d_S$ satisfies the ordered triangle inequality
$$
d_S(q,q'') \,\leq\, d_S(q,q') + d_S(q',q'') \quad \forall \, q,q',q'' \in Q. 
$$
The function $d_S$ is symmetric if and only if each star~$S_qQ$ is symmetric, that is $-S_qQ=S_qQ$ for all $q \in Q$. 
If each $S_qQ$ is strictly convex, then there are unique shortest curves between sufficiently nearby points, 
see e.g.\ \cite[\S 6.3]{BCS00}.

A star field $\{S_qQ\}$ as above is called a Finsler structure if each $D_q Q$ is strictly convex,
and a Finsler structure is called reversible if each~$S_q Q$ is symmetric, and irreversible otherwise.
A reversible Finsler structure is a Riemannian structure if each~$S_q Q$ is an ellipsoid, i.e., 
the level set of an inner product on~$T_qQ$.

Riemannian structures were introduced in 1854 by Riemann in his Habilitationsvortrag~\cite{Rie}.
Both Berger~\cite[p.\ 708]{Ber03} and Chern~\cite{Che96}
pointed out that what Riemann really had in mind are Finsler structures.
However, from his text\footnote{In \S II.1 of his text, page 259 of~\cite{Rie}, he writes:
``Unter diesen Annahmen wird das Linienelement eine beliebige homogene Function
ersten Grades der Gr\"ossen $dx$ sein k\"onnen, welche unge\"andert bleibt, 
wenn s\"ammtliche Gr\"ossen~$dx$ ihr Zeichen \"andern''.
He then readily specializes to the convex (Finsler) case, mentioning just an example, 
and then restricts to Riemannian metrics, since  
``die Untersuchung dieser allgemeinern Gattung
w\"urde zwar keine wesentlich andere Principien erfordern, aber ziemlich zeitraubend sein''.}
and given that Riemann's main conceptual point was to do intrinsic measurements, 
that were all based
on length measurements, one may at least as well argue that what he meant is
``reversible star field geometry'', 
see also Spivak~\cite[p.\ 167 and p.\ 202]{Spi79}.
In contrast to Riemannian geometry, 
Finsler geometry developed only slowly, see~\cite{BCS00},
and ``star field geometry'' 
developed only in the setting of Lorentzian and semi-Riemannian geometry 
(and their Finsler generalizations), 
in which however the stars~$S_q Q$ are not compact.

If each $D_qQ$ is strictly convex, one can pass from $TQ$ to~$T^*Q$ by the Legendre transform. 
In geometric terms, each convex body $D_q Q \subset T_qQ$ is replaced by its dual body
$$
D_q^*Q \,:=\, \left\{ (q,p) \in T_q^*Q \mid p(v) \leq 1 \mbox{ for all } v \in D_qQ \right\}
$$
in $T_q^*Q$.
The dual body $D_q^*Q$ is strictly convex, or is symmetric, or is an ellipsoid, if and only if 
$D_qQ$ has this property, cf.\ Figure~\ref{dual.fig}.

\begin{figure}[h]   
 \begin{center}
  \psfrag{L}{$\mathcal{L}$} \psfrag{N}{\mbox{NO !}}        
  \leavevmode\includegraphics{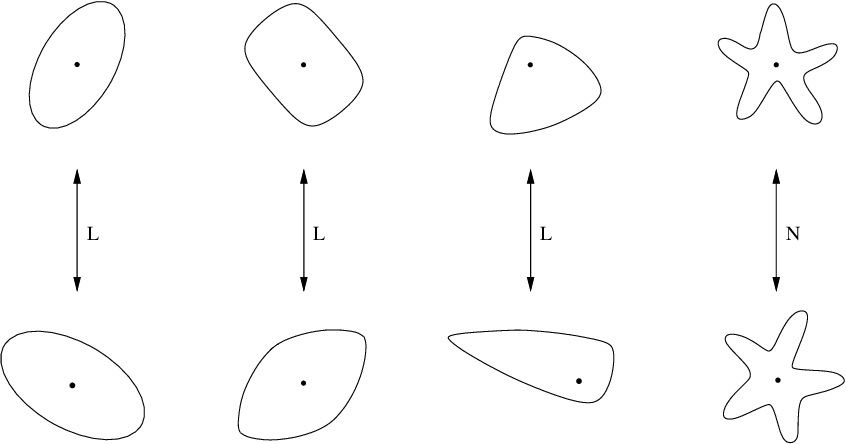}
 \end{center}
 \caption{Stars in $T_qQ$ and $T_q^*Q$}   \label{dual.fig}
\end{figure}

In more dynamical terms, we associate to the field of strictly convex disks~$\{D_qQ\}$
and its dual field $\{D_q^*Q\}$ the functions 
$$
F \colon TQ \to \R, \qquad F^* \colon T^*Q \to \R
$$
that are homogenous of degree one in each fiber and satisfy
$$
F^{-1}(1) = SQ, \qquad (F^*)^{-1} (1) = S^*Q .
$$
The Legendre transform
$$
\mathcal{L} \colon TQ \to T^*Q, \quad (q,v) \mapsto 
\left( q,  \partial_v \bigl( \tfrac 12 F^2(q,v) \bigr) \right)
$$
is a diffeomorphism that maps fibers to fibers and $D_qQ$ to $D_q^*Q$.
This diffeomorphism conjugates the Finsler geodesic flow of~$F$ on~$SQ$
with the Hamiltonian flow of $F^*$ restricted to~$S^*Q$, 
$$
\mathcal{L} \circ \phi_F^t (q,v) \,=\, \phi_{F^*}^t \circ \mathcal{L} (q,v) \quad 
                  \forall \, t \in \R, \; \forall \, (q,v) \in SQ .
$$
For strictly convex starfields, one can therefore freely switch between 
Finsler geodesic flows on tangent
bundles and co-Finsler geodesic flows on cotangent bundles.

For non-convex star fields $\{ S_qQ \}$ along $Q$ there is no Legendre transform. 
However, a smooth star field~$\{ S^*_qQ \}$ in~$T^*Q$ is the same thing as a Reeb flow on 
the spherization~$S^*Q$ of~$Q$, as we shall recall below.
We conclude that while Riemann's concept of a star field geometry in~$TQ$ led to nothing, 
the same picture in~$T^*Q$ describes the main example of contact geometry and Reeb flows,
a by now huge and thriving theory!
This is one more instance for the fact that it is always worthwhile and often crucial
to work in~$T^*Q$ instead of~$TQ$.

\bigskip \noindent
{\bf The flows $\phi_H$ on $S^*(H)$ are Reeb flows.}
For every function $H \colon T^* Q \to \R$ that is fiberwise homogenous of degree one
and smooth and positive off the zero-section we consider
the regular hypersurface $S^*(H) = H^{-1}(1)$ in~$T^*Q$
and the restriction $\phi_H^t$ of the Hamiltonian flow of~$H$ to~$S^*(H)$.
The flows $\phi_H^t$ live on different spaces. 
To have a class of flows on one manifold, 
we consider the spherization, or positive projectivization, of the cotangent bundle
$$
S^*Q \,:=\, \left( T^*Q \setminus Q \right) / \sim \quad 
              \mbox{ where $(q,p) \sim (q,sp)$ for $s>0$.}
$$
While the 1-form $\lambda = \sum_j p_j \1 dq_j$ on $T^*Q$ does not descend to this quotient, 
the kernel of~$\lambda$ does descend. 
The resulting hyperplane field $\xi_{\can}$ is the canonical contact structure on~$S^*Q$.
If for every function $H$ as above we abbreviate $\lambda_H = \lambda |_{S^* (H)}$ and $\xi_H = \ker (\lambda_H)$, 
we have that $(S^*(H),\xi_H)$ is diffeomorphic to $(S^*Q, \xi_{\can})$ under the map $(q,p) \mapsto [(q,p)]$.

We wish to show that the set of flows $\phi_H^t$ is in bijection with the set of Reeb flows
on $(S^*Q, \xi_{\can})$.
For this we first recall that $\phi_H^t$ is the Reeb flow of the contact form~$\lambda_H$ on~$(S^*(H), \xi_H)$,
see \cite[Lemma~4.2]{FLS13} for the short proof.
To identify the Reeb flows of the contact forms~$\lambda_H$ with the Reeb flows on~$(S^*Q, \xi_{\can})$,
we fix a representative $(S^*(H_0), \xi_{H_0})$ of $(S^*Q, \xi_{\can})$,
and for an arbitrary $H$ consider the diffeomorphism $\Psi_H \colon S^*(H) \to S^*(H_0)$
given by fiberwise radial projection, 
$$
\Psi_H (q,p) \,=\, \left( q, \frac{H(q,p)}{H_0(q,p)} \,p \right) .
$$
A computation shows that the differential of $\Psi_H$ takes the Reeb vector field of~$\lambda_H$
to the Reeb vector field of $\frac{H_0}{H} \2 \lambda_{H_0}$.
Therefore, the map $\Psi_H$ conjugates the Reeb flows of $\lambda_H$ and of $\frac{H_0}{H} \2 \lambda_{H_0}$.
In conclusion, each Reeb flow on $(S^*Q,\xi_{\can})$ corresponds to the Reeb flow of $f  \lambda_{H_0}$ 
on $(S^*(H_0), \xi_{H_0})$
for a positive smooth function~$f$ on $S^*(H_0)$,
and this Reeb flow is conjugate to the Hamiltonian flow $\phi_H^t$ on~$S^*(H)$, where $H = H_0 / \tilde f$
and $\tilde f$ is the extension of $f$ to~$T^*Q \setminus Q$ that is fiberwise homogenous of degree zero. 
 
For a Hamiltonian $H$ as above, the Holmes--Thompson volume is defined as
$$
\vol_H^{\HT}(Q) \,=\, \frac{1}{n! \, \omega_n} \int_{D^*(H)} \omega^n ,
$$
and for a contact form $\alpha$ on $S^*Q$ we defined the contact volume 
in \S \ref{ss:collapse-intro} as
$$
\vol_\alpha (S^*Q) \,=\, \int_{S^*Q} \alpha \wedge (d\alpha)^{n-1} .
$$
Under the above identification of $(S^*(H), \lambda_H)$ with $(S^*Q, \alpha)$
we obtain, using $\lambda_H = \lambda |_{S^*(H)}$ and $d \lambda = \omega$ and Stokes' theorem, 
$$
\int_{S^*Q} \alpha \wedge (d \alpha)^{n-1} \,=\, \int_{S^*(H)} \lambda_H \wedge (d\lambda_H)^{n-1}
\,=\, \int_{D^*(H)} \omega^n .
$$
Therefore, 
$$
\vol_\alpha (S^*Q) \,=\, n!\, \omega_n \, \vol_H^{\HT}(Q) .
$$

\subsection{A few generalizations of Riemannian results to Finsler geodesic flows and Reeb flows}

Given a result in Riemannian geometry or dynamics, it is interesting to see whether this result
extends to reversible or even irreversible Finsler metrics, or even to Reeb flows. 
In this way it becomes clear to which geometry the Riemannian result belongs properly. 
In this section we state a few such results from dynamics. 
For (non-)extensions of results from Riemannian geometry, involving for instance curvatures and spectra, 
we refer to \cite{Bar17, BarCol13, ColNewVer09, Ver99}.

\medskip
\raisebox{.5pt}{\textcircled{\raisebox{-.9pt} {1}}}
(i)
Riemannian 2-tori without conjugate points are flat, \cite{Hop48},
but there are many reversible and irreversible Finsler 2-tori 
without conjugate points that are not flat~\cite[\S 33]{Bus55}.

\smallskip
(ii)
There is (up to scaling) only one Riemannian metric on~$\RP^2$ all of whose geodesics 
are simple closed and of the same length, \cite{Gre63},
but there are many such Finsler metrics on~$\RP^2$, among them reversible ones, see~\cite{Sko55} and also~\cite{Bus57}.

\medskip
\raisebox{.5pt}{\textcircled{\raisebox{-.9pt} {2}}}
(i)
Every Riemannian $2$-sphere carries infinitely many geometrically distinct closed geodesics~\cite{Ban93}.
This result extends to reversible Finsler metrics~\cite{DMMS20},
but not to all Finsler metrics: 
Katok~\cite{Kat73} gave a simple example of a Finsler metric on~$S^2$
with only two geometrically distinct geodesics 
(where the reverse geodesic is counted, since it has different period).

\medskip
(ii)
For reversible Finsler geodesic flows on~$S^2$ that are periodic all orbits have the same period~\cite{GroGro81},
but there exist irreversible periodic Finsler geodesic flows on~$S^2$ whose orbits have different minimal periods,
\cite[p.~143]{Zil83}.

\medskip
\raisebox{.5pt}{\textcircled{\raisebox{-.9pt} {3}}}
(i)
For many compact manifolds~$Q$ (namely so-called essential ones and all surfaces),
there exists a constant~$C>0$ such that every normalized Finsler metric~$F$ on~$Q$
has a closed geodesic of $F$-length at most~$C$, cf.\ \S \ref{ss:systolic}.
This was shown by Loewner, Pu, Croke, and Gromov
in the Riemannian case and was extended to the Finsler case by 
\'{A}lvarez Paiva--Balacheff--Tzanev~\cite{apbt16}.
The generalization of length to closed orbits of Reeb flows is the period~$\int_\gamma \alpha$.
It is shown in~\cite{ABHS1, Sag18} that every spherization~$S^*Q$ of a compact manifold~$Q$ 
admits for every $C>0$ a normalized contact form~$\alpha$ 
whose Reeb flow has periodic orbits, but none of period~$\leq C$.

\medskip
(ii)
Theorems~\ref{t:Finslerintro} and \ref{t:mainintro} of this paper show that there is a positive
lower bound for the topological entropy of all normalized Finsler geodesic flows on compact surfaces of genus 
at least two, but not for Reeb flows.

\medskip
\raisebox{.5pt}{\textcircled{\raisebox{-.9pt} {4}}}
(i)
Most of the existence and multiplicity results for closed geodesics on compact Riemannian manifolds~$Q$
extend to Reeb flows on spherizations.
For instance, the isomorphism in~\cite{AbSch06} implies that every Reeb flow on~$S^*Q$
has a closed orbit in every component of the free loop space~$\Lambda Q$.
Also the Gromoll--Meyer theorem, according to which every Riemannian metric has infinitely many prime
closed geodesics provided that the Betti numbers of~$\Lambda Q$ 
are unbounded~\cite{GroMey69},
extends to Reeb flows on spherizations~\cite{McL12}.

\medskip
(ii) 
The exponential growth of the fundamental group of a compact manifold~$Q$
or of the rank of the homology of the based loop space of a simply connected compact manifold~$Q$ implies that
the topological entropy of every Riemannian geodesic flow on~$Q$ is positive, 
see \cite{Din71} and \cite[\S 5.3]{Pat99}.
This result generalizes to all Reeb flows on the spherization~$S^*Q$, see \cite{FraSch13, MacSch11},
and in fact to time-dependent Reeb flows, namely contact isotopies that are 
everywhere transverse to the contact distribution~$\xi$, see~\cite{Dah18a}.

\medskip
(iii)
According to the Bott--Samelson theorem~\cite{Bott54, Sam63}, 
the cohomology ring of a Riemannian manifold all of whose geodesics are simple closed and of equal length
must be generated by one element. 
Finer versions are proven in~\cite[Chapter~7]{Bes78}.
All these results hold true for Reeb flows on spherizations, even time-dependent ones,
see~\cite{FLS13, Dah18b}.


\section{Properties of the norm growth}
\label{app:norm}
Given a $C^1$-diffeomorphism $\phi$ of a compact manifold~$M$, we define the two real numbers 
\begin{eqnarray*}
\Gamma_+(\phi) &:=& \lim_{n \to +\infty} \frac 1n \log \| d \phi^n \|_{\infty} \,, \\
\Gamma(\phi) &:=& \max \left\{ \Gamma_+(\phi), \Gamma_+(\phi^{-1}) \right\}.
\end{eqnarray*}
Here  $\| \cdot \|_{\infty}$ denotes the supremum norm induced by 
the operator norm on endomorphisms of~$TM$ that is determined by any Riemannian metric 
on~$M$.  
The limit defining~$\Gamma_+$ exists because the sequence 
$\left( \log \| d \phi^n \|_{\infty} \right)$ is subadditive. 
Clearly, $\Gamma_+$ and~$\Gamma$ do not depend on the choice of the Riemannian metric on $M$. 

For a $C^1$-flow $\phi=\{\phi^t\}_{t\in \R}$, we set $\Gamma_+(\phi) := \Gamma_+(\phi^1)$ and $\Gamma(\phi) := \Gamma(\phi^1)$, 
or equivalently
\begin{eqnarray*}
\Gamma_+(\phi) &:=& \lim_{t \to +\infty} \frac 1t \log \| d \phi^t \|_{\infty} \,, \\
\Gamma(\phi) &:=& \lim_{t \to +\infty} \frac 1t \log \max \left\{ \| d \phi^t \|_{\infty} , \| d \phi^{-t} \|_{\infty}\right\} .
\end{eqnarray*}
The following properties of $\Gamma_+$ and $\Gamma$ are analogous to those of
the topological entropy, cf.\ \cite[Prop.\ 3.1.7]{HK95},
except for~(5)
(the topological entropy of a product is the sum of the topological entropies 
of the factors).
The proofs are somewhat easier, since in contrast to the case of topological entropy, 
which is defined in metrical terms, we can use differential calculus. 

\begin{prop} \label{p:Gammaele}
Let $\phi$ be a $C^1$-diffeomorphism of the compact manifold~$M$.
The norm growths $\Gamma_+$ and $\Gamma$ have the following properties.

\begin{enumerate} [label=\mbox{{\rm (\arabic*)}}]
\item
{\rm (Conjugacy invariance)} 
If $\psi$ is another $C^1$-diffeomorphism of $M$, then 
$$
\Gamma_+ (\psi^{-1} \phi \psi) = \Gamma_+ (\phi), \qquad
\Gamma (\psi^{-1} \phi \psi) = \Gamma (\phi) .
$$

\item
{\rm (Monotonicity)}
If $K$ is a compact submanifold of~$M$ that is invariant under~$\phi$, 
then 
$$
\Gamma_+ (\phi |_K) \leq \Gamma_+ (\phi), \qquad
\Gamma (\phi |_K) \leq \Gamma (\phi) .
$$ 

\item
{\rm (Decomposition)}
If $M = \bigcup_{i=1}^m K_i$, where $K_1, \dots, K_m$ are compact $\phi$-invariant submanifolds, 
then 
$$
\Gamma_+ (\phi) = \max_{1 \leq i \leq m} \Gamma_+ (\phi |_{K_i}) , \qquad
\Gamma (\phi) = \max_{1 \leq i \leq m} \Gamma (\phi |_{K_i}) .
$$

\item
{\rm (Elementary time change)} 
$\Gamma_+ (\phi^m) = m \2 \Gamma_+ (\phi)$ for all $m\in \N$ and
$\Gamma (\phi^m) = |m| \2 \Gamma (\phi)$ for all $m \in \Z$. 
For a flow, 
$$
\Gamma_+ \bigl( \{\phi^{st}\}_{t\in \R} \bigr) = 
 s \, \Gamma_+ \bigl( \{\phi^t\}_{t \in \R} \bigr) 
\quad \forall \, s \geq 0, \qquad
\Gamma \bigl( \{\phi^{st}\}_{t\in \R} \bigr) = 
|s| \, \Gamma \bigl( \{\phi^t\}_{t \in \R} \bigr) 
\quad \forall \, s \in \R.
$$

\item
{\rm (Product)}
If $\phi = \phi_1 \times \phi_2$ on $M_1 \times M_2$, then 
$\Gamma_+ (\phi) = \max \left\{ \Gamma_+ (\phi_1), \Gamma_+ (\phi_2) \right\}$ and 
$\Gamma (\phi) = \max \left\{ \Gamma (\phi_1), \Gamma (\phi_2) \right\}$.
\end{enumerate}
\end{prop}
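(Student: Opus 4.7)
The plan is to deduce all five items from two basic facts: the chain rule
$\|d(\psi_1 \circ \psi_2)\|_\infty \leq \|d\psi_1\|_\infty \, \|d\psi_2\|_\infty$,
and the observation that for any $C^1$-diffeomorphism $\psi$ of the compact
manifold $M$ the quantities $\|d\psi\|_\infty$ and $\|d\psi^{-1}\|_\infty$
are finite, so that their contribution to $\log \|d\phi^n\|_\infty$ is $O(1)$
and disappears after dividing by $n$. It suffices to treat $\Gamma_+$;
the statements for $\Gamma$ follow at once by taking the maximum with the
corresponding statement applied to $\phi^{-1}$.

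Items (1), (2), (4) (discrete case) and (5) will then be almost
one-liners. For (1) I write $(\psi^{-1}\phi\psi)^n = \psi^{-1}\phi^n\psi$
and apply the chain rule; the reverse inequality is obtained by swapping
the roles of $\phi$ and $\psi^{-1}\phi\psi$. For (2) I use that the
Riemannian norm on $TK$ is the restriction of the one on $TM$, so that
$\|d(\phi|_K)^n\|_\infty \leq \|d\phi^n\|_\infty$ holds pointwise. For (4)
the identity $(\phi^m)^n = \phi^{mn}$ reduces the claim to the existence
of the limit defining $\Gamma_+$. For (5) I note that
$d(\phi_1 \times \phi_2)^n = d\phi_1^n \oplus d\phi_2^n$ is block
diagonal with respect to the product Riemannian metric, and the operator
norm of such a block-diagonal map is the maximum of the block norms,
giving $\|d\phi^n\|_\infty = \max\{\|d\phi_1^n\|_\infty,
\|d\phi_2^n\|_\infty\}$; this is precisely where $\Gamma_+$ behaves
differently from the topological entropy, which would sum rather than
maximize. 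For (3), the $K_i$ must be codimension zero (otherwise they
cannot cover $M$ while being invariant), so at every point of $M$ the
tangent space equals that of some $K_i$ and
$\|d\phi^n\|_\infty = \max_i \|d(\phi|_{K_i})^n\|_\infty$; the remaining
step is the elementary fact that $\log \max_i a_n^{(i)}$ and
$\max_i \log a_n^{(i)}$ differ by at most $\log m$, so that
$\frac{1}{n}\log$ of both sides has the same limit.

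The only mildly technical point will be the flow version of (4): I need
to show that $\Gamma_+(\{\phi^{st}\}_{t \in \R}) =
s\,\Gamma_+(\{\phi^{t}\}_{t \in \R})$ for arbitrary $s \geq 0$, not just
integer $s$. To this end I will apply the continuous Fekete lemma to the
function $f(t) := \log \|d\phi^t\|_\infty$, which is subadditive by the
chain rule and locally bounded because $\phi$ is $C^1$ and $M$ is
compact; this yields the existence of
$\lim_{t \to \infty} t^{-1} f(t)$. That this continuous limit coincides
with $\Gamma_+(\phi^1)$ follows from the uniform bound
$|f(t) - f(\lfloor t \rfloor)| \leq \max_{r \in [0,1]} (|f(r)| + |f(-r)|)$,
obtained by writing $\phi^t = \phi^{\lfloor t \rfloor} \circ \phi^{t - \lfloor t \rfloor}$
and $\phi^{\lfloor t \rfloor} = \phi^t \circ \phi^{-(t - \lfloor t \rfloor)}$.
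Evaluating the continuous limit along $t = sn$ then gives the scaling
law, and the corresponding statement for $\Gamma$ with $s \in \R$ follows
by applying the argument to both $\phi^t$ and $\phi^{-t}$ and taking the
maximum.
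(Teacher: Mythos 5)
Your proof is correct and follows essentially the same path as the paper's, with two places worth a comment. In (3), after the identity $\|d\phi^n\|_\infty = \max_i \|d(\phi|_{K_i})^n\|_\infty$ (which, as you correctly observe, requires the $K_i$ to have codimension zero, a point the paper leaves implicit), the paper argues by pigeonhole: for each $n$ choose an index $i_n$ attaining the maximum, pass to a subsequence on which $i_n$ is constantly some $j$, and use the existence of the subadditive limit to conclude $\Gamma_+(\phi) = \Gamma_+(\phi|_{K_j})$. You instead want to commute limit and maximum directly, which also works; but the justification you offer is off. The quantities $\log \max_i a_n^{(i)}$ and $\max_i \log a_n^{(i)}$ are in fact \emph{equal}, since $\log$ is monotone increasing, not merely within $\log m$ of one another -- you may be conflating this with the estimate $\log \sum_i a_n^{(i)} \leq \log m + \max_i \log a_n^{(i)}$ that appears in the analogous entropy computation. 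The step that actually needs one more line is that, for finitely many convergent sequences $b_n^{(i)} \to L^{(i)}$, one has $\lim_n \max_i b_n^{(i)} = \max_i L^{(i)}$. For (4), the paper simply calls the flow case clear; your argument via the continuous Fekete lemma together with the comparison of $f(t)$ and $f(\lfloor t \rfloor)$ supplies precisely the missing details. In (5), your product Riemannian metric yields the same block-diagonal operator-norm identity as the paper's fiberwise max-norm, so either choice works.
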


\proof
Properties (2) and (4) are clear, and (1) follows from the chain rule.

For (3) it suffices to show that 
$\Gamma_+ (\phi) \leq \max_{1 \leq i \leq m} \Gamma_+ (\phi |_{K_i})$
in view of~(2).
For each $n \in \N$ choose $i_n \in \{1, \dots, m\}$ such that
$$
\| d \phi^n \|_{\infty} 
\,=\, \| (d \phi|_{K_{i_n}})^n \|_{\infty}  .
$$
There exists $j \in \{1, \dots, m \}$ such that $i_n = j$ for infinitely many~$n$.
For such a $j$ it holds that $\Gamma_+ (\phi) = \Gamma_+ (\phi_j)$. The result for $\Gamma$ follows.

For (5), given norms $\| \1 \cdot \1 \|_1$ on $TM_1$ and $\| \1 \cdot \1 \|_2$ on $TM_2$,
we choose the norm $\| (v_1,v_2) \| = \max \{ \|v_1\|_1, \|v_2\|_2 \}$ on~$TM$.
With this choice, 
$$
\|d\phi^n\|_{\infty} \,=\, \max \left\{ \|d\phi_1^n\|_{\infty}, \|d\phi_2^n\|_{\infty} \right\} 
\quad \forall \, n \in \N.
$$
The equality $\Gamma_+ (\phi) = \max \{ \Gamma_+ (\phi_1), \Gamma_+ (\phi_2)\}$ now follows by arguing as in 
the proof of~(3) for $m=2$. The result for $\Gamma$ follows.
\proofend

The following result improves property (4) above. 
It is an analogue of the time change estimate for the topological entropy in~\cite{Ohno80}.

\begin{prop} \label{p:ER}
Let $X$ be a smooth vector field on a compact manifold~$M$,
and let $f \colon M \to \R$ be a positive smooth function. 
Then 
$$
\Gamma_+ (\phi_{fX}) \leq \|f\|_{\infty} \, \Gamma_+ (\phi_X), \qquad \Gamma (\phi_{fX}) \leq \|f\|_{\infty} \, \Gamma (\phi_X).
$$ 
\end{prop}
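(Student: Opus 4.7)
The starting point of my plan is to exploit the fact that $fX$ and $X$ are everywhere positively proportional, so $\phi_{fX}$ is merely a time reparametrization of $\phi_X$. Writing $\Phi^t := \phi_{fX}^t$ and $\Psi^s := \phi_X^s$, one has
\[
\Phi^t(p) = \Psi^{\tau_p(t)}(p), \qquad \tau_p(t) := \int_0^t f(\Phi^s(p))\, ds,
\]
with the obvious bound $\tau_p(t) \leq \|f\|_\infty\, t$. Differentiating in~$p$ gives
\[
d\Phi^t(p) = d\Psi^{\tau_p(t)}(p) + X(\Phi^t(p)) \otimes d_p \tau(t, \cdot),
\]
which expresses $d\Phi^t(p)$ as the differential of $\Psi$ at the smaller time $\tau_p(t)$ plus a rank-one correction. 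Bounding $\|d\Phi^t\|_\infty$ therefore reduces to bounding the co-vector $d_p\tau(t,\cdot)$.

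Next I would derive and solve an integral equation for $d_p\tau(t,\cdot) = \int_0^t df(\Phi^s(p))\, d\Phi^s(p)\, ds$. Substituting the formula above for $d\Phi^s(p)$, using $df(X) = X(f)$, and changing variables $u = \tau_p(s)$ turns this into a Volterra equation for $\tilde h(u) := d_p\tau(\tau_p^{-1}(u), \cdot)$:
\[
\tilde h(a) = \int_0^a \frac{df(\Psi^u(p))\, d\Psi^u(p)}{f(\Psi^u(p))}\, du + \int_0^a \frac{X(f)(\Psi^u(p))}{f(\Psi^u(p))}\, \tilde h(u)\, du.
\]
The crucial observation is that the kernel of the second integral is a total derivative along $\Psi$-orbits,
\[
\frac{X(f)(\Psi^u(p))}{f(\Psi^u(p))} = \frac{d}{du} \log f(\Psi^u(p)),
\]
so the integrating factor is elementary. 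The Volterra equation admits the explicit solution
\[
d_p\tau(t, \cdot) = f(\Phi^t(p)) \int_0^{\tau_p(t)} \frac{df(\Psi^u(p))\, d\Psi^u(p)}{f(\Psi^u(p))^2}\, du.
\]

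Setting $K(s) := \|d\Psi^s\|_\infty$, the closed-form expression yields $\|d_p\tau(t,\cdot)\| \leq C_1 \int_0^{\|f\|_\infty t} K(u)\, du$, where $C_1 := \|f\|_\infty\, \|df\|_\infty / (\min f)^2$. Substituting back produces
\[
\|d\Phi^t\|_\infty \leq \sup_{s \leq \|f\|_\infty t} K(s) + C_1 \|X\|_\infty \int_0^{\|f\|_\infty t} K(u)\, du.
\]
Setting $\alpha := \Gamma_+(\phi_X)$, for every $\varepsilon > 0$ the bound $K(u) \leq B_\varepsilon\, e^{(\alpha+\varepsilon)u}$ (which follows since $\tfrac{1}{s}\log K(s) \to \alpha$) makes both terms on the right bounded by a constant multiple of $e^{(\alpha+\varepsilon)\|f\|_\infty t}$. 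Taking $\tfrac{1}{t}\log$ and letting $\varepsilon \to 0$ gives $\Gamma_+(\phi_{fX}) \leq \|f\|_\infty\, \Gamma_+(\phi_X)$. The inequality for~$\Gamma$ follows by applying the same statement to the vector field~$-X$ with the same~$f$, since $\phi_{f(-X)}^t = \phi_{fX}^{-t}$ and $\phi_{-X}^t = \phi_X^{-t}$, and taking the maximum.

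The main obstacle to this plan, and the reason it must go through the explicit Volterra solution, is that a naive Gronwall argument is not sharp enough. Bounding $\|d_p\tau(t,\cdot)\|$ by $\|df\|_\infty \int_0^t \|d\Phi^s\|_\infty\, ds$ before integration and feeding this into the formula of Paragraph~1 only yields $\Gamma_+(\phi_{fX}) \leq \max\{\|f\|_\infty\, \Gamma_+(\phi_X),\, \|X\|_\infty\, \|df\|_\infty\}$, which is worthless as soon as $f$ oscillates wildly. Avoiding this spurious term forces one to pass to the Volterra equation in the $\Psi$-time~$u$ rather than in the $\Phi$-time~$s$, so that the elementary cancellation $X(f)/f = \tfrac{d}{du}\log f$ becomes available and the integrating factor reduces to $f(\Psi^a(p))/f(p)$ in closed form.
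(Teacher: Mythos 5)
Your proof is correct and follows essentially the same route as the paper's: decompose $d\phi_{fX}^t(p) = d\phi_X^{F(t,p)}(p) + X\bigl(\phi_{fX}^t(p)\bigr)\,dF(t,p)$, derive a linear first-order equation for $dF(t,p)$, observe that the coefficient of the homogeneous term is a logarithmic derivative of $f$ along the orbit, and exploit the resulting elementary integrating factor. The paper works directly in $\Phi$-time with the function $\alpha(t) = f(\phi_{fX}^t(p))$ and the integrating factor $1/\alpha(t)$, whereas you change to $\Psi$-time and write out the closed-form solution; these are equivalent reformulations, and both yield the same exponential bound on $\|dF(t,p)\|$, hence the same conclusion.

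One small correction to your closing remark: the change of variables to $\Psi$-time is \emph{not} forced. Since $\tfrac{d}{dt}\Phi^t(p) = (fX)(\Phi^t(p))$, one has the same cancellation already in $\Phi$-time, namely $X(f)(\Phi^t(p)) = \tfrac{d}{dt}\log f(\Phi^t(p))$, so the coefficient $\alpha'(t)/\alpha(t)$ in the $\Phi$-time equation is also an exact derivative, and the integrating factor $1/\alpha(t) = 1/f(\Phi^t(p))$ is just as explicit. This is precisely what the paper's proof does; you could keep the entire argument in $\Phi$-time without losing sharpness. What is genuinely needed — and what you correctly identify — is solving (or estimating) the linear equation with the exact integrating factor rather than applying a crude Gronwall bound with $\|df\|_\infty \|X\|_\infty$ in the exponent.
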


\proof
Fix some positive number $\gamma>\Gamma_+(\phi_X)$. By the definition of $\Gamma_+(\phi_X)$ and the continuity of $t\mapsto \|\phi_X^t\|_{\infty}$, there exists a positive number $C_{\gamma}$ such that
\begin{equation}
\label{gbound}
\| d\phi^t_X\|_{\infty} \leq C_{\gamma}\2 e^{\gamma t} \qquad \forall \, t \geq 0.
\end{equation}
The smooth function $F \colon \R \times M \rightarrow \R$ that is defined by
\[
\partial_t F(t,p) = f(\phi_{fX}^t(p)), \qquad F(0,p) = 0, \qquad \forall \, (t,p)\in \R \times M,
\]
is the time change function relating the two flows:
\[
\phi_{f X}^t (p) \,=\, \phi_{X}^{F(t,p)}(p) .
\]
We have
\[
F(t,p) \leq \|f\|_{\infty}\, t \qquad \forall \, (t,p) \in [0,+\infty) \times M,
\]
and from \eqref{gbound} we find
\begin{equation} \label{gbound2}
\bigl\| d\phi_X^{F(t,p)}(p) \bigr\| \,\leq\, 
           C_{\gamma}\2 e^{\gamma \2 \|f\|_{\infty} \2 t} \qquad 
					 \forall \,(t,p)\in [0,+\infty) \times M.
\end{equation}
Here and in the following equations, $d$ denotes differentiation with respect to the spatial variables. By differentiating the identity
\[
\partial_t F(t,p) = f \bigl( \phi_{X}^{F(t,p)}(p)\bigr),
\]
we obtain
\begin{equation} \label{dF}
\begin{split}
\partial_t d F(t,p) &= df \bigl( \phi_X^{F(t,p)}(p) \bigr) \circ d\phi_X^{F(t,p)}(p) + 
      df \bigl( \phi_X^{F(t,p)}(p) \bigr) \bigl[ X(\phi_X^{F(t,p)}(p)) \bigr] dF(t,p) 
			\\[0.2em] 
			&= df \bigl( \phi_X^{F(t,p)}(p) \bigr) \circ d\phi_X^{F(t,p)}(p) + 
			df \bigl( \phi_{fX}^{t}(p) \bigr) \bigl[X(\phi_{fX}^{t}(p)) \bigr] dF(t,p) \\ 
			&=  df \bigl( \phi_X^{F(t,p)}(p) \bigr) \circ d\phi_X^{F(t,p)}(p) + 
			\frac{1}{f(\phi_{fX}^t(p))} \2 \frac{d}{dt} f(\phi^t_{fX}(p)) \, dF(t,p).
\end{split}
\end{equation}
Let $v\in T_p M$ be a vector of norm one and set
\[
u(t):=  d F(t,p)[v], \qquad \alpha(t):= f(\phi_{fX}^t(p)).
\]
From \eqref{dF} and \eqref{gbound2} we find
\[
u'(t) - \frac{\alpha'(t)}{\alpha(t)}\2 u(t) \,=\, 
 df \bigl( \phi_X^{F(t,p)}(p) \bigr) \circ d\phi_X^{F(t,p)}(p)[v] 
 \,\leq\, C_{\gamma}\2 \|df\|_{\infty} \,e^{\gamma \2 \|f\|_{\infty} \2t} 
 \qquad \forall \,t \geq 0.
\]
Multiplying both sides of this inequality by $1/\alpha(t)$, which is bounded from above by $1/\min f$, we obtain
\[
\frac{d}{dt} \frac{u(t)}{\alpha(t)} \,\leq\, \frac{C_{\gamma}\2 \|df\|_{\infty}}{\alpha(t)}\, e^{\gamma \2 \|f\|_{\infty}\2 t} \,\leq\, \frac{C_{\gamma}\2 \|df\|_{\infty}}{\min f} \,e^{\gamma \2 \|f\|_{\infty} \2 t} \qquad \forall \, t\geq 0.
\]
Since $u(0)=0$, integration on the interval $[0,t]$ and multiplication by $\alpha(t)$, which is bounded from above by $\|f\|_{\infty}$, yields
\[
u(t) \,\leq\, \alpha(t) \,\frac{C_{\gamma} \2 \|df\|_{\infty}}{\min f}  
       \int_0^t e^{\gamma \2 \|f\|_{\infty} \2 s} \, ds  
			\,\leq\, \frac{C_{\gamma} \2 \|df\|_{\infty}}{\gamma \min f} \,e^{\gamma \2 \|f\|_{\infty}\2 t} \qquad \forall \, t\geq 0.
\]
Recalling that $u(t)=  d F(t,p)[v]$ where $v$ is an arbitrary unit tangent vector at~$p$, we have proven the bound
\begin{equation} \label{gbound3}
\|dF(t,p)\| \,\leq\, C_{\gamma}'\,  e^{\gamma \2 \|f\|_{\infty} \2t} 
   \qquad \forall \, (t,p)\in [0,+\infty) \times M,
\end{equation}
where $C_{\gamma}'= \frac{C_{\gamma} \2 \|df\|_{\infty}}{\gamma \min f}$. 
Thanks to the identity
\[
d\phi_{fX}^t(p) = d\phi_X^{F(t,p)}(p) + X \bigl(\phi_X^{F(t,p)}(p) \bigr) \,dF(t,p),
\]
\eqref{gbound2} and \eqref{gbound3} imply
\[
\|d\phi_{fX}^t \|_{\infty} \,\leq\, 
        \left( C_{\gamma}  + \|X\|_{\infty} \2 C_{\gamma}' \right) e^{\gamma \2 \|f\|_{\infty} \2t} \qquad \forall \, t \geq 0,
\] 
and hence
\[
\Gamma_+(\phi_{fX}) \,=\, \lim_{t\rightarrow \infty} \frac{1}{t} \log \|d\phi_{fX}^t \|_{\infty}  \,\leq\, \gamma \2\|f\|_{\infty}.
\]
Since $\gamma$ is an arbitrary number that is larger than $\Gamma_+(\phi_X)$, we deduce the desired bound
\[
\Gamma_+(\phi_{fX}) \,\leq\, \|f\|_{\infty} \, \Gamma_+(\phi_X).
\]
The analogous bound for $\Gamma$ immediately follows.
\proofend

\section{Dynamically trivial deformations of Finsler metrics} 
\label{app:Finslerisotop}

The aim of this section is to show that it is always possible to deform any given regular Finsler metric on a closed manifold of dimension at least two into a new Finsler metric which is not isometric to the original one but whose geodesic flow is nevertheless conjugate to the one of the original metric by a smooth time-preserving conjugacy.

Let $Q$ be a closed manifold and $\lambda = \sum_j p_j \2 dq_j$ be the canonical Liouville one-form on~$T^*Q$. A diffeomorphism $\varphi \colon T^*Q \rightarrow T^*Q$ is a symplectomorphism, i.e.\ preserves the symplectic form~$d\lambda$, 
if and only if the one-form $\varphi^* \lambda - \lambda$ is closed. 
When this form is exact, $\varphi$ is called an exact symplectomorphism. 

Any diffeomorphism $\theta \colon Q \rightarrow Q$ lifts to an exact symplectomorphism 
$T^*\theta$ of~$T^* Q$ by setting
\[
T^* \theta (q,p) := \bigl(\theta(q), d\theta(q)^{-*} [p] \bigr),
\]
where the symbol $-*$ denotes the inverse of the adjoint. 
Indeed, $T^* \theta$ preserves the Liouville form $\lambda$.
Actually, every diffeomorphism of $T^*Q$ preserving $\lambda$ is the lift of some diffeomorphism of~$Q$,
see for instance Proposition~2.1 and Homework 3.3 in~\cite{Can01}.

Therefore, the diffeomorphism group of $Q$ embeds naturally into the group of exact 
symplectomorphisms of~$T^* Q$. The latter group is much larger, though. 
For instance, any symplectomorphism $\varphi \colon T^* Q \rightarrow T^* Q$ which is compactly supported is exact: 
The exactness of $\varphi^* \lambda - \lambda$ is equivalent to the fact that this closed one-form has vanishing integral on any closed curve in $T^*Q$, and if $\varphi$ is compactly supported this is certainly true since every closed curve in $T^* Q$ is freely homotopic to a closed curve taking values in the complement of any given compact subset of $T^* Q$. Producing non-trivial compactly supported symplectomorphisms 
of~$T^* Q$ is very easy, as one can integrate the vector field that is induced by a compactly supported Hamiltonian function. Actually, the
time-one map of any possibly time-dependent Hamiltonian vector field with globally defined flow is exact even without assuming the support to be compact. Indeed, denoting by $X_t$ the Hamiltonian vector field of the time-dependent function $H_t$ and by $\phi_H^t$ its flow, 
we compute with the help of Cartan's identity
$$
\frac{d}{dt} (\phi_H^t)^* \lambda \,=\, 
(\phi_H^t)^* \left( \imath_{X_t} d\lambda + d \imath_{X_t} \lambda \right) \,=\,
(\phi_H^t)^* d \left( \imath_{X_t} \lambda - H_t \right)
$$
and hence
$$
(\phi_H^1)^* \lambda-\lambda \,=\, 
d \int_0^1 \left( \imath_{X_t} \lambda - H_t \right) \circ \phi_H^t\, dt .
$$
If $Q$ has non-trivial de-Rham cohomology in degree one, then any closed one-form $\eta$ on $Q$ that is not exact induces a symplectomorphism
\[
T^*Q \rightarrow T^* Q, \qquad (q,p) \mapsto (q,p+\eta(q)),
\]
which is symplectically isotopic to the identity but not exact.

Recall from \S \ref{ss:RSG} that to a fiberwise starshaped hypersurface $S \subset T^*Q$
we associate the Reeb flow $\phi_{\alpha}$ of $\alpha := \lambda|_S$, 
namely the flow generated by the vector field $R_\alpha$ defined by
\[
d\alpha (R_\alpha, \cdot) = 0, \qquad \alpha (R_{\alpha}) = 1. 
\]
Equivalently, $\phi_\alpha$ is the restriction to~$S$ of the Hamiltonian flow on~$T^*Q$
of the function $T^*Q \to \R$ that is fiberwise positively homogeneous 
of degree~1 and equal to~$1$ on~$S$.

The next result tells us that the Reeb dynamics on fiberwise starshaped hypersurfaces does not change when we transform them by exact symplectomorphisms. 

\begin{prop} \label{p:Fdef}
Let $Q$ be a closed manifold, $S$ a fiberwise starshaped hypersurface of~$T^*Q$
and $\varphi: T^* Q \rightarrow T^* Q$ an exact symplectomorphism 
such that $S' := \varphi (S)$ is also fiberwise starshaped.
Then there exists a diffeomorphism $\psi \colon S \to S'$
such that
\[
\psi^* \left( \lambda|_{S'} \right) \,=\, \lambda|_S .
\]
In particular, the diffeomorphism $\psi$ is a smooth time-preserving conjugacy between the Reeb flows of $S$ and $S'$. 
\end{prop}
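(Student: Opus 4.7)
The plan is to modify the restriction $\varphi|_S$ by a well-chosen reparametrization along Reeb orbits of $S'$, in order to absorb the exact one-form by which $\varphi|_S$ fails to intertwine the Liouville forms. More precisely, exactness of $\varphi$ furnishes a smooth function $f \colon T^*Q \to \R$ with $\varphi^*\lambda - \lambda = df$. Setting $\alpha = \lambda|_S$ and $\alpha' = \lambda|_{S'}$, restriction to $S$ yields
\[
(\varphi|_S)^*\alpha' \,=\, \alpha + d(f|_S) \,=\, \alpha + dg, \qquad g := f|_S.
\]
Thus $\varphi|_S$ is already a diffeomorphism $S \to S'$, but its pullback of $\alpha'$ differs from $\alpha$ by the exact term $dg$.

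First I would observe that the Reeb flow $\phi_{\alpha'}^t$ of $\alpha'$ is complete because $S'$ is compact (it is a fiberwise starshaped hypersurface in the cotangent bundle of a closed manifold). This permits the following definition: set
\[
\psi \colon S \to S', \qquad \psi(x) \,:=\, \phi_{\alpha'}^{-g(x)}\bigl(\varphi(x)\bigr).
\]
The map $\psi$ is smooth, and it remains a diffeomorphism onto $S'$, since reparametrizing along Reeb orbits permutes the points of $S'$ in a fiberwise bijective manner and can be inverted by the analogous construction applied to $\varphi^{-1}$ (which is again exact).

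The key computation is then to check $\psi^*\alpha' = \alpha$. Differentiating, for $v \in T_xS$ one has
\[
d\psi(x)\,v \,=\, d\phi_{\alpha'}^{-g(x)}(\varphi(x)) \bigl( d\varphi(x)\,v \bigr) \,-\, dg(x)\,v \cdot R_{\alpha'}(\psi(x)).
\]
Applying $\alpha'$ to this expression and using the two defining properties of the Reeb vector field, namely $\alpha'(R_{\alpha'}) = 1$ and $(\phi_{\alpha'}^t)^*\alpha' = \alpha'$, one obtains
\[
\psi^*\alpha'(x)\,v \,=\, (\varphi|_S)^*\alpha'(x)\,v \,-\, dg(x)\,v \,=\, \alpha(x)\,v,
\]
as required. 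This is the heart of the argument, but it is a direct verification rather than the main obstacle; the only subtle point is confirming that the correction along $R_{\alpha'}$ indeed kills the exact term, which is exactly the purpose of the construction.

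Finally, once $\psi^*\alpha' = \alpha$ is established, $\psi$ automatically conjugates the Reeb dynamics. Indeed, pulling back also gives $\psi^*d\alpha' = d\alpha$, so $d\psi(x)\cdot R_\alpha(x)$ satisfies both defining conditions of $R_{\alpha'}$ at $\psi(x)$, hence equals $R_{\alpha'}(\psi(x))$. This yields the time-preserving conjugacy $\psi \circ \phi_\alpha^t = \phi_{\alpha'}^t \circ \psi$, which is the second assertion of the proposition. The main conceptual step is the initial recognition that exactness of $\varphi$ is precisely the obstruction measured by an exact one-form on $S$, and that such an obstruction on a contact manifold can always be trivialized by sliding along the Reeb flow; no topological hypothesis on $Q$ or $S$ is needed beyond what guarantees completeness of this flow.
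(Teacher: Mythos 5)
Your construction is a genuinely different route from the paper's. Instead of integrating a time-dependent vector field on $S$ by Moser's method, you perform a single Reeb-flow correction on $S'$: defining $\psi(x) = \phi_{\alpha'}^{-g(x)}(\varphi(x))$ and verifying directly that the exact error $dg$ is cancelled. The key computation $\psi^*\alpha' = \alpha$ is correct, and this route has a real advantage: it bypasses the positivity estimate $1 + t\,dh(R_\alpha) > 0$ that the paper must establish for its Moser vector field $\chi_t R_\alpha$ to be well defined.

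However, your justification that $\psi$ is a diffeomorphism is not valid as stated. The Reeb flow of $\alpha'$ does \emph{not} act fiberwise on $S'$ (it projects to geodesic-type motion on $Q$), so ``reparametrizing along Reeb orbits permutes the points of $S'$ in a fiberwise bijective manner'' is false. The inversion argument also does not go through as described: if $\psi'(y) = \phi_\alpha^{-g'(y)}(\varphi^{-1}(y))$ is the analogous map built from $\varphi^{-1}$, then $\psi' \circ \psi$ is not evidently the identity, because $\varphi^{-1}\circ\phi_{\alpha'}^t\circ\varphi$ is \emph{not} the Reeb flow $\phi_\alpha^t$ on $S$ — it is the Hamiltonian flow of the pulled-back (generally not fiberwise homogeneous) Hamiltonian. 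In principle two points $x_1 \neq x_2$ with $\varphi(x_1), \varphi(x_2)$ on a common periodic Reeb orbit could collide under $\psi$. The gap can be closed, but it requires a separate argument: from $\psi^*\alpha' = \alpha$ one sees that $\psi^*\bigl(\alpha' \wedge (d\alpha')^{n-1}\bigr) = \alpha \wedge (d\alpha)^{n-1}$ is a contact volume form, so $\psi$ is a local diffeomorphism of the compact $S$ onto $S'$, hence a covering; and it has degree one either because it is homotopic to the diffeomorphism $\varphi|_S$ via $\psi_t(x) = \phi_{\alpha'}^{-tg(x)}(\varphi(x))$, or because $\varphi$ preserves symplectic volume so $S$ and $S'$ have equal contact volume. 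This is precisely what the Moser approach in the paper gets for free, since the flow $\eta_t$ it produces is automatically a one-parameter family of diffeomorphisms.
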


The above proposition is proven at the end of this appendix. We now discuss its consequences concerning Finsler geodesic flows. 
Let $F$ be a regular Finsler metric on~$Q$ and denote by $S^*(F)\subset T^* Q$ the corresponding unit cotangent sphere bundle, which we here consider as the domain 
of the geodesic flow of~$F$. The push-forward $\theta_* F$ of this Finsler metric 
by a diffeomorphism $\theta \colon Q \rightarrow Q$ is another Finsler metric on~$Q$. 
By construction, the Finsler manifolds $(Q,F)$ and $(Q,\theta_* F)$ are isometric, 
and their geodesic flows are smoothly conjugate. 
Indeed, the cotangent lift $T^* \theta$ restricts to a diffeomorphism 
from $S^*(F)$ to~$S^*(\theta_* F)$ conjugating the two geodesic flows. 
Diffeomorphisms of~$Q$ induce ``metrically trivial deformations'' of~$F$.

Now consider a more general exact symplectomorphism 
$\varphi \colon T^* Q \rightarrow T^* Q$. If $\varphi$ is $C^2$-close to the identity, then the image of $S^*(F)$ under~$\varphi$ is still fiberwise strictly convex and hence can be seen as the unit cotangent sphere bundle of another Finsler metric~$F'$:
\[
\varphi(S^*(F)) = S^*(F').
\]
The fact that $\varphi$ is volume preserving implies that the Finsler metrics $F$ and~$F'$ have the same Holmes--Thompson volume. 
By Proposition~\ref{p:Fdef} above, the geodesic flows of 
the Finsler metrics $F$ and~$F'$ are conjugate by a smooth time-preserving conjugacy. Therefore, the exact symplectomorphism~$\varphi$ induces a 
``dynamically trivial deformation'' $F'$ of~$F$. 
However, $F$ and~$F'$ need not be isometric. 
For instance, $F$ could be a Riemannian metric, meaning that $S^*(F)$ is a field of centrally symmetric ellipsoids. In the special case in which $\varphi$ is 
the cotangent lift of a diffeomorphism of~$Q$, $\varphi$ maps fibers into fibers and acts linearly on them, so $\varphi(S^*(F))$ is still a field of centrally symmetric ellipsoids and the metric $F'$ is still Riemannian. But for a more general exact symplectomorphism~$\varphi$, there is no reason why $\varphi(S^*(F))$ should still be a field of ellipsoids, so the new Finsler metric~$F'$ is in general not Riemannian. 

Let $F$ again be an arbitrary regular Finsler metric on~$Q$ and let $q$ be a point in~$Q$. By acting just by diffeomorphisms of~$Q$, we obtain Finsler metrics~$F'$ on~$Q$ 
which are isometric to~$F$ and whose unit sphere $S_q^*(F')$ at~$q$ is the image of 
some $S^*_{q'}(F)$ by some linear isomorphism. 
Therefore, all the possible cotangent unit spheres at~$q$ of Finsler metrics which are constructed in this way belong to the finite dimensional family of sets
\[
\left\{ 
L(S^*_{q'}(F)) \mid q'\in Q, \; L \colon T_{q'} Q \rightarrow T_q Q \mbox{ linear isomorphism}
\right\}.
\]
Instead, by acting by more general exact symplectomorphisms, we can get a new Finsler metric~$F'$ whose geodesic flow is still conjugate to the one of~$F$ 
but such that $S^*_q(F')$ is an arbitrary convex hypersurface which is sufficiently close to~$S^*_q(F)$. When $S^*_q(F')$ does not belong to the above finite dimensional family, the metric~$F'$ cannot be isometric to~$F$.

Let us prove this fact. Let $S'_q$ be a strictly convex hypersurface in~$T^*_q Q$. 
Using a cotangent local chart, we identify $\pi^{-1}(U)$, where $U$ is a neighborhood 
of~$q$ in~$Q$ and $\pi \colon T^* Q \rightarrow Q$ denotes the footpoint projection, 
with $T^* \R^n = \R^n \times (\R^n)^*$ in such a way that $T^*_q Q$ is identified 
with $\{0\} \times (\R^n)^*$.  Then both $S_q^*(F)$ and~$S'_q$ are subsets of 
$\{0\} \times (\R^n)^* \cong (\R^n)^*$. Let $\xi \colon (\R^n)^* \rightarrow (\R^n)^*$ 
be a compactly supported smooth vector field whose flow at time one 
maps $S_q^*(F)$ to~$S'_q$ (one easily achieves this by a radial vector field). 
Consider the Hamiltonian function
\[
H(q,p) \,:=\, \chi(q) \, \langle \xi(p), q \rangle, \qquad (q,p) \in \R^n \times (\R^n)^*,
\]
where $\langle\cdot,\cdot \rangle$ denotes the duality pairing and 
$\chi \colon \R^n \rightarrow \R$ is a smooth compactly supported function taking the value~1 near~0. 
This function extends to a compactly supported smooth function on~$T^*Q$ and the corresponding Hamiltonian flow leaves $T_q^*Q$ invariant and restricts to the flow 
of~$\xi$ on it. Therefore, denoting by $\varphi \colon T^* Q \rightarrow T^* Q$ 
the time-one map of this Hamiltonian flow, we obtain an exact symplectomorphism~$\varphi$ such that $\varphi(S_q^*(F))=S'_q$.  
If $S_q'$ is $C^3$-close to $S_q^*(F)$, meaning that 
\[
S_q' \,=\, \{ e^{f(p)} \2 p \mid p\in S_q^*(F)\}
\]
for some $C^3$-small function $f$ on $S_q^*(F)$, then the vector field~$\xi$ can be chosen to be $C^3$-small, the Hamiltonian vector field of~$H$ is $C^2$-small and hence $\varphi$ is $C^2$-close to the identity. Under this closeness assumption, $\varphi(S^*(F))$ is fiberwise strictly convex and hence is the unit cotangent sphere bundle of a Finsler metric~$F'$ such that $S_q^*(F') = S_q'$. 

Note that if the Finsler metric $F$ is reversible and $S_q'$ is centrally symmetric, 
then the dynamically equivalent Finsler metric~$F'$ as above can be chosen to be reversible by requiring $\xi$ to be an odd map. 

Summarizing: Given any regular (reversible) Finsler metric $F$, 
we obtain a large family of non-isometric (reversible) Finsler metrics~$F'$ 
having the same Holmes--Thompson volume and whose geodesic flows are smoothly 
conjugate to the one of $F$; $F'$ can be chosen to be non-Riemannian 
even if $F$ is Riemannian. 
The topological entropy, 
the volume growth used in Appendix~\ref{ss:Yomdin},
the norm growth from Appendix~\ref{app:norm}, 
the length spectrum and all the dynamical invariants of~$F'$ 
coincide with those of~$F$. 
If one uses exact symplectomorphisms that are isotopic to the identity, 
then also the marked length spectrum is preserved, and 
if one starts with a reversible Finsler metric~$F$ of negative flag curvature 
and deforms it to a reversible Finsler metric~$F'$ that still has negative flag curvature, 
then also the volume entropy of~$F$ and~$F'$ agrees, 
since for Finsler metrics of negative flag curvature 
the volume entropy is equal to the topological entropy 
by the equality case in Manning's theorem, see \cite[Theorem~15]{Bar17}.
One deformation to Finsler metrics 
with equal marked length spectrum and equal volume entropy 
was constructed in~\cite{ColNewVer09} 
for the Riemannian metrics of constant curvature on hyperbolic surfaces.
Our discussion here shows that this is a general phenomenon and is a manifestation of the fact that a neighborhood of the identity on the group of exact symplectomorphisms 
of~$T^*Q$ can transform a Finsler metric in ways in which diffeomorphisms of~$Q$ cannot.

\bigskip \noindent
{\it Proof of Proposition \ref{p:Fdef}.}
The proof is given in \cite[\S 8]{AbMa15} for the case of convex hypersurfaces 
in a Hilbert space. That proof is readily adapted to our situation. We give the proof for the reader's convenience.
 
Since the symplectomorphism $\varphi$ is assumed to be exact, there exists a smooth function $h \colon T^*Q \to \R$
such that
\[
\varphi^* \2 \gl \,=\, \gl + dh.
\]
We abbreviate $\alpha = \lambda |_S$ and $\alpha' = \lambda|_{S'}$. The pull-back of $\ga'$ to~$S$,
that is the 1-form
\begin{equation} \label{e:fa'}
(\varphi|_{S})^* (\ga') \,=\, \alpha + dh |_S ,
\end{equation}
is a contact form on~$S$, and its Reeb vector field is
$$
\varphi^* (R_{\ga'}) \,=\, R_{\ga + dh|_{S}} .
$$
Since $\alpha$ and $\alpha + dh |_S$ have the same differential, we have
\begin{equation} \label{e:Rf}
R_{\ga + dh|_{S}} \,=\, f \1 R_{\ga}
\end{equation}
for a nowhere vanishing function $f$ on $S$. If $Y= p\, \partial_p$ denotes the canonical Liouville vector field on~$T^*Q$, the identity $\imath_Y d\lambda = \lambda$ implies that for every $x\in S$ the one-form $(\imath_{R_{\alpha}} d\lambda) (x)$, 
whose kernel is the tangent space~$T_x S$, is negative on tangent vectors which are pointing outwards, i.e.\ belong to the half-space containing~$Y(x)$. 
Similarly, $\imath_{R_{\alpha'}} d\lambda$ is negative on outward pointing vectors 
based at~$S'$. By the identity
\[
\varphi^* ( \imath_{R_{\alpha'}} d\lambda ) \,=\, \imath_{\varphi^*(R_{\alpha'})} \varphi^*(d\lambda) \,=\,  \imath_{R_{\ga + dh|_{S}}} d\lambda
\]
and by the fact that the differential of $\varphi$ maps outward pointing vectors based 
at~$S$ to outward pointing vectors based at~$S'$, we deduce that the above one-form is negative on outward pointing vectors based at~$S$. 
Since also $\imath_{R_{\alpha}} d\lambda$ is negative on these vectors, 
the function~$f$ appearing in~\eqref{e:Rf} is everywhere positive.

By applying the 1-form $\alpha + dh|_S$ to~\eqref{e:Rf} we obtain
$1 = f \2 \bigl( 1 + dh (R_\alpha) \bigr)$.
Therefore, 
\begin{equation} \label{e:tg0}
1 + t\, dh (R_{\alpha}) \,>\, 0 \qquad \forall \, t \in [0,1].
\end{equation}

We now apply Moser's homotopy method:
We look for a smooth time-dependent vector field $X_t$ on~$S$
of the form $X_t = \chi_t\2 R_\ga$, where $\chi_t$ is a family of functions on~$S$,
such that the flow $\eta_t$ of~$X_t$ satisfies
\begin{equation} \label{e:Moser1}
\eta_t^* \left(\alpha + t\, d h \right) \,=\, \alpha \qquad \forall \, t \in [0,1].
\end{equation}
Since $\eta_0^* \2 \alpha = \alpha$, this identity is equivalent to 
\begin{equation} \label{e:Moser2}
\frac{d}{dt} \bigl( \eta_t^* (\alpha + t\, d h) \bigr) \,=\, 0 
                   \qquad \forall \, t \in [0,1].
\end{equation}
Using Cartan's identity we compute
\begin{eqnarray*}
\frac{d}{dt} \bigl( \eta_t^* \left(\alpha + t\, d h \right) \bigr) &=& 
\eta_t^* \2 \bigl( \cl_{X_t} ( \alpha + t\, d h ) + dh \bigr) \\
&=& \eta_t^* \2 \bigl( \imath_{X_t} d (\alpha + t\, d h)
         + d \imath_{X_t} (\alpha + t\, d h)  + dh \bigr) .
\end{eqnarray*}
Since $\imath_{X_t} d\alpha = \chi_t \, \imath_{R_{\ga}} d\alpha = 0$,
it follows that \eqref{e:Moser2} holds if and only if
\begin{equation} \label{e:diota}
d \imath_{X_t} (\alpha + t\, d h)  + dh \,=\, 0.
\end{equation}
Since $\imath_{X_t} (\alpha + t\, d h) = \chi_t \2 \bigl( 1 + t \,dh (R_{\ga}) \bigr)$,
equation~\eqref{e:diota} is satisfied if we take
$$
\chi_t \,:=\, - \frac{h |_S}{1 + t \,dh (R_{\ga})} , \qquad t \in [0,1].
$$
By \eqref{e:tg0} the functions $\chi_t$ are well-defined.

In view of \eqref{e:fa'} and \eqref{e:Moser1}, 
the diffeomorphism $\psi := \varphi \circ \eta_1 \colon S \to S'$ 
satisfies
$$
\psi^* \alpha' \,=\, \eta_1^* \left(\varphi^* \alpha' \right) \,=\, 
      \eta_1^* \left( \alpha + dh \right) \,=\, \alpha ,
$$
as required.
\proofend

\end{document}